\documentclass[11pt,reqno]{amsart}
\usepackage[a4paper, hmargin={2.7cm,2.7cm},vmargin={3.3cm,3.3cm}]{geometry}
\usepackage[T1]{fontenc}
\usepackage{lmodern}
\usepackage{amsmath,amssymb,amsthm,mathtools}
\mathtoolsset{showonlyrefs}
\usepackage{todonotes}
\usepackage{comment}
\usepackage[normalem]{ulem}

\usepackage[noadjust]{cite}
\usepackage{bm}
\usepackage{booktabs,longtable,array}
\usepackage{enumitem}
\usepackage{xcolor}
\usepackage{calc}

\usepackage{hyperref}
\usepackage{cleveref}
\usepackage{setspace}
\usepackage{mathrsfs}
\usepackage{listings}
\usepackage{seqsplit}
\usepackage{needspace}
\usepackage{datetime}

\lstdefinestyle{pythonappendix}{
  language=Python,
  basicstyle=\ttfamily\scriptsize,
  keywordstyle=\color{deepblue},
  commentstyle=\color{darkgreen},
  stringstyle=\color{warningred},
  numbers=left,
  numberstyle=\tiny\color{gray},
  numbersep=8pt,
  frame=single,
  backgroundcolor=\color{softgray},
  breaklines=true,
  breakatwhitespace=false,
  columns=fullflexible,
  keepspaces=true,
  showstringspaces=false,
  tabsize=4,
  captionpos=b
}

\definecolor{warningred}{RGB}{145,25,25}
\definecolor{deepblue}{RGB}{24,64,118}
\definecolor{darkgreen}{RGB}{30,105,60}
\definecolor{softgray}{RGB}{246,247,249}
\hypersetup{
 colorlinks=true,
 linkcolor=deepblue,
 citecolor=deepblue,
 urlcolor=deepblue,
 pdftitle={Linear dependence of time-frequency shifts of Schwartz functions},
 pdfauthor={}
}

\numberwithin{equation}{section}

\newtheorem*{conjecture}{Conjecture}

\newtheorem{theorem}{Theorem}[section]
\newtheorem{lemma}[theorem]{Lemma}
\newtheorem{proposition}[theorem]{Proposition}
\newtheorem{corollary}[theorem]{Corollary}

\theoremstyle{definition}
\newtheorem{definition}[theorem]{Definition}
\newtheorem{remark}[theorem]{Remark}
\newtheorem*{remark*}{Remark}

\newcommand{\eps}{\varepsilon}

\newcommand{\Z}{\mathbb Z}
\newcommand{\R}{\mathbb R}
\newcommand{\Q}{\mathbb Q}
\newcommand{\C}{\mathbb C}
\newcommand{\N}{\mathbb N}
\newcommand{\ii}{\mathrm i}
\newcommand{\e}{\mathrm e}
\newcommand{\dd}{\,\mathrm d}

\newcommand{\norm}[1]{\left\lVert #1\right\rVert}
\newcommand{\abs}[1]{\left\lvert #1\right\rvert}

\newcommand{\supp}{\operatorname{supp}}

\newcommand{\Log}{\operatorname{Log}}

\newcommand{\op}{\mathrm{op}}
\newcommand{\Zak}{\mathcal{Z}}

\newcommand{\FrakI}{\mathfrak{I}}
\newcommand{\BoldEta}{\boldsymbol{\eta}}
\newcommand{\CalX}{\mathcal{X}}
\newcommand{\CalS}{\mathcal{S}}
\newcommand{\Ball}{\mathcal{B}}

\newcommand{\PhysicalVariable}{t}
\newcommand{\PhasePoint}{z}
\newcommand{\TranslationParameter}{x}
\newcommand{\ModulationParameter}{\omega}

\newcommand{\TimeFrequencyShift}{\pi}
\newcommand{\WeylShift}{\rho}
\newcommand{\GenericWeylPolynomial}{\mathcal P}

\newcommand{\CubicIrrational}{\vartheta}
\newcommand{\TranslationAlpha}{\alpha}
\newcommand{\TranslationBeta}{\beta}
\newcommand{\WeylOffset}{\zeta}
\newcommand{\ZakPoint}{z}
\newcommand{\z}{\ZakPoint}
\newcommand{\TorusShiftVector}{\tau}
\newcommand{\TranslateZak}{T_{\TorusShiftVector}}

\newcommand{\CoefficientIndexSet}{\mathcal I}
\newcommand{\RealCoefficientNumerator}{p}
\newcommand{\ImagCoefficientNumerator}{q}
\newcommand{\FixedWeylCoefficient}{a}
\newcommand{\aaa}{\FixedWeylCoefficient}

\newcommand{\ScalarZak}{\mathcal Z}
\newcommand{\VectorZak}{\mathcal Z_2}
\newcommand{\ZakPosition}{x}
\newcommand{\ZakFrequency}{\omega}
\newcommand{\FirstSewingMatrix}{U_1}
\newcommand{\SecondSewingMatrix}{U_2}
\newcommand{\SewingMatrix}{U}

\newcommand{\SmoothStep}{s_0}
\newcommand{\AngleFunction}{\theta}
\newcommand{\ReferenceSine}{a}
\newcommand{\ReferenceCosine}{b}
\newcommand{\ReferenceWindow}{f_0}
\newcommand{\ReferenceSection}{\chi}
\newcommand{\ReferenceProjection}{P}
\newcommand{\ComplementProjection}{Q}

\newcommand{\LatticeWeylMatrix}{L}

\newcommand{\OffsetPhase}{\eta}
\newcommand{\TranslatedReferenceSection}{w}
\newcommand{\IdealMatrixField}{A_0}
\newcommand{\IdealCocycle}{B_0}
\newcommand{\FixedLatticeOperator}{\mathcal P_*^{(0)}}
\newcommand{\FixedMatrixField}{A_*}
\newcommand{\FixedWeylOperator}{\mathcal P_*}
\newcommand{\FixedCocycle}{B_*}

\newcommand{\MatrixErrorField}{\mathcal E_A}
\newcommand{\LaurentVariable}{U}
\newcommand{\LaurentExponent}{\ell}
\newcommand{\LaurentSupport}{\mathcal L}
\newcommand{\FixedLaurentCoefficient}{\gamma}
\newcommand{\IdealLaurentCoefficient}{d}
\newcommand{\ErrorLaurentCoefficient}{e}
\newcommand{\LaurentMajorant}{\mathfrak B}
\newcommand{\CertifiedError}{\delta}
\newcommand{\DerivativeBound}{D_\ast}

\newcommand{\CorrectionField}{X}
\newcommand{\CandidateInvariantVector}{v_X}
\newcommand{\NormalizationScalar}[1]{q_{#1}}
\newcommand{\CocycleErrorField}{\mathcal E_B}
\newcommand{\CorrectionMap}{\Phi}
\newcommand{\FixedCorrection}{X_*}
\newcommand{\FixedInvariantVector}{v_*}
\newcommand{\InvariantMultiplier}{q_*}
\newcommand{\NormalSection}{\nu}
\newcommand{\UnitaryFrame}{U_\chi}
\newcommand{\ProjectiveCoordinate}{\xi}
\newcommand{\vv}{\ProjectiveCoordinate}
\newcommand{\LocalMatrix}{\mathscr M}
\newcommand{\LocalMatrixEntry}{\mu}
\newcommand{\ProjectiveMap}{\mathfrak F}

\newcommand{\ProjectiveContractionConstant}{\lambda_0}

\newcommand{\LogMultiplier}{\varphi}
\newcommand{\CohomologySolution}{u}
\newcommand{\ScalarCorrection}{h}
\newcommand{\FixedEigenvalue}{c_*}
\newcommand{\FinalZakEigenfunction}{F_*}
\newcommand{\FinalWindow}{f_*}

\newcommand{\GridSize}{G}
\newcommand{\GridInvariantVector}{v_G}
\newcommand{\GridMultiplier}{q_G}
\newcommand{\GridLogMultiplier}{\varphi_G}
\newcommand{\GridCohomologySolution}{u_G}
\newcommand{\GridScalarCorrection}{h_G}
\newcommand{\GridZakEigenfunction}{F_G}
\newcommand{\GridEigenvalue}{c_G}
\newcommand{\GridWindow}{f_G}
\newcommand{\GridScalarZak}{(\ScalarZak\GridWindow)}
\newcommand{\GridResidual}{R_G}
\newcommand{\GridSummand}{S}

\renewcommand{\Re}{\operatorname{Re}}
\renewcommand{\Im}{\operatorname{Im}}

\title[Linear dependence of time-frequency shifts  of a Schwartz function]
{Linear dependence of time-frequency shifts \\ of a Schwartz function}

\author[M. Faulhuber]{Markus Faulhuber}
\address{Faculty of Mathematics,
University of Vienna, 
Oskar-Morgenstern-Platz 1,
1090 Vienna, Austria}

\email{markus.faulhuber@univie.ac.at}

\author[P. Petersen]{Philipp Petersen}
\address{Faculty of Mathematics,
University of Vienna, 
Kolingasse 14-16,
1090 Vienna, Austria}
\email{philipp.petersen@univie.ac.at}

\author[J.T. van Velthoven]{Jordy Timo van Velthoven}
\address{Faculty of Mathematics,
University of Vienna, 
Oskar-Morgenstern-Platz 1,
1090 Vienna, Austria}
\email{jordy-timo.van-velthoven@univie.ac.at}

\author[F. Voigtlaender]{Felix Voigtlaender}
\address{
Mathematical Institute for Machine Learning and Data Science (MIDS),
Catholic University of Eichstätt–Ingolstadt (KU),
Auf der Schanz 49, 85049 Ingolstadt, Germany
}

\email{felix.voigtlaender@ku.de}

\thanks{All authors have contributed equally}

\subjclass[2020]{42C15}

\keywords{Linear dependence, Time-frequency shifts, Zak transform}

\begin{document}

\begin{abstract}
    We show that a finite number of time-frequency shifts of a Schwartz function can be linearly dependent.
    This disproves the so-called HRT conjecture of Heil, Ramanathan, and Topiwala.
    In particular, we provide an example consisting of 12 time-frequency shifts.
\end{abstract}

\maketitle

\section{Introduction}\label{sec:claim}

For a function $f \in L^2 (\R)$ and a point $ \PhasePoint=(\TranslationParameter,\ModulationParameter)\in\R^2$,
its associated \emph{time-frequency shift} is defined by
\begin{equation}
    \TimeFrequencyShift(\PhasePoint)f(\PhysicalVariable)
    =
    \e^{2\pi\ii\PhysicalVariable\ModulationParameter}
    f(\PhysicalVariable-\TranslationParameter),
    \qquad
    \PhasePoint=(\TranslationParameter,\ModulationParameter)\in\R^2.
\end{equation}
The following conjecture of Heil, Ramanathan, and Topiwala \cite{HRT} is widely known as the \emph{HRT conjecture}. 

\begin{conjecture}[\cite{HRT}]
    For a nonzero function $f \in L^2(\R)$ and a finite number of pairwise distinct points
$\PhasePoint_1,\ldots,\PhasePoint_n \in \R^2$, the vectors 
    $
       \TimeFrequencyShift(\PhasePoint_1)f,\ldots,
        \TimeFrequencyShift(\PhasePoint_n)f $
    are linearly independent over $\C$.
\end{conjecture}

The HRT conjecture is known to hold for various classes of special functions (see, e.g., \cite{bownik2016linear, bownik2013linear, romanos2025linear, guan2006HRT, benedetto2015linear, kreisel2019letter, HRT}) and special point configurations (see, e.g., \cite{okoudjou2025letter, demeter2012proof, Linnell, demeter2010linear, liu2019letter, Rzeszotnik2025point}); see also the surveys \cite{heil2006linear, heil2015hrt}. Among the most fundamental contributions to the HRT conjecture is Linnell's theorem \cite{Linnell}, which asserts that the conjecture is true for any finite subset of a discrete subgroup of $\R^2$; see also \cite{DemeterGautam, enstad2025linear, bownik2010linear, antezana2020linear} for various more elementary proofs of this theorem. 
We also mention \cite{grochenig2015linear} on the asymptotics of lower Riesz bounds of finite subsets of Gabor frames. The main result in that paper demonstrates that one may numerically observe linear dependencies even when the opposite holds analytically, e.g., in the lattice case.

In this paper, we show that the HRT conjecture is false in general. In particular, we show that a twelve-point counterexample exists. The following theorem is the main result of this paper.

\begin{theorem}\label{thm:main}
    There exist $\WeylOffset \in (\R\setminus\Q)^2$, nonzero  $\alpha_1, ..., \alpha_{12} \in\C $, pairwise distinct points $\PhasePoint_1, ..., \PhasePoint_{12} \in\R^2 $,
    and a nonzero Schwartz function $\FinalWindow \in \mathcal{S} (\R)$ such that
    $\PhasePoint_1,\ldots,\PhasePoint_{11}\in(\Z\times\frac12\Z)+\WeylOffset$,
    $\PhasePoint_{12}=0$, and
    \begin{equation}
        \sum_{k=1}^{12}
        \alpha_k\,\TimeFrequencyShift(\PhasePoint_k)
        \FinalWindow =0.
    \end{equation}
\end{theorem}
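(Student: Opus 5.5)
The plan is to turn the identity of Theorem~\ref{thm:main} into an eigenvalue problem and then solve that problem on the Zak side. Write $\PhasePoint_k=\lambda_k+\WeylOffset$ with $\lambda_k\in\Z\times\frac12\Z$ for $k\le 11$. Up to unimodular phase factors, $\TimeFrequencyShift(\lambda_k+\WeylOffset)=\TimeFrequencyShift(\WeylOffset)\TimeFrequencyShift(\lambda_k)$. The desired relation then reads
\[
\TimeFrequencyShift(\WeylOffset)\,\GenericWeylPolynomial\,\FinalWindow=c\,\FinalWindow,
\]
where $\GenericWeylPolynomial=\sum_{k\le 11}\beta_k\TimeFrequencyShift(\lambda_k)$ is a Weyl polynomial over the lattice $\Z\times\frac12\Z$ and $c=-\alpha_{12}\neq 0$. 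So it suffices to build a nonzero Schwartz eigenfunction of $\TimeFrequencyShift(\WeylOffset)\GenericWeylPolynomial$ with nonzero eigenvalue and with all coefficients $\beta_k$ nonzero. Linnell's theorem shows that $\WeylOffset$ must be irrational. I would choose $\WeylOffset$ built from a cubic irrational $\CubicIrrational$, so that the induced rotation vector $\TorusShiftVector$ is badly approximable (Diophantine), which is what small-divisor arguments need.

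Next I would pass to a vector-valued Zak transform $\VectorZak$ adapted to $\Z\times\frac12\Z$. Its sublattice $\Z\times\Z$ has index $2$. The operators $\TimeFrequencyShift(\lambda)$ with $\lambda\in\Z\times\frac12\Z$ then act on $\VectorZak f$ as multiplication by $2\times 2$ matrix fields with trigonometric-polynomial entries, and $\VectorZak$ is quasi-periodic with respect to the sewing matrices $\FirstSewingMatrix,\SecondSewingMatrix$. The polynomial $\GenericWeylPolynomial$ becomes multiplication by a matrix field $A(\ZakPoint)$, and $\TimeFrequencyShift(\WeylOffset)$ becomes a phase times the translation $\TranslateZak$. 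The eigen-equation thus becomes a linear cocycle equation over an irrational rotation of the torus,
\[
F(\ZakPoint+\TorusShiftVector)=c\,B(\ZakPoint)F(\ZakPoint),
\]
for a smooth $\C^2$-valued $F$ satisfying the sewing relations. Any such smooth $F$ is the Zak transform of a Schwartz function, and this gives $\FinalWindow$. The point of working with a vector Zak transform is this: a continuous scalar Zak transform must vanish somewhere, which blocks a naive scalar approach, whereas a $2$-dimensional bundle with sewing matrices has room for a nonvanishing invariant line field.

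I would solve the cocycle equation in two steps. First, find an invariant line field $\ZakPoint\mapsto\mathbb C\,v(\ZakPoint)$ with $B(\ZakPoint)v(\ZakPoint)=q(\ZakPoint)v(\ZakPoint+\TorusShiftVector)$. In a local projective coordinate $\ProjectiveCoordinate$ relative to a reference section $\ReferenceSection$, this is a fixed point of a graph transform $\ProjectiveMap$. I want $B$ to be uniformly projectively hyperbolic, in the sense that $\ProjectiveMap$ contracts with constant $\ProjectiveContractionConstant<1$ in a $C^k$ neighbourhood; the fixed point is then smooth by the standard invariant-section argument.

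To get such a $B$, I would start from an idealized field $\IdealMatrixField$ and cocycle $\IdealCocycle$ with an explicit invariant section, and truncate to a Laurent polynomial with $11$ lattice terms, giving $\FixedMatrixField$. The resulting error $\MatrixErrorField$ must be certified small by Laurent coefficient majorants. The correction $\FixedCorrection$ is then produced by a contraction argument for $\CorrectionMap$ near the ideal section.

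Second, reduce to a scalar problem. Set $F=e^{\CohomologySolution}v$, so that $\CohomologySolution$ must satisfy
\[
\CohomologySolution(\ZakPoint+\TorusShiftVector)-\CohomologySolution(\ZakPoint)=\LogMultiplier(\ZakPoint)-\log c,
\]
where $\LogMultiplier=\log q$ and $c$ is fixed by $\log c=\int\LogMultiplier$. This requires $q$ to have zero winding along both torus cycles so that $\LogMultiplier$ is a globally defined smooth function. Granting that, the Diophantine property of $\TorusShiftVector$ gives a smooth solution by Fourier series, and compatibility with the sewing matrices must be checked.

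The main obstacle is the first step. One has to design an explicit $11$-term Weyl polynomial whose cocycle is genuinely projectively contracting, with trivial winding of the multiplier, and then rigorously certify the truncation and contraction estimates; I expect this to require computer-assisted bounds. I would first try a perturbation of a cocycle that is diagonal in a smoothly rotating frame $\UnitaryFrame$ with eigenvalue gap, and check numerically that the gap survives truncation. Finally I would verify that all twelve coefficients are nonzero and all points distinct.
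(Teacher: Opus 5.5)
Your proposal follows essentially the same route as the paper: the vector Zak transform turns the eigenproblem into a cocycle over the rotation by $\TorusShiftVector$; a contraction/graph-transform argument near the rank-one ideal cocycle $\IdealCocycle$ gives a smooth invariant line $\FixedInvariantVector$ with multiplier $\InvariantMultiplier$; the cohomological equation is solved by a Diophantine Fourier-series argument; and a certified Laurent-majorant bound is established for an explicit 11-term coefficient sequence. The zero-winding condition you ``grant'' is automatic in this setup: $\IdealCocycle(\ZakPoint)=\ReferenceSection(\ZakPoint)\ReferenceSection(\TranslateZak\ZakPoint)^*$ has multiplier identically $1$ and the certified error is $<1/3$, so $|\InvariantMultiplier-1|<5/12$, the principal logarithm of the $\Z^2$-periodic function $\InvariantMultiplier$ is globally smooth, and the sewing compatibility of $e^{\CohomologySolution}\FixedInvariantVector$ is immediate.
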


We show a numerical approximation to the function $\FinalWindow$ of Theorem~\ref{thm:main} in Appendix~\ref{sec:numerical-illustrations}.

The proof of Theorem~\ref{thm:main} relies on a certified numerical computation. To isolate the underlying mechanism and make the argument more transparent, we provide a separate, purely analytic proof of the following qualitative result.

\begin{theorem}\label{thm:qualitative}
There exists $\zeta \in (\R \setminus \Q)^2$, a finite subset $\Lambda \subseteq (\Z \times \frac{1}{2} \Z) + \zeta$ and a nonzero $\FinalWindow \in \mathcal{S}(\R)$ such that the system $\big \{ \pi(\lambda ) \FinalWindow : \lambda \in \Lambda \cup \{0\} \big\}$ is linearly dependent over $\C$.
\end{theorem}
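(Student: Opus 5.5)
The plan is to turn the linear dependence into an eigenvalue problem for a \emph{Weyl operator}. Write $\Lambda_0=\Z\times\frac12\Z$ and fix $\WeylOffset\in(\R\setminus\Q)^2$. For a finitely supported coefficient family $(\aaa_\lambda)_{\lambda\in\Lambda_0}$, set $\GenericWeylPolynomial=\sum_\lambda \aaa_\lambda \TimeFrequencyShift(\lambda+\WeylOffset)$. It suffices to find a nonzero $\FinalWindow\in\CalS(\R)$ and a scalar $\FixedEigenvalue\in\C$ with $\GenericWeylPolynomial\FinalWindow=\FixedEigenvalue\FinalWindow$.
\begin{itemize}
\item If $\FixedEigenvalue\neq 0$, then $\{\TimeFrequencyShift(\lambda)\FinalWindow:\lambda\in\Lambda\cup\{0\}\}$ is linearly dependent, with $\Lambda=\{\lambda+\WeylOffset:\aaa_\lambda\ne0\}$.
\item If $\FixedEigenvalue=0$, the dependence already holds within $\Lambda$ (we then append $0$).
\end{itemize}
Up to the phase factors coming from the commutation relations, $\TimeFrequencyShift(\lambda+\WeylOffset)=c_\lambda\,\TimeFrequencyShift(\lambda)\TimeFrequencyShift(\WeylOffset)$. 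So $\GenericWeylPolynomial=\GenericWeylPolynomial^{(0)}\TimeFrequencyShift(\WeylOffset)$, where $\GenericWeylPolynomial^{(0)}$ is a finite combination of shifts from the lattice $\Lambda_0$.

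Next I pass to the vector-valued Zak transform $\VectorZak$ adapted to $\Lambda_0$. It is a unitary map from $L^2(\R)$ to $\C^2$-valued functions on $[0,1)^2$ that are quasi-periodic, with sewing matrices $\FirstSewingMatrix,\SecondSewingMatrix$. It maps $\CalS(\R)$ onto the smooth sections of the corresponding rank-two vector bundle over the torus. Under $\VectorZak$:
\begin{itemize}
\item lattice shifts become multiplication by $2\times2$ trigonometric-polynomial matrices, so $\GenericWeylPolynomial^{(0)}$ becomes multiplication by a matrix field $A(\ZakPoint)$ with trigonometric-polynomial entries;
\item $\TimeFrequencyShift(\WeylOffset)$ becomes, up to a smooth unitary twist, the translation $\TranslateZak$ by an irrational vector $\TorusShiftVector$ determined by $\WeylOffset$.
\end{itemize}
The eigen-equation therefore reads $A(\ZakPoint)F(\ZakPoint+\TorusShiftVector)=\FixedEigenvalue F(\ZakPoint)$ for a smooth quasi-periodic section $F$. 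I will solve it in the form $F=\ScalarCorrection\cdot v$, where $v$ is a smooth nonvanishing section spanning an $A$-invariant line field and $\ScalarCorrection$ is a scalar function.

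\textbf{Step 1 (ideal model).} Pick a reference window $\ReferenceWindow\in\CalS$ whose Zak section $\ReferenceSection=\VectorZak\ReferenceWindow$ is nowhere vanishing. This requires a nontrivial construction, since by the Balian--Low/Zak obstruction a rank-one subbundle must be built carefully, for instance via a smooth step $\SmoothStep$ and an angle function $\AngleFunction$. Let $\ReferenceProjection$ be the projection onto the line spanned by $\ReferenceSection$ and $\ComplementProjection=I-\ReferenceProjection$. Define a smooth ideal field $\IdealMatrixField$ that maps the translated line $\ReferenceSection(\ZakPoint+\TorusShiftVector)$ to $\ReferenceSection(\ZakPoint)$ with a large multiplier, and contracts the complementary direction strongly. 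This produces a dominated splitting.

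\textbf{Step 2 (approximation).} Approximate $\IdealMatrixField$ in $C^k$ by a trigonometric-polynomial field $\FixedMatrixField$ that satisfies the required quasi-periodicity, for example by Fejér means of the Fourier coefficients in the appropriate twisted basis. The qualitative theorem needs no explicit bounds, only a sufficiently small error $\MatrixErrorField$.

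\textbf{Step 3 (invariant line).} Use projective coordinates $\ProjectiveCoordinate$ relative to the frame $(\ReferenceSection,\NormalSection)$. The cocycle equation for the line field becomes a fixed-point problem $\ProjectiveCoordinate=\ProjectiveMap(\ProjectiveCoordinate)$. By domination, $\ProjectiveMap$ is a contraction on a small ball of smooth sections, and it also contracts in $C^k$ for every $k$ after shrinking the error (graph-transform estimates). This yields a smooth invariant section $\FixedInvariantVector$ with $\FixedMatrixField(\ZakPoint)\FixedInvariantVector(\ZakPoint+\TorusShiftVector)=\InvariantMultiplier(\ZakPoint)\FixedInvariantVector(\ZakPoint)$, where $\InvariantMultiplier$ is a smooth, nonvanishing, genuinely periodic scalar close to the ideal multiplier.

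\textbf{Step 4 (cohomological equation).} Write $\InvariantMultiplier=\e^{\LogMultiplier}$. This needs a winding-number-zero condition, which is inherited from the ideal model by closeness. Then solve $\CohomologySolution(\ZakPoint)-\CohomologySolution(\ZakPoint+\TorusShiftVector)=\LogMultiplier(\ZakPoint)-\log\FixedEigenvalue$, with $\log\FixedEigenvalue$ equal to the mean of $\LogMultiplier$, by dividing Fourier coefficients by $1-\e^{2\pi\ii\langle k,\TorusShiftVector\rangle}$. Choosing $\WeylOffset$ so that $\TorusShiftVector$ is Diophantine (e.g.\ with coordinates in a cubic field $\Q(\CubicIrrational)$) makes the small divisors polynomially bounded, so $\CohomologySolution$ is $C^\infty$. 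Then $\FinalZakEigenfunction=\e^{\CohomologySolution}\FixedInvariantVector$ solves the eigen-equation, and $\FinalWindow=\VectorZak^{-1}\FinalZakEigenfunction$ is a nonzero Schwartz function.

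I expect Steps 1 and 3 to be the main obstacle. The invariant line must be a smooth section of a \emph{twisted} bundle, with its quasi-periodicity compatible with the translation $\TranslateZak$ and with a well-defined, winding-free logarithm of the multiplier. Topologically this is exactly where the lattice case is obstructed, consistent with Linnell's theorem. My first attempt would be to pick $\ReferenceSection$ so that the translated sewing data match and the phase mismatch is absorbed into $\OffsetPhase$. After that, I would check the winding condition directly on $\IdealCocycle$.
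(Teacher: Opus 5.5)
Your outline follows the paper's route almost step for step. You factor $\mathcal P=\mathcal P^{(0)}\pi(\zeta)$, pass to the vector Zak transform, and build a unit reference section $\chi$ from a smooth step. You then take an ideal cocycle sending $\chi(\TranslateZak\ZakPoint)$ to $\chi(\ZakPoint)$ and killing (or contracting) the complement, find an invariant line by a projective contraction, and remove the multiplier with a cohomological equation for a cubic Diophantine shift.

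The genuine gap is the regularity claim in Step~3, together with the $C^k$ approximation in Step~2 that feeds it. You assert that the graph transform contracts in $C^k$ ``for every $k$ after shrinking the error''. But the statement needs one fixed finite Weyl polynomial, whose error is then fixed, and your argument gives no smallness threshold that is uniform in $k$. Choosing the polynomial better cannot repair this. Indeed, $\norm{A_*-A_0}_{C^k}\to\infty$ as $k\to\infty$, because $A_*$ is real-analytic and $A_0$, built from a flat step, is not. So, as quantified, any given configuration yields only a $C^k$ invariant section for some finite $k$. Hence you only get $\mathcal Z f_*\in C^k$, which does not give $f_*\in\mathcal S(\R)$.

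The missing observation is that no shrinking is needed, because the base map $\TranslateZak$ is a translation whose derivative is the identity. Differentiating $\ProjectiveCoordinate_{n+1}(\ZakPoint)=\ProjectiveMap(\ZakPoint,\ProjectiveCoordinate_n(\TranslateZak\ZakPoint))$ gives
\[
 \partial^\gamma\ProjectiveCoordinate_{n+1}(\ZakPoint)
 =(\partial_{\ProjectiveCoordinate}\ProjectiveMap)\bigl(\ZakPoint,\ProjectiveCoordinate_n(\TranslateZak\ZakPoint)\bigr)\,(\partial^\gamma\ProjectiveCoordinate_n)(\TranslateZak\ZakPoint)
 +(\text{terms with derivatives of order}<|\gamma|).
\]
So the top-order derivative enters linearly, with a coefficient bounded by the \emph{same} constant $\ProjectiveContractionConstant<1$ at every order. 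The lower-order terms only need to be bounded (by induction on $|\gamma|$), not small. On compact sets this yields $a_{n+1}\le\ProjectiveContractionConstant a_n+C$ for $a_n=\sup\abs{\partial^\gamma\ProjectiveCoordinate_n}$. The quasi-periodicity of $\ProjectiveCoordinate_n$ lets you pass from $\TranslateZak(\Omega)$ back to $\Omega$ without a factor on the top-order term. This gives uniform bounds on all derivatives of the iterates. Arzel\`a--Ascoli, together with uniform convergence of the iterates, then gives $\ProjectiveCoordinate_*\in C^\infty$; this is Theorem~\ref{lem:regularity}.

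In graph-transform language, the $C^r$ section theorem asks for fiber contraction times $\mathrm{Lip}(\TranslateZak^{-1})^r<1$, and the second factor equals $1$ for every $r$. With this, a single sup-norm error bound serves all orders; the paper uses $<1/3$, available from Lemma~\ref{lem:fourier-density}. In particular, Step~2 needs no $C^k$ approximation. By contrast, the winding issue you anticipate in Step~4 is harmless. With the exactly rank-one ideal $\chi(\ZakPoint)\chi(\TranslateZak\ZakPoint)^*$, the multiplier stays in the disc $D(1,5/12)$, so the principal logarithm applies directly.
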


Both theorems disprove the general HRT conjecture and its well-known formulation restricted to functions in the Schwartz space $\mathcal{S}(\R)$; see, e.g., \cite[Conjecture~2]{Okoudjo}. 
The addition of the origin to the translated half-integer lattice is essential in view of Linnell's theorem \cite{Linnell}. Moreover, the conclusion $\FinalWindow\in\mathcal S(\R)$ cannot be strengthened to arbitrarily fast decay, since the HRT conjecture holds under certain stronger decay conditions than rapid decay; see, e.g., \cite{benedetto2015linear, bownik2013linear, bownik2016linear}.

\subsection{Proof outline}
\label{sec:outline}
We first sketch the proof of the qualitative Theorem~\ref{thm:qualitative}, which isolates the mechanism of the construction. The proof of Theorem~\ref{thm:main} follows the same general principle and adds the certified numerical step needed to obtain exactly twelve shifts.

To prove Theorem~\ref{thm:qualitative}, we explicitly construct a Schwartz eigenfunction for a 
so-called \emph{Weyl polynomial}, i.e., a finite linear combination of Weyl shifts
\(
  \WeylShift(\PhasePoint)
  = e^{-\pi \ii \TranslationParameter \ModulationParameter} \TimeFrequencyShift(\PhasePoint),
\)
where $\PhasePoint = (\TranslationParameter, \ModulationParameter) \in \R^2$. We note that we use Weyl shifts (rather than time-frequency shifts) because they yield more symmetric formulae. Specifically, in order to prove Theorem~\ref{thm:qualitative}, we construct a nonzero $c \in\C $ and a nonzero $f \in\mathcal S(\R) $ satisfying the eigenvalue problem
\begin{equation}\label{eq:eigenfunction}
    \mathcal{P} f
    =c f
\end{equation}
for a Weyl polynomial 
\begin{equation} \label{eq:fixed_weyl}
    \mathcal{P}
    =\sum_{k=1}^{n}c_k\,\WeylShift(\PhasePoint_k),
\end{equation}
where $c_1, ..., c_n \in \C$ are nonzero coefficients and $\PhasePoint_1, ..., \PhasePoint_n \in \R^2$ are pairwise distinct points. 
Setting $\PhasePoint_{n+1} = 0 \in \R^2$ reduces this to
$\mathcal{P} f - c \WeylShift (z_{n+1}) f = 0$,
that is, the linear dependence of the vectors $\WeylShift(\PhasePoint_k) f$, $ k = 1, \dots, n + 1$.
In turn, this implies that also the vectors $\TimeFrequencyShift(\PhasePoint_k) f $, $ k = 1, \dots , n + 1 $, are linearly dependent,
settling Theorem~\ref{thm:qualitative}.

We mention that a spectral approach to the HRT conjecture was developed in \cite{Balan}. In particular, by \cite[Theorem~3.10]{Balan}, it is known that an eigenvalue for a finite linear combination of time-frequency shifts cannot be both isolated and of finite multiplicity. The present counterexample, however, shows that $0$ can occur in the point spectrum of such an operator. 

Instead of the general Weyl polynomials of the form \eqref{eq:fixed_weyl}, we consider Weyl polynomials of the more restricted form
\begin{equation}
\label{eq:Intro-StructuredWeylPolynomial}
  \FixedWeylOperator
  = \sum_{(m,n) \in \Z^2}
      \aaa_{m,n} \, \WeylShift \left( m + \TranslationAlpha, \frac{n + \TranslationBeta}{2} \right)
  ,
\end{equation}
with a finitely supported coefficient sequence $\aaa = (\aaa_{m,n})_{(m,n) \in \Z^2} \in \C^{\Z \times \Z}$ and a  translation parameter $ (\TranslationAlpha,{\TranslationBeta})\in(\R\setminus\Q)^2$. This particular choice allows us to rewrite the action of a Weyl polynomial, as in \eqref{eq:Intro-StructuredWeylPolynomial}, using the \emph{vector Zak transform} $\VectorZak f$ of a function $f \in L^2(\R)$, given by
\begin{equation}
    \VectorZak f (\ZakPosition,\ZakFrequency)
    = 2^{-1/2}
    \begin{pmatrix}
        \ScalarZak f \left(\ZakPosition, \ZakFrequency / 2 \right)\\
        \ScalarZak f \left(\ZakPosition,(\ZakFrequency+1) / 2\right)
    \end{pmatrix}.
\end{equation}
The vector Zak transform $\VectorZak$ is a unitary operator from $L^2 (\R)$ onto a Hilbert space of measurable functions $F: \R^2 \to \C^2$ satisfying certain quasi-periodicity relations. We call the elements of this space \emph{vector Zak functions}; see Section~\ref{sec:zak} for precise details. Using the vector Zak transform $\VectorZak$, the action of the Weyl polynomial $\FixedWeylOperator$ defined in \eqref{eq:Intro-StructuredWeylPolynomial} on a vector Zak function $F : \R^2 \to \C^2$ is given by
\begin{equation}\label{eq:B*}
    \left(\VectorZak\FixedWeylOperator\VectorZak^{-1} \right)F (\ZakPoint)
    =
    \FixedCocycle(\ZakPoint) F (\ZakPoint - \TorusShiftVector),
    \qquad \ZakPoint\in\R^2,
\end{equation}
for $\TorusShiftVector := (\TranslationAlpha, \TranslationBeta) \in \R^2$ and some explicit matrix-valued function $\FixedCocycle : \R^2 \to \C^{2 \times 2}$ depending on the coefficient sequence $\aaa = (\aaa_{m,n})_{m,n \in \Z^2}$ in \eqref{eq:Intro-StructuredWeylPolynomial}, cf.\ Section~\ref{sec:AdmissibleSequences}. As such, the  eigenvalue problem \eqref{eq:eigenfunction} for $\FixedWeylOperator$ can alternatively be solved by finding a nonzero $\FixedEigenvalue \in \C$ and a nonzero vector Zak function $\FinalZakEigenfunction : \R^2 \to \C^2$ satisfying
\begin{equation} \label{eq:intro_reformulation}
    \FixedCocycle(\ZakPoint)
    \FinalZakEigenfunction(\ZakPoint - \TorusShiftVector)
    =
    \FixedEigenvalue\FinalZakEigenfunction(\ZakPoint), \qquad \ZakPoint\in\R^2.
\end{equation}
We mention that solving an equation as in \eqref{eq:intro_reformulation} is the key reason why we work with a vector-valued Zak transform rather than the classical (scalar-valued) Zak transform, because a scalar-valued version of \eqref{eq:intro_reformulation} would not admit a nonzero scalar-valued function as a solution; see Lemma~\ref{lem:zak-zero}.

To solve \eqref{eq:intro_reformulation}, we construct an explicit smooth vector Zak function $\ReferenceSection : \R^2 \to \C^2$ satisfying
\begin{equation}
    \ReferenceSection(\ZakPoint)^*\ReferenceSection(\ZakPoint)
    = \| \ReferenceSection(z)\|^2 = 1,
    \qquad \ZakPoint\in\R^2,
\end{equation}
which naturally gives rise to the rank-one matrix function
\begin{equation}
    \IdealCocycle(\ZakPoint)
    =
    \ReferenceSection(\ZakPoint)
    \ReferenceSection(\ZakPoint - \TorusShiftVector)^*, \qquad z \in \R^2.
\end{equation}
For our purposes, the matrix function $\IdealCocycle$ is more convenient to work with than the general matrix function $\FixedCocycle$ appearing in \eqref{eq:intro_reformulation}. For this reason, we show that the set of possible matrix-valued functions $\FixedCocycle : \R^2 \to \C^{2 \times 2}$ in \eqref{eq:intro_reformulation} obtained by varying the coefficients $(\aaa_{m,n})_{(m,n) \in \Z^2}$ is sufficiently large to ensure for a suitable choice of the coefficients that
\begin{equation}
    \sup_{\ZakPoint\in \R^2}
    \left\|
      \FixedCocycle(\ZakPoint)-\IdealCocycle(\ZakPoint)
    \right\|_{\op}
    <\frac{1}{3};
    \label{eq:IntroCertifiedEstimate}
\end{equation}
we refer to Section~\ref{sub:LatticeWeylDensity} and Section~\ref{sec:AdmissibleSequences} for the details. For such a choice of coefficients $\aaa \in \C^{\Z \times \Z}$, we consider the associated matrix function $\FixedCocycle$ and fix the associated Weyl polynomial $\FixedWeylOperator$ as in \eqref{eq:Intro-StructuredWeylPolynomial}.
The close approximation \eqref{eq:IntroCertifiedEstimate} allows us in various arguments
to work with the more convenient matrix function $\IdealCocycle$ instead of  $\FixedCocycle$.
In particular, this estimate is crucial in a contraction argument to show the existence of a smooth,
nowhere-vanishing vector Zak function $\FixedInvariantVector : \R^2 \to \C^2$
and a smooth, $\Z^2$-periodic function $\InvariantMultiplier : \R^2 \to \C$ satisfying
\begin{equation} \label{eq:intro_scalarmultiplier}
    \FixedCocycle(\ZakPoint)
    \FixedInvariantVector(\ZakPoint - \TorusShiftVector)
    =
    \InvariantMultiplier(\ZakPoint)\FixedInvariantVector(\ZakPoint), \qquad z \in \R^2;
\end{equation}
see Section~\ref{sec:graph}. Note that the function $\FixedInvariantVector$ would solve \eqref{eq:intro_reformulation} if $\InvariantMultiplier$ was a constant function, which, however, is generally not the case. 

To solve \eqref{eq:intro_reformulation}, and hence the eigenvalue problem \eqref{eq:eigenfunction}, it remains to solve the equation
\begin{equation} \label{eq:difference_equation}
    \InvariantMultiplier(\ZakPoint)
    \ScalarCorrection(\ZakPoint - \TorusShiftVector)
    = \FixedEigenvalue\ScalarCorrection(\ZakPoint)
\end{equation}
by finding a smooth solution $h : \R^2 \to \C$. Indeed, defining $\FinalZakEigenfunction := \ScalarCorrection \cdot \FixedInvariantVector$, it would then follow from a combination of \eqref{eq:intro_scalarmultiplier} and \eqref{eq:difference_equation} that
\begin{align*}
\FixedCocycle(\ZakPoint)
    \FinalZakEigenfunction (\ZakPoint - \TorusShiftVector) &= 
 \FixedCocycle(\ZakPoint) \ScalarCorrection(\ZakPoint - \TorusShiftVector) \FixedInvariantVector(\ZakPoint - \TorusShiftVector) \\
 &= \InvariantMultiplier(\ZakPoint) \ScalarCorrection(\ZakPoint - \TorusShiftVector) \FixedInvariantVector(\ZakPoint) \\
 &= \FixedEigenvalue \FinalZakEigenfunction (\ZakPoint),
\end{align*}
which shows that the smooth vector Zak function $\FinalZakEigenfunction : \R^2 \to \C^2$ satisfies \eqref{eq:intro_reformulation}, cf. Section~\ref{sec:lifting} for precise details.
We show the existence of a solution to \eqref{eq:difference_equation}
in Section~\ref{sec:cohomology} using a Fourier series argument.
Therefore, mapping the vector Zak function $\FinalZakEigenfunction$
satisfying \eqref{eq:intro_reformulation} back via the vector Zak transform yields that
$\FinalWindow=\VectorZak^{-1}\FinalZakEigenfunction$
is a Schwartz function solving the eigenvalue problem \eqref{eq:eigenfunction} for $\FixedWeylOperator$ with eigenvalue $\FixedEigenvalue$, thus settling Theorem~\ref{thm:qualitative}.

\subsection{Usage of Large Language Models}
\label{sec:llm}

The counterexample and its proof strategy were developed by Large Language Models (LLMs), specifically ChatGPT (GPT-5.6 Pro), in a dialogue with the authors. We initiated the counterexample search, suggested increasing the number of points when an initial search got stuck, urged replacing abstract methods with more elementary arguments, and asked that an at-first-only-existential construction be converted into a fixed finite configuration.

The original counterexample consisted of the final configuration and a $C^3$ function. Going through the argument, we found that the regularity of the function can be increased, potentially to $C^\infty$ if an initially too strict version of Theorem~\ref{thm:certificate} was replaced by a looser version ($\CertifiedError < 1/4$ replaced by $\CertifiedError < 1/3$). This led to a concrete Schwartz-class counterexample.

We replaced several of ChatGPT's complex arguments with elementary proofs and filled numerous obvious gaps by providing the necessary computations or the relevant reference. From our perspective, the gaps in the proof were occasionally relatively substantial. For example, the entire argument of Section~\ref{sec:regOfZakFunction} and Appendix~\ref{sec:FaaDiBrunoByHand} was originally given with just the following two (ultimately correct) sentences: ``\emph{The fixed point inherits the finite regularity of the data. One may prove this by differentiating the fixed-point equation: at each derivative order, the highest derivative occurs linearly with the same fiber-contraction coefficient $< 1$, while lower-order terms are already known.}'' 

We checked whether the vector Zak construction can be simplified to a scalar version, which is not possible. We provide arguments for this fact, underlining that the dimensionality of the vector Zak transform is both necessary and minimal. We identified a nontrivial mistake in an initial proof of Theorem~\ref{lem:regularity}, which we then repaired with a new argument. 

In the end, we followed the proposed proof strategy, outlined in Section~\ref{sec:outline}, which remained valid.

The authors have independently verified, validated, and rewritten all parts of the paper influenced
by LLM-generated material and take full responsibility for the mathematical content of the paper.

\begin{remark*}
    The first version of this paper contained only the current Theorem~\ref{thm:main}, although an argument for an at-first-only-existential construction was already suggested by ChatGPT in the initial search for a counterexample (see Section~\ref{sec:llm}). After the first version of our paper appeared, this observation was also made by Tao~\cite{taoblog, supplement}, who pointed out that a merely qualitative counterexample (i.e., the current Theorem~\ref{thm:qualitative}) can be obtained without numerical arguments. In the present version, we formulate such a qualitative result separately as Theorem~\ref{thm:qualitative}. We decided to do so because we believe that it clarifies the main ideas of the construction and, moreover, requires no computer-assisted numerical verification. The original quantitative proof of Theorem~\ref{thm:main} is retained in the final section (Section~\ref{sec:certificate}), where certified numerics are used to obtain the explicit twelve-point counterexample.
\end{remark*}

\subsection{Notation}

We write $\N=\{1,2,\ldots\}$ for the natural numbers and set $\N_0=\N \cup \{0\}$. For $n \in \N$, the Euclidean norm on $\R^n$ or $\C^n$ is denoted by $\|\cdot\|$. The operator norm of a matrix will be denoted by $\|\cdot\|_{\op}$. For a matrix $A \in \C^{n \times n}$, we denote its conjugate transpose by $A^*$.

We use the usual notation for multi-indices and (partial) derivatives.
Specifically, for multi-indices $\alpha=(\alpha_1,\alpha_2)$ and
$\gamma=(\gamma_1,\gamma_2)$ in $\N_0^2$, we write
\[
|\gamma|=\gamma_1+\gamma_2,\qquad
\partial^\gamma
=\partial_{\ZakPosition}^{\gamma_1}
 \partial_{\ZakFrequency}^{\gamma_2},
\]
and $\alpha\leq\gamma$ means
$\alpha_j\leq\gamma_j$ for $j=1,2$.

\section{\texorpdfstring{Weyl-polynomial reformulation of the qualitative theorem}{Weyl-polynomial reformulation of the qualitative theorem}}
\label{sec:EigenvalueReformulation}

In this section, we reformulate the qualitative Theorem~\ref{thm:qualitative} as an eigenvalue problem for a specific Weyl polynomial, which is the problem we will actually solve.

\subsection{Weyl operators}
For a  point
$\PhasePoint=(\TranslationParameter,\ModulationParameter)\in\R^2$,
the Weyl shift
$\WeylShift(\PhasePoint)
 =\WeylShift(\TranslationParameter,\ModulationParameter)$
is the unitary operator on $L^2 (\R)$ given by
\begin{equation}\label{eq:Weyl}
 \WeylShift(\TranslationParameter,\ModulationParameter)
 f(\PhysicalVariable)
 =
 \e^{2\pi\ii\ModulationParameter
       (\PhysicalVariable-\TranslationParameter/2)}
 f(\PhysicalVariable-\TranslationParameter).
\end{equation}
The term phase space reflects that $\PhasePoint$ records both a 
translation by $\TranslationParameter$ in the physical variable and a
modulation by $\ModulationParameter$ in the dual frequency variable.
Thus, $\R^2$ parametrizes the combined time--frequency position
of a Weyl shift.
Note that $\WeylShift(\PhasePoint)$ is a symmetric
version of $\TimeFrequencyShift(\PhasePoint)$, and the two operators
coincide up to a phase factor. We have
\begin{equation}\label{eq:Weyl_vs_TF-shifts}
 \WeylShift(\PhasePoint)
 =
 \e^{-\pi\ii\TranslationParameter\ModulationParameter}
 \TimeFrequencyShift(\PhasePoint).
\end{equation}
Thus, the HRT conjecture can equivalently be formulated for Weyl shifts by absorbing the occurring phase factors into the coefficients $\{c_k\}$. The Weyl shifts closely follow the composition law of the Heisenberg group; see, e.g., \cite{Fol89, Gro01}.

The composition of two Weyl shifts yields
\begin{equation}\label{eq:Weyl-shift_composition}
    \WeylShift(\TranslationParameter,\ModulationParameter)
    \WeylShift(\TranslationParameter',\ModulationParameter')
    =  \e^{\pi\ii(\ModulationParameter\TranslationParameter' -\TranslationParameter\ModulationParameter')}
    \WeylShift(
    \TranslationParameter+\TranslationParameter',
    \ModulationParameter+\ModulationParameter')
\end{equation}
for $(\TranslationParameter, \ModulationParameter), (\TranslationParameter', \ModulationParameter') \in \R^2$.

\begin{definition}
For coefficients $c_1, ..., c_n \in \C$ and pairwise distinct points
$\PhasePoint_1,\ldots,\PhasePoint_n \in \R^2$, a
\emph{finite Weyl polynomial} is an operator of the form
\begin{equation}
    \GenericWeylPolynomial
    =
    \sum_{k=1}^n c_k\WeylShift(\PhasePoint_k),
\end{equation}
 The \emph{support} of a finite Weyl polynomial is defined as
$
    \operatorname{supp} (\GenericWeylPolynomial)
    =
    \{\PhasePoint_k:c_k\ne0\}
    \subset\R^2.
$
\end{definition}
The HRT conjecture is equivalent to the statement that a finite Weyl polynomial with a nonempty support cannot annihilate a nonzero function. Thus, a counterexample consists of a nonzero $f\in L^2(\R)$, pairwise distinct points $\PhasePoint_k$, and coefficients $c_k$, not all zero, such that
\begin{equation}
    \GenericWeylPolynomial f
    =
    \sum_{k=1}^n c_k\WeylShift(\PhasePoint_k) f
    =0.
\end{equation}
The following subsection describes the specific point configuration we will work with.

\subsection{Choice of point configuration}\label{sec:Counterexample}
 Throughout this paper, we set
\begin{equation}\label{eq:arith-data}
    \CubicIrrational := \sqrt[3]{2},\qquad
    \TranslationAlpha := \CubicIrrational-1,\qquad
    \TranslationBeta := \CubicIrrational^2-1,\qquad
    \WeylOffset
    := \left(\TranslationAlpha,\frac{\TranslationBeta}{2}\right),
    \qquad
    \TorusShiftVector := (\TranslationAlpha,\TranslationBeta).
\end{equation}
Note that $0 < \TranslationAlpha < \TranslationBeta < 1$.
For a finitely supported sequence $(\FixedWeylCoefficient_{m,n})_{(m,n) \in \Z^2} \in \C$,  we consider the operators
\begin{equation}
    \sum_{(m,n) \in \Z^2}
    \FixedWeylCoefficient_{m,n} \WeylShift\left(m, \frac{n}{2}\right)
    \qquad \text{and} \qquad 
    \WeylShift(\WeylOffset) = \WeylShift \left(\TranslationAlpha, \frac{\TranslationBeta}{2}\right),
\end{equation}
and note that their composition is given by
\begin{equation}\label{eq:Tstar-preview}  
    \sum_{(m,n) \in \Z^2}
    \FixedWeylCoefficient_{m,n} \WeylShift\left(m, \frac{n}{2}\right)
    \WeylShift(\WeylOffset) =
    \sum_{(m,n)\in \Z^2}
    \FixedWeylCoefficient_{m,n}
    \e^{\frac{\pi\ii}{2}(n\TranslationAlpha-m\TranslationBeta)} 
    \WeylShift\!\left(m+\TranslationAlpha,\frac{n+\TranslationBeta}{2}\right).
\end{equation}
The operator in \eqref{eq:Tstar-preview} is  a Weyl polynomial with coefficients $c_{m,n} =  \FixedWeylCoefficient_{m,n}\e^{\frac{\pi\ii}{2}(n\TranslationAlpha-m\TranslationBeta)}$ and points $z_{m,n} = (m + \alpha, (n + \beta)/2) \in \Z \times \frac{1}{2} \Z + \WeylOffset$ for $(m,n) \in \Z^2$. The points $z_{m,n}$ are clearly pairwise distinct. As such, for any nonzero $c \in \C$, the support of the operator
\[
\sum_{(m,n)\in \Z^2}
        \FixedWeylCoefficient_{m,n}\e^{\frac{\pi\ii}{2}(n\TranslationAlpha-m\TranslationBeta)} 
        \WeylShift\!\left(m+\TranslationAlpha,\frac{n+\TranslationBeta}{2}\right) - cI
\]
is given by
$
    \{(0,0)\}\cup
    \left\{z_{m,n} : a_{m,n} \neq 0 \right\}.
$

For proving the qualitative Theorem~\ref{thm:qualitative}, we will show the following theorem:

\begin{theorem}\label{thm:main_Weyl}
There exist a nonzero scalar $\FixedEigenvalue \in \C$, a nonzero finitely supported sequence $(a_{m,n} )_{(m,n) \in \Z^2} \in \C^{\Z \times \Z}$
and a nonzero $\FinalWindow\in\mathcal S(\R)$ such that
\begin{equation}\label{eq:fixed-dependence}
    -\FixedEigenvalue\FinalWindow+
    \sum_{(m,n)\in \Z^2}
      \FixedWeylCoefficient_{m,n} \, \e^{\frac{\pi\ii}{2}(n\TranslationAlpha-m\TranslationBeta)}
      \WeylShift\!\left(m+\TranslationAlpha,\frac{n+\TranslationBeta}{2}\right)\FinalWindow=0.
\end{equation}
\end{theorem}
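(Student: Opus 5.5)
We follow the outline of Section~\ref{sec:outline} and conjugate the problem by the vector Zak transform $\VectorZak$. The first step is to compute $\VectorZak \FixedWeylOperator \VectorZak^{-1}$ for the operator in \eqref{eq:fixed-dependence}. By \eqref{eq:Tstar-preview}, this operator factors as $\big(\sum a_{m,n}\WeylShift(m,n/2)\big)\WeylShift(\WeylOffset)$. Under $\VectorZak$, the lattice part $\WeylShift(m,n/2)$ acts as multiplication by an explicit matrix function: since $\Z\times\frac12\Z$ is a lattice whose adjoint lattice is $\frac12\Z\times\Z$, the shift by $1/2$ in frequency mixes the two components of $\VectorZak f$. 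The part $\WeylShift(\WeylOffset)$ acts, up to a phase and a unitary matrix, as the translation $F\mapsto F(\cdot-\TorusShiftVector)$. This gives \eqref{eq:B*} with $\FixedCocycle=\LatticeWeylMatrix_a$ times a fixed smooth unitary factor. Here $\LatticeWeylMatrix_a$ ranges over the finite linear combinations of the basic matrix functions $e^{2\pi i(\cdot)}$ times constant matrices. We then show that these span a dense subset, in the uniform norm, of the smooth $2\times2$ matrix functions having the quasi-periodicity that maps vector Zak functions to vector Zak functions. The argument is a Fourier-series (Stone--Weierstrass) argument on the torus.

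The second step is to construct the smooth unit section $\ReferenceSection$ and the rank-one cocycle $\IdealCocycle(z)=\ReferenceSection(z)\ReferenceSection(z-\TorusShiftVector)^*$. The point of using two components is that $\ReferenceSection$ can be smooth and nowhere vanishing, whereas in the scalar case Lemma~\ref{lem:zak-zero} forces a zero. We build $\ReferenceSection$ from a smooth step $\SmoothStep$ and an angle function $\AngleFunction$ through $(\cos\AngleFunction,\sin\AngleFunction)$ combined with the sewing matrices. The matrix function $\IdealCocycle$ has the correct quasi-periodicity, so by density we can pick a finitely supported $a$ with $\sup_z\|\FixedCocycle(z)-\IdealCocycle(z)\|_{\op}<1/3$.

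The third step, which I expect to be the main obstacle, is to find an invariant line field. We seek $\FixedInvariantVector=\ReferenceSection+\NormalSection\,\ProjectiveCoordinate$, where $\NormalSection$ is a unit normal section and $\ProjectiveCoordinate$ is a scalar function, such that $\FixedCocycle(z)\FixedInvariantVector(z-\TorusShiftVector)$ is parallel to $\FixedInvariantVector(z)$. Written in projective coordinates, this becomes a fixed-point equation $\ProjectiveCoordinate=\ProjectiveMap(\ProjectiveCoordinate)$ involving a fractional linear map of the local matrix $\LocalMatrix$. For $\IdealCocycle$ the image of every vector is exactly $\ReferenceSection$, so the map is identically $0$. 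With the perturbation bounded by $1/3$, the map sends a ball $\{|\ProjectiveCoordinate|\le r\}$ into itself and contracts with a constant $\ProjectiveContractionConstant<1$. This yields a continuous fixed point. Smoothness is the delicate part. We differentiate the fixed-point equation and observe that the top-order derivative enters linearly with the same fiber contraction factor $<1$, while all lower-order terms are already controlled (Faà di Bruno). Induction on the derivative order then gives $C^k$ for every $k$. The threshold $1/3$ is exactly what keeps the fiber contraction below $1$ uniformly. From this we obtain \eqref{eq:intro_scalarmultiplier}, where $\InvariantMultiplier(z)$ is smooth and nonvanishing: it is close to $\ReferenceSection(z)^*\ReferenceSection(z)=1$. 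It is also $\Z^2$-periodic, because the quasi-periodic phases of $\FixedInvariantVector$ cancel.

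The fourth step solves \eqref{eq:difference_equation}. Since $|\InvariantMultiplier-1|$ is small, $\InvariantMultiplier$ has a smooth periodic logarithm $\LogMultiplier$ with zero winding numbers. Set $\FixedEigenvalue=\exp(\int_{\mathbb T^2}\LogMultiplier)$. Writing $\ScalarCorrection=e^{\CohomologySolution}$, the equation becomes $\CohomologySolution(z)-\CohomologySolution(z-\TorusShiftVector)=\LogMultiplier(z)-\int\LogMultiplier$. We solve this by Fourier series, dividing the $k$-th coefficient by $1-e^{-2\pi i k\cdot\TorusShiftVector}$. Because $(1,\TranslationAlpha,\TranslationBeta)$ is a $\Q$-basis of the cubic field $\Q(\CubicIrrational)$, Liouville-type bounds for algebraic numbers give $|k\cdot\TorusShiftVector-\ell|\gtrsim|k|^{-2}$. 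Combined with the rapid decay of the coefficients of $\LogMultiplier$, this small-divisor estimate makes $\CohomologySolution$ smooth. Then $\FinalZakEigenfunction=\ScalarCorrection\FixedInvariantVector$ is a smooth, nowhere-vanishing vector Zak function satisfying \eqref{eq:intro_reformulation}. Smooth vector Zak functions correspond under $\VectorZak^{-1}$ to Schwartz functions, so $\FinalWindow=\VectorZak^{-1}\FinalZakEigenfunction\neq0$ is a Schwartz function satisfying \eqref{eq:fixed-dependence}.
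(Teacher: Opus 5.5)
Your outline is the paper's proof step for step: conjugation by $\mathcal Z_2$ with $B_*=\eta\,A[a]$, density of the lattice Weyl matrices via Fourier coefficients, the rank-one model $B_0(z)=\chi(z)\chi(T_\tau z)^*$, a contraction for the projective coordinate on $\{|\xi|\le 3/4\}$ with constant below $48/49$, the cohomological equation with the cubic Diophantine bound, and Schwartz decay from smoothness of the Zak transform. The one step that is not yet an argument is the smoothness of the fixed point. There, ``differentiate the fixed-point equation'' hides two genuine issues, which the paper spends Section~\ref{sec:regOfZakFunction} and Appendix~\ref{sec:FaaDiBrunoByHand} on.

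\emph{First issue.} The contraction in the sup norm only gives a continuous $\xi_*$, so the equation $\xi_*(z)=\mathfrak F(z,\xi_*(T_\tau z))$ cannot be differentiated as it stands. You have to differentiate the recursion $\xi_{n+1}(z)=\mathfrak F(z,\xi_n(T_\tau z))$, $\xi_0=0$, whose iterates are smooth. Then prove bounds on $\partial^\gamma\xi_n$ that are uniform in $n$, and pass to the limit using Arzel\`a--Ascoli together with $\xi_n\to\xi_*$ uniformly.

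\emph{Second issue.} These bounds cannot be global. By \eqref{eq:xi-boundary}, $\partial_\omega\xi(x+k,\omega)=(-1)^k\e^{2\pi\ii k\omega}\bigl(\partial_\omega\xi(x,\omega)+2\pi\ii k\,\xi(x,\omega)\bigr)$, so $\sup_{\R^2}|\partial_\omega\xi|=\infty$ unless $\xi\equiv0$. A local sup over $\Omega_T=[-T,T]^2$, on the other hand, is not reproduced by the recursion, because the top-order term is evaluated on $T_\tau(\Omega_T)\not\subset\Omega_T$.

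The paper closes the estimate by moving each point of $T_\tau(\Omega_T)$ back into $\Omega_T$ by some $\epsilon_1e_1+\epsilon_2e_2$ with $\epsilon_j\in\{0,1\}$, and differentiating \eqref{eq:xi-boundary}. Because the multipliers there are unimodular, the top-order derivative comes back with a coefficient of modulus exactly one. Everything else is of lower order and is controlled by the induction hypothesis; see \eqref{eq:translated-square-estimate}. This yields $a_{n+1}\le\lambda_0a_n+C$ for $a_n=\|\partial^\gamma\xi_n\|_{L^\infty(\Omega_T)}$, with the same $\lambda_0=48/49$ as in \eqref{eq:vertical-derivative-bound}, and hence uniform bounds by Lemma~\ref{lem:RecursiveSequenceEstimate}. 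A factor $\ge 1/\lambda_0$ on the top-order term in this pull-back would have broken the induction, so your phrase ``lower-order terms are already controlled'' does not by itself suffice. Alternatively, you could invoke a $C^r$ section theorem on the compact torus, where the twisted periodicity is absorbed into a line bundle and the base translation is an isometry, but that framework would have to be set up.

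A minor slip: the adjoint lattice of $\Z\times\frac12\Z$ is $2\Z\times\Z$, not $\frac12\Z\times\Z$. What you actually use is that $\rho(m,n/2)$ with $n$ odd swaps the two components of $\mathcal Z_2 f$ (Lemma~\ref{lem:vector_Zak-matrix_multiplication}).
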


This is the reformulation of the qualitative Theorem~\ref{thm:qualitative}, using Weyl shifts $\WeylShift$ instead of time-frequency shifts~$\TimeFrequencyShift$. We will provide a proof of Theorem~\ref{thm:main_Weyl} in Section~\ref{sec:lifting}.

\section{Zak transforms and Zak functions}\label{sec:zak}
This section recalls the definition of the Zak transform and defines the notion of vector Zak functions, which will play an important role throughout the paper.

\subsection{Zak transforms}
For $f\in\mathcal S(\R)$ and
$\ZakPoint=(\ZakPosition,\ZakFrequency)\in\R^2$, define the scalar
Zak transform by
\begin{equation}\label{eq:scalar-zak}
    \ScalarZak f(\ZakPosition,\ZakFrequency)
    = \sum_{k\in\Z} f(\ZakPosition-k)\e^{2 \pi \ii k \ZakFrequency}.
\end{equation}
The sum converges absolutely because of the rapid decay of $f$, and
$\ScalarZak$ extends to a unitary operator from $L^2(\R)$ onto $L^2 ([0,1]^2)$, see, e.g., \cite[Theorem 8.2.3]{Gro01}. It is
straightforward to check that $\ScalarZak f$ satisfies the \emph{quasi-periodicity relations}
\begin{align}
    \ScalarZak f(\ZakPosition+1,\ZakFrequency)
    & = \e^{2\pi\ii\ZakFrequency}
    \ScalarZak f(\ZakPosition,\ZakFrequency),
    \label{eq:Zak_quasiperiodic}
    \\
    \ScalarZak f(\ZakPosition,\ZakFrequency+1)
    & = \ScalarZak f(\ZakPosition,\ZakFrequency).
    \label{eq:Zak_periodic}
\end{align}
By using the explicit action \eqref{eq:Weyl} of $\WeylShift$ on $f$ and elementary calculations, we obtain the following covariance principle for Weyl shifts (cf.\ \cite[Chap.~8.2]{Gro01}) and the Zak transform.
\begin{align}\label{eq:zak-covariance}
    \Zak\bigl(\rho(x_0,\omega_0)f\bigr)(x,\omega)
    & = \sum_{k\in\Z}
    \bigl(\rho(x_0,\omega_0)f\bigr)(x-k) \e^{2\pi \ii k\omega}
    \\
    & = \sum_{k\in\Z}
    \e^{2\pi \ii \omega_0(x-k-x_0/2)}
    f(x-k-x_0) \e^{2 \pi \ii k\omega}
    \\
    & = \e^{2 \pi \ii \omega_0(x-x_0/2)}
    \sum_{k\in\Z} f(x-x_0-k) \e^{2 \pi \ii k(\omega-\omega_0)}
    \\
    & = \e^{2 \pi \ii\omega_0(x-x_0/2)}
    \Zak f(x - x_0, \omega - \omega_0) \\
    &  = \e^{2 \pi \ii\omega_0(x-x_0/2)}
    \Zak f(T_{(x_0, \omega_0)} (x, \omega)),
\end{align}
where we use the notation
\begin{equation} \label{eq:shift}
    T_{z_0} z = z - z_0
\end{equation}
for points $z, z_0 \in \R^2$.

The following observation explains why the scalar Zak transform cannot be used in our construction. Every continuous function satisfying the quasi-periodicity conditions \eqref{eq:Zak_quasiperiodic} and \eqref{eq:Zak_periodic} must vanish somewhere in a fundamental square, see, e.g., \cite[Chapter~8.4]{Gro01}. This statement is included in the following lemma. 

\begin{lemma}\label{lem:zak-zero}
    Let $F\colon \R^2 \to \C$ be continuous and satisfy the quasi-periodicity relations
    \eqref{eq:Zak_quasiperiodic} and \eqref{eq:Zak_periodic}. Then $F$ has a zero in $[0,1]^2$.
    In addition, suppose that
    \begin{equation}\label{eq:scalar-zak-functional-equation}
        b(\ZakPoint)F(\TranslateZak\ZakPoint)
        =
        cF(\ZakPoint)
        \quad \text{for all} \quad
        \ZakPoint \in \R^2,
    \end{equation}
    for some $b\colon \R^2 \to \C$ and $c\neq 0$. Then $F \equiv 0$.
\end{lemma}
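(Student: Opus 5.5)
The first assertion is the classical winding-number argument. Suppose $F$ has no zero on $[0,1]^2$; by quasi-periodicity it then has no zero on $\R^2$. Since $\R^2$ is simply connected, there is a continuous logarithm: a continuous $\phi\colon\R^2\to\R$ with $F=\abs{F}\e^{2\pi\ii\phi}$. The relations \eqref{eq:Zak_quasiperiodic} and \eqref{eq:Zak_periodic} give
\[
\phi(x+1,\omega)-\phi(x,\omega)-\omega\in\Z,
\qquad
\phi(x,\omega+1)-\phi(x,\omega)\in\Z .
\]
Each left-hand side is continuous, so each is a constant integer, say $k_1$ and $k_2$.

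Now compute the net change of $\phi$ around the boundary of $[0,1]^2$ in two ways. Going right along the bottom edge, up the right edge, left along the top edge and down the left edge, the total must be $0$. Using the two relations, the horizontal edges contribute $(0+k_1)-(1+k_1)=-1$, and the vertical edges contribute $k_2-k_2=0$. This gives $-1=0$, a contradiction. Therefore $F$ vanishes somewhere in $[0,1]^2$.

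For the second assertion, let $Z=F^{-1}(\{0\})\subseteq\R^2$. It is nonempty by the first part and closed by continuity. It is also invariant under $\Z^2$-translations, since $\abs{F}$ is $\Z^2$-periodic. Evaluating \eqref{eq:scalar-zak-functional-equation} at $z+\tau$ gives
\[
c\,F(z+\tau)=b(z+\tau)F(z).
\]
Because $c\neq0$, every zero $z$ of $F$ produces a zero $z+\tau$. By induction, $z_0+k\tau+\Z^2\subseteq Z$ for every $k\in\N_0$ and any $z_0\in Z$.

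It remains to show that $\{k\tau \bmod \Z^2: k\in\N_0\}$ is dense in $\R^2/\Z^2$. This is the one place where the arithmetic of $\tau$ enters. By Kronecker's theorem (forward orbits of a minimal rotation are dense, the rotation being an isometry), it suffices that $1,\alpha,\beta$ are linearly independent over $\Q$. This is equivalent to $1,\vartheta,\vartheta^2$ being $\Q$-independent, which holds because $\vartheta=\sqrt[3]{2}$ has degree $3$ over $\Q$ (its minimal polynomial $X^3-2$ is irreducible by Eisenstein).

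Hence $Z$ is dense and closed, so $Z=\R^2$ and $F\equiv0$. The only genuine obstacle is the bookkeeping of the logarithm in the first part, namely getting the sign and the $\omega$-term right so that the boundary sum is $\pm1$. The density step is a standard citation.
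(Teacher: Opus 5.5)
Your proof is correct and matches the paper's argument. The second part (the zero set is closed and $\Z^2$-invariant, $c\neq 0$ carries a zero $z_0$ to $z_0+k\tau$, then Kronecker density plus continuity) is exactly the paper's proof. For the existence of a zero, the paper only cites \cite[Lemma~8.4.2]{Gro01}, whereas you write out the standard continuous-argument computation, which is the proof behind that citation; you also make explicit the $\Q$-independence of $1,\alpha,\beta$ and the forward-orbit form of Kronecker's theorem.
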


\begin{proof}
    The fact that $F$ has a zero in $[0,1]^2$ can be found in, e.g., \cite[Lemma 8.4.2]{Gro01}.
    
    Moreover, we note that the quasi-periodicity relations
    \eqref{eq:Zak_quasiperiodic} and \eqref{eq:Zak_periodic} imply that the zero set of $F$
    is invariant under translations by $\Z^2$. Indeed, for every $\ZakPoint\in\R^2$ and
    $m\in\Z^2$,
    \begin{equation}
        F(\ZakPoint+m)
        =
        \gamma_m(\ZakPoint)F(\ZakPoint)
    \end{equation}
    for some nonzero $\gamma_m(\ZakPoint)\in\C$.

    For the second assertion, let $\ZakPoint_0\in [0,1]^2$ satisfy $F(\ZakPoint_0)=0$. Evaluating
    \eqref{eq:scalar-zak-functional-equation} at
    $\TranslateZak^{-1}\ZakPoint_0$ gives
    \begin{equation}
        b(\TranslateZak^{-1}\ZakPoint_0)F(\ZakPoint_0)
        =
        cF(\TranslateZak^{-1}\ZakPoint_0).
    \end{equation}
    Since the left-hand side is zero by assumption and $c\neq0$, we obtain
    $
        F(\TranslateZak^{-1}\ZakPoint_0)=0.
    $
    Iterating this argument yields
    $
        F(\TranslateZak^{-k}\ZakPoint_0)=0,
   $ for $k \in \N_0$.
   
    Finally, let $\ZakPoint\in\R^2$ be arbitrary. Since $1, \TranslationAlpha, \TranslationBeta$ are linearly independent over $\Q$ and $\TorusShiftVector := (\TranslationAlpha, \TranslationBeta)$, it follows from Kronecker's theorem that $\{\TranslateZak^{-k}\ZakPoint:k\in\N_0\} \subset \R^2$  is dense modulo $\Z^2$, i.e., its image under the projection $p : \R^2 \to \R^2 / \Z^2$ is dense. Therefore, there exist sequences $(k_j)_{j\in\N}\subset\N_0$ and $(m_j)_{j\in\N}\subset\Z^2$ such that
    $
        \TranslateZak^{-k_j}\ZakPoint_0-m_j
        \to
        \ZakPoint
    $
    as $j \to \infty$. Since the zero set of $F$ is invariant under translations by $\Z^2$, we thus obtain
    \begin{equation}
        F(\TranslateZak^{-k_j}\ZakPoint_0-m_j)=0
        \qquad \text{for all} \qquad j\in\N.
    \end{equation}
    By continuity of $F$, it follows that $F(\ZakPoint)=0$. This shows that $F \equiv 0$.
\end{proof}

\subsection{The vector Zak transform and Zak functions}
For $f\in L^2(\R)$ and $\ZakPoint=(\ZakPosition,\ZakFrequency)\in\R^2$, define the two-component vector Zak transform by
\begin{equation}\label{eq:vector Zak}
    (\VectorZak f)_{r+1}(\ZakPosition,\ZakFrequency)
    = 2^{-1/2} \, \ScalarZak f
    \left(\ZakPosition,\frac{\ZakFrequency+r}{2}\right),
    \quad r\in\{0,1\}.
\end{equation}
The vector Zak transform is thus given by 
\begin{equation}
    \VectorZak f (\ZakPosition,\ZakFrequency)
    = 2^{-1/2}
    \begin{pmatrix}
        \ScalarZak f \left(\ZakPosition,\frac{\ZakFrequency}{2}\right)\\
        \ScalarZak f \left(\ZakPosition,\frac{\ZakFrequency+1}{2}\right)
    \end{pmatrix}.
\end{equation}
The target  space $\mathscr{H}_\Zak$ consists of (equivalence classes of) measurable functions $F : \R^2\to\C^2$ satisfying, for a.e. $(\ZakPosition, \ZakFrequency) \in \R^2$,
\begin{equation}\label{eq:x-sewing}
 F (\ZakPosition+1,\ZakFrequency) = \FirstSewingMatrix(\ZakFrequency) F (\ZakPosition,\ZakFrequency),
 \qquad
 \FirstSewingMatrix(\ZakPosition, \ZakFrequency)
 := \FirstSewingMatrix(\ZakFrequency) 
 := \e^{\pi\ii\ZakFrequency}
 \begin{pmatrix}
    1&0\\
    0&-1
 \end{pmatrix},
\end{equation}
\begin{equation}\label{eq:omega-sewing}
 F (\ZakPosition,\ZakFrequency+1)
 =
 \SecondSewingMatrix F (\ZakPosition,\ZakFrequency),
 \qquad
 \SecondSewingMatrix (\ZakPosition, \ZakFrequency)
 := \SecondSewingMatrix
 := \begin{pmatrix}0&1\\1&0\end{pmatrix},
\end{equation}
and having a finite norm
\begin{equation}
 \norm{F}_{\mathscr H_\Zak}^2
 =\int_{[0,1]^2}
 F (z)^*
 F (z)
 \,\dd z.
\end{equation}
We remark that $\FirstSewingMatrix(\ZakPosition, \ZakFrequency)$ is independent of $\ZakPosition$, whereas $\SecondSewingMatrix(\ZakPosition, \ZakFrequency)$ is independent of $\ZakPosition, \ZakFrequency$.
Nevertheless, this notation will prove convenient for many proofs.

\begin{definition}
    A measurable function $F: \R^2 \to \C^2$ satisfying the relations \eqref{eq:x-sewing} and \eqref{eq:omega-sewing} is said to be a \emph{vector Zak function}. The relations \eqref{eq:x-sewing} and \eqref{eq:omega-sewing} are sometimes referred to as \emph{quasi-periodicity relations} for $\C^2$-valued functions.
    
    In addition, a matrix-valued function $C:\R^2\to \C^{2 \times 2}$ is said to satisfy the (matrix) \emph{quasi-periodicity relations} if, for $(\ZakPosition, \ZakFrequency) \in \R^2$,
\begin{equation}\label{eq:sewing}
    C(\ZakPosition+1,\ZakFrequency)
    =
    \FirstSewingMatrix(\ZakFrequency)
    C(\ZakPosition,\ZakFrequency)
    \FirstSewingMatrix(\ZakFrequency)^{-1},
    \qquad
    C(\ZakPosition,\ZakFrequency+1)
    =
    \SecondSewingMatrix
    C(\ZakPosition,\ZakFrequency)
    \SecondSewingMatrix^{-1}.
\end{equation}
 where the matrices $\FirstSewingMatrix$ and $\SecondSewingMatrix$ are as in \eqref{eq:x-sewing} and \eqref{eq:omega-sewing}, respectively.
\end{definition}

Note that if a measurable map $C : \R^2 \to \C^{2 \times 2}$ satisfies the quasi-periodicity relations \eqref{eq:sewing},
then pointwise multiplication by $C$ maps vector Zak functions to vector Zak functions.

Lastly, we mention the following lemma. Although it is expected to be part of the folklore (see, e.g., \cite[Section 2.2]{ZibZee97}), we provide a short proof for completeness.

\begin{lemma} \label{lem:vector_zak_unitary}
    The vector Zak transform $\VectorZak$ forms an isometry from $L^2 (\R)$ onto $\mathscr{H}_\Zak$. In particular, the space $\mathscr{H}_\Zak$ is a Hilbert space.
\end{lemma}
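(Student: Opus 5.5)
We reduce the statement to the known unitarity of the scalar Zak transform $\ScalarZak\colon L^2(\R)\to L^2([0,1]^2)$ from \cite[Theorem 8.2.3]{Gro01}, combined with an explicit reparametrization of the frequency variable. The plan has four steps: show that $\VectorZak f$ lies in $\mathscr H_\Zak$, prove the norm identity, prove surjectivity by building an explicit preimage, and deduce completeness of $\mathscr H_\Zak$.

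\emph{Membership and sewing relations.} First fix $f\in\mathcal S(\R)$, so that $\ScalarZak f$ is given pointwise by \eqref{eq:scalar-zak}. Write $G:=\ScalarZak f$.
\begin{itemize}
\item For the $x$-relation \eqref{eq:x-sewing}, use \eqref{eq:Zak_quasiperiodic}: $G(x+1,(\omega+r)/2)=\e^{\pi\ii\omega}\e^{\pi\ii r}G(x,(\omega+r)/2)$. This gives the factor $\e^{\pi\ii\omega}$ times $1$ for $r=0$ and $-1$ for $r=1$, which is exactly $\FirstSewingMatrix(\omega)$.
\item For the $\omega$-relation \eqref{eq:omega-sewing}, replacing $\omega$ by $\omega+1$ sends the first component $G(x,\omega/2)$ to $G(x,(\omega+1)/2)$, the old second component. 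It sends the second component to $G(x,(\omega+2)/2)=G(x,\omega/2)$ by \eqref{eq:Zak_periodic}. So the components are swapped, which is multiplication by $\SecondSewingMatrix$.
\end{itemize}
Both relations pass to general $f\in L^2(\R)$ by density of $\mathcal S(\R)$ and $L^2_{\mathrm{loc}}$-convergence, taking an a.e.\ subsequence.

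\emph{Isometry.} For $f\in L^2(\R)$, substitute $\eta=(\omega+r)/2$ in each component:
\[
\norm{\VectorZak f}^2_{\mathscr H_\Zak}=\tfrac12\sum_{r=0}^1\int_0^1\!\!\int_0^1\abs{G(x,\tfrac{\omega+r}{2})}^2\dd\omega\dd x=\int_0^1\!\!\int_0^1\abs{G(x,\eta)}^2\dd\eta\dd x .
\]
Here the $r=0$ term covers $\eta\in[0,\tfrac12]$, the $r=1$ term covers $\eta\in[\tfrac12,1]$, and the Jacobian $2$ cancels the prefactor $\tfrac12$. The right-hand side equals $\norm{f}_{L^2}^2$ by unitarity of $\ScalarZak$.

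\emph{Surjectivity.} This is the main point to check, since one must be careful that the sewing relations determine $F$ from its values on the unit square. Given $F\in\mathscr H_\Zak$, define $G\colon\R\times\R\to\C$ on $\R\times[0,1)$ by
\[
G(x,\eta)=\sqrt2\,F_1(x,2\eta)\ \text{ for }\eta\in[0,\tfrac12),\qquad G(x,\eta)=\sqrt2\,F_2(x,2\eta-1)\ \text{ for }\eta\in[\tfrac12,1),
\]
and extend $G$ $1$-periodically in $\eta$.
\begin{itemize}
\item \emph{Consistency with $\omega$-periodicity.} By \eqref{eq:omega-sewing}, $F_1(x,\omega+1)=F_2(x,\omega)$. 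Hence the formula $G(x,\eta)=\sqrt2F_1(x,2\eta)$ in fact holds for all $\eta\in[0,1)$, and then for all $\eta$ by iterating \eqref{eq:omega-sewing} twice, which gives $F(x,\omega+2)=F(x,\omega)$.
\item \emph{Quasi-periodicity in $x$.} From \eqref{eq:x-sewing}, $F_1(x+1,2\eta)=\e^{2\pi\ii\eta}F_1(x,2\eta)$. Thus $G$ satisfies \eqref{eq:Zak_quasiperiodic}.
\item \emph{Square-integrability.} The same change of variables as in the isometry step gives $G\in L^2([0,1]^2)$.
\end{itemize}
By surjectivity of $\ScalarZak$ onto quasi-periodic $L^2$-functions (the cited theorem, understood through the identification of $L^2([0,1]^2)$ with quasi-periodic functions), there is $f\in L^2(\R)$ with $\ScalarZak f=G$. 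By construction, $\VectorZak f=F$ a.e.

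\emph{Hilbert space structure.} $\mathscr H_\Zak$ is the isometric image of the complete space $L^2(\R)$, so it is complete, and its norm comes from the evident inner product. Hence $\mathscr H_\Zak$ is a Hilbert space.
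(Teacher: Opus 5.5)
Your proposal is correct and takes essentially the same route as the paper: the same half-interval change of variables gives the norm identity, and the same folding $\sqrt2\,F_1(x,2\eta)$ / $\sqrt2\,F_2(x,2\eta-1)$, together with unitarity of $\ScalarZak$, gives surjectivity. The only difference is cosmetic. You build the full quasi-periodic preimage on $\R^2$ and verify its quasi-periodicity. The paper instead matches $\VectorZak f$ and $F$ only on $[0,1]^2$ and then extends the equality using the sewing relations that both functions satisfy.
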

\begin{proof}
The quasi-periodicity relations of the Zak transform $\ScalarZak f$ show that $\VectorZak f$ satisfies the quasi-periodicity relations \eqref{eq:x-sewing} and \eqref{eq:omega-sewing}. Moreover, splitting the frequency integral into two halves gives
\begin{align*}
    \norm{\VectorZak f}_{\mathscr H_\Zak}^2
    &=\frac12\int_0^1\int_0^1
    \left(
    \left|\ScalarZak f\left(\ZakPosition,\frac{\ZakFrequency}{2}\right)\right|^2
    +
    \left|\ScalarZak f\left(\ZakPosition,\frac{\ZakFrequency+1}{2}\right)\right|^2
    \right)
    \,\dd\ZakPosition\,\dd\ZakFrequency\\
    &=\int_0^1\int_0^1
    |\ScalarZak f(\ZakPosition,\nu)|^2
    \,\dd\ZakPosition\,\dd\nu
    =\norm{f}_{L^2(\R)}^2.
\end{align*}
Thus $\VectorZak$ is an isometry. To prove surjectivity, let
$F = (f_1,f_2)^\mathsf{T}\in\mathscr H_\Zak$ and define
$\widetilde{F} \in L^2([0,1]^2)$ by
\begin{equation}\label{eq:inverse-vector Zak-folding}
 \widetilde{F}(\ZakPosition,\nu)
 :=
 \begin{cases}
  \sqrt2\,f_1(\ZakPosition,2\nu),
      &0\leq\nu<\frac12,\\
  \sqrt2\,f_2(\ZakPosition,2\nu-1),
      &\frac12\leq\nu\leq1.
 \end{cases}
\end{equation}
Since the Zak transform $\ScalarZak : L^2 (\R) \to L^2 ([0,1]^2)$ is unitary, there exists $f\in L^2(\R)$
such that $\ScalarZak f=\widetilde{F}$ on $[0,1]^2$. The definition of $\widetilde{F}$ then gives
$\VectorZak f=F$ almost everywhere on $[0,1]^2$. Both sides satisfy the
quasi-periodicity relations, so this equality extends almost everywhere to $\R^2$.
Consequently, $\VectorZak:L^2(\R)\to\mathscr H_\Zak$ is unitary.
\end{proof}

\subsection{An explicit vector Zak function}
Define the smooth step function $s_0 : \R \to \R$ by
\begin{equation}\label{eq:smooth-step}
    \SmoothStep(\PhysicalVariable)
    :=
    \begin{cases}
        0,&\PhysicalVariable\le0,\\
        \displaystyle
        \frac{\exp(-1/\PhysicalVariable)}{\exp(-1/\PhysicalVariable)+\exp(-1/(1-\PhysicalVariable))},
        &0<\PhysicalVariable<1,\\
        1,&\PhysicalVariable\ge1,
    \end{cases}
    \qquad
    \AngleFunction(\PhysicalVariable) := \frac\pi2\SmoothStep(\PhysicalVariable),
\end{equation}
and let $\ReferenceWindow:\R\to\R$ be defined by
\begin{equation}\label{eq:g0}
    \ReferenceWindow(\PhysicalVariable)
    :=
    \begin{cases}
        \sin (\AngleFunction(\PhysicalVariable)),
        &0\le\PhysicalVariable\le1,\\
        \cos (\AngleFunction(\PhysicalVariable-1)),
        &1\le\PhysicalVariable\le2,\\
        0,&\text{otherwise}.
    \end{cases}
\end{equation}
We first note that $\ReferenceWindow \in C_c^{\infty} (\R)$.

\begin{lemma}\label{lem:g-smooth}
The function $\ReferenceWindow$ belongs to $C_c^\infty(\R)$.
\end{lemma}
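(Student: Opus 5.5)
The plan is to check smoothness of $\ReferenceWindow$ separately on the interiors of the three pieces and then at the junction points $\PhysicalVariable\in\{0,1,2\}$. Compact support is immediate, since $\ReferenceWindow$ vanishes outside $[0,2]$.

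The first step is to establish that $\SmoothStep\in C^\infty(\R)$. The function $\psi(\PhysicalVariable)=\exp(-1/\PhysicalVariable)$ for $\PhysicalVariable>0$, and $\psi=0$ otherwise, is the standard $C^\infty$ function all of whose derivatives vanish at $0$. We can write $\SmoothStep(\PhysicalVariable)=\psi(\PhysicalVariable)/(\psi(\PhysicalVariable)+\psi(1-\PhysicalVariable))$, and the denominator is strictly positive on all of $\R$, since at least one of $\PhysicalVariable>0$, $1-\PhysicalVariable>0$ holds. This formula agrees with the piecewise definition everywhere: on $\PhysicalVariable\le0$ the numerator vanishes, and on $\PhysicalVariable\ge1$ we get $\psi(1-\PhysicalVariable)=0$, so the quotient equals $1$. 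Hence $\SmoothStep$, and therefore $\AngleFunction$, are $C^\infty$ on $\R$. Moreover, $\SmoothStep$ is constant on $(-\infty,0]$ and on $[1,\infty)$, so all derivatives of order at least $1$ vanish at $0$ and at $1$.

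Next, define the globally smooth functions $g_1(\PhysicalVariable)=\sin(\AngleFunction(\PhysicalVariable))$ and $g_2(\PhysicalVariable)=\cos(\AngleFunction(\PhysicalVariable-1))$. Then $\ReferenceWindow$ equals $0$ on $(-\infty,0]$, equals $g_1$ on $[0,1]$, equals $g_2$ on $[1,2]$, and equals $0$ on $[2,\infty)$. The piecewise definition is consistent at the junctions: $g_1(1)=\sin(\pi/2)=1=\cos(0)=g_2(1)$, $g_1(0)=0$ and $g_2(2)=\cos(\pi/2)=0$. The key observation is that near each junction the neighbouring pieces coincide with a single smooth function:
\begin{itemize}
\item On $(-\infty,1]$ we have $\ReferenceWindow=g_1$, since $\AngleFunction=0$ on $(-\infty,0]$ and hence $g_1=0$ there.
\item On $[1,\infty)$ we have $\ReferenceWindow=g_2$, since $\AngleFunction(\PhysicalVariable-1)=\pi/2$ for $\PhysicalVariable\ge2$ and hence $g_2=0$ there.
\end{itemize}
This handles $0$ and $2$ directly. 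At $\PhysicalVariable=1$, it suffices to check that all derivatives of $g_1$ and $g_2$ agree at $1$. Near $\PhysicalVariable=1^-$, $\AngleFunction$ is flat, i.e.\ $\AngleFunction-\pi/2$ vanishes to infinite order at $1$. Therefore $g_1-1$ vanishes to infinite order at $1$, by Taylor expansion of $\sin$ at $\pi/2$ composed with a flat function (Faà di Bruno). Similarly, $\AngleFunction(\PhysicalVariable-1)$ is flat at $\PhysicalVariable=1$, so $g_2-1$ vanishes to infinite order there. Consequently $g_1^{(k)}(1)=g_2^{(k)}(1)$ for all $k$: both equal $1$ for $k=0$ and $0$ for $k\ge1$.

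The only mildly delicate point is this matching at $\PhysicalVariable=1$. To conclude, consider the function equal to $g_1$ on $(-\infty,1]$ and $g_2$ on $[1,\infty)$. It is $C^\infty$ away from $1$. At $1$, its one-sided derivatives of every order exist and agree; inductively, via the mean value theorem, each derivative is continuous at $1$, so the function is $C^\infty$. Hence $\ReferenceWindow\in C_c^\infty(\R)$.
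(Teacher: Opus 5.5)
Your proof is correct and follows essentially the same route as the paper: show that $\SmoothStep$ is smooth and flat at $0$ and $1$, then check the junctions $t=0,1,2$ using the flatness of the nonconstant parts of the two branches. The differences are cosmetic. You give more detail than the paper, namely the global formula $\psi(t)/(\psi(t)+\psi(1-t))$ with positive denominator and the explicit gluing argument at $t=1$. You also obtain flatness at $1$ from the constancy of $\SmoothStep$ on $[1,\infty)$, whereas the paper uses the symmetry $\SmoothStep(1-t)=1-\SmoothStep(t)$.
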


\begin{proof}
    The function $\PhysicalVariable\mapsto\exp(-1/\PhysicalVariable)$, extended by zero for $\PhysicalVariable\le0$, is smooth and flat at the origin: every derivative there is zero. It follows from \eqref{eq:smooth-step}, and by the symmetry $\SmoothStep(1-\PhysicalVariable)=1-\SmoothStep(\PhysicalVariable)$, that $\SmoothStep$ is smooth, is flat at $0$, and that $1-\SmoothStep$ is flat at $1$. Consequently, the first branch in \eqref{eq:g0} and all of its derivatives vanish at $\PhysicalVariable=0$, while the second branch and all of its derivatives vanish at $\PhysicalVariable=2$. At $\PhysicalVariable=1$, both branches equal $1$; their nonconstant parts are flat there, so their derivatives of every positive order agree and vanish. Hence $\ReferenceWindow\in C^\infty(\R)$ and $\supp \ReferenceWindow\subset[0,2]$.
\end{proof}

For proving the main Theorem~\ref{thm:main}, we will also use the following estimate for the derivative of $\SmoothStep$. This lemma is not needed for the qualitative Theorem~\ref{thm:qualitative}.

\begin{lemma}\label{lem:s0-derivative-estimate}
    The function $\SmoothStep$ defined in \eqref{eq:smooth-step} satisfies $0 \leq \SmoothStep' (x) \leq 2$ for all $x \in \R$.
\end{lemma}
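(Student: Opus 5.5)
On $(0,1)$ we write $\SmoothStep = \frac{1}{1+\e^{\psi}}$, where
\[
\psi(t) := \frac1t - \frac1{1-t}.
\]
This follows by dividing numerator and denominator in \eqref{eq:smooth-step} by $\exp(-1/t)$. Outside $(0,1)$ the function $\SmoothStep$ is constant, so $\SmoothStep' = 0$ there; at the endpoints $0$ and $1$ the derivative also vanishes, by the flatness established in Lemma~\ref{lem:g-smooth}. It therefore suffices to treat $t\in(0,1)$.

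On $(0,1)$ we compute
\[
\SmoothStep'(t) = -\frac{\e^{\psi}\psi'}{(1+\e^{\psi})^2}
= \frac{\e^{\psi}}{(1+\e^{\psi})^2}\left(\frac1{t^2}+\frac1{(1-t)^2}\right),
\]
using $\psi'(t) = -\frac{1}{t^2} - \frac{1}{(1-t)^2} < 0$. Both factors are positive, which gives $\SmoothStep' \ge 0$.

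For the upper bound we use $\frac{\e^{\psi}}{(1+\e^{\psi})^2} = \frac14 \operatorname{sech}^2(\psi/2)$. The problem is then to show
\[
\frac14\operatorname{sech}^2\!\Big(\frac{\psi}{2}\Big)\left(\frac1{t^2}+\frac1{(1-t)^2}\right)\le 2 .
\]
Both terms are symmetric under $t\mapsto 1-t$, since $\psi$ merely changes sign, so we may restrict to $t\in(0,\tfrac12]$. Substituting $u = \frac12 - t \in [0,\tfrac12)$ gives
\[
t(1-t) = \tfrac14 - u^2, \qquad \psi = \frac{1-2t}{t(1-t)} = \frac{2u}{\tfrac14-u^2},
\]
and
\[
\frac1{t^2}+\frac1{(1-t)^2} = \frac{1-2t(1-t)}{t^2(1-t)^2}.
\]
At $t=\tfrac12$ the expression equals $\tfrac14\cdot 8 = 2$, so the bound is sharp there. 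This means we must show that the function has its maximum at the midpoint, which is the main obstacle.

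The plan for that obstacle is to split into two regimes.

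\textbf{Near the middle}, say $u\le u_0$ for some $u_0$ around $0.2$. Use $\operatorname{sech}^2 x \le 1/(1+x^2)$, which holds since $\cosh^2 x \ge 1+x^2$. The desired inequality then reduces to a polynomial inequality in $u$, which should follow by expanding: the quadratic gain in $\psi^2 \approx 64u^2$ dominates the growth $\frac1{t^2}+\frac1{(1-t)^2} \approx 8 + 96u^2$, because $\tfrac14\cdot 8\cdot 64u^2 = 128u^2 > \tfrac14\cdot 96u^2 = 24u^2$.

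\textbf{Near the endpoint}, $t$ small. Use $\operatorname{sech}^2(\psi/2)\le 4\e^{-\psi}$, so that the expression is at most $\e^{-\psi}\left(\frac1{t^2}+\frac1{(1-t)^2}\right) \lesssim \frac{2\e^{2}}{t^2}\e^{-1/t}$, because $\frac1{1-t}\le 2$. The map $t \mapsto t^{-2}\e^{-1/t}$ is increasing on $(0,\tfrac12)$, so this bound is small for small $t$ and can be checked explicitly up to the crossover point $u_0$. If the crude constants do not close the gap, the fallback is to verify the middle interval by a finite interval-arithmetic check on the explicit one-variable function. That is in the spirit of the certified computations used for Theorem~\ref{thm:main}, and for a lemma used only there it is acceptable.
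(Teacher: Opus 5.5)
Your key inequality, $\operatorname{sech}^2 y\le 1/(1+y^2)$ from $\cosh^2 y\ge 1+y^2$, is exactly the one the paper uses. The problem is that you never carry out the computation it calls for, and the two-regime argument you build in its place does not close as written.

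\textbf{Middle regime.} You only assert that the resulting inequality ``should follow by expanding'', based on a second-order heuristic. That heuristic also contains a factor-$4$ slip. The argument of $\operatorname{sech}$ is $\psi/2$, so the gain is $\psi^2/4\approx 16u^2$. The correct comparison is therefore $32u^2$ against $24u^2$, not $128u^2$ against $24u^2$, so the margin is much thinner than you suggest.

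\textbf{Endpoint regime.} Your bound $2\e^{2}t^{-2}\e^{-1/t}$ stays below $2$ only for $t\lesssim 0.19$. At your proposed crossover $t=0.3$ (i.e.\ $u=0.2$) it is about $5.9$. So with the constants you state, the range $0.19\lesssim t\lesssim 0.3$ is covered by neither regime. The proof then rests on an unspecified numerical check.

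The missing step is to do the algebra exactly, which shows that your ``middle'' estimate holds on the whole interval. With $u=\tfrac12-t$ you have $\psi/2=u/(\tfrac14-u^2)$, hence
\[
  1+\frac{\psi^2}{4}=\frac{(\tfrac14+u^2)^2}{(\tfrac14-u^2)^2},
  \qquad
  \frac1{t^2}+\frac1{(1-t)^2}=\frac{2(\tfrac14+u^2)}{(\tfrac14-u^2)^2},
\]
and therefore
\[
  \SmoothStep'(t)
  \le \frac14\cdot\frac{1}{1+\psi^2/4}\left(\frac1{t^2}+\frac1{(1-t)^2}\right)
  = \frac{2}{1+4u^2}\le 2
  \qquad\text{for all } t\in(0,1).
\]
No symmetry reduction, endpoint regime, or numerics are needed.

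This is precisely the paper's proof, written in the variable $\tau=2t-1=-2u$. There $\SmoothStep'=r'(\tau)\operatorname{sech}^2(r(\tau))$ with $r(\tau)=2\tau/(1-\tau^2)=-\psi/2$. The identity $1+r^2=(1+\tau^2)^2/(1-\tau^2)^2$ then gives $r'(\tau)/(1+r(\tau)^2)=2/(1+\tau^2)\le 2$.
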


\begin{proof}
    On $(-\infty, 0)$ and $(1,\infty)$, we clearly have $\SmoothStep' (\ZakPosition) = 0$. By continuity of $\SmoothStep'$, it is thus enough to consider the case $\ZakPosition \in (0,1)$. To this end, put
    \begin{equation}
    t := 2\ZakPosition-1 \in (-1,1),
    \qquad
    r(t) := \frac{2t}{1-t^2} \in \R.
    \label{eq:s0-derivative-auxiliary-quantities}
    \end{equation}
    
    Writing $\SmoothStep$ in logistic form and differentiating, as detailed in Section~\ref{sub:smooth-step-derivative}, gives
    \begin{equation}
    \SmoothStep'(\ZakPosition)
    =
    2\frac{1+t^2}{(1-t^2)^2}\operatorname{sech}^2(r(t)) \geq 0.
    \label{eq:s0-derivative-sech-formula}
    \end{equation}
    Recall the well-known identity $\operatorname{sech}(r) = \frac{1}{\cosh(r)}$,
    where $\cosh(r) = \frac{1}{2}(e^r + e^{-r})$ satisfies
    \[
      \cosh(r)
      = \frac{1}{2}
        \sum_{n=0}^\infty \frac{r^n + (-r)^n}{n!}
      = \sum_{n \in \N_0, \, n \text{ even}}
          \frac{r^n}{n!}
      \geq \frac{r^0}{0!} + \frac{r^2}{2!}
      =  1 + \frac{r^2}{2}
    \]
    and hence $\cosh^2(r) \geq (1 + \frac{r^2}{2})^2 = 1 + r^2 + \frac{r^4}{4} \geq 1 + r^2$.
    This implies
    \begin{equation}
    \frac{1}{\operatorname{sech}^2(r(t))}
    =\cosh^2(r(t))
    \geq
    1+r(t)^2
    =
    \frac{(1+t^2)^2}{(1-t^2)^2}
    \geq
    \frac{1+t^2}{(1-t^2)^2}.
    \end{equation}
    Hence, $\operatorname{sech}^2(r(t)) \leq \frac{(1-t^2)^2}{1+t^2}$.
    Plugging this inequality into \eqref{eq:s0-derivative-sech-formula},
    we obtain $\SmoothStep'(\ZakPosition)\leq2$, as claimed.
\end{proof}

For $\PhysicalVariable\in \R$, put
\begin{equation}
    \ReferenceSine(\PhysicalVariable)
    := \sin (\AngleFunction(\PhysicalVariable)),
    \qquad
    \ReferenceCosine(\PhysicalVariable)
    := \cos (\AngleFunction(\PhysicalVariable)).
    \label{eq:ab-definition}
\end{equation}
Then $\ReferenceSine(\PhysicalVariable)^2+\ReferenceCosine(\PhysicalVariable)^2=1$.
Since $\ReferenceWindow$ is supported in $[0,2]$, for $\ZakPosition \in [0,1]$
only the terms $k=-1,0$ contribute in the Zak transform of $\ReferenceWindow$, i.e.,
\begin{equation}
    (\ScalarZak\ReferenceWindow)(\ZakPosition,\ZakFrequency)
    =
    \ReferenceSine(\ZakPosition)
    +\ReferenceCosine(\ZakPosition)\e^{-2\pi\ii\ZakFrequency}
    \qquad \text{for } \ZakPosition \in [0,1], \ZakFrequency \in \R.
\end{equation}
Consequently, the vector Zak function
\begin{align} \label{eq:explicit_vector_zak}
  \ReferenceSection := \VectorZak\ReferenceWindow
\end{align}
satisfies
\begin{equation}\label{eq:chi}
    \ReferenceSection(\ZakPosition,\ZakFrequency)
    = \VectorZak \ReferenceWindow(\ZakPosition, \ZakFrequency) = 2^{-1/2}
    \begin{pmatrix}
        \ReferenceSine(\ZakPosition)
        +\ReferenceCosine(\ZakPosition)\e^{-\pi\ii\ZakFrequency}\\[1mm]
        \ReferenceSine(\ZakPosition)
        -\ReferenceCosine(\ZakPosition)\e^{-\pi\ii\ZakFrequency}
    \end{pmatrix},
  \quad \text{for} \quad
  \ZakPosition \in [0,1] \text{ and } \ZakFrequency \in \R
  .
\end{equation}
Here, we used $\e^{-\pi\ii(\ZakFrequency + 1)} = \e^{-\pi\ii\ZakFrequency}\e^{-\pi\ii} = - \e^{-\pi\ii\ZakFrequency}$.

\begin{lemma}\label{lem:unit}
    The function $\ReferenceSection$ defined in \eqref{eq:explicit_vector_zak}
    belongs to $C^{\infty}(\R^2; \C^2)$ and satisfies the quasi-periodicity relations \eqref{eq:x-sewing} and \eqref{eq:omega-sewing}.
    Moreover, it satisfies
    \begin{equation}\label{eq:chi-unit}
        \ReferenceSection(\PhasePoint)^* \ReferenceSection(\PhasePoint) = 1
        \qquad\text{for all }
        \PhasePoint \in \R^2.
    \end{equation}
\end{lemma}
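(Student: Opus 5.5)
The plan is to handle the three assertions of Lemma~\ref{lem:unit} in turn: smoothness, the quasi-periodicity relations, and the unit-norm identity.

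\emph{Smoothness.} By Lemma~\ref{lem:g-smooth}, $\ReferenceWindow\in C_c^\infty(\R)$ with $\supp\ReferenceWindow\subset[0,2]$. Hence, on any bounded set of $\ZakPosition$, the series $\ScalarZak\ReferenceWindow(\ZakPosition,\ZakFrequency)=\sum_k \ReferenceWindow(\ZakPosition-k)\e^{2\pi\ii k\ZakFrequency}$ is locally a finite sum of smooth functions. Therefore $\ScalarZak\ReferenceWindow\in C^\infty(\R^2)$. Composing with the affine maps $(\ZakPosition,\ZakFrequency)\mapsto(\ZakPosition,(\ZakFrequency+r)/2)$ shows that both components of $\ReferenceSection=\VectorZak\ReferenceWindow$ are smooth.

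\emph{Quasi-periodicity.} The relations \eqref{eq:x-sewing} and \eqref{eq:omega-sewing} for $\VectorZak f$ were already observed in the proof of Lemma~\ref{lem:vector_zak_unitary}. For completeness, I would verify them directly from \eqref{eq:Zak_quasiperiodic} and \eqref{eq:Zak_periodic}, which hold pointwise here because $\ReferenceWindow$ is Schwartz.
\begin{itemize}
\item For the first relation, shifting $\ZakPosition\mapsto\ZakPosition+1$ multiplies the component with frequency $(\ZakFrequency+r)/2$ by $\e^{\pi\ii(\ZakFrequency+r)}=\e^{\pi\ii\ZakFrequency}(-1)^r$. This is exactly $\FirstSewingMatrix(\ZakFrequency)$.
\item For the second relation, shifting $\ZakFrequency\mapsto\ZakFrequency+1$ sends $\ZakFrequency/2\mapsto(\ZakFrequency+1)/2$ and $(\ZakFrequency+1)/2\mapsto \ZakFrequency/2+1$. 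By $1$-periodicity in the frequency variable, the two components are swapped, which is exactly $\SecondSewingMatrix$.
\end{itemize}

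\emph{Unit norm.} First I would take $\ZakPosition\in[0,1]$ and use the explicit formula \eqref{eq:chi}. Writing $u=\ReferenceCosine(\ZakPosition)\e^{-\pi\ii\ZakFrequency}$ and using that $\ReferenceSine$ is real, we get
\[
\ReferenceSection^*\ReferenceSection=\tfrac12\bigl(|\ReferenceSine+u|^2+|\ReferenceSine-u|^2\bigr)=\ReferenceSine^2+|u|^2=\ReferenceSine(\ZakPosition)^2+\ReferenceCosine(\ZakPosition)^2=1
\]
by the parallelogram law. Then I would extend to all of $\R^2$. Any $\ZakPoint\in\R^2$ can be written as $\ZakPoint'+(m,0)$ with $\ZakPoint'\in[0,1]\times\R$ and $m\in\Z$. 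Iterating \eqref{eq:x-sewing} gives $\ReferenceSection(\ZakPoint)=V\,\ReferenceSection(\ZakPoint')$, where $V$ is a product of matrices $\FirstSewingMatrix(\ZakFrequency)^{\pm1}$. Each $\FirstSewingMatrix(\ZakFrequency)$ is unitary, being a unimodular scalar times a diagonal sign matrix, so $V$ is unitary and the norm is preserved. No periodicity in $\ZakFrequency$ is needed, since \eqref{eq:chi} already holds for all $\ZakFrequency\in\R$.

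I expect no step to be a real obstacle. The only points needing care are the use of the sign $\e^{-\pi\ii}=-1$ in deriving \eqref{eq:chi}, which the paper has already handled, and making sure the unitary extension argument covers negative $m$ through $\FirstSewingMatrix^{-1}$.
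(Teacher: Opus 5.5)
Your proposal is correct and follows essentially the same argument as the paper: smoothness from the locally finite Zak sum, the sewing relations from the scalar Zak quasi-periodicity, the parallelogram computation via \eqref{eq:chi}, and extension by unitarity of the sewing matrices. The only difference is cosmetic. You work on the strip $[0,1]\times\R$, where \eqref{eq:chi} already holds, so you need only $\FirstSewingMatrix$. The paper starts from $[0,1)^2$ and uses both sewing matrices.
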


\begin{proof}
Smoothness follows from
$\ReferenceWindow\in C_c^\infty(\R)$ and the locally finite Zak sum.
Validity of  \eqref{eq:x-sewing} and \eqref{eq:omega-sewing} follows
since $\ReferenceSection = \VectorZak \ReferenceWindow \in \mathscr{H}_\Zak$.

To prove \eqref{eq:chi-unit}, first consider the case $\ZakPoint = (\ZakPosition, \ZakFrequency) \in [0,1)^2$.
In this case, \eqref{eq:chi} shows
\begin{align}
    \ReferenceSection^*(\ZakPoint) \ReferenceSection(\ZakPoint)
    &= \norm{\ReferenceSection(\ZakPoint)}^2
    = 2^{-1}\left(
    \abs{\ReferenceSine(\ZakPosition)
        +\ReferenceCosine(\ZakPosition) \e^{-\pi\ii\ZakFrequency}}^2
    +\abs{\ReferenceSine(\ZakPosition)
         -\ReferenceCosine(\ZakPosition) \e^{-\pi\ii\ZakFrequency}}^2
    \right) \\
    &=\ReferenceSine(\ZakPosition)^2+\ReferenceCosine(\ZakPosition)^2=1.
\end{align}
The sewing matrices $\FirstSewingMatrix(\ZakFrequency)$ and $\SecondSewingMatrix$ are unitary for real $\ZakFrequency$, so the identity extends to every fundamental cell.
\end{proof}

Lastly, we prove the following lemma on the partial derivative of $\ReferenceSection$ with respect to its first variable. This result is not needed for the proof of the qualitative Theorem~\ref{thm:qualitative}, but will be used in our proof of the main Theorem~\ref{thm:main}.

\begin{lemma}\label{lem:ChiDerivativeEstimate}
    For the function $\ReferenceSection : \R^2 \to \C^2$ introduced in
    \eqref{eq:explicit_vector_zak}, we have
    \[
      \| \partial_\ZakPosition \ReferenceSection (\ZakPosition, \ZakFrequency) \|
      \leq \pi
      \qquad \text{for all } (\ZakPosition, \ZakFrequency) \in \R^2
      .
    \]
\end{lemma}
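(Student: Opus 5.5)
First reduce to the fundamental strip. By \eqref{eq:x-sewing} we have $\ReferenceSection(\ZakPosition+1,\ZakFrequency)=\FirstSewingMatrix(\ZakFrequency)\ReferenceSection(\ZakPosition,\ZakFrequency)$. Here $\FirstSewingMatrix(\ZakFrequency)$ does not depend on $\ZakPosition$ and is unitary for real $\ZakFrequency$. Differentiating in $\ZakPosition$ therefore gives
\[
\partial_\ZakPosition\ReferenceSection(\ZakPosition+1,\ZakFrequency)=\FirstSewingMatrix(\ZakFrequency)\,\partial_\ZakPosition\ReferenceSection(\ZakPosition,\ZakFrequency),
\]
so the norm of $\partial_\ZakPosition\ReferenceSection$ is $1$-periodic in $\ZakPosition$. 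Since $\ReferenceSection\in C^\infty$ by Lemma~\ref{lem:unit}, it suffices to bound $\|\partial_\ZakPosition\ReferenceSection(\ZakPosition,\ZakFrequency)\|$ for $\ZakPosition\in[0,1]$ and all $\ZakFrequency\in\R$. The endpoint $\ZakPosition=1$ is covered by continuity, or directly by the explicit formula \eqref{eq:chi}, which is valid on the closed interval.

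On this range, \eqref{eq:chi} lets us differentiate componentwise. Writing $\epsilon=\e^{-\pi\ii\ZakFrequency}$, which satisfies $|\epsilon|=1$, we get
\[
\partial_\ZakPosition\ReferenceSection=2^{-1/2}\bigl(\ReferenceSine'+\ReferenceCosine'\epsilon,\ \ReferenceSine'-\ReferenceCosine'\epsilon\bigr)^{\mathsf T}.
\]
Using the parallelogram identity exactly as in the proof of Lemma~\ref{lem:unit}, this yields
\[
\|\partial_\ZakPosition\ReferenceSection\|^2=\ReferenceSine'(\ZakPosition)^2+\ReferenceCosine'(\ZakPosition)^2 .
\]
Next, by \eqref{eq:ab-definition} we have $\ReferenceSine'=\cos(\AngleFunction)\AngleFunction'$ and $\ReferenceCosine'=-\sin(\AngleFunction)\AngleFunction'$. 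Hence the sum of squares equals $\AngleFunction'(\ZakPosition)^2=\frac{\pi^2}{4}\SmoothStep'(\ZakPosition)^2$.

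Finally, Lemma~\ref{lem:s0-derivative-estimate} gives $0\le\SmoothStep'\le2$, so $\|\partial_\ZakPosition\ReferenceSection\|\le\frac{\pi}{2}\cdot2=\pi$. The only point requiring care is the reduction to $\ZakPosition\in[0,1]$. For this we use that the sewing matrix $\FirstSewingMatrix$ is unitary and independent of $\ZakPosition$, so differentiation commutes with the sewing relation. The derivative bound for $\SmoothStep$ itself is already provided by Lemma~\ref{lem:s0-derivative-estimate}.
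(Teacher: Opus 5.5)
Your proof is correct and follows essentially the same route as the paper: differentiate the explicit two-term formula for $\chi$ on a unit interval, use the parallelogram identity to get $\|\partial_x \chi\| = |\theta'| = \frac{\pi}{2}|s_0'|$, and conclude with $0 \le s_0' \le 2$. The only difference is how general $x$ is handled. The paper recomputes the Zak sum on each interval $(\ell,\ell+1)$, picking up the unimodular factor $e^{2\pi i \ell \omega}$, and finishes by density and continuity. You instead transfer the bound from $[0,1]$ via the differentiated sewing relation with the unitary, $x$-independent matrix $U_1(\omega)$; both are equally valid.
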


\begin{proof}
  We first consider the case $x \in (\ell, \ell+1)$ with $\ell \in \Z$. In this case, if $k \in \Z$ with $\ReferenceWindow(\ZakPosition - k) \neq 0$, then necessarily $0 < \ZakPosition - k < 2$ and hence $k < \ZakPosition < \ell+1$ and $\ell < \ZakPosition < k + 2$, which implies $k \in \{\ell - 1, \ell \}$. Therefore, by the definition of the Zak transform, we see
  \begin{align*}
    \Zak \ReferenceWindow (\ZakPosition, \ZakFrequency)
    &= \sum_{k \in \Z} \ReferenceWindow(x - k) e^{2 \pi i k \ZakFrequency} \\
    &= e^{2 \pi i (\ell-1) \omega} \ReferenceWindow(x - (\ell-1))
      + e^{2 \pi i \ell \omega} \ReferenceWindow (x - \ell) \\
    &= e^{2 \pi i \ell \omega} \bigl( \ReferenceSine(x - \ell) + e^{-2 \pi i \omega} \ReferenceCosine(x - \ell) \bigr)
    ,
  \end{align*}
  where we used the definition \eqref{eq:g0} of $\ReferenceWindow$,
  the definition \eqref{eq:ab-definition} of $\ReferenceSine,\ReferenceCosine$,
  and the fact that $x - \ell \in (0,1)$ and $x - \ell + 1 \in (1,2)$.
  This implies
  \begin{align*}
    \partial_\ZakPosition (\Zak \ReferenceWindow) (\ZakPosition, \ZakFrequency)
    &= e^{2 \pi i \ell \omega}
       \bigl( \ReferenceSine'(\ZakPosition - \ell) + e^{-2 \pi i \omega} \ReferenceCosine'(\ZakPosition - \ell) \bigr) \\
    &= e^{2 \pi i \ell \omega}
       \bigl(
             \AngleFunction'(\ZakPosition-\ell) \cdot \ReferenceCosine(\ZakPosition - \ell)
             - e^{-2 \pi i \omega} \AngleFunction'(\ZakPosition-\ell) \ReferenceSine(\ZakPosition - \ell)
       \bigr) \\
    &= e^{2 \pi i \ell \omega} \cdot \AngleFunction'(\ZakPosition-\ell) \cdot
       \bigl(
             \ReferenceCosine(\ZakPosition - \ell)
             - e^{-2 \pi i \omega} \ReferenceSine(\ZakPosition - \ell)
       \bigr)
  \end{align*}
for $\ZakPosition \in (\ell, \ell+1), \ZakFrequency \in \R$.

  In view of the definition of the vector Zak transform $\VectorZak$, we have
  \[
    \ReferenceSection (\ZakPosition, \ZakFrequency)
    = \VectorZak \ReferenceWindow (\ZakPosition, \ZakFrequency)
    = 2^{-1/2} \begin{pmatrix}
        \Zak \ReferenceWindow (\ZakPosition, \frac{\ZakFrequency}{2}) \\[0.2cm]
        \Zak \ReferenceWindow (\ZakPosition, \frac{\ZakFrequency + 1}{2})
    \end{pmatrix}
  \]
  and hence
  \begin{align*}
    \| \partial_\ZakPosition \ReferenceSection (\ZakPosition, \ZakFrequency) \|
    &= 2^{-1/2} |\AngleFunction' (x - \ell)|
       \cdot
       \left\|
         \begin{pmatrix}
              \ReferenceCosine(\ZakPosition - \ell)
              - e^{-\pi i \omega} \ReferenceSine(\ZakPosition - \ell)
             \\[0.2cm]
              \ReferenceCosine(\ZakPosition - \ell)
              - e^{-\pi i (\omega+1)} \ReferenceSine(\ZakPosition - \ell)
         \end{pmatrix}
       \right\| \\
    &\leq 2^{-1/2} \pi
       \left\|
         \begin{pmatrix}
              \ReferenceCosine(\ZakPosition - \ell)
              - e^{-\pi i \omega} \ReferenceSine(\ZakPosition - \ell)
             \\[0.2cm]
              \ReferenceCosine(\ZakPosition - \ell)
              + e^{-\pi i \omega} \ReferenceSine(\ZakPosition - \ell)
         \end{pmatrix}
       \right\|
      = \pi
  \end{align*}
  where we used the estimate $|\AngleFunction'(t)| = \frac{\pi}{2} |\SmoothStep'(t)| \leq \pi$
  implied by Lemma~\ref{lem:s0-derivative-estimate},
  and the parallelogram identity $|A + B|^2 + |A - B|^2 = 2 (|A|^2 + |B|^2)$
  for $A,B \in \C$, as well as the fact that
  \[
    \ReferenceCosine^2 (\ZakPosition - \ell) + \ReferenceSine^2 (\ZakPosition-\ell)
    = \cos^2 (\AngleFunction(\ZakPosition-\ell))
      + \sin^2 (\AngleFunction(\ZakPosition-\ell))
    = 1
    .
  \]
  We have thus verified the claimed estimate on $\bigcup_{\ell \in \Z} (\ell,\ell+1)$.
  Since this is a dense set and $\partial_\ZakPosition \ReferenceSection$ is continuous,
  we are done.
\end{proof}

\section{The matrix field associated with the Weyl operator}
\label{sec:cocycles}

In this section, we provide a convenient representation of a Weyl operator using the vector Zak transform.

\subsection{Lattice Weyl matrices}

In accordance with the vector Zak transform $\VectorZak$, we will consider Weyl shifts $\WeylShift(m,n/2)$ along the half-integer lattice $\Z \times \frac{1}{2}\Z$. Our first goal is to show that applying $\VectorZak \, \WeylShift(m,n/2) \, \VectorZak^{-1}$ is multiplication by a $2 \times 2$ matrix $\LatticeWeylMatrix_{m,n}$.

Specifically, for $m,n\in\Z$, define
\begin{equation}\label{eq:Lmn-factorization}
    \LatticeWeylMatrix_{m,n}(\ZakPosition,\ZakFrequency)
    :=
    \e^{\pi\ii\left(n\ZakPosition-m\ZakFrequency+\frac{mn}{2}\right)}
    \LatticeWeylMatrix_{m,n}^{(0)}
    \in\C^{2\times2},
    \qquad
    (\ZakPosition,\ZakFrequency)\in\R^2,
\end{equation}
where
\begin{equation}\label{eq:Lmn-constant}
  \LatticeWeylMatrix_{m,n}^{(0)}
  :=
  \begin{cases}
    \left(
      \begin{smallmatrix}
        1&0\\
        0&\e^{-\pi\ii m}
      \end{smallmatrix}
    \right)
    = \left(
        \begin{smallmatrix}
          1 & 0\\
          0 & (-1)^m
        \end{smallmatrix}
      \right),
    & \text{if } n\equiv0\pmod 2,\\[3mm]
    \left(
     \begin{smallmatrix}
        0&1\\
        \e^{-\pi\ii m}&0
      \end{smallmatrix}
    \right)
    = \left(
       \begin{smallmatrix}
          0      & 1\\
          (-1)^m & 0
        \end{smallmatrix}
      \right)
    ,
    & \text{if } n\equiv1\pmod 2.
  \end{cases}
\end{equation}
We will refer to a matrix $\LatticeWeylMatrix_{m,n} (\ZakPosition,\ZakFrequency)$ as defined in \eqref{eq:Lmn-factorization} a \emph{lattice Weyl matrix}. 

In what follows, we will prove various basic properties of lattice Weyl matrices. We start by proving the aforementioned relation with the Weyl shifts under the vector Zak transform in the following lemma.

\begin{lemma}\label{lem:vector_Zak-matrix_multiplication}
    For every $m,n\in\Z$ and $F \in\mathscr H_\Zak$,
    \begin{equation}
      \left(
        \VectorZak\,\WeylShift\left(m,\frac n2\right)\VectorZak^{-1}
      \right) F (\ZakPoint)
      =
      \LatticeWeylMatrix_{m,n}(\ZakPoint)F (\ZakPoint)
      \qquad\text{for almost every }\ZakPoint\in\R^2.
    \end{equation}
    In addition, if $F$ is continuous, then the identity holds for every $z \in \R^2$.
\end{lemma}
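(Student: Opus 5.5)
The plan is to reduce to a dense class of functions, compute there directly, and then extend by continuity. Write $F = \VectorZak f$ with $f \in L^2(\R)$; this is possible by Lemma~\ref{lem:vector_zak_unitary}. It then suffices to show
\[
\VectorZak\bigl(\WeylShift(m,\tfrac n2)f\bigr)(\ZakPoint) = \LatticeWeylMatrix_{m,n}(\ZakPoint)\,\VectorZak f(\ZakPoint)
\]
almost everywhere. I would first prove this for $f\in\mathcal S(\R)$, where everything holds pointwise, and then extend.

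For $f \in \mathcal S(\R)$ I apply the covariance formula \eqref{eq:zak-covariance} with $(x_0,\omega_0)=(m,n/2)$:
\[
\ScalarZak(\WeylShift(m,\tfrac n2)f)(x,\nu)=\e^{\pi\ii n(x-m/2)}\,\ScalarZak f(x-m,\nu-\tfrac n2).
\]
Next, use the quasi-periodicity relations \eqref{eq:Zak_quasiperiodic} and \eqref{eq:Zak_periodic} to move the arguments back:
\[
\ScalarZak f(x-m,\mu)=\e^{-2\pi\ii m\mu}\,\ScalarZak f(x,\mu).
\]
Now substitute $\nu=(\omega+r)/2$ for $r\in\{0,1\}$, so that $\mu = (\omega+r-n)/2$. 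If $n$ is even, then $\mu\equiv(\omega+r)/2 \pmod 1$, so the $r$-th component is mapped to itself. If $n$ is odd, then $(\omega+r-n)/2 \equiv (\omega+(1-r))/2 \pmod 1$, so the two components are swapped. This swap is exactly the off-diagonal structure of $\LatticeWeylMatrix^{(0)}_{m,n}$.

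The remaining step, and the main bookkeeping obstacle, is to collect the phases. The total phase is $\e^{\pi\ii n x}\e^{-\pi\ii mn/2}\e^{-\pi\ii m(\omega+r-n)}$. Its $r$-independent part is
\[
\e^{\pi\ii(nx-m\omega+mn/2)},
\]
and the leftover factor is $\e^{-\pi\ii m r}=(-1)^{mr}$, evaluated at the source component $r$. For $n$ even this gives $\mathrm{diag}(1,(-1)^m)$. For $n$ odd, output component $1$ comes from source $r=1$ and output component $2$ comes from source $r=0$. Since the $n$-dependent sign $\e^{\pi\ii mn}$ has already been absorbed into the $mn/2$ term, this should give the matrix with first row $(0,1)$ and second row $((-1)^m,0)$, up to a consistency check of the sign convention. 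The shift by an integer $n/2$ or $(n\pm1)/2$ in the periodic $\omega$-variable introduces no further phase, by \eqref{eq:Zak_periodic}. I would verify this sign carefully, since it is where an error is most likely.

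For general $f\in L^2(\R)$, both sides are bounded operators from $L^2(\R)$ to $\mathscr H_\Zak$. The left side is $\VectorZak\WeylShift(m,n/2)$, which is unitary. The right side is multiplication by a unitary-valued $\LatticeWeylMatrix_{m,n}$ composed with $\VectorZak$; note that $|\e^{\pi\ii(\cdots)}|=1$ and $\LatticeWeylMatrix^{(0)}_{m,n}$ is a permutation matrix with signs. The two operators agree on the dense set $\mathcal S(\R)$, hence everywhere, as elements of $\mathscr H_\Zak$, i.e. almost everywhere. Finally, if $F$ is continuous, then both $\VectorZak\WeylShift(m,n/2)\VectorZak^{-1}F$ and $\LatticeWeylMatrix_{m,n}F$ are continuous. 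The first is continuous because by the computation above it equals $\LatticeWeylMatrix_{m,n}F$ almost everywhere, and so it has a continuous representative. Two continuous functions that agree almost everywhere agree everywhere.
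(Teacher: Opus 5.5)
Your proposal is correct and is essentially the paper's proof: the same covariance formula \eqref{eq:zak-covariance}, quasi-periodicity in the first variable, and the parity split in $n$. Your density extension from $\mathcal S(\R)$ and your continuous-representative remark add rigor that the paper leaves implicit, though calling $f\mapsto\LatticeWeylMatrix_{m,n}\VectorZak f$ a map into $\mathscr H_\Zak$ needs Lemma~\ref{lem:sewing_Weyl_matrix}, or you can simply compare both sides in $L^2$ on bounded squares. One bookkeeping correction: in your phase $\e^{\pi\ii nx}\e^{-\pi\ii mn/2}\e^{-\pi\ii m(\omega+r-n)}$ the index $r$ is the \emph{output} (row) index, not the source, so for odd $n$ row~$1$ carries $1$ and row~$2$ carries $(-1)^m$, giving $\LatticeWeylMatrix^{(0)}_{m,n}$ directly with no further sign check; attaching $(-1)^{mr}$ to the source component while also absorbing $\e^{\pi\ii mn}$, as your wording suggests, would instead give $\left(\begin{smallmatrix}0&(-1)^m\\1&0\end{smallmatrix}\right)=(-1)^m\LatticeWeylMatrix^{(0)}_{m,n}$.
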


\begin{proof}
    As $F \in \mathscr{H}_\Zak$, we have $F = \VectorZak f$ for some $f \in L^2(\R)$. We compute
    \begin{align}
        \left(
            \VectorZak \, \WeylShift(m,n/2) \, \VectorZak^{-1}
        \right) \,
        \VectorZak f(z)
        & = \VectorZak (\WeylShift(m,n/2) f)(z)\\
        & = 2^{-1/2} \left(
            \e^{2 \pi \ii \left( \TranslationParameter n/2 - \frac{m n/2}{2}\right)} 
            \Zak f\left(\TranslationParameter-m,\frac{\ModulationParameter-n+r}{2}\right)
        \right)_{r=0,1},
    \end{align}
    by \eqref{eq:zak-covariance}. Next, we use the quasiperiodicity \eqref{eq:Zak_quasiperiodic} of $\Zak$, and the fact that $m \in \Z$ to obtain
    \begin{align}
        \left(
            \VectorZak \, \WeylShift(m,n/2) \, \VectorZak^{-1}
        \right) \,
        \VectorZak f(z)
        & = 2^{-1/2} \left(
            \e^{\pi \ii \left(n \left( \TranslationParameter - \frac{m}{2}\right) - m(\ModulationParameter+r-n) \right)}
            \Zak f\left(\TranslationParameter,\frac{\ModulationParameter+(r-n)}{2}\right)
        \right)_{r=0,1}\\
        & = 2^{-1/2} \left(
            \e^{\pi \ii \left(n \TranslationParameter + \frac{m n}{2} - m \ModulationParameter -m r \right)}
            \Zak f\left(\TranslationParameter,\frac{\ModulationParameter+(r-n)}{2}\right)
        \right)_{r=0,1}
    \end{align}
    We distinguish two cases. If $n\in2\Z$, then periodicity in the second Zak variable gives
    \[
      \Zak f\left(
        \TranslationParameter,
        \frac{\ModulationParameter+(r-n)}2
      \right)
      =
      \Zak f\left(
        \TranslationParameter,
        \frac{\ModulationParameter+r}2
      \right).
    \]
    Factoring the part of the phase that is independent of $r$, therefore
    yields
    \begin{equation}
      \left(
        \VectorZak\,\WeylShift\left(m,\frac n2\right)\VectorZak^{-1}
      \right)F(\ZakPoint)
      =
      \e^{\pi\ii\left(
        n\ZakPosition-m\ZakFrequency+\frac{mn}{2}
      \right)}
      \begin{pmatrix}
        1&0\\
        0&\e^{-\pi\ii m}
      \end{pmatrix}
      F (\ZakPoint).
    \end{equation}

    If $n\in2\Z+1$, periodicity instead gives
    \[
      \Zak f\left(
        \TranslationParameter,
        \frac{\ModulationParameter+(r-n)}2
      \right)
      =
      \begin{cases}
        \Zak f\left(
          \TranslationParameter,
          \dfrac{\ModulationParameter+1}{2}
        \right),&r=0,\\[2mm]
        \Zak f\left(
          \TranslationParameter,
          \dfrac{\ModulationParameter}{2}
        \right),&r=1.
      \end{cases}
    \]
    Consequently, the two components are interchanged and
    \begin{equation}
      \left(
        \VectorZak\,\WeylShift\left(m,\frac n2\right)\VectorZak^{-1}
      \right)F (\ZakPoint)
      =
      \e^{\pi\ii\left(
        n\ZakPosition-m\ZakFrequency+\frac{mn}{2}
      \right)}
      \begin{pmatrix}
        0&1\\
        \e^{-\pi\ii m}&0
      \end{pmatrix}
      F (\ZakPoint).
    \end{equation}
    In both cases, this is precisely
    $\LatticeWeylMatrix_{m,n}(\ZakPoint)F (\ZakPoint)$ by
    \eqref{eq:Lmn-factorization} and \eqref{eq:Lmn-constant}.
\end{proof}

We next show that the lattice Weyl matrices satisfy the quasi-periodicity relations \eqref{eq:sewing}. 
\begin{lemma} \label{lem:sewing_Weyl_matrix}
    For $(m, n) \in \Z^2$, the lattice Weyl matrix $\LatticeWeylMatrix_{m,n}$
    satisfies the quasi-periodicity relations \eqref{eq:sewing},
    i.e., 
    \begin{align}
        \LatticeWeylMatrix_{m,n}(\ZakPosition+1,\ZakFrequency)
        &=
        \FirstSewingMatrix(\ZakFrequency)
        \LatticeWeylMatrix_{m,n}(\ZakPosition,\ZakFrequency)
        \FirstSewingMatrix(\ZakFrequency)^{-1},
        \label{eq:Lmn-x-sewing}\\
        \LatticeWeylMatrix_{m,n}(\ZakPosition,\ZakFrequency+1)
        &=
        \SecondSewingMatrix
        \LatticeWeylMatrix_{m,n}(\ZakPosition,\ZakFrequency)
        \SecondSewingMatrix^{-1}
        \label{eq:Lmn-omega-sewing}
    \end{align}
    for $(\ZakPosition, \ZakFrequency) \in \R^2$.
\end{lemma}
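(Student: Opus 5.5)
The plan is a direct computation that uses the factorization \eqref{eq:Lmn-factorization}. We write $\LatticeWeylMatrix_{m,n}(\ZakPosition,\ZakFrequency)=\phi_{m,n}(\ZakPosition,\ZakFrequency)\LatticeWeylMatrix_{m,n}^{(0)}$, where $\phi_{m,n}(\ZakPosition,\ZakFrequency)=\e^{\pi\ii(n\ZakPosition-m\ZakFrequency+mn/2)}$ is a scalar phase. Conjugating by $\FirstSewingMatrix(\ZakFrequency)$ or by $\SecondSewingMatrix$ leaves this scalar factor unchanged. Both relations therefore reduce to comparing how the phase transforms under a unit shift with how the constant matrix $\LatticeWeylMatrix_{m,n}^{(0)}$ transforms under conjugation.

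For \eqref{eq:Lmn-x-sewing}, we have $\phi_{m,n}(\ZakPosition+1,\ZakFrequency)=\e^{\pi\ii n}\phi_{m,n}(\ZakPosition,\ZakFrequency)=(-1)^n\phi_{m,n}(\ZakPosition,\ZakFrequency)$. In the conjugation, the scalar $\e^{\pi\ii\ZakFrequency}$ in $\FirstSewingMatrix(\ZakFrequency)$ cancels against its inverse, so only conjugation by $D:=\operatorname{diag}(1,-1)$ remains. Conjugating by $D$ fixes diagonal matrices and negates off-diagonal ones. Hence $D\LatticeWeylMatrix_{m,n}^{(0)}D^{-1}=(-1)^n\LatticeWeylMatrix_{m,n}^{(0)}$, because $\LatticeWeylMatrix_{m,n}^{(0)}$ is diagonal when $n$ is even and anti-diagonal when $n$ is odd. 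Multiplying this identity by $\phi_{m,n}(\ZakPosition,\ZakFrequency)$ gives \eqref{eq:Lmn-x-sewing}.

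For \eqref{eq:Lmn-omega-sewing}, we have $\phi_{m,n}(\ZakPosition,\ZakFrequency+1)=(-1)^m\phi_{m,n}(\ZakPosition,\ZakFrequency)$. Conjugating by the swap matrix $\SecondSewingMatrix$ (which satisfies $\SecondSewingMatrix^{-1}=\SecondSewingMatrix$) exchanges the two diagonal entries and also exchanges the two off-diagonal entries. For even $n$ this gives $\SecondSewingMatrix\operatorname{diag}(1,(-1)^m)\SecondSewingMatrix=\operatorname{diag}((-1)^m,1)=(-1)^m\operatorname{diag}(1,(-1)^m)$. For odd $n$, the conjugate of the anti-diagonal matrix with entries $1$ (upper right) and $(-1)^m$ (lower left) is the anti-diagonal matrix with entries $(-1)^m$ and $1$. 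This equals $(-1)^m$ times the original, since $(-1)^{2m}=1$. Multiplying by the phase gives \eqref{eq:Lmn-omega-sewing}.

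There is no real obstacle here; the computation is routine. The step that needs care is tracking the signs $\e^{\pi\ii n}$ and $\e^{-\pi\ii m}$ correctly, and noting that the scalar factor $\e^{\pi\ii\ZakFrequency}$ in $\FirstSewingMatrix$ cancels under conjugation. The argument splits into the two parity cases of $n$.
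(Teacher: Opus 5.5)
Your proposal is correct and follows essentially the same route as the paper's proof. Both factor out the scalar phase, note that $\e^{\pi\ii\ZakFrequency}$ cancels under conjugation, and compare the phase change ($(-1)^n$, resp.\ $\e^{-\pi\ii m}$) with $D\LatticeWeylMatrix_{m,n}^{(0)}D=(-1)^n\LatticeWeylMatrix_{m,n}^{(0)}$ and $\SecondSewingMatrix\LatticeWeylMatrix_{m,n}^{(0)}\SecondSewingMatrix^{-1}=\e^{-\pi\ii m}\LatticeWeylMatrix_{m,n}^{(0)}$, checked separately for the two parities of $n$.
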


\begin{proof}
To verify the first identity, note that \eqref{eq:Lmn-factorization} gives
\begin{equation}
 \LatticeWeylMatrix_{m,n}(\ZakPosition+1,\ZakFrequency)
 =
 \e^{\pi\ii n}
 \LatticeWeylMatrix_{m,n}(\ZakPosition,\ZakFrequency)
 =
 (-1)^n\LatticeWeylMatrix_{m,n}(\ZakPosition,\ZakFrequency).
 \label{eq:Lmn-x-sewing-partial}
\end{equation}
On the other hand, put
\[
 D:=\begin{pmatrix}1&0\\0&-1\end{pmatrix},
 \quad \text{so that} \quad
 \FirstSewingMatrix(\ZakFrequency) = \e^{\pi\ii\ZakFrequency}D.
\]
The scalar factor $\e^{\pi\ii\ZakFrequency}$ in $\FirstSewingMatrix(\ZakFrequency)$ cancels under conjugation and we have $D^\ast = D = D^{-1}$. Moreover, the two cases in
\eqref{eq:Lmn-constant} give
\[
 D\LatticeWeylMatrix_{m,n}^{(0)}D
 =
 \begin{cases}
  \LatticeWeylMatrix_{m,n}^{(0)},&n\equiv0\pmod2,\\
  -\LatticeWeylMatrix_{m,n}^{(0)},&n\equiv1\pmod2,
 \end{cases}
 =
 (-1)^n\LatticeWeylMatrix_{m,n}^{(0)}.
\] 
Combining this identity with \eqref{eq:Lmn-factorization} and \eqref{eq:Lmn-x-sewing-partial},
we obtain
\begin{align*}
 \FirstSewingMatrix(\ZakFrequency)
 \LatticeWeylMatrix_{m,n}(\ZakPosition,\ZakFrequency)
 \FirstSewingMatrix(\ZakFrequency)^{-1}
 &=
 \e^{\pi\ii\left(
   n\ZakPosition-m\ZakFrequency+\frac{mn}{2}
 \right)}
 D\LatticeWeylMatrix_{m,n}^{(0)}D\\
 & =
 (-1)^n
 \e^{\pi\ii\left(
   n\ZakPosition-m\ZakFrequency+\frac{mn}{2}
 \right)}
 \LatticeWeylMatrix_{m,n}^{(0)}\\
 &=
 \LatticeWeylMatrix_{m,n}(\ZakPosition+1,\ZakFrequency).
\end{align*}
This proves \eqref{eq:Lmn-x-sewing}.

For the second identity, \eqref{eq:Lmn-factorization} similarly yields
\begin{equation}
    \LatticeWeylMatrix_{m,n}(\ZakPosition,\ZakFrequency+1)
    =
    \e^{-\pi\ii m}
    \LatticeWeylMatrix_{m,n}(\ZakPosition,\ZakFrequency)
    \label{eq:Lmn-omega-sewing-partial}
\end{equation}
Noting that $(\e^{-\pi\ii m})^2 = \e^{-2\pi\ii m} = 1$, $m \in \Z$, direct multiplication gives, when $n$ is even,
\[
    \SecondSewingMatrix \LatticeWeylMatrix_{m,n}^{(0)} \SecondSewingMatrix^{-1}
    = \SecondSewingMatrix\begin{pmatrix}1&0\\0&\e^{-\pi\ii m}\end{pmatrix} \SecondSewingMatrix^{-1}
    = \begin{pmatrix}\e^{-\pi\ii m}&0\\0&1\end{pmatrix}
    = \e^{-\pi\ii m}\begin{pmatrix}1&0\\0&\e^{-\pi\ii m}\end{pmatrix}
    = \e^{-\pi\ii m} \, \LatticeWeylMatrix_{m,n}^{(0)},
\]
whereas, when $n$ is odd,
\[
    \SecondSewingMatrix \LatticeWeylMatrix_{m,n}^{(0)} \SecondSewingMatrix^{-1}
    = \SecondSewingMatrix \begin{pmatrix} 0&1 \\ \e^{-\pi\ii m}&0\end{pmatrix} \SecondSewingMatrix^{-1}
    = \begin{pmatrix}0&\e^{-\pi\ii m}\\ 1&0\end{pmatrix}
    = \e^{-\pi\ii m}\begin{pmatrix}0&1\\ \e^{-\pi\ii m}&0\end{pmatrix}
    = \e^{-\pi\ii m} \, \LatticeWeylMatrix_{m,n}^{(0)}.
\]
Thus, in both cases, 
$
    \SecondSewingMatrix \LatticeWeylMatrix_{m,n}^{(0)} \SecondSewingMatrix^{-1}
    = \e^{-\pi\ii m}\LatticeWeylMatrix_{m,n}^{(0)}.
$
Using \eqref{eq:Lmn-factorization} once more, it follows that
\begin{align*}
    \SecondSewingMatrix
    \LatticeWeylMatrix_{m,n}(\ZakPosition,\ZakFrequency)
    \SecondSewingMatrix^{-1}
    &=
    \e^{\pi\ii\left(
    n\ZakPosition-m\ZakFrequency+\frac{mn}{2}
    \right)}
    \SecondSewingMatrix \LatticeWeylMatrix_{m,n}^{(0)} \SecondSewingMatrix^{-1}\\
    & =
    \e^{\pi\ii\left(
    n\ZakPosition-m\ZakFrequency+\frac{mn}{2}
    \right)}
    \e^{-\pi\ii m}
    \LatticeWeylMatrix_{m,n}^{(0)}\\
    &=
    \LatticeWeylMatrix_{m,n}(\ZakPosition,\ZakFrequency+1),
\end{align*}
where we used \eqref{eq:Lmn-omega-sewing-partial}. This proves \eqref{eq:Lmn-omega-sewing}. 
\end{proof}

Note that the matrix functions
$\LatticeWeylMatrix_{m,n} : \R^2 \to \C^{2 \times 2}$ defined by the lattice Weyl matrices are smooth.
The following proposition provides an explicit bound for the first derivatives
of these matrix functions.
Like some of our prior results, the following result is only used in our proof of the main Theorem~\ref{thm:main}
in Section~\ref{sec:certificate}.

\begin{proposition}
    For every $m,n\in\Z$ and $\ZakPoint\in\R^2$, the matrix
    $\LatticeWeylMatrix_{m,n}(\ZakPoint)\in\C^{2\times2}$ is unitary, and we have
    \begin{equation}\label{eq:L-derivatives}
        \norm{
        \partial_{\ZakPosition}
        \LatticeWeylMatrix_{m,n}(\ZakPosition, \ZakFrequency)
        }_{\op}
        = \pi|n|,
        \qquad
        \norm{\partial_\ZakFrequency \LatticeWeylMatrix_{m,n}(\ZakPosition, \ZakFrequency)}_{\op} = \pi|m|.
    \end{equation}
\end{proposition}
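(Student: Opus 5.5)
The plan is to use the factorization \eqref{eq:Lmn-factorization}, which writes $\LatticeWeylMatrix_{m,n}(\ZakPosition,\ZakFrequency)$ as a scalar phase $\e^{\pi\ii\phi(\ZakPosition,\ZakFrequency)}$ with the real-valued function $\phi(\ZakPosition,\ZakFrequency) := n\ZakPosition-m\ZakFrequency+\frac{mn}{2}$, times the constant matrix $\LatticeWeylMatrix^{(0)}_{m,n}$. Both unitarity and the derivative identities reduce to properties of $\LatticeWeylMatrix^{(0)}_{m,n}$ together with the fact that the phase has modulus one.

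\emph{Unitarity.} I would first check that $\LatticeWeylMatrix^{(0)}_{m,n}$ is unitary in each of the two cases of \eqref{eq:Lmn-constant}.
\begin{itemize}
\item For $n$ even, it is $\operatorname{diag}(1,(-1)^m)$, a diagonal matrix with unimodular entries.
\item For $n$ odd, it is the antidiagonal matrix with entries $1$ and $(-1)^m$. Directly, $(\LatticeWeylMatrix^{(0)}_{m,n})^*\LatticeWeylMatrix^{(0)}_{m,n}=\operatorname{diag}(|(-1)^m|^2,1)=I$.
\end{itemize}
Since $\phi$ is real for real $(\ZakPosition,\ZakFrequency)$, the factor $\e^{\pi\ii\phi}$ has modulus one. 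A unimodular scalar multiple of a unitary matrix is unitary, so $\LatticeWeylMatrix_{m,n}(\ZakPoint)$ is unitary for every $\ZakPoint\in\R^2$.

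\emph{Derivatives.} Since $\LatticeWeylMatrix^{(0)}_{m,n}$ is constant, differentiating \eqref{eq:Lmn-factorization} gives
\[
\partial_\ZakPosition \LatticeWeylMatrix_{m,n}=\pi\ii n\,\LatticeWeylMatrix_{m,n},
\qquad
\partial_\ZakFrequency \LatticeWeylMatrix_{m,n}=-\pi\ii m\,\LatticeWeylMatrix_{m,n}.
\]
The operator norm of a unitary matrix is $1$. Taking operator norms therefore yields $\pi|n|$ and $\pi|m|$, respectively. These are exact equalities, not merely upper bounds, which matches the statement \eqref{eq:L-derivatives}.

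\emph{Main obstacle.} There is no real obstacle here. The only point needing care is to confirm unitarity of the odd-$n$ antidiagonal matrix, which is a two-line computation.
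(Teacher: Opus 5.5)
Your proposal is correct and follows essentially the same route as the paper's proof. You factor $\LatticeWeylMatrix_{m,n}$ as a unimodular scalar phase times the constant unitary matrix $\LatticeWeylMatrix^{(0)}_{m,n}$, use $\partial_\ZakPosition \LatticeWeylMatrix_{m,n}=\pi\ii n\,\LatticeWeylMatrix_{m,n}$ and $\partial_\ZakFrequency \LatticeWeylMatrix_{m,n}=-\pi\ii m\,\LatticeWeylMatrix_{m,n}$, and conclude by unitarity; your explicit check of the antidiagonal case is a harmless extra detail.
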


\begin{proof}
    The constant matrix $\LatticeWeylMatrix_{m,n}^{(0)}$ in
    \eqref{eq:Lmn-constant} is unitary, since $|\e^{-\pi\ii m}|=1$. 
    The scalar factor in \eqref{eq:Lmn-factorization} also has modulus one.
    Hence $\LatticeWeylMatrix_{m,n}(\ZakPoint)$ is unitary. 
    Differentiating \eqref{eq:Lmn-factorization} gives
    \begin{equation}
      \partial_{\ZakPosition}
      \LatticeWeylMatrix_{m,n}(\ZakPosition,\ZakFrequency)
      =
      \pi\ii n\,
      \LatticeWeylMatrix_{m,n}(\ZakPosition,\ZakFrequency),
      \qquad
      \partial_{\ZakFrequency}
      \LatticeWeylMatrix_{m,n}(\ZakPosition,\ZakFrequency)
      =
      -\pi\ii m\,
      \LatticeWeylMatrix_{m,n}(\ZakPosition,\ZakFrequency).
    \end{equation}
    Taking operator norms and using unitarity proves
    \eqref{eq:L-derivatives}.
\end{proof}

\subsection{Lattice Weyl expansions}
\label{sub:LatticeWeylDensity}

In this section, we show that any map $A\in C^\infty(\R^2;\C^{2\times2})$ satisfying the matrix quasi-periodicity relations \eqref{eq:sewing} can be written as a series of Weyl shifts. Specifically, we prove the following lemma.

\begin{lemma}\label{lem:fourier-density}
    Let $A \in C^\infty(\R^2, \C^{2 \times 2})$ satisfy the matrix quasi-periodicity relations \eqref{eq:sewing}. Then there exist coefficients $a_{m,n} \in \C$, $(m,n) \in \Z^2$, such that
    \begin{equation}\label{eq:fourier-weyl-expansion}
        A(\ZakPosition,\ZakFrequency)
        = \sum_{(m,n)\in\Z^2}
          a_{m,n} \,
          \LatticeWeylMatrix_{m,n}(\ZakPosition,\ZakFrequency),
        \qquad (\ZakPosition,\ZakFrequency) \in \R^2,
    \end{equation}
    with absolute and uniform convergence;
    in fact, for every $N \in \N$ there exists $K_N < \infty$ with
    \[
      |a_{m,n}|
      \leq K_N \cdot (1 + m^2 + n^2)^{-N}
      \qquad \text{for all } n,m \in \Z
      .
    \]
    
    In particular,
    for every $\eps>0$ there exists a finite
    set $J\subset\Z^2$ such that
    \begin{equation}\label{eq:finite-fourier-approximation}
        \sup_{\ZakPoint\in\R^2}
        \left\|A(\ZakPoint)-
          \sum_{(m,n)\in J}
            a_{m,n} \, \LatticeWeylMatrix_{m,n}(\ZakPoint)
        \right\|_{\op}<\eps.
    \end{equation}
\end{lemma}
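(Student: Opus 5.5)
The plan is to reduce the statement to ordinary two-dimensional Fourier series for two scalar entries of $A$. We write $A=(A_{jk})_{j,k=1,2}$ and unpack the relations \eqref{eq:sewing} entrywise. Since $\FirstSewingMatrix(\ZakFrequency)=\e^{\pi\ii\ZakFrequency}D$ with $D=\mathrm{diag}(1,-1)$, the scalar factor cancels under conjugation. The first relation therefore says $A(\ZakPosition+1,\ZakFrequency)=DA(\ZakPosition,\ZakFrequency)D$. In entries, the diagonal entries $A_{11},A_{22}$ are $1$-periodic in $\ZakPosition$, and the off-diagonal entries $A_{12},A_{21}$ are $1$-antiperiodic in $\ZakPosition$. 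The second relation, $A(\ZakPosition,\ZakFrequency+1)=\SecondSewingMatrix A\SecondSewingMatrix$, gives
\[
A_{22}(\ZakPosition,\ZakFrequency)=A_{11}(\ZakPosition,\ZakFrequency+1),\qquad A_{21}(\ZakPosition,\ZakFrequency)=A_{12}(\ZakPosition,\ZakFrequency+1).
\]
Iterating it, $A_{11}$ and $A_{12}$ are $2$-periodic in $\ZakFrequency$. Consequently $A$ is completely determined by the two scalar functions $A_{11}$ and $A_{12}$.

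Next we expand these two functions. The function $A_{11}$ is smooth and $(1,2)$-periodic, so it has a Fourier expansion in the characters $\e^{\pi\ii(n\ZakPosition-m\ZakFrequency)}$ with $n$ even and $m\in\Z$. The function $A_{12}$ is smooth, $1$-antiperiodic in $\ZakPosition$ and $2$-periodic in $\ZakFrequency$. Hence $\e^{-\pi\ii\ZakPosition}A_{12}$ is $(1,2)$-periodic, and $A_{12}$ expands in the characters $\e^{\pi\ii(n\ZakPosition-m\ZakFrequency)}$ with $n$ odd. Write $b_{m,n}$ for the corresponding Fourier coefficients, computed over the period cell $[0,1]\times[0,2]$ with the appropriate character. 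We then define
\[
a_{m,n}:=\e^{-\pi\ii mn/2}\,b_{m,n},
\]
which absorbs the phase $\e^{\pi\ii mn/2}$ appearing in \eqref{eq:Lmn-factorization}.

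Since $A$ is $C^\infty$, repeated integration by parts on the period cell shows that for every $N$ there is $K_N<\infty$ with $|b_{m,n}|\le K_N(1+m^2+n^2)^{-N}$. The same bound holds for $a_{m,n}$, since $|a_{m,n}|=|b_{m,n}|$. The integration by parts has no boundary terms because the integrands, including the conjugated character, are periodic on the cell. Each $\LatticeWeylMatrix_{m,n}(\ZakPoint)$ is unitary by the Proposition preceding this subsection, so its operator norm is $1$. Hence the series $\sum a_{m,n}\LatticeWeylMatrix_{m,n}$ converges absolutely and uniformly on $\R^2$, and \eqref{eq:finite-fourier-approximation} follows by choosing $J$ so that the tail $\sum_{(m,n)\notin J}|a_{m,n}|$ is smaller than $\eps$.

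It remains to identify the sum $S:=\sum a_{m,n}\LatticeWeylMatrix_{m,n}$ with $A$ entrywise, using \eqref{eq:Lmn-constant}. Even $n$ contribute only to the diagonal and odd $n$ only to the off-diagonal. The $(1,1)$ entry of $S$ is exactly the Fourier series of $A_{11}$, and the $(1,2)$ entry is exactly that of $A_{12}$. For the $(2,2)$ entry, $S_{22}=\sum_{n \text{ even}}(-1)^m b_{m,n}\e^{\pi\ii(n\ZakPosition-m\ZakFrequency)}$. Because $(-1)^m=\e^{-\pi\ii m}$, this series equals the expansion of $A_{11}(\ZakPosition,\ZakFrequency+1)$, which is $A_{22}$ by the second sewing relation. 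The $(2,1)$ entry is handled identically, giving $A_{21}$. Alternatively, Lemma~\ref{lem:sewing_Weyl_matrix} shows that $S$ satisfies \eqref{eq:sewing}, and a quasi-periodic matrix field is determined by its $(1,1)$ and $(1,2)$ entries.

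The only real obstacle is bookkeeping. One has to verify carefully the sign and phase conventions: the $-m\ZakFrequency$ sign, the $(-1)^m$ in the lower row, and the $\e^{\pi\ii mn/2}$ factor. One also has to check that the parity of $n$ matches periodicity versus antiperiodicity in $\ZakPosition$. Beyond that, the argument is standard smooth Fourier analysis.
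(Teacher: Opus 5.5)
Your proposal is correct and follows essentially the same route as the paper: you Fourier-expand the field made periodic by the sewing relations, use the $x$-relation to tie the parity of $n$ to the diagonal/off-diagonal pattern and the $\omega$-relation to produce the factor $(-1)^m$ in the second row, and absorb the phase via the same definition $a_{m,n}=\e^{-\pi\ii mn/2}b_{m,n}$. The only difference is bookkeeping. You work entrywise with $A_{11}$ and $A_{12}$ on the cell $[0,1]\times[0,2]$, whereas the paper expands the whole matrix on $[0,2]^2$ and derives the constraints $(-1)^n C_{m,n}=DC_{m,n}D$ and $(-1)^m C_{m,n}=\SecondSewingMatrix C_{m,n}\SecondSewingMatrix$ on the matrix Fourier coefficients.
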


\begin{proof} We split the proof into three steps.
\\~\\
\textbf{Step 1.}
As previously done, we set
\[
  \FirstSewingMatrix = \e^{\pi \ii \ZakFrequency} D,
  \quad D = \begin{pmatrix}1&0\\0&-1\end{pmatrix}
\]

For $a,b,c,d,u,v \in \C$, we have
\begin{equation}
  D
  \begin{pmatrix}
      a & b \\ c & d
  \end{pmatrix}
  D
  = \begin{pmatrix}
      a & -b \\
      -c & d
    \end{pmatrix}
  ,
  \;\;\;
  \SecondSewingMatrix
  \begin{pmatrix}
    0 & u \\
    v & 0
  \end{pmatrix}
  \SecondSewingMatrix
  = \begin{pmatrix}
      0 & v \\
      u & 0
    \end{pmatrix}
  ,
  \;\;\;
  \SecondSewingMatrix
  \begin{pmatrix}
    u & 0 \\
    0 & v
  \end{pmatrix}
  \SecondSewingMatrix
  = \begin{pmatrix}
      v & 0 \\
      0 & u
    \end{pmatrix}
  ,
  \label{eq:elementary-matrix-identities}
\end{equation}
where we note that $D^*=D=D^{-1}$ and $\SecondSewingMatrix^* = \SecondSewingMatrix = \SecondSewingMatrix^{-1}$.
\medskip

\noindent
\textbf{Step 2.}
As the exponential factors in $\FirstSewingMatrix$ cancel under matrix inversion, we have
\[
  \FirstSewingMatrix(\ZakFrequency) B \FirstSewingMatrix(\ZakFrequency)^{-1}
  = D B D
\]
for any $B \in \C^{2 \times 2}$.
Therefore, the quasi-periodicity condition \eqref{eq:sewing} for $A$ is equivalent to
\begin{equation}\label{eq:reduced-matrix-sewing}
    A(\ZakPosition+1,\ZakFrequency)
    =D A(\ZakPosition,\ZakFrequency) D,
    \qquad
    A(\ZakPosition,\ZakFrequency+1)
    =\SecondSewingMatrix A(\ZakPosition,\ZakFrequency) \SecondSewingMatrix,
    \qquad (\ZakPosition,\ZakFrequency) \in \R^2
    .
\end{equation}
Applying each of these identities twice shows that $A$ is $2 \Z$-periodic in each variable
and thus $(2 \Z)^2$-periodic.
Therefore, it has the matrix-valued Fourier expansion
\begin{equation}\label{eq:matrix-fourier-series}
    A(\ZakPosition,\ZakFrequency)
    =\sum_{(m,n)\in\Z^2}
       C_{m,n} \,
       \e^{\pi\ii(n\ZakPosition-m\ZakFrequency)},
\end{equation}
where we chose the sign convention in the exponent to be compatible with our Weyl matrix factorization \eqref{eq:Lmn-factorization}, and
\[
  C_{m,n}
  := \frac{1}{4}
     \int_{[0,2]^2}
       A(\ZakPosition,\ZakFrequency)
       \e^{-\pi\ii(n\ZakPosition-m\ZakFrequency)}
     \dd (\ZakPosition, \ZakFrequency)
  \in \C^{2 \times 2}
  .
\]

Since $A$ is $(2 \Z)^2$-periodic and smooth, it follows by standard results in Fourier analysis
(see, e.g., \cite[Corollary~3.3.10]{GrafakosClassicalFourier}) applied to each component of $A(\ZakPoint)$ that for every  $N\in\N$, there exists a constant $K_N < \infty$ such that
\begin{equation}
  \label{eq:rapid-fourier-decay}
  \|C_{m,n}\|_{\op}
  \leq K_N \cdot \bigl(1+m^2+n^2\bigr)^{-N}
  .
\end{equation}
It follows that the series \eqref{eq:matrix-fourier-series} converges absolutely and uniformly.

\medskip{}

\noindent
\textbf{Step 3.}
The first identity of \eqref{eq:reduced-matrix-sewing} gives
because of $(-1)^n = e^{\pi \ii n}$ that
\begin{equation}
\begin{aligned}
  (-1)^n C_{m,n}
  &= \frac{1}{4}
     \int_{[0,2]^2}
       A(\ZakPosition,\ZakFrequency)
       \, \e^{-\pi\ii(n(\ZakPosition-1) - m\ZakFrequency)}
     \dd (\ZakPosition, \ZakFrequency) \\
  &= \frac{1}{4}
     \int_{[0,2]^2}
       A(\ZakPosition+1,\ZakFrequency)
       \, \e^{-\pi\ii(n\ZakPosition - m\ZakFrequency)}
     \dd (\ZakPosition, \ZakFrequency) \\
  &= \frac{1}{4}
     \int_{[0,2]^2}
       D A(\ZakPosition,\ZakFrequency) D
       \, \e^{-\pi\ii(n\ZakPosition - m\ZakFrequency)}
     \dd (\ZakPosition, \ZakFrequency) \\
  &= D C_{m,n} D
  .
\end{aligned}
\label{eq:first-fourier-constraint}
\end{equation}
where the second step used that the integrand is $(2 \Z)^2$-periodic,
so that the integrals over any two fundamental domains agree.

Similarly, the second identity of \eqref{eq:reduced-matrix-sewing} gives
\begin{equation}\label{eq:second-fourier-constraint}
    (-1)^m C_{m,n}
    = \SecondSewingMatrix C_{m,n} \SecondSewingMatrix
    .
\end{equation}

Now, as in the definition of $\LatticeWeylMatrix_{m,n}^{(0)}$ in \eqref{eq:Lmn-constant}, we consider the parity of $n$.

If $n$ is even, then \eqref{eq:elementary-matrix-identities} and \eqref{eq:first-fourier-constraint} show that $C_{m,n}$ is diagonal,
say $C_{m,n} = \operatorname{diag}(u,v)$.
Then, Equations \eqref{eq:second-fourier-constraint} and \eqref{eq:elementary-matrix-identities} yield $v = (-1)^m u$.
Hence,
\begin{equation}\label{eq:even-coefficient-form}
    C_{m,n}
    = (C_{m,n})_{1,1}
    \cdot \begin{pmatrix}1&0\\0&(-1)^m\end{pmatrix}
    = (C_{m,n})_{1,1} \cdot \LatticeWeylMatrix_{m,n}^{(0)}
    \qquad (\text{if } n\ \text{is even}).
    \end{equation}
If $n$ is odd, \eqref{eq:elementary-matrix-identities} and \eqref{eq:first-fourier-constraint} show that $C_{m,n}$ is off-diagonal, say
\(
    C_{m,n} = 
    \left(
    \begin{smallmatrix}
        0 & u \\
        v & 0 
    \end{smallmatrix}
    \right)
\).
Then, equations \eqref{eq:second-fourier-constraint} and \eqref{eq:elementary-matrix-identities} yield $v=(-1)^m u$.
Therefore,
\begin{equation}\label{eq:odd-coefficient-form}
    C_{m,n}
    = (C_{m,n})_{1,2}  \cdot
    \begin{pmatrix}
        0&1\\
        (-1)^m&0
    \end{pmatrix} 
    = (C_{m,n})_{1,2} \cdot \LatticeWeylMatrix^{(0)}_{m,n}.
\end{equation}
Thus, setting 
\begin{equation}
    b_{m,n} := 
    \begin{cases}
        (C_{m,n})_{1,1},& \text{if } n\equiv0\pmod 2,\\
        (C_{m,n})_{1,2},& \text{if } n\equiv1\pmod 2, 
    \end{cases} 
    \qquad \text{and} \qquad
    a_{m,n} := b_{m,n} \, e^{-\pi \ii mn/2}, 
\end{equation}
and recalling from \eqref{eq:Lmn-factorization} that
\[
  \LatticeWeylMatrix_{m,n}(\ZakPosition,\ZakFrequency)
  = \e^{\pi\ii(n\ZakPosition-m\ZakFrequency+mn/2)}
    \cdot \LatticeWeylMatrix^{(0)}_{m,n}
  ,
\]
we obtain
\begin{equation}
\label{eq:Coefficient-Identification}
    C_{m,n} \cdot e^{\pi \ii (n \ZakPosition - m\ZakFrequency)} = a_{m,n} \cdot \LatticeWeylMatrix_{m,n}(\ZakPosition, \ZakFrequency),
    \qquad (\ZakPosition, \ZakFrequency) \in \R^2
    .
\end{equation}

Combining \eqref{eq:matrix-fourier-series} and \eqref{eq:Coefficient-Identification}, we obtain \eqref{eq:fourier-weyl-expansion}. The decay of the coefficients follows from
\[
  |a_{m,n}|
  \leq \| C_{m,n} \|_\op
  \leq K_N \cdot (1 + m^2 + n^2)^{-N}
  ,
\]
which proves absolute and uniform convergence of the series in \eqref{eq:fourier-weyl-expansion}. Finally, taking a sufficiently large finite set $J \subset \Z^2$, we get
\[
  \left\|
    A(\ZakPoint)
    - \sum_{(m,n) \in J}
        a_{m,n} \,
        \LatticeWeylMatrix_{m,n}(\ZakPosition,\ZakFrequency)
  \right\|_{\op}
  \leq \sum_{(m,n) \in \Z^2 \setminus J}
         |a_{m,n}| \, \| \LatticeWeylMatrix_{m,n} \|_{\op}
  \leq \sum_{(m,n) \in \Z^2 \setminus J}
         |a_{m,n}|
  < \eps
\]
for all $\ZakPoint \in \R^2$.
\end{proof}

\subsection{The irrational Weyl shift}

For the reader's convenience, we recall \eqref{eq:arith-data};
\begin{equation}
    \CubicIrrational = \sqrt[3]{2},\qquad
    \TranslationAlpha = \CubicIrrational-1,\qquad
    \TranslationBeta = \CubicIrrational^2-1,\qquad
    \WeylOffset
    =\left(\TranslationAlpha,\frac{\TranslationBeta}{2}\right),
    \qquad
    \TorusShiftVector=(\TranslationAlpha,\TranslationBeta).
\end{equation}
We first derive the vector Zak action of
$\WeylShift(\WeylOffset)$ explicitly. Applying
\eqref{eq:zak-covariance} with translation parameter
$\TranslationAlpha$ and modulation parameter
$\TranslationBeta/2$, and then using the definition of
$\VectorZak$, gives
\begin{align}
  \bigl(\VectorZak(\WeylShift(\WeylOffset) f)\bigr)_{r+1}
  (\ZakPosition,\ZakFrequency)
  &=
  \OffsetPhase(\ZakPosition)
  (\VectorZak f)_{r+1}(
    \ZakPosition-\TranslationAlpha,
    \ZakFrequency-\TranslationBeta),
  \qquad r\in\{0,1\},
  \label{eq:vector Zak-action-R-expanded}
\end{align}
where we define the phase factor $\eta$ to be
\begin{equation}
  \OffsetPhase(\ZakPosition)
  :=
  \e^{\pi\ii\TranslationBeta
      (\ZakPosition-\TranslationAlpha/2)}
  \in \C
  \quad \text{for} \quad
  \ZakPosition \in \R.
  \label{eq:EtaDefinition}
\end{equation}
We note that $\OffsetPhase : \R \to \C$ is smooth.

Consequently, for $F \in \mathscr{H}_\Zak$, we obtain (analogous to Lemma~\ref{lem:vector_Zak-matrix_multiplication})
\begin{equation} \label{eq:shift_action}
  (\VectorZak \, \WeylShift(\WeylOffset) \, \VectorZak^{-1})
  F (\ZakPosition,\ZakFrequency)
  =
  \OffsetPhase(\ZakPosition)
  F (\ZakPosition-\TranslationAlpha,
       \ZakFrequency-\TranslationBeta)
  =
  \OffsetPhase(\ZakPosition) F (\TranslateZak\ZakPoint)
\end{equation}
for a.e. $\ZakPoint \in \R^2$, where $\TranslateZak$ is defined as in \eqref{eq:shift}.
Define
\begin{align}
    \label{eq:def_w}
  \TranslatedReferenceSection(\ZakPoint)
  :=
  \OffsetPhase(\ZakPosition)
  \ReferenceSection(\TranslateZak\ZakPoint)
  \in \C^2 ,
  \qquad \text{for} \quad
  \ZakPoint = (\ZakPosition, \ZakFrequency) \in \R^2
  .
\end{align}
We note that with $\OffsetPhase$ and $\ReferenceSection$ (see Lemma~\ref{lem:unit}),
also $\TranslatedReferenceSection : \R^2 \to \C^2$ is smooth.

We verify that $\TranslatedReferenceSection$ satisfies the same vector Zak quasi-periodicity relations as $\ReferenceSection$. 

\begin{lemma} \label{lem:w_vectorZak}
The function $\TranslatedReferenceSection : \R^2 \to \C^2$ is a vector Zak function. Moreover, we have that $\| \TranslatedReferenceSection  (\ZakPoint) \| =1$ for all $\ZakPoint \in \R^2$.
\end{lemma}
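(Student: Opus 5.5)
The plan is to verify directly that $\TranslatedReferenceSection(\ZakPoint)=\OffsetPhase(\ZakPosition)\ReferenceSection(\ZakPosition-\TranslationAlpha,\ZakFrequency-\TranslationBeta)$ satisfies the two sewing relations \eqref{eq:x-sewing} and \eqref{eq:omega-sewing}. The inputs are the corresponding relations for $\ReferenceSection$ from Lemma~\ref{lem:unit}, together with the explicit form \eqref{eq:EtaDefinition} of the phase $\OffsetPhase$.

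An alternative route is to note that $\TranslatedReferenceSection=\VectorZak(\WeylShift(\WeylOffset)\ReferenceWindow)$ by \eqref{eq:shift_action}, which holds everywhere since both sides are continuous. It then lies in $\mathscr H_\Zak$ by Lemma~\ref{lem:vector_zak_unitary}, and the sewing relations hold pointwise because $\WeylShift(\WeylOffset)\ReferenceWindow\in C_c^\infty(\R)$ and its Zak sum is locally finite. I would mention this, but I will carry out the direct computation as the main argument, since it is self-contained.

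\textbf{Step 1: the $\ZakFrequency$-relation.} Since $\OffsetPhase$ depends only on $\ZakPosition$, we have
\[
\TranslatedReferenceSection(\ZakPosition,\ZakFrequency+1)=\OffsetPhase(\ZakPosition)\ReferenceSection(\ZakPosition-\TranslationAlpha,\ZakFrequency-\TranslationBeta+1)=\OffsetPhase(\ZakPosition)\SecondSewingMatrix\ReferenceSection(\TranslateZak\ZakPoint)=\SecondSewingMatrix\TranslatedReferenceSection(\ZakPoint),
\]
because $\SecondSewingMatrix$ is constant.

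\textbf{Step 2: the $\ZakPosition$-relation.} This is the only place where the phase matters. On one side,
\[
\TranslatedReferenceSection(\ZakPosition+1,\ZakFrequency)=\OffsetPhase(\ZakPosition+1)\,\FirstSewingMatrix(\ZakFrequency-\TranslationBeta)\,\ReferenceSection(\TranslateZak\ZakPoint).
\]
Here $\OffsetPhase(\ZakPosition+1)=\e^{\pi\ii\TranslationBeta}\OffsetPhase(\ZakPosition)$ and $\FirstSewingMatrix(\ZakFrequency-\TranslationBeta)=\e^{-\pi\ii\TranslationBeta}\FirstSewingMatrix(\ZakFrequency)$. The two factors $\e^{\pm\pi\ii\TranslationBeta}$ cancel, which gives exactly $\FirstSewingMatrix(\ZakFrequency)\TranslatedReferenceSection(\ZakPoint)$. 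This cancellation is the whole point of the choice of $\OffsetPhase$, and it is the step I would check most carefully for sign conventions.

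\textbf{Step 3: unit norm and smoothness.} Since $\TranslationBeta$ and $\ZakPosition$ are real, $|\OffsetPhase(\ZakPosition)|=1$. Hence $\|\TranslatedReferenceSection(\ZakPoint)\|=\|\ReferenceSection(\TranslateZak\ZakPoint)\|=1$ by \eqref{eq:chi-unit}. Measurability, and in fact smoothness, is immediate from the smoothness of $\OffsetPhase$ and $\ReferenceSection$. No step is a genuine obstacle; the only care needed is the bookkeeping of the phases in Step 2.
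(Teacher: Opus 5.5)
Your proof is correct and follows the paper's argument essentially line for line. You verify the $\ZakFrequency$-relation using that $\SecondSewingMatrix$ is constant and $\OffsetPhase$ does not depend on $\ZakFrequency$, verify the $\ZakPosition$-relation via the cancellation $\e^{\pi\ii\TranslationBeta}\FirstSewingMatrix(\ZakFrequency-\TranslationBeta)=\FirstSewingMatrix(\ZakFrequency)$, and obtain the unit norm from $|\OffsetPhase|=1$ together with \eqref{eq:chi-unit}; your side remark that $\TranslatedReferenceSection=\VectorZak(\WeylShift(\WeylOffset)\ReferenceWindow)$ is also valid.
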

\begin{proof}
First, note that $
  \OffsetPhase(\ZakPosition+1)
  =\e^{\pi\ii\TranslationBeta}\OffsetPhase(\ZakPosition).
$
Using the quasi-periodicity relation \eqref{eq:x-sewing} for $\ReferenceSection$, we obtain
\begin{align*}
    \TranslatedReferenceSection(\ZakPosition+1,\ZakFrequency)
    &=
    \OffsetPhase(\ZakPosition+1)
    \ReferenceSection(
    \ZakPosition+1-\TranslationAlpha,
    \ZakFrequency-\TranslationBeta)\\
    &=
    \e^{\pi\ii\TranslationBeta}\OffsetPhase(\ZakPosition)
    \FirstSewingMatrix(\ZakFrequency-\TranslationBeta)
    \ReferenceSection(
    \ZakPosition-\TranslationAlpha,
    \ZakFrequency-\TranslationBeta).
\end{align*}
Since
\begin{equation}
    \e^{\pi\ii\TranslationBeta}
    \FirstSewingMatrix(\ZakFrequency-\TranslationBeta)
    =
    \e^{\pi\ii\TranslationBeta}
    \e^{\pi\ii(\ZakFrequency-\TranslationBeta)}
    \begin{pmatrix}
    1&0\\
    0&-1
    \end{pmatrix}
    =
    \FirstSewingMatrix(\ZakFrequency),
\end{equation}
it follows that
\begin{equation}
    \TranslatedReferenceSection(\ZakPosition+1,\ZakFrequency)
    =
    \FirstSewingMatrix(\ZakFrequency)
    \TranslatedReferenceSection(\ZakPosition,\ZakFrequency).
    \label{eq:w-x-sewing}
\end{equation}
Similarly, the quasi-periodicity relation \eqref{eq:omega-sewing} gives
\begin{equation}
\begin{aligned}
  \TranslatedReferenceSection(\ZakPosition,\ZakFrequency+1)
  &=
  \OffsetPhase(\ZakPosition)
  \ReferenceSection(
    \ZakPosition-\TranslationAlpha,
    \ZakFrequency+1-\TranslationBeta)\\
  &=
  \OffsetPhase(\ZakPosition)
  \SecondSewingMatrix
  \ReferenceSection(
    \ZakPosition-\TranslationAlpha,
    \ZakFrequency-\TranslationBeta)\\
  &=
  \SecondSewingMatrix
  \TranslatedReferenceSection(\ZakPosition,\ZakFrequency).
\end{aligned}
\label{eq:w-omega-sewing}
\end{equation}
Thus $\TranslatedReferenceSection$ is a vector Zak function. Moreover,
since $\lvert\OffsetPhase(\ZakPosition)\rvert=1$ and
$\|\ReferenceSection(\TranslateZak\ZakPoint)\|=1$, one has that
$
  \|\TranslatedReferenceSection(\ZakPoint)\|=1,
  \ \ZakPoint\in\R^2,
$
as claimed.
\end{proof}

Now, we define the rank-one matrix field by
\begin{equation}\label{eq:A0Definition}
    \IdealMatrixField(\ZakPoint)
    :=
    \ReferenceSection(\ZakPoint)
    \TranslatedReferenceSection(\ZakPoint)^*
    \in \C^{2 \times 2},
    \qquad \ZakPoint \in \R^2
    .
\end{equation}
We note that with $\ReferenceSection$ (see Lemma~\ref{lem:unit}) and $w$ (see \eqref{eq:def_w}) being smooth, also $\IdealMatrixField : \R^2 \to \C^{2 \times 2}$ is smooth. Moreover, because $\ReferenceSection$ and $\TranslatedReferenceSection$ obey \eqref{eq:x-sewing} and \eqref{eq:omega-sewing}, $\IdealMatrixField$ satisfies the matrix quasi-periodicity relations \eqref{eq:sewing}. In addition,
\begin{equation}
    \IdealMatrixField(\ZakPoint)
    \TranslatedReferenceSection(\ZakPoint)
    =
    \ReferenceSection(\ZakPoint)
    \TranslatedReferenceSection(\ZakPoint)^*
    \TranslatedReferenceSection(\ZakPoint)
    =
    \ReferenceSection(\ZakPoint).
\end{equation}
Thus, multiplication by $\IdealMatrixField(\ZakPoint)$ exactly corrects the image under $\WeylShift(\WeylOffset)$ of the distinguished vector $\ReferenceSection(\TranslateZak\ZakPoint)$. Consequently, for every vector Zak function $F \in\mathscr H_\Zak$, applying $\WeylShift(\WeylOffset)$ first and then multiplying by $\IdealMatrixField(\ZakPoint)$ gives
\[
  \ZakPoint\longmapsto
  \IdealMatrixField(\ZakPoint)\OffsetPhase(\ZakPosition)
  F (\TranslateZak\ZakPoint).
\]
Since 
\(
    \TranslatedReferenceSection(\ZakPoint)^*
    =
    \overline{\OffsetPhase(\ZakPosition)}
    \,\ReferenceSection(\TranslateZak\ZakPoint)^*,
\)
the phase $\eta(x)$ cancels:
\begin{align*}
    \IdealMatrixField(\ZakPoint)\OffsetPhase(\ZakPosition)I_2
    =
    \ReferenceSection(\ZakPoint)
    \TranslatedReferenceSection(\ZakPoint)^*
    \OffsetPhase(\ZakPosition) =
    \ReferenceSection(\ZakPoint)
    \overline{\OffsetPhase(\ZakPosition)}
    \ReferenceSection(\TranslateZak\ZakPoint)^*
    \OffsetPhase(\ZakPosition) =
    \ReferenceSection(\ZakPoint)
    \ReferenceSection(\TranslateZak\ZakPoint)^*.
\end{align*}
We therefore set
\begin{equation}
  \IdealCocycle(\ZakPoint)
  :=
  \ReferenceSection(\ZakPoint)
  \ReferenceSection(\TranslateZak\ZakPoint)^*
  \in \C^{2 \times 2},
  \quad \text{for} \quad \ZakPoint \in \R^2.
  \label{eq:B0Definition}
\end{equation}
Again, by smoothness of $\ReferenceSection$ (see Lemma~\ref{lem:unit}),
it follows that $\IdealCocycle : \R^2 \to \C^{2 \times 2}$ is smooth.
Moreover, the unit identity for $\ReferenceSection$ then yields the exact relation
\begin{equation}
  \IdealCocycle(\ZakPoint)
  \ReferenceSection(\TranslateZak\ZakPoint)
  =
  \ReferenceSection(\ZakPoint)
  \ReferenceSection(\TranslateZak\ZakPoint)^*
  \ReferenceSection(\TranslateZak\ZakPoint)
  =
  \ReferenceSection(\ZakPoint)
\end{equation}
for $z \in \R^2$.

\subsection{Admissible coefficient sequences}
\label{sec:AdmissibleSequences}
In the remainder, we will use sequences that allow us to adequately approximate the rank-one matrix function $\IdealCocycle$ by a sum of lattice Weyl matrices. For this, we make the following definition.

\begin{definition}\label{def:AdmissibleSequence}
Given a finitely supported coefficient sequence $\aaa = (\aaa_{m,n})_{(m,n) \in \Z^2} \in \C^{\Z \times \Z}$, define
\[
  A[\aaa], B[\aaa] : \quad \R^2 \to \C^{2 \times 2}
\]
by
\[
  A[\aaa] (\ZakPoint)
  := \sum_{(m,n) \in \Z^2}
       \aaa_{m,n}
       \LatticeWeylMatrix_{m,n}(\ZakPoint)
  \qquad \text{and} \qquad
  B[\aaa] (\ZakPosition, \ZakFrequency) := \OffsetPhase(\ZakPosition) \, A[\aaa](\ZakPoint)
  ,
\]
with $\OffsetPhase(\ZakPosition)$ defined as in \eqref{eq:EtaDefinition}. We say the coefficient sequence $\aaa$ is \emph{admissible} if
\[
  \CertifiedError
  := \CertifiedError (\aaa)
  := \sup_{\ZakPoint \in \R^2}
     \|
       B[\aaa](\ZakPoint) - \IdealCocycle (\ZakPoint)
     \|_{\op}
  < \frac{1}{3}
  ,
\]
with $\IdealCocycle$ as defined in \eqref{eq:B0Definition}.
\end{definition}

\begin{remark}\label{rem:AdmissibleSequencesExist}
    Since $B[\aaa](\ZakPosition, \ZakFrequency) = \OffsetPhase(\ZakPosition) \, A[\aaa](\ZakPoint)$
    and\footnote{
        This follows from \eqref{eq:def_w}, \eqref{eq:A0Definition}, and \eqref{eq:B0Definition}, which show for $\ZakPoint = (\ZakPosition, \ZakFrequency)$ that
        \(
            \IdealMatrixField (\ZakPoint)
            = \ReferenceSection(\ZakPoint) \TranslatedReferenceSection (\ZakPoint)^\ast
            = \ReferenceSection(\ZakPoint) (\OffsetPhase(\ZakPosition) \ReferenceSection (\TranslateZak \ZakPoint))^\ast
            = \ReferenceSection(\ZakPoint) \ReferenceSection(\TranslateZak \ZakPoint)^\ast \overline{\OffsetPhase(\ZakPosition)}
            = \overline{\OffsetPhase(\ZakPosition)}  \IdealCocycle(\ZakPoint)
        \).
    }
    $\IdealCocycle (\ZakPosition, \ZakFrequency) = \OffsetPhase(\ZakPosition)\, \IdealMatrixField (\ZakPoint)$, with $|\OffsetPhase(\ZakPosition)| = 1$, we have
    \begin{equation}
        \|
          B[\aaa](\ZakPoint) - \IdealCocycle (\ZakPoint)
        \|_{\op}
        =   \|
              A[\aaa](\ZakPoint) - \IdealMatrixField (\ZakPoint)
            \|_{\op}
        .
        \label{eq:delta-equality-pointwise}
    \end{equation}
    Therefore,
    \begin{equation}
        \CertifiedError(\aaa)
        = \sup_{\ZakPoint \in \R^2}
           \|
             A[\aaa](\ZakPoint) - \IdealMatrixField (\ZakPoint)
           \|_{\op}
        .
        \label{eq:delta-equality}
    \end{equation}
    Since $\IdealMatrixField : \R^2 \to \C^{2 \times 2}$ is smooth and satisfies the quasi-periodicity relations \eqref{eq:sewing}, Lemma~\ref{lem:fourier-density} implies the existence of a finitely supported admissible sequence $\aaa \in \C^{\Z \times \Z}$. We will furthermore show in Section~\ref{sec:ExplicitAdmissibleSequence} that, in fact, one can choose an explicit admissible sequence with a support containing $11$ elements; the coefficients are given by \eqref{eq:amn-dyadic} and Table~\ref{tab:coefficients}.
\end{remark}

For the rest of the paper, we assume that $\aaa \in \C^{\Z \times \Z}$ is a fixed, finitely supported admissible sequence.
Using this fixed sequence, define $\FixedMatrixField : \R^2 \to \C^{2 \times 2}$ and $\FixedLatticeOperator : L^2(\R) \to L^2(\R)$ as
\begin{equation}\label{eq:Astar}
 \FixedMatrixField(\ZakPoint)
 := A[\aaa](\ZakPoint)
 =
 \sum_{(m,n) \in \Z^2}
   \FixedWeylCoefficient_{m,n}
   \LatticeWeylMatrix_{m,n}(\ZakPoint)
 \in \C^{2 \times 2},
 \qquad
 \FixedLatticeOperator
 := \sum_{(m,n)\in \Z^2}
      \FixedWeylCoefficient_{m,n}
      \WeylShift\!\left(m,\frac{n}{2}\right),
\end{equation}
and define $\FixedCocycle : \R^2 \to \C^{2 \times 2}$ as
\begin{equation}
  \label{eq:Bstar}
  \FixedCocycle(\ZakPoint)
  := B[\aaa](\ZakPoint)
  = \FixedMatrixField(\ZakPoint)\OffsetPhase(\ZakPosition)
  \in \C^{2 \times 2}
  \quad \text{for} \quad
  \ZakPoint = (\ZakPosition, \ZakFrequency) \in \R^2
  .
\end{equation}
Since each matrix function $\LatticeWeylMatrix_{m,n} : \R^2 \to \C^{2 \times 2}$ is smooth and the coefficient sequence $\aaa$ is finitely supported, $\FixedMatrixField : \R^2 \to \C^{2 \times 2}$ is smooth as well. Since $\OffsetPhase(\ZakPoint): \R \to \C$ is also smooth, it follows that $\FixedCocycle$ is smooth.

In terms of the operator $\FixedLatticeOperator$ defined above, we define the fixed finite Weyl operator
\begin{equation}\label{eq:Tstar}
    \FixedWeylOperator :
    L^2(\R) \to L^2(\R),
    \quad
    \FixedWeylOperator := \FixedLatticeOperator\WeylShift(\WeylOffset).
\end{equation}
Its vector Zak action on $F \in \mathscr H_\Zak$ is, for a.e. $\ZakPoint \in \R^2$,
\begin{equation}\label{eq:Bstar-Zak-Action}
 (\VectorZak\FixedWeylOperator\VectorZak^{-1})F(\ZakPoint)
 =\FixedCocycle(\ZakPoint)F(\TranslateZak\ZakPoint),
\end{equation}
cf.\ Lemma~\ref{lem:vector_Zak-matrix_multiplication} and Equation~\eqref{eq:shift_action}.

\section{Existence of a fixed point vector Zak function}
\label{sec:graph}

Let $\CalX$ be the Banach space of continuous vector Zak functions $\CorrectionField : \R^2 \to \C^2$ (i.e., satisfying the quasi-periodicity relations \eqref{eq:x-sewing} and \eqref{eq:omega-sewing}) that satisfy the additional orthogonality condition 
\begin{equation}\label{eq:x-orthogonal}
    \ReferenceSection(\ZakPoint)^*\CorrectionField(\ZakPoint)=0\quad\text{for every }\ZakPoint \in \R^2,
\end{equation}
where $\ReferenceSection = \VectorZak \ReferenceWindow$ is the vector Zak function defined in \eqref{eq:explicit_vector_zak}. We define the norm on $\CalX$ as
\[
  \norm{\CorrectionField}_\infty
  := \sup_{\ZakPoint\in[0,1]^2}\norm{\CorrectionField(\ZakPoint)}
  = \sup_{\ZakPoint \in \R^2}\norm{\CorrectionField(\ZakPoint)}
\]
with equality following from the quasi-periodicity relations
\eqref{eq:x-sewing} and \eqref{eq:omega-sewing}.
Define the ball
\begin{equation}
 \Ball := \left\{\CorrectionField\in\mathcal X:
 \norm{\CorrectionField}_\infty\le\frac{3}{4}\right\}.
\end{equation}

Recall from Section~\ref{sec:AdmissibleSequences} that we fixed a finitely supported, admissible coefficient sequence $\aaa$, and used it to define the matrix-valued functions $\FixedMatrixField,\FixedCocycle : \R^2 \to \C^{2 \times 2}$ and the operators $\FixedWeylOperator, \FixedLatticeOperator : L^2(\R) \to L^2(\R)$. Now, for $\CorrectionField \in \Ball$ and $\ZakPoint \in \R^2$, put 
\begin{equation}
  \CandidateInvariantVector(\ZakPoint)
  := \ReferenceSection(\ZakPoint)+\CorrectionField(\ZakPoint) \in \C^2
  \label{eq:vXDefinition}
\end{equation}
and
\begin{equation}\label{eq:denom}
    \NormalizationScalar{\CorrectionField}(\ZakPoint)
    := \ReferenceSection(\ZakPoint)^*
    \FixedCocycle(\ZakPoint)
    \CandidateInvariantVector(\TranslateZak \ZakPoint)
    \in \C,
    \qquad z \in \R^2,
\end{equation}
where $B_*$ is the matrix field defined in Section~\ref{sec:AdmissibleSequences}.
Note that $\NormalizationScalar{\CorrectionField}$ thus depends on the admissible sequence $\aaa$.

We have the following lemma.

\begin{lemma} \label{lem:q-positive}
   For every $\CorrectionField\in\Ball$, we have that
$\NormalizationScalar{\CorrectionField}(\ZakPoint)\neq 0$ for all $z \in \R^2$.
\end{lemma}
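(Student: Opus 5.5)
The plan is to split $\FixedCocycle = \IdealCocycle + (\FixedCocycle - \IdealCocycle)$ inside the definition \eqref{eq:denom}. We then show that the ideal part contributes exactly $1$, and that the error part has modulus strictly less than $1$.

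\textbf{Ideal part.} Using \eqref{eq:B0Definition} and the unit identity \eqref{eq:chi-unit}, we get $\ReferenceSection(\ZakPoint)^*\IdealCocycle(\ZakPoint) = \ReferenceSection(\TranslateZak\ZakPoint)^*$. Therefore
\[
\ReferenceSection(\ZakPoint)^*\IdealCocycle(\ZakPoint)\CandidateInvariantVector(\TranslateZak\ZakPoint)
= \ReferenceSection(\TranslateZak\ZakPoint)^*\ReferenceSection(\TranslateZak\ZakPoint) + \ReferenceSection(\TranslateZak\ZakPoint)^*\CorrectionField(\TranslateZak\ZakPoint) = 1 + 0 = 1.
\]
Here the orthogonality condition \eqref{eq:x-orthogonal} is applied at the point $\TranslateZak\ZakPoint$; this is legitimate because it holds for every point of $\R^2$.

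\textbf{Error part.} By Cauchy--Schwarz and the definition of the operator norm,
\[
\bigl|\ReferenceSection(\ZakPoint)^*(\FixedCocycle(\ZakPoint)-\IdealCocycle(\ZakPoint))\CandidateInvariantVector(\TranslateZak\ZakPoint)\bigr|
\le \|\ReferenceSection(\ZakPoint)\|\,\CertifiedError\,\|\CandidateInvariantVector(\TranslateZak\ZakPoint)\|.
\]
We have $\|\ReferenceSection(\ZakPoint)\| = 1$ and $\CertifiedError < 1/3$ by admissibility (Definition~\ref{def:AdmissibleSequence}). For the last factor, $\ReferenceSection$ and $\CorrectionField$ are orthogonal pointwise, so Pythagoras gives
\[
\|\CandidateInvariantVector\|^2 = 1 + \|\CorrectionField\|^2 \le 1 + \tfrac{9}{16} = \tfrac{25}{16},
\]
that is, $\|\CandidateInvariantVector\| \le 5/4$. (The cruder triangle inequality bound $1 + 3/4 = 7/4$ would also do, since $7/12 < 1$.) The error term is therefore at most $\tfrac{1}{3}\cdot\tfrac{5}{4} = \tfrac{5}{12}$.

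\textbf{Conclusion.} Combining the two parts,
\[
|\NormalizationScalar{\CorrectionField}(\ZakPoint)| \ge 1 - \tfrac{5}{12} = \tfrac{7}{12} > 0,
\]
which proves the lemma. In fact it gives the stronger statement $\Re \NormalizationScalar{\CorrectionField}(\ZakPoint) \ge 7/12$.

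There is no real obstacle here. The only point needing care is the algebraic identity $\ReferenceSection(\ZakPoint)^*\IdealCocycle(\ZakPoint) = \ReferenceSection(\TranslateZak\ZakPoint)^*$, which is what lets the orthogonality condition be used at the shifted point.
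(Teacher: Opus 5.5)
Your proof is correct and follows the paper's argument: the paper also rewrites $q_X(z) = 1 + \chi(z)^*\bigl(B_*(z)-B_0(z)\bigr)v_X(T_\tau z)$ using orthogonality at the shifted point, bounds $\|v_X\|\le 5/4$ by Pythagoras, and concludes $|q_X(z)-1|\le \tfrac54\delta<\tfrac5{12}$, hence $|q_X(z)|>\tfrac7{12}$. Your remarks that the cruder bound $7/4$ would already suffice for this lemma, and that one even gets $\operatorname{Re} q_X(z) > 7/12$, are both correct.
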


\begin{proof}
    Write
    \begin{equation}
        \CocycleErrorField(\ZakPoint)
        :=\FixedCocycle(\ZakPoint)-\IdealCocycle(\ZakPoint)
        \in \C^{2 \times 2},
        \qquad \ZakPoint \in \R^2
    \end{equation}
    and recall from \eqref{eq:B0Definition} that
    \begin{equation}
        \IdealCocycle(\ZakPoint)
        =
        \ReferenceSection(\ZakPoint)\ReferenceSection(\TranslateZak \ZakPoint)^*.
    \end{equation}
    Since \eqref{eq:x-orthogonal}
    implies $\CorrectionField(\TranslateZak \ZakPoint)\perp\ReferenceSection(\TranslateZak \ZakPoint)$
    while Lemma~\ref{lem:unit} implies
    $\ReferenceSection(\TranslateZak \ZakPoint)^*\ReferenceSection(\TranslateZak \ZakPoint)=1$, we have
    \begin{equation}
    \IdealCocycle(\ZakPoint)\CandidateInvariantVector(\TranslateZak \ZakPoint)
    =
    \ReferenceSection(\ZakPoint)\ReferenceSection(\TranslateZak \ZakPoint)^*
    \bigl(\ReferenceSection(\TranslateZak \ZakPoint)+\CorrectionField(\TranslateZak \ZakPoint)\bigr)
    =
    \ReferenceSection(\ZakPoint).
    \end{equation}
    Consequently,
    \begin{equation}
        \NormalizationScalar{\CorrectionField}(\ZakPoint)
        =
        \ReferenceSection(\ZakPoint)^*\FixedCocycle(\ZakPoint)\CandidateInvariantVector(\TranslateZak \ZakPoint)
        =
        1+\ReferenceSection(\ZakPoint)^*\CocycleErrorField(\ZakPoint)\CandidateInvariantVector(\TranslateZak \ZakPoint).
        \label{eq:qXRewrittenUsingErrorTerm}
    \end{equation}
    As $\CorrectionField \in \Ball$, one has $\|\CorrectionField\|_\infty\leq 3/4$. The orthogonality of $\ReferenceSection(\TranslateZak \ZakPoint)$ and $\CorrectionField(\TranslateZak \ZakPoint)$ (see \eqref{eq:x-orthogonal}) gives
    \begin{equation}
    \|\CandidateInvariantVector(\TranslateZak \ZakPoint)\|^2
    =
    \|\ReferenceSection(\TranslateZak \ZakPoint)\|^2+\|\CorrectionField(\TranslateZak \ZakPoint)\|^2
    \leq 1+\frac9{16}=\frac{25}{16} = \left( \frac{5}{4} \right)^2.
    \end{equation}
    Using \eqref{eq:qXRewrittenUsingErrorTerm}, we get
    \begin{equation}
    |\NormalizationScalar{\CorrectionField}(\ZakPoint)-1|
    \leq
    \|\ReferenceSection(\ZakPoint)\|\,\|\CocycleErrorField(\ZakPoint)\|_{\mathrm{op}}\,
    \|\CandidateInvariantVector(\TranslateZak \ZakPoint)\|
    \leq
    \frac{5}{4}\,\CertifiedError
    <
    \frac{5}{12},
    \end{equation}
    where $\CertifiedError := \sup_{\ZakPoint \in \R^2} \|\FixedCocycle(\ZakPoint) - \IdealCocycle(\ZakPoint)\|_{\op} < 1/3$;
    see Definition~\ref{def:AdmissibleSequence} and \eqref{eq:Bstar}.
    Hence, $|\NormalizationScalar{\CorrectionField}(\ZakPoint)|>\frac{7}{12}$ for all $\ZakPoint \in \R^2$, which settles the claim.
\end{proof}

We now define the following rank-one matrices:
\begin{equation}\label{eq:PQ}
     \ReferenceProjection(\ZakPoint)
     :=\ReferenceSection(\ZakPoint)\ReferenceSection(\ZakPoint)^*,
     \qquad
     \ComplementProjection(\ZakPoint)
     :=I_2 - \ReferenceProjection(\ZakPoint),
     \qquad
     \ZakPoint \in \R^2.
\end{equation}
Note that $\ReferenceProjection (\ZakPoint)$ is the orthogonal projection onto $\mathrm{span} \{\ReferenceSection(\ZakPoint)\}$ and $\ComplementProjection(\ZakPoint)$ is the orthogonal projection onto the orthogonal complement of this space. As $\ReferenceSection \in C^\infty (\R^2; \C^2)$, it follows that $\ReferenceProjection, \ComplementProjection \in C^\infty (\R^2; \C^{2\times 2})$. Also, recall that $\FixedCocycle \in C^\infty (\R^2 ; \C^{2 \times 2})$ (see \eqref{eq:Bstar} and the remark following it).

Using Lemma \ref{lem:q-positive}, given $\CorrectionField \in \Ball$, we can define
\begin{equation}\label{eq:Phi}
    (\CorrectionMap \CorrectionField)(\ZakPoint)
    :=
    \frac{
        \ComplementProjection (\ZakPoint) \FixedCocycle (\ZakPoint) \CandidateInvariantVector (\TranslateZak \ZakPoint)
    }
    {
        \NormalizationScalar{\CorrectionField}(\ZakPoint)
    }
    \in \C^2
    , \quad \ZakPoint \in \R^2 .
\end{equation}
As $\NormalizationScalar{\CorrectionField}(\ZakPoint) \neq 0$ this expression is well-defined. Since $\CorrectionField \in \CalX \subset C(\R^2; \C^2)$, it follows that $\CandidateInvariantVector (\ZakPoint) = \ReferenceSection (\ZakPoint) + \CorrectionField (\ZakPoint) \in C(\R^2; \C^2)$ and $\NormalizationScalar{\CorrectionField} \in C(\R^2; \C)$. Altogether, this implies $\CorrectionMap \CorrectionField \in C(\R^2; \C^2)$.

We next show that $\CorrectionMap \CorrectionField$ is a well-defined vector Zak function.

\begin{lemma} \label{lem:correction_zak}
    The function $\CorrectionMap \CorrectionField$ is a well-defined vector Zak function. In addition, $\CorrectionMap \CorrectionField \in \CalX$.
\end{lemma}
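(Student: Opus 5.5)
We need to show three things about $\CorrectionMap \CorrectionField$: it is continuous, it satisfies the quasi-periodicity relations \eqref{eq:x-sewing} and \eqref{eq:omega-sewing}, and it obeys the orthogonality condition \eqref{eq:x-orthogonal}. Continuity was already noted after \eqref{eq:Phi}, and Lemma~\ref{lem:q-positive} shows the denominator never vanishes. So the plan is to check the transformation behaviour of each ingredient under $\ZakPoint\mapsto\ZakPoint+(1,0)$ and $\ZakPoint\mapsto\ZakPoint+(0,1)$.

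\textbf{Step 1: the translated vector.} First, $\CandidateInvariantVector=\ReferenceSection+\CorrectionField$ is a vector Zak function, since both summands are (Lemma~\ref{lem:unit} and $\CorrectionField\in\CalX$). Second, $\FixedCocycle(\ZakPoint)\CandidateInvariantVector(\TranslateZak\ZakPoint)$ is the vector Zak action \eqref{eq:Bstar-Zak-Action} of $\FixedWeylOperator$ applied to $\CandidateInvariantVector$. I would verify directly that it is again a vector Zak function, in two parts:
\begin{itemize}
\item the map $\ZakPoint\mapsto\OffsetPhase(\ZakPosition)\CandidateInvariantVector(\TranslateZak\ZakPoint)$ satisfies \eqref{eq:x-sewing} and \eqref{eq:omega-sewing}, by exactly the computation in the proof of Lemma~\ref{lem:w_vectorZak}, using $\e^{\pi\ii\TranslationBeta}\FirstSewingMatrix(\ZakFrequency-\TranslationBeta)=\FirstSewingMatrix(\ZakFrequency)$;
\item multiplying by $\FixedMatrixField$ preserves vector Zak functions, because each $\LatticeWeylMatrix_{m,n}$ satisfies \eqref{eq:sewing} by Lemma~\ref{lem:sewing_Weyl_matrix}, and so does their finite linear combination.
\end{itemize}
Write $G(\ZakPoint):=\FixedCocycle(\ZakPoint)\CandidateInvariantVector(\TranslateZak\ZakPoint)$, so that $G(\ZakPoint+e)=U G(\ZakPoint)$ with $U=\FirstSewingMatrix(\ZakFrequency)$ or $U=\SecondSewingMatrix$.

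\textbf{Step 2: the scalar and the projection.} Both sewing matrices are unitary for real $\ZakFrequency$, and the same $U$ transforms $\ReferenceSection$ and $G$. Hence
\[
\NormalizationScalar{\CorrectionField}(\ZakPoint+e)=\ReferenceSection(\ZakPoint)^*U^*UG(\ZakPoint)=\NormalizationScalar{\CorrectionField}(\ZakPoint),
\]
so $\NormalizationScalar{\CorrectionField}$ is $\Z^2$-periodic. Likewise
\[
\ReferenceProjection(\ZakPoint+e)=U\ReferenceProjection(\ZakPoint)U^*,
\]
so $\ComplementProjection(\ZakPoint+e)=U\ComplementProjection(\ZakPoint)U^{-1}$. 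Combining these,
\[
(\CorrectionMap\CorrectionField)(\ZakPoint+e)=U\,(\CorrectionMap\CorrectionField)(\ZakPoint),
\]
which is the required quasi-periodicity.

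\textbf{Step 3: orthogonality and the main obstacle.} Orthogonality is immediate: $\ReferenceSection(\ZakPoint)^*\ComplementProjection(\ZakPoint)=0$, because $\|\ReferenceSection(\ZakPoint)\|=1$. So $\CorrectionMap\CorrectionField\in\CalX$, and membership in $\Ball$ is not claimed here. The main obstacle is the care needed in Step 1 with the phase $\OffsetPhase$ and the $\ZakFrequency$-dependent $\FirstSewingMatrix$ evaluated at the shifted argument. Beyond that, the essential point is that one common unitary $U$ acts on both $\ReferenceSection$ and $G$, since this is what makes the scalar $\NormalizationScalar{\CorrectionField}$ genuinely periodic rather than merely quasi-periodic.
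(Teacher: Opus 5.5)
Your proof is correct and follows essentially the same route as the paper: you show that $q_X$ is $\Z^2$-periodic, that $Q$ and the numerator transform by the sewing unitaries, and you get orthogonality from $Q$. The only difference is bookkeeping. You show that $G(z)=B_*(z)v_X(T_\tau z)$ is itself a vector Zak function, via the computation of Lemma~\ref{lem:w_vectorZak} for $\eta\, v_X\circ T_\tau$ and the matrix sewing of $A_*$. The paper instead records the twisted relation $B_*(z+e_j)=U_j(z)B_*(z)U_j(T_\tau z)^{-1}$ and pairs it with the sewing of $v_X$ at $T_\tau z$, which amounts to the same calculation.
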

\begin{proof}
    We need to show that $\CorrectionMap \CorrectionField$ satisfies the vector Zak quasi-periodicity relations \eqref{eq:x-sewing} and \eqref{eq:omega-sewing}. For this, let
    $
        e_1 := (1,0),
        \; 
        e_2 := (0,1),
    $
    and, for $j\in\{1,2\}$, let $\SewingMatrix_j(\ZakPoint)$ denote the unitary matrix in the corresponding quasi-periodicity relation
    $
        F (\ZakPoint+e_j)=\SewingMatrix_j(\ZakPoint)F (\ZakPoint)
   $
    for vector Zak functions $F$. Since
    $
        \TranslateZak(\ZakPoint+e_j)=\TranslateZak \ZakPoint+e_j,
    $
    and since both $\ReferenceSection = \VectorZak \ReferenceWindow$ and $\CorrectionField$ satisfy \eqref{eq:x-sewing} and \eqref{eq:omega-sewing}, the relevant quantities satisfy
    \begin{equation}
        \ReferenceSection(\ZakPoint+e_j)=\SewingMatrix_j(\ZakPoint)\ReferenceSection(\ZakPoint),
    \end{equation}
    \begin{equation}
    \CandidateInvariantVector(\TranslateZak (\ZakPoint+e_j))
    =\SewingMatrix_j(\TranslateZak \ZakPoint)\CandidateInvariantVector(\TranslateZak \ZakPoint),
    \end{equation}
    \begin{equation} \label{eq:sewingB_star}
        \FixedCocycle(\ZakPoint+e_j)
        =
        \SewingMatrix_j(\ZakPoint)\FixedCocycle(\ZakPoint)\SewingMatrix_j(\TranslateZak \ZakPoint)^{-1};
    \end{equation}
    see Appendix~\ref{sub:fixed-cocycle-sewing} for a verification of this last identity. Moreover,
    \begin{equation}
        \ComplementProjection(\ZakPoint+e_j)
        =
        \SewingMatrix_j(\ZakPoint)\ComplementProjection(\ZakPoint)\SewingMatrix_j(\ZakPoint)^{-1},
    \end{equation}
    where we recall that $\ComplementProjection$ was introduced in \eqref{eq:PQ}.
    
    Using these identities and the unitarity of $\SewingMatrix_j(\ZakPoint)$, we obtain
    \begin{equation}
    \begin{aligned}
    \NormalizationScalar{\CorrectionField}(\ZakPoint+e_j)
    &=
    \ReferenceSection(\ZakPoint+e_j)^*
    \FixedCocycle(\ZakPoint+e_j)
    \CandidateInvariantVector(\TranslateZak (\ZakPoint+e_j))\\
    &=
    \ReferenceSection(\ZakPoint)^*\SewingMatrix_j(\ZakPoint)^*
    \SewingMatrix_j(\ZakPoint)\FixedCocycle(\ZakPoint)\SewingMatrix_j(\TranslateZak \ZakPoint)^{-1}
    \SewingMatrix_j(\TranslateZak \ZakPoint)\CandidateInvariantVector(\TranslateZak \ZakPoint)\\
    &=
    \NormalizationScalar{\CorrectionField}(\ZakPoint),
    \end{aligned}
    \label{eq:qIsPeriodic}
    \end{equation}
    and
    \begin{equation}
    \begin{aligned}
& \ComplementProjection(\ZakPoint+e_j)\FixedCocycle(\ZakPoint+e_j)\CandidateInvariantVector(\TranslateZak (\ZakPoint+e_j)) \\
    & \qquad =   \SewingMatrix_j(\ZakPoint)\ComplementProjection(\ZakPoint)\SewingMatrix_j(\ZakPoint)^{-1}  \SewingMatrix_j(\ZakPoint)\FixedCocycle(\ZakPoint)\SewingMatrix_j(\TranslateZak \ZakPoint)^{-1}
    \SewingMatrix_j(\TranslateZak \ZakPoint)\CandidateInvariantVector(\TranslateZak \ZakPoint)\\
    &\qquad =
\SewingMatrix_j(\ZakPoint)\ComplementProjection(\ZakPoint)\FixedCocycle(\ZakPoint)\CandidateInvariantVector(\TranslateZak \ZakPoint).
    \end{aligned}
    \end{equation}
    Therefore,
    \begin{equation}
    (\CorrectionMap \CorrectionField)(\ZakPoint+e_j)
    =
    \SewingMatrix_j(\ZakPoint)(\CorrectionMap \CorrectionField)(\ZakPoint),
    \qquad j=1,2.
    \end{equation}
    Thus $\CorrectionMap \CorrectionField$ satisfies the vector Zak quasi-periodicity relations
    \eqref{eq:x-sewing} and \eqref{eq:omega-sewing} and hence is a vector Zak function.
    
    Lastly, note that because the
    numerator in the definition of $\CorrectionMap \CorrectionField$ is multiplied by $\ComplementProjection(\ZakPoint)$, we have that
    \begin{equation}
    \ReferenceSection(\ZakPoint)^*(\CorrectionMap \CorrectionField)(\ZakPoint)=0
    \end{equation}
    for every $\ZakPoint \in \R^2$, so that $(\CorrectionMap \CorrectionField)(\ZakPoint)\perp\ReferenceSection(\ZakPoint)$.
    Therefore, the quotient is a well-defined vector Zak function on all of $\R^2$,
    and thus $\CorrectionMap \CorrectionField \in \CalX$ for every $\CorrectionField \in \Ball$.
\end{proof}

\subsection{Existence of the fixed point}

We next show that the map $\CorrectionMap : \Ball \to \CalX$ defined in \eqref{eq:Phi} forms a contraction admitting a fixed point.

\begin{proposition} \label{prop:contraction}
The map $\CorrectionMap : \Ball \to \CalX$ defined in \eqref{eq:Phi} maps $\Ball$ into itself and is a contraction with
Lipschitz constant smaller than $48/49$.  It has a unique fixed point
$\FixedCorrection\in\Ball$, which satisfies
\begin{equation}\label{eq:xstar-small}
 \norm{\FixedCorrection}_\infty<\frac57.
\end{equation}
With
\begin{equation}\label{eq:vq}
 \FixedInvariantVector(\ZakPoint)
 := \ReferenceSection(\ZakPoint)+\FixedCorrection(\ZakPoint)
 \in \C^2,
 \qquad
 \InvariantMultiplier(\ZakPoint)
 := \NormalizationScalar{\FixedCorrection}(\ZakPoint) \in \C,
\end{equation}
one has
\begin{equation}\label{eq:line-invariance}
 \FixedCocycle(\ZakPoint)\FixedInvariantVector(\TranslateZak \ZakPoint)=\InvariantMultiplier(\ZakPoint)\FixedInvariantVector(\ZakPoint)
 \qquad(\ZakPoint\in\R^2),
\end{equation}
and
\begin{equation}\label{eq:q-disk}
 |\InvariantMultiplier(\ZakPoint)-1|<\frac5{12}
 \qquad(\ZakPoint\in\R^2).
\end{equation}
In particular, $\FixedInvariantVector$ never vanishes, and $\InvariantMultiplier$ never vanishes.
\end{proposition}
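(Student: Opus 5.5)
The plan is to exploit that the ideal cocycle $\IdealCocycle$ maps $\CandidateInvariantVector(\TranslateZak\ZakPoint)$ exactly onto $\ReferenceSection(\ZakPoint)$, so that only the error field $\CocycleErrorField=\FixedCocycle-\IdealCocycle$ contributes to the numerator of $\CorrectionMap\CorrectionField$. Write $\CertifiedError<1/3$ for the admissibility constant.

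\textbf{Step 1: the numerator of $\CorrectionMap$.} As in the proof of Lemma~\ref{lem:q-positive}, $\IdealCocycle(\ZakPoint)\CandidateInvariantVector(\TranslateZak\ZakPoint)=\ReferenceSection(\ZakPoint)$. Since $\ComplementProjection(\ZakPoint)\ReferenceSection(\ZakPoint)=0$, this gives
\[
\ComplementProjection(\ZakPoint)\FixedCocycle(\ZakPoint)\CandidateInvariantVector(\TranslateZak\ZakPoint)=\ComplementProjection(\ZakPoint)\CocycleErrorField(\ZakPoint)\CandidateInvariantVector(\TranslateZak\ZakPoint).
\]
Because $\CorrectionField(\TranslateZak\ZakPoint)\perp\ReferenceSection(\TranslateZak\ZakPoint)$, we have $\|\CandidateInvariantVector(\TranslateZak\ZakPoint)\|\le 5/4$ for $\CorrectionField\in\Ball$. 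Hence the numerator has norm at most $\frac54\CertifiedError<\frac5{12}$. From the proof of Lemma~\ref{lem:q-positive}, $|\NormalizationScalar{\CorrectionField}|>7/12$. Therefore
\[
\|\CorrectionMap\CorrectionField\|_\infty<\frac{5/12}{7/12}=\frac57<\frac34 .
\]
Together with Lemma~\ref{lem:correction_zak}, this shows $\CorrectionMap(\Ball)\subset\Ball$. It also already yields \eqref{eq:xstar-small} once the fixed point exists.

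\textbf{Step 2: Lipschitz bound.} For $\CorrectionField,Y\in\Ball$, write $N_{\CorrectionField}$ for the numerator in \eqref{eq:Phi}. Then
\[
\CorrectionMap\CorrectionField-\CorrectionMap Y=\frac{N_{\CorrectionField}-N_Y}{\NormalizationScalar{\CorrectionField}}+\frac{N_Y}{\NormalizationScalar{Y}}\cdot\frac{\NormalizationScalar{Y}-\NormalizationScalar{\CorrectionField}}{\NormalizationScalar{\CorrectionField}} .
\]
Here
\[
N_{\CorrectionField}-N_Y=\ComplementProjection\,\CocycleErrorField\,(\CorrectionField-Y)(\TranslateZak\cdot),
\qquad
\NormalizationScalar{\CorrectionField}-\NormalizationScalar{Y}=\ReferenceSection^*\CocycleErrorField\,(\CorrectionField-Y)(\TranslateZak\cdot),
\]
where the $\IdealCocycle$-part cancels because $\ReferenceSection(\TranslateZak\ZakPoint)^*(\CorrectionField-Y)(\TranslateZak\ZakPoint)=0$. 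Both differences are therefore bounded by $\CertifiedError\|\CorrectionField-Y\|_\infty$. Using $|N_Y/\NormalizationScalar{Y}|<5/7$ from Step 1 and $|\NormalizationScalar{\CorrectionField}|>7/12$, the Lipschitz constant is at most
\[
\frac{12}{7}\,\CertifiedError\Bigl(1+\frac57\Bigr)<\frac{12}{7}\cdot\frac13\cdot\frac{12}{7}=\frac{48}{49}.
\]

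\textbf{Step 3: fixed point.} The space $\CalX$ is a closed subspace of the bounded continuous $\C^2$-valued functions under the sup norm. Indeed, continuity, the quasi-periodicity relations \eqref{eq:x-sewing}, \eqref{eq:omega-sewing} and the orthogonality \eqref{eq:x-orthogonal} are all preserved under uniform limits. So $\CalX$ is Banach, $\Ball$ is a closed subset, and Banach's fixed point theorem gives a unique $\FixedCorrection\in\Ball$.

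\textbf{Step 4: invariance and consequences.} To get \eqref{eq:line-invariance}, decompose $\FixedCocycle\FixedInvariantVector(\TranslateZak\cdot)=\ReferenceProjection(\cdot)+\ComplementProjection(\cdot)$ applied to it. The $\ReferenceProjection$-part equals $\ReferenceSection(\ZakPoint)\,\NormalizationScalar{\FixedCorrection}(\ZakPoint)$ by \eqref{eq:denom}. The $\ComplementProjection$-part equals $\NormalizationScalar{\FixedCorrection}(\ZakPoint)\,\FixedCorrection(\ZakPoint)$ by the fixed point equation. Summing gives $\InvariantMultiplier\FixedInvariantVector$. The bound \eqref{eq:q-disk} is exactly the estimate $|\NormalizationScalar{\CorrectionField}-1|<5/12$ from Lemma~\ref{lem:q-positive}, which also gives $\InvariantMultiplier\ne0$. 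By orthogonality, $\|\FixedInvariantVector\|\ge\|\ReferenceSection\|=1$, so $\FixedInvariantVector$ never vanishes.

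The main obstacle is Step 2: one must obtain the constant $48/49$ (and not merely something $<1$). This hinges on noticing that the $\IdealCocycle$ contributions cancel in both differences, so that the $\CorrectionField$-dependence enters only through $\CocycleErrorField$.
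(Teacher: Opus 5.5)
Your proposal is correct and matches the paper's proof essentially step for step. Both use the same cancellation of the $B_0$-part via orthogonality, the same quotient-difference identity for the Lipschitz bound, and the same $P$/$Q$ decomposition for the invariance relation. Your cruder pointwise bounds $1/|q_X|<12/7$ and $|N_Y/q_Y|<5/7$ give the same $48/49$ as the paper's $\delta/r_0^2$; you additionally justify completeness of $\mathcal X$ explicitly, and you show $v_*\neq 0$ via $\|v_*\|\ge\|\chi\|=1$ rather than via $\chi^*v_*=1$.
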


\begin{proof}
Write
\[
  \CocycleErrorField(\ZakPoint)
  := \FixedCocycle(\ZakPoint) - \IdealCocycle(\ZakPoint) \in \C^{2 \times 2} ,
  \qquad \ZakPoint \in \R^2
  .
\]
By Definition~\ref{def:AdmissibleSequence} and \eqref{eq:Bstar}, we have
$\norm{\CocycleErrorField (\ZakPoint)}_{\mathrm{op}} \leq \CertifiedError<1/3$ for all $\ZakPoint \in \R^2$.
We split the proof into three steps.
\medskip{}

\noindent
\textbf{Step 1.} In this step, we show that $\CorrectionMap \CorrectionField \in \Ball$ for $\CorrectionField \in \Ball$.
Since we have $\CorrectionField(\TranslateZak \ZakPoint)\perp\ReferenceSection(\TranslateZak \ZakPoint)$
by \eqref{eq:x-orthogonal}, and 
${\IdealCocycle(\ZakPoint) = \ReferenceSection(\ZakPoint) \ReferenceSection(\TranslateZak\ZakPoint)^*}$
by \eqref{eq:B0Definition},
as well as $\| \ReferenceSection(\TranslateZak \ZakPoint) \| = 1$ by Lemma~\ref{lem:unit}, we see
\begin{equation}
 \IdealCocycle(\ZakPoint)(\ReferenceSection(\TranslateZak \ZakPoint)
 +\CorrectionField(\TranslateZak \ZakPoint))
 =\ReferenceSection(\ZakPoint).
\end{equation}
Thus
\begin{equation}\label{eq:d-error}
 \NormalizationScalar{\CorrectionField}(\ZakPoint)=1+\ReferenceSection(\ZakPoint)^*\CocycleErrorField(\ZakPoint)\CandidateInvariantVector(\TranslateZak \ZakPoint),
\end{equation}
and
\begin{equation}\label{eq:numer-error}
 \ComplementProjection(\ZakPoint)\FixedCocycle(\ZakPoint)\CandidateInvariantVector(\TranslateZak \ZakPoint)
 =\ComplementProjection(\ZakPoint)\CocycleErrorField(\ZakPoint)\CandidateInvariantVector(\TranslateZak \ZakPoint).
\end{equation}
Here, we used that $\ComplementProjection (\ZakPoint) \IdealCocycle(\ZakPoint) = 0$
since
\(
  \mathrm{range}(\IdealCocycle(\ZakPoint))
  \subset \mathrm{span} \{ \ReferenceSection (\ZakPoint) \}
  \subset \mathrm{ker} ( \ComplementProjection (\ZakPoint))
  .
\)

Moreover, because $\CorrectionField(\ZakPoint)\perp\ReferenceSection(\ZakPoint)$ in $\C^2$,
\begin{equation}
 \norm{\CandidateInvariantVector(\ZakPoint)}^2
 =\norm{\ReferenceSection(\ZakPoint)}^2+\norm{\CorrectionField(\ZakPoint)}^2
 \le1+\frac9{16}=\frac{25}{16} = \left(\frac{5}{4}\right)^2.
 \label{eq:vXNormEstimate}
\end{equation}
Thus $\norm{\CandidateInvariantVector(\ZakPoint)}\le5/4$, and \eqref{eq:d-error} gives
\begin{equation}\label{eq:denom-lower}
 |\NormalizationScalar{\CorrectionField}(\ZakPoint)|\ge1-\frac54\CertifiedError>1-\frac5{12}=\frac7{12}.
\end{equation}
Equations \eqref{eq:numer-error} -- \eqref{eq:denom-lower} yield
\begin{equation}
 \norm{\CorrectionMap \CorrectionField}_\infty
 \le\frac{(5/4)\CertifiedError}{1-(5/4)\CertifiedError}
 <\frac{5/12}{7/12}=\frac57<\frac34.
\end{equation}
Therefore, $\CorrectionMap(\Ball)\subset\Ball$, which settles the claim.
\\~\\
\indent
Since $\FixedCorrection=\CorrectionMap \FixedCorrection$,
the same estimate will prove \eqref{eq:xstar-small},
once we establish the existence of the unique fixed point $\FixedCorrection \in \Ball$ of $\CorrectionMap$.
\\~\\
\textbf{Step 2.} We show that $\mathrm{Lip}(\CorrectionMap) \leq \frac{48}{49}$.
For $\CorrectionField,Y\in\Ball$, denote the numerator and
denominator in \eqref{eq:Phi} by
\begin{align*}
 N_{\CorrectionField}(\ZakPoint)
 &:=\ComplementProjection(\ZakPoint)\FixedCocycle(\ZakPoint)
   \CandidateInvariantVector(\TranslateZak\ZakPoint) \in \C^2 ,\\
 d_{\CorrectionField}(\ZakPoint)
 &:=\NormalizationScalar{\CorrectionField}(\ZakPoint) \in \C ,
\end{align*}
and define $N_Y,d_Y$ analogously.
Since
$\ComplementProjection(\ZakPoint)\IdealCocycle(\ZakPoint)=0$ and
$(\CorrectionField-Y)(\TranslateZak\ZakPoint)$ is perpendicular to
$\ReferenceSection(\TranslateZak\ZakPoint)$ by \eqref{eq:x-orthogonal},
we see by \eqref{eq:vXDefinition} and \eqref{eq:denom} that
\begin{align*}
 N_{\CorrectionField}(\ZakPoint)-N_Y(\ZakPoint)
 &=\ComplementProjection(\ZakPoint)\CocycleErrorField(\ZakPoint)
   (\CorrectionField-Y)(\TranslateZak\ZakPoint),\\
 d_{\CorrectionField}(\ZakPoint)-d_Y(\ZakPoint)
 &=\ReferenceSection(\ZakPoint)^*\CocycleErrorField(\ZakPoint)
   (\CorrectionField-Y)(\TranslateZak\ZakPoint).
\end{align*}
Thus, the numerator difference has norm at most
$\CertifiedError\norm{\CorrectionField-Y}_\infty$, and the
denominator difference has modulus at most the same quantity. 
We use the elementary identity
\begin{equation}\label{eq:quotient-difference}
 \frac{N_1}{d_1}-\frac{N_2}{d_2}
 =\frac{N_1-N_2}{d_1}
  +N_2\frac{d_2-d_1}{d_1d_2},
 \qquad
 \text{for } k \in \N, \,\,\ N_1, N_2 \in \C^k \text{ and } d_1,d_2 \in \C \setminus \{0\}
 .
\end{equation}
With $r_0 := 1-(5/4)\CertifiedError$, applying
\eqref{eq:quotient-difference} and \eqref{eq:vXNormEstimate} gives
\begin{equation}
 \norm{
   \CorrectionMap\CorrectionField-\CorrectionMap Y
 }_\infty
 \le\left(
   \frac{\CertifiedError}{r_0}
   +\frac{(5/4)\CertifiedError^2}{r_0^2}
 \right)
       \norm{\CorrectionField-Y}_\infty
 =\frac{\CertifiedError}{r_0^2}
  \norm{\CorrectionField-Y}_\infty.
  \label{eq:CorrectionMapLipschitz}
\end{equation}
Here, we used that
\[
  \| N_Y (\ZakPoint) \|
  = \| \ComplementProjection (\ZakPoint) \CocycleErrorField (\ZakPoint) v_Y (\TranslateZak \ZakPoint) \|
  \leq \CertifiedError \, \| v_Y (\TranslateZak \ZakPoint) \|
  \leq \frac{5}{4} \CertifiedError,
\]
thanks to \eqref{eq:numer-error}.
Moreover, the final equality in \eqref{eq:CorrectionMapLipschitz} used that
$r_0+(5/4)\CertifiedError=1$.
Since $r_0>7/12$ and
$\CertifiedError<1/3$,
\begin{equation}
 \frac{\CertifiedError}{r_0^2}
 <\frac{1/3}{(7/12)^2}
 =\frac{48}{49}<1.
\end{equation}
This proves for $\CorrectionMap : \Ball \to \Ball$ that
$\mathrm{Lip} (\CorrectionMap) \leq \frac{\CertifiedError}{r_0^2} < \frac{48}{49}$.
In particular, the Banach fixed-point theorem yields the existence of the unique fixed point 
$\FixedCorrection\in\Ball$ of $\CorrectionMap$.
\\~\\
\textbf{Step 3.}
At a fixed point, \eqref{eq:Phi} says that the $\ComplementProjection(\ZakPoint)$ component
of $\FixedCocycle(\ZakPoint)\FixedInvariantVector(\TranslateZak \ZakPoint)$ is $\InvariantMultiplier(\ZakPoint)\FixedCorrection(\ZakPoint)$,
while the $\ReferenceProjection(\ZakPoint)$ component is $\InvariantMultiplier(\ZakPoint)\ReferenceSection(\ZakPoint)$. 
Adding the two components proves \eqref{eq:line-invariance}.  
Finally, \eqref{eq:d-error} and \eqref{eq:vXNormEstimate} give
\begin{equation}
 |\InvariantMultiplier(\ZakPoint)-1|
 \leq \CertifiedError\norm{\FixedInvariantVector(\TranslateZak \ZakPoint)}
 \leq \frac{5}{4}\CertifiedError
 <    \frac{5}{12}.
\end{equation}
Since $\ReferenceSection^*\FixedInvariantVector=1$, the function $\FixedInvariantVector$ is nowhere zero.
\end{proof}

\subsection{Regularity of the fixed point}\label{sec:regOfZakFunction}
The contraction argument in the previous section above was carried out in the uniform
norm, which guarantees the continuity of the fixed point $\FixedCorrection$.
However, for later purposes, we need the function $\InvariantMultiplier$ to be smooth. 
We establish this, together with smoothness of $\FixedCorrection$ and $\FixedInvariantVector$,
in this subsection.

The results in this subsection are closely related to a fixed point argument for a graph-transform, see, e.g., \cite[Theorem~3.1, p.~25]{HirschPughShub};
the corresponding regularity statement is
\cite[Theorem~3.2, p.~26]{HirschPughShub}. 
However, instead of invoking general theory, we replaced the argument
by a direct, elementary argument. 
Besides making the paper more self-contained and understandable for most researchers
of the time-frequency community,
this keeps the regularity argument in the analytic language used elsewhere in the paper.

Throughout this subsection, we write
\begin{equation}
 \ReferenceSection(\ZakPoint)
 =\begin{pmatrix}\ReferenceSection_1(\ZakPoint)\\ \ReferenceSection_2(\ZakPoint)\end{pmatrix},
 \qquad
 \NormalSection(\ZakPoint)
 :=\begin{pmatrix}-\overline{\ReferenceSection_2(\ZakPoint)}\\
                         \overline{\ReferenceSection_1(\ZakPoint)}\end{pmatrix},
 \qquad
 \UnitaryFrame(\ZakPoint)
 :=\bigl(\ReferenceSection(\ZakPoint) \,|\, \NormalSection(\ZakPoint)\bigr) \in \C^{2 \times 2},
\end{equation}
where we recall that $\ReferenceSection$ is as defined in \eqref{eq:explicit_vector_zak}.
Since $\ReferenceSection(\ZakPoint)$ is a unit vector, so is $\NormalSection(\ZakPoint)$,
$\NormalSection(\ZakPoint)\perp\ReferenceSection(\ZakPoint)$, and the $2\times2$ matrix $\UnitaryFrame(\ZakPoint)$ is
unitary.
All three functions are smooth on $\R^2$.

Given $\CorrectionField \in \CalX$, we know that $\CorrectionField(\ZakPoint)$
is  perpendicular to $\ReferenceSection(\ZakPoint)$ for every $\ZakPoint \in \R^2$
and can therefore be written uniquely as
\begin{equation}
 \CorrectionField(\ZakPoint)
 =\ProjectiveCoordinate(\ZakPoint)\NormalSection(\ZakPoint),
 \qquad \text{for} \qquad
 \ProjectiveCoordinate(\ZakPoint)
 := \NormalSection(\ZakPoint)^*\CorrectionField(\ZakPoint)\in\C.
 \label{eq:ProjectiveCoordinateDefinition}
\end{equation}
We now compute how the function $\ProjectiveCoordinate$ transforms under shifts.
With the matrices $\SewingMatrix_j(\ZakPoint)$, $j \in \{1,2\}$,
from the quasi-periodicity relations \eqref{eq:x-sewing} and \eqref{eq:omega-sewing}, we have
\begin{equation}
 \NormalSection(\ZakPoint+e_j)
 =
 \overline{\det\SewingMatrix_j(\ZakPoint)}\,
 \SewingMatrix_j(\ZakPoint)\NormalSection(\ZakPoint),
 \label{eq:OrthogonalComplementSewingRelations}
\end{equation}
as follows from Lemma~\ref{lem:OrthogonalVectorTransformation}.

Thus, for a given vector Zak function $\CorrectionField=\ProjectiveCoordinate\NormalSection$,
the scalar function $\ProjectiveCoordinate : \R^2 \to \C$ satisfies
\begin{equation}\label{eq:xi-boundary}
 \ProjectiveCoordinate(\ZakPoint+e_1)=-\e^{2\pi\ii\ZakFrequency}\ProjectiveCoordinate(\ZakPoint),
 \qquad
 \ProjectiveCoordinate(\ZakPoint+e_2)=-\ProjectiveCoordinate(\ZakPoint),
 \qquad \ZakPoint=(\ZakPosition,\ZakFrequency).
\end{equation}
This follows from the quasi-periodicity relations \eqref{eq:x-sewing} and \eqref{eq:omega-sewing}.
Indeed, since we have $\det \FirstSewingMatrix (\ZakPoint) = - e^{2 \pi i \ZakFrequency}$
for $\ZakPoint = (\ZakPosition, \ZakFrequency)$, we see
\begin{align*}
  \ProjectiveCoordinate(\ZakPoint + e_1)
  &= \NormalSection(\ZakPoint + e_1)^\ast \CorrectionField(\ZakPoint + e_1) \\
  &=
  \Bigl( \overline{\det \FirstSewingMatrix (\ZakPoint)} \, \FirstSewingMatrix (\ZakPoint) \, \NormalSection(\ZakPoint) \Bigr)^\ast
  \FirstSewingMatrix (\ZakPoint) \CorrectionField (\ZakPoint) \\
  &= - e^{2 \pi i \ZakFrequency}
     \, \NormalSection (\ZakPoint)^\ast
     \FirstSewingMatrix (\ZakPoint)^\ast
     \FirstSewingMatrix (\ZakPoint)
     \CorrectionField (\ZakPoint)
  = - e^{2 \pi i \ZakFrequency} \, \ProjectiveCoordinate (\ZakPoint)
  ,
\end{align*}
and, since $\det \SecondSewingMatrix (\ZakPoint) = -1$, we have that
\begin{align*}
  \ProjectiveCoordinate(\ZakPoint + e_2)
  &= \NormalSection(\ZakPoint + e_2)^\ast \CorrectionField(\ZakPoint + e_2) \\
  &=
  \Bigl( \overline{\det \SecondSewingMatrix (\ZakPoint)} \, \SecondSewingMatrix (\ZakPoint) \, \NormalSection(\ZakPoint) \Bigr)^\ast
  \SecondSewingMatrix (\ZakPoint) \CorrectionField (\ZakPoint) \\
  &= - \NormalSection (\ZakPoint)^\ast \, \SecondSewingMatrix (\ZakPoint)^\ast \SecondSewingMatrix (\ZakPoint) \CorrectionField (\ZakPoint)
  = - \ProjectiveCoordinate (\ZakPoint)
  .
\end{align*}

We next express $\FixedCocycle(\ZakPoint)$ using the orthonormal bases
$\bigl(\ReferenceSection(\TranslateZak \ZakPoint),\NormalSection(\TranslateZak \ZakPoint)\bigr)$ at the input and
$\bigl(\ReferenceSection(\ZakPoint),\NormalSection(\ZakPoint)\bigr)$ at the output to obtain the matrix representation
\begin{equation}\label{eq:local-cocycle-matrix}
 \LocalMatrix(\ZakPoint)
 :=
 \UnitaryFrame(\ZakPoint)^*
 \FixedCocycle(\ZakPoint)
 \UnitaryFrame(\TranslateZak\ZakPoint)
 =:
 \begin{pmatrix}
  \LocalMatrixEntry_{1,1}(\ZakPoint)
    &\LocalMatrixEntry_{1,2}(\ZakPoint)\\
  \LocalMatrixEntry_{2,1}(\ZakPoint)
    &\LocalMatrixEntry_{2,2}(\ZakPoint)
 \end{pmatrix}.
\end{equation}
Similarly, we express the matrix $\IdealCocycle(z)$ in the same coordinates as
\begin{align} \label{eq:matrixB0}
 \UnitaryFrame(\ZakPoint)^*
 \IdealCocycle(\ZakPoint)
 \UnitaryFrame(\TranslateZak\ZakPoint)
 = \begin{pmatrix}1&0\\0&0\end{pmatrix},
\end{align}
cf. the calculation in Appendix~\ref{sub:ideal-local-matrix}.
Since unitary changes of coordinates preserve the operator norm,
it follows from Definition~\ref{def:AdmissibleSequence} and \eqref{eq:Bstar} that
\begin{equation}\label{eq:local-matrix-error}
 \sup_{\ZakPoint \in \R^2}
   \left\|\LocalMatrix(\ZakPoint)-\begin{pmatrix}1&0\\0&0\end{pmatrix}\right\|_{\op}
 \leq \CertifiedError
 <\frac{1}{3}.
\end{equation}

With this preparation, we can now prove the following technical lemma
which will be the central ingredient for proving that the fixed point
$\FixedCorrection$ from Proposition~\ref{prop:contraction} is smooth.

\begin{lemma}
\label{lem:projective-map}
Let the functions $\LocalMatrixEntry_{i,j} : \R^2 \to \C$ be as defined in \eqref{eq:local-cocycle-matrix}.
There exists an open set $V \subset \C$ satisfying
\(
  V_0 := \{ \ProjectiveCoordinate \in \C \,\,:\,\, |\ProjectiveCoordinate| \leq 3/4 \} 
  \subset V
\)
such that the function
\begin{equation}\label{eq:local-projective-map}
  \ProjectiveMap :  \R^2 \times V \to \C, \qquad
 \ProjectiveMap(\ZakPoint,\ProjectiveCoordinate)
 :=
 \frac{
  \LocalMatrixEntry_{2,1}(\ZakPoint)
  +\LocalMatrixEntry_{2,2}(\ZakPoint)\ProjectiveCoordinate
 }{
  \LocalMatrixEntry_{1,1}(\ZakPoint)
  +\LocalMatrixEntry_{1,2}(\ZakPoint)\ProjectiveCoordinate
 }
\end{equation}
is a well-defined $C^\infty$ function that is holomorphic in its second variable.

Moreover, for $\ProjectiveCoordinate \in V_0$,
the denominator in \eqref{eq:local-projective-map} satisfies
\begin{equation}\label{eq:local-denom-regularity}
 \left|
  \LocalMatrixEntry_{1,1}(\ZakPoint)
  +\LocalMatrixEntry_{1,2}(\ZakPoint)\ProjectiveCoordinate
 \right|>\frac7{12}.
\end{equation}
Additionally,
\begin{equation}\label{eq:vertical-derivative-bound}
 \sup_{\ZakPoint \in \R^2,\, \ProjectiveCoordinate \in V_0}
 \left|
  \partial_{\ProjectiveCoordinate}
  \ProjectiveMap(\ZakPoint,\ProjectiveCoordinate)
 \right|
 \le \frac{\CertifiedError}{(1-(5/4)\CertifiedError)^2}
 <\frac{48}{49}=: \ProjectiveContractionConstant<1.
\end{equation}

Finally, given any $\CorrectionField \in \Ball \subset \CalX$,
using \eqref{eq:ProjectiveCoordinateDefinition} to write
$\CorrectionField = \ProjectiveCoordinate \cdot \NormalSection$
and  $\CorrectionMap \CorrectionField = \ProjectiveCoordinate^\sharp \cdot \NormalSection$,
then
\begin{equation}
 \ProjectiveCoordinate^\sharp (\ZakPoint)
 =
 \ProjectiveMap\bigl(
  \ZakPoint,
  \ProjectiveCoordinate(\TranslateZak\ZakPoint)
 \bigr),
 \qquad \ZakPoint \in \R^2
 .
    \label{eq:CorrectionMapInNormalCorrdinate}
\end{equation}
In particular, writing the fixed point $\FixedCorrection$
from Proposition~\ref{prop:contraction} as
$\FixedCorrection(\ZakPoint) =\ProjectiveCoordinate_*(\ZakPoint)\NormalSection(\ZakPoint)$
we have
\begin{equation}\label{eq:local-fixed}
 \ProjectiveCoordinate_*(\ZakPoint)
 =
 \ProjectiveMap\bigl(
  \ZakPoint,
  \ProjectiveCoordinate_*(\TranslateZak\ZakPoint)
 \bigr),
 \qquad
 |\ProjectiveCoordinate_*(\ZakPoint)|<\frac57.
\end{equation}
\end{lemma}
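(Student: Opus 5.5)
The plan is to work entirely in the local coordinates given by the unitary frames $\UnitaryFrame$. First, we write $\LocalMatrix(\ZakPoint) = \left(\begin{smallmatrix}1&0\\0&0\end{smallmatrix}\right) + E(\ZakPoint)$. By \eqref{eq:local-matrix-error}, every entry of $E$ has modulus at most $\|E(\ZakPoint)\|_{\op}\le\CertifiedError<1/3$. More precisely, $E(\ZakPoint)$ applied to the unit-norm-bounded vector $(1,\ProjectiveCoordinate)^{\mathsf T}$ has norm at most $\CertifiedError\sqrt{1+|\ProjectiveCoordinate|^2}$. For $|\ProjectiveCoordinate|\le 3/4$ this gives
\[
\bigl|\LocalMatrixEntry_{1,1}+\LocalMatrixEntry_{1,2}\ProjectiveCoordinate-1\bigr|\le \tfrac54\CertifiedError<\tfrac5{12},
\]
which is exactly \eqref{eq:local-denom-regularity}. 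The same estimate holds uniformly in $\ZakPoint$ with some slack, since $\CertifiedError<1/3$ is a fixed number. So we choose $V:=\{|\ProjectiveCoordinate|<3/4+\eps\}$ for small $\eps>0$ such that $\CertifiedError\sqrt{1+(3/4+\eps)^2}<1$, keeping the denominator bounded away from $0$ on $\R^2\times V$. Smoothness of $\LocalMatrixEntry_{i,j}$ follows from the smoothness of $\ReferenceSection$, $\NormalSection$ and $\FixedCocycle$ and from $\TranslateZak$ being affine. Then $\ProjectiveMap$ is a quotient of smooth functions, affine in $\ProjectiveCoordinate$, with nonvanishing denominator. Hence it is $C^\infty$ and holomorphic in $\ProjectiveCoordinate$.

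For the derivative bound, we differentiate the Möbius map:
\[
\partial_{\ProjectiveCoordinate}\ProjectiveMap=\frac{\det\LocalMatrix(\ZakPoint)}{(\LocalMatrixEntry_{1,1}+\LocalMatrixEntry_{1,2}\ProjectiveCoordinate)^2}.
\]
The denominator is at least $(1-\tfrac54\CertifiedError)^2$ in modulus, so it remains to show $|\det\LocalMatrix|\le\CertifiedError$. We write $\LocalMatrixEntry_{2,1}=e_{21}$, $\LocalMatrixEntry_{2,2}=e_{22}$, $\LocalMatrixEntry_{1,1}=1+e_{11}$ and $\LocalMatrixEntry_{1,2}=e_{12}$. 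Then $\det\LocalMatrix=e_{22}+\det E$, and the naive bound $\CertifiedError+\CertifiedError^2$ is too weak. This is the main obstacle. To get around it, we use $|\det \LocalMatrix|=s_1s_2$, where $s_1\ge s_2$ are the singular values of $\LocalMatrix$. By Weyl's inequality for singular values, $s_2\le s_2(\mathrm{diag}(1,0))+\|E\|_{\op}=\CertifiedError$ and $s_1\le 1+\CertifiedError$. This only gives $\CertifiedError(1+\CertifiedError)$.

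To reach the sharper constant, we can alternatively use the formula directly with $\ProjectiveCoordinate$: we bound $|\partial_\ProjectiveCoordinate\ProjectiveMap|$ through the difference-quotient argument of Step~2 of Proposition~\ref{prop:contraction}. That is, for $\ProjectiveCoordinate,\eta\in V_0$,
\[
\ProjectiveMap(\ZakPoint,\ProjectiveCoordinate)-\ProjectiveMap(\ZakPoint,\eta)=\frac{N_1-N_2}{d_1}+N_2\frac{d_2-d_1}{d_1d_2},
\]
with $|N_1-N_2|,|d_1-d_2|\le\CertifiedError|\ProjectiveCoordinate-\eta|$, $|N_2|\le\tfrac54\CertifiedError$ and $|d_i|\ge1-\tfrac54\CertifiedError$. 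This yields exactly $\CertifiedError/(1-\tfrac54\CertifiedError)^2$ as the Lipschitz constant on $V_0$. Letting $\eta\to\ProjectiveCoordinate$ gives \eqref{eq:vertical-derivative-bound}. The strict inequality $<48/49$ follows as in that proposition, since the bound is increasing in $\CertifiedError<1/3$.

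Finally, for \eqref{eq:CorrectionMapInNormalCorrdinate}, we write $\CandidateInvariantVector(\TranslateZak\ZakPoint)=\UnitaryFrame(\TranslateZak\ZakPoint)(1,\ProjectiveCoordinate(\TranslateZak\ZakPoint))^{\mathsf T}$. Then
\[
\UnitaryFrame(\ZakPoint)^*\FixedCocycle(\ZakPoint)\CandidateInvariantVector(\TranslateZak\ZakPoint)=\LocalMatrix(\ZakPoint)\bigl(1,\ProjectiveCoordinate(\TranslateZak\ZakPoint)\bigr)^{\mathsf T}.
\]
Its first component is $\NormalizationScalar{\CorrectionField}(\ZakPoint)$, since it equals $\ReferenceSection^*\FixedCocycle\CandidateInvariantVector$. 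Its second component is $\NormalSection(\ZakPoint)^*\ComplementProjection\FixedCocycle\CandidateInvariantVector$, because $\NormalSection^*\ComplementProjection=\NormalSection^*$. Dividing gives $\NormalSection^*\CorrectionMap\CorrectionField=\ProjectiveMap(\ZakPoint,\ProjectiveCoordinate(\TranslateZak\ZakPoint))$. Here $|\ProjectiveCoordinate|=\|\CorrectionField\|\le3/4$, so $\ProjectiveCoordinate(\TranslateZak\ZakPoint)$ lies in $V_0$. Applying this to $\FixedCorrection=\CorrectionMap\FixedCorrection$, together with $|\ProjectiveCoordinate_*|=\|\FixedCorrection\|<5/7$ from \eqref{eq:xstar-small}, gives \eqref{eq:local-fixed}.
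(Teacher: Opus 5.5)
Your proposal is correct and follows essentially the same route as the paper: the paper also bounds the denominator via $|\psi(z,\xi)|\le\delta\sqrt{1+|\xi|^2}$, obtains the derivative bound through the same difference-quotient argument with the quotient identity and the four estimates on numerator and denominator, and derives $\xi^\sharp$ by the same frame computation. The only differences are cosmetic. The paper takes $V=\{|\xi|<2\}$ rather than your $\{|\xi|<3/4+\varepsilon\}$, and your determinant/singular-value detour is not needed.
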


\begin{proof}
Define
$
  V := \{ \ProjectiveCoordinate \in \C \,\,:\,\, |\ProjectiveCoordinate| < 2 \}
  .
$
Then, for $\ProjectiveCoordinate \in V$, a direct calculation shows that
\begin{equation}
    \begin{aligned}
        \LocalMatrixEntry_{1,1} (\ZakPoint) + \LocalMatrixEntry_{1,2}(\ZakPoint) \ProjectiveCoordinate
        &=  \left( \LocalMatrix (\ZakPoint) \begin{pmatrix} 1 \\ \ProjectiveCoordinate \end{pmatrix} \right)_1 \\
        &=  \left[
        \left(
          \LocalMatrix (\ZakPoint) - \begin{pmatrix} 1 & 0 \\ 0 & 0 \end{pmatrix}
          \right)
          \begin{pmatrix} 1 \\ \ProjectiveCoordinate \end{pmatrix}
        \right]_1
        + \left[ \begin{pmatrix} 1 & 0 \\ 0 & 0 \end{pmatrix} \begin{pmatrix} 1 \\ \ProjectiveCoordinate \end{pmatrix} \right]_1 \\
        &=: \psi(\ZakPoint, \ProjectiveCoordinate) + 1 .
    \end{aligned}
    \label{eq:DenominatorRewriting}
\end{equation}
Next, the matrix estimate \eqref{eq:local-matrix-error} yields
\[
  |\psi (\ZakPoint, \ProjectiveCoordinate)|
  \leq \CertifiedError \cdot \| (1,\ProjectiveCoordinate) \|
  =    \CertifiedError \cdot \sqrt{1 + |\xi|^2}
  <    \CertifiedError \sqrt{5}
  \leq \frac{1}{3} \sqrt{5}
  <    1
  .
\]
This implies that $\ProjectiveMap : \R^2 \times V \to \C$ is well-defined and smooth, and holomorphic in the second variable. Moreover, if  $\ProjectiveCoordinate \in V_0$, then the above calculation shows $|\psi (\ZakPoint, \ProjectiveCoordinate)| \leq \CertifiedError \sqrt{1 + |\ProjectiveCoordinate|^2} < \frac{1}{3} \cdot \frac{5}{4}$, and hence
\[
 \left|
  \LocalMatrixEntry_{1,1}(\ZakPoint)
  +\LocalMatrixEntry_{1,2}(\ZakPoint)\ProjectiveCoordinate
 \right|
 \ge
 1-\CertifiedError\sqrt{1+|\ProjectiveCoordinate|^2}
 >1-\frac{1}{3}\cdot\frac{5}{4}=\frac{7}{12}
\]
which proves \eqref{eq:local-denom-regularity}.

To prove \eqref{eq:vertical-derivative-bound}, write the numerator and
denominator in \eqref{eq:local-projective-map} as
$N_{\ZakPoint}(\ProjectiveCoordinate)$ and
$D_{\ZakPoint}(\ProjectiveCoordinate)$, respectively, and put
$r_0 :=1-(5/4)\CertifiedError$. 
For
$\ProjectiveCoordinate, \widetilde{\ProjectiveCoordinate} \in V_0$,
\eqref{eq:local-matrix-error} gives
the following estimates:
\begin{equation} \label{eq:estimates_differences}
\begin{aligned}
 |N_{\ZakPoint}(\ProjectiveCoordinate)
   -N_{\ZakPoint}(\widetilde{\ProjectiveCoordinate})|
 &\le\CertifiedError
   |\ProjectiveCoordinate-\widetilde{\ProjectiveCoordinate}|,\\
 |D_{\ZakPoint}(\ProjectiveCoordinate)
   -D_{\ZakPoint}(\widetilde{\ProjectiveCoordinate})|
 &\le\CertifiedError
   |\ProjectiveCoordinate-\widetilde{\ProjectiveCoordinate}|,\\
 |N_{\ZakPoint}(\widetilde{\ProjectiveCoordinate})|
 &\le\frac54\CertifiedError,\\
 |D_{\ZakPoint}(\ProjectiveCoordinate)|,
 |D_{\ZakPoint}(\widetilde{\ProjectiveCoordinate})|
 &\ge r_0,
\end{aligned}
\end{equation}
cf.
Appendix~\ref{sub:local-projective-estimates} for a detailed verification.
The quotient identity \eqref{eq:quotient-difference} therefore yields
\begin{align*}
 |\ProjectiveMap(\ZakPoint,\ProjectiveCoordinate)
   -\ProjectiveMap(\ZakPoint,\widetilde{\ProjectiveCoordinate})|
 &\quad\le
 \left(
  \frac{\CertifiedError}{r_0}
  +\frac{(5/4)\CertifiedError^2}{r_0^2}
 \right)
 |\ProjectiveCoordinate-\widetilde{\ProjectiveCoordinate}|
 =\frac{\CertifiedError}{r_0^2}
 |\ProjectiveCoordinate-\widetilde{\ProjectiveCoordinate}| \leq \frac{48}{49}  |\ProjectiveCoordinate-\widetilde{\ProjectiveCoordinate}|,
\end{align*}
where the last inequality holds since by $\CertifiedError<1/3$ we have
\[
 r_0=1-\frac54\CertifiedError
 >1-\frac5{12}=\frac7{12},
\]
and therefore
\[
 \frac{\CertifiedError}{r_0^2}
 <\frac{1/3}{(7/12)^2}
 =\frac{48}{49}.
\]
Taking the limit as
$\widetilde{\ProjectiveCoordinate}\to\ProjectiveCoordinate$ proves
\eqref{eq:vertical-derivative-bound}.

For the proof of \eqref{eq:CorrectionMapInNormalCorrdinate}, let $\CorrectionField\in\Ball$, and use
\eqref{eq:ProjectiveCoordinateDefinition} to write
\[
 \CorrectionField(\ZakPoint)
 =\ProjectiveCoordinate(\ZakPoint)\NormalSection(\ZakPoint),
 \qquad
 (\CorrectionMap\CorrectionField)(\ZakPoint)
 =\ProjectiveCoordinate^\sharp(\ZakPoint)
  \NormalSection(\ZakPoint).
\]
Since $\NormalSection(\ZakPoint)$ is a unit vector, we have
$
 \ProjectiveCoordinate^\sharp(\ZakPoint)
 =\NormalSection(\ZakPoint)^*
  (\CorrectionMap\CorrectionField)(\ZakPoint).
$
The definition \eqref{eq:Phi} of $\CorrectionMap$ therefore gives
\begin{align*}
 \ProjectiveCoordinate^\sharp(\ZakPoint)
 &=
 \frac{
  \NormalSection(\ZakPoint)^*
  \ComplementProjection(\ZakPoint)
  \FixedCocycle(\ZakPoint)
  \CandidateInvariantVector(\TranslateZak\ZakPoint)
 }{
  \NormalizationScalar{\CorrectionField}(\ZakPoint)
 }
 =
 \frac{
  \NormalSection(\ZakPoint)^*
  \FixedCocycle(\ZakPoint)
  \CandidateInvariantVector(\TranslateZak\ZakPoint)
 }{
  \ReferenceSection(\ZakPoint)^*
  \FixedCocycle(\ZakPoint)
  \CandidateInvariantVector(\TranslateZak\ZakPoint)
 },
\end{align*}
where we used that $\ComplementProjection(\ZakPoint)$ is the
orthogonal projection onto the span of $\NormalSection(\ZakPoint)$,
and hence
$\NormalSection(\ZakPoint)^*\ComplementProjection(\ZakPoint)
=\NormalSection(\ZakPoint)^*$.
By \eqref{eq:vXDefinition}, we have
\begin{align*}
 \CandidateInvariantVector(\TranslateZak\ZakPoint)
 &=\ReferenceSection(\TranslateZak\ZakPoint)
   +\ProjectiveCoordinate(\TranslateZak\ZakPoint)
    \NormalSection(\TranslateZak\ZakPoint)\\
 &=\UnitaryFrame(\TranslateZak\ZakPoint)
   \begin{pmatrix}
    1\\
    \ProjectiveCoordinate(\TranslateZak\ZakPoint)
   \end{pmatrix}.
\end{align*}
Using the definition \eqref{eq:local-cocycle-matrix} of
$\LocalMatrix(\ZakPoint)$, the numerator and denominator above
satisfy
\begin{align*}
    \NormalSection(\ZakPoint)^*
    \FixedCocycle(\ZakPoint)
    \CandidateInvariantVector(\TranslateZak\ZakPoint)
    & = \LocalMatrixEntry_{2,1}(\ZakPoint)
    +\LocalMatrixEntry_{2,2}(\ZakPoint)
    \ProjectiveCoordinate(\TranslateZak\ZakPoint),\\
    \ReferenceSection(\ZakPoint)^*
    \FixedCocycle(\ZakPoint)
    \CandidateInvariantVector(\TranslateZak\ZakPoint)
    & = \LocalMatrixEntry_{1,1}(\ZakPoint)
    +\LocalMatrixEntry_{1,2}(\ZakPoint)
    \ProjectiveCoordinate(\TranslateZak\ZakPoint).
\end{align*}
Consequently,
\begin{equation}
    \ProjectiveCoordinate^\sharp(\ZakPoint)
    =
    \frac{
    \LocalMatrixEntry_{2,1}(\ZakPoint)
    +\LocalMatrixEntry_{2,2}(\ZakPoint)
    \ProjectiveCoordinate(\TranslateZak\ZakPoint)
    }{
    \LocalMatrixEntry_{1,1}(\ZakPoint)
    +\LocalMatrixEntry_{1,2}(\ZakPoint)
    \ProjectiveCoordinate(\TranslateZak\ZakPoint)
    }
    =\ProjectiveMap\bigl(
    \ZakPoint,
    \ProjectiveCoordinate(\TranslateZak\ZakPoint)
    \bigr),
\end{equation}
which proves \eqref{eq:CorrectionMapInNormalCorrdinate}.

Lastly, based on \eqref{eq:CorrectionMapInNormalCorrdinate}, the fixed-point equation $\CorrectionMap \FixedCorrection=\FixedCorrection$ gives the first part of \eqref{eq:local-fixed}, while \eqref{eq:xstar-small} gives its last assertion.
\end{proof}

Using Lemma~\ref{lem:projective-map}, we prove the following theorem.

\begin{theorem}\label{lem:regularity}
    The functions $\FixedCorrection$, $\FixedInvariantVector$ belong to $ C^\infty(\R^2;\C^2)$, and $\InvariantMultiplier$ is a smooth $\Z^2$-periodic function.
\end{theorem}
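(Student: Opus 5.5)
The plan is to prove smoothness of the scalar coordinate $\ProjectiveCoordinate_*$ from Lemma~\ref{lem:projective-map}. Once that is done, $\FixedCorrection=\ProjectiveCoordinate_*\NormalSection$ and $\FixedInvariantVector=\ReferenceSection+\FixedCorrection$ are smooth because $\NormalSection$ and $\ReferenceSection$ are. Moreover, $\InvariantMultiplier=\NormalizationScalar{\FixedCorrection}=\ReferenceSection^*\FixedCocycle\,\FixedInvariantVector\circ\TranslateZak$ is then smooth, being built from smooth functions, and it is $\Z^2$-periodic by the computation \eqref{eq:qIsPeriodic}.

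To get smoothness of $\ProjectiveCoordinate_*$, I will work with the fixed-point equation \eqref{eq:local-fixed}, $\ProjectiveCoordinate_*=\ProjectiveMap(\cdot,\ProjectiveCoordinate_*\circ\TranslateZak)$. The argument is an induction on the order $k$, using a fiber contraction in a space of functions of bounded derivatives. The natural setting is the space $\mathcal Y$ of continuous $\xi:\R^2\to\C$ satisfying the twisted periodicity \eqref{eq:xi-boundary}, so that $|\xi|$ is $\Z^2$-periodic and sup norms are taken over $[0,1]^2$. The operator is $\mathcal T\xi(\ZakPoint)=\ProjectiveMap(\ZakPoint,\xi(\TranslateZak\ZakPoint))$ on $\{\|\xi\|_\infty\le 3/4\}$, and it preserves that ball by Proposition~\ref{prop:contraction}. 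We also need all $\partial^\gamma_\ZakPoint\partial_\xi^j\ProjectiveMap$ to be bounded on $\R^2\times V_0'$ for a slightly larger closed disc $V_0'\subset V$. To see this, note that $\LocalMatrix$ is smooth and, by the sewing relations, transforms under $\Z^2$-shifts by unimodular phases, so its entries have $\Z^2$-periodic moduli and periodic bounds. Alternatively one can use Cauchy estimates in $\xi$ together with compactness of $[0,1]^2$ after accounting for the phases.

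The key step is as follows. I start from the iterates $\xi_{n+1}=\mathcal T\xi_n$ with $\xi_0=0$, which are all smooth and converge uniformly to $\ProjectiveCoordinate_*$ (geometrically, with rate $\ProjectiveContractionConstant$). I then show by induction on $k$ that $\sup_n\max_{|\gamma|\le k}\|\partial^\gamma\xi_n\|_\infty<\infty$ and that $\partial^\gamma\xi_n$ is uniformly Cauchy. Differentiating $\xi_{n+1}(\ZakPoint)=\ProjectiveMap(\ZakPoint,\xi_n(\ZakPoint-\TorusShiftVector))$ with $|\gamma|=k$, the chain rule (Fa\`a di Bruno) gives
\[
\partial^\gamma\xi_{n+1}(\ZakPoint)=\partial_\xi\ProjectiveMap(\ZakPoint,\xi_n(\TranslateZak\ZakPoint))\,(\partial^\gamma\xi_n)(\TranslateZak\ZakPoint)+R_{\gamma,n}(\ZakPoint).
\]
Here $R_{\gamma,n}$ is a polynomial in derivatives of $\xi_n$ of order $<k$, with coefficients given by derivatives of $\ProjectiveMap$ evaluated along the graph. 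Two facts drive the induction. First, $\TranslateZak$ is a translation, so it has unit Jacobian and no factors of its derivative appear. Second, \eqref{eq:vertical-derivative-bound} gives $|\partial_\xi\ProjectiveMap|\le\ProjectiveContractionConstant<1$. Since $\ProjectiveMap$ is holomorphic in $\xi$, only $\partial_\xi$ appears and no $\partial_{\bar\xi}$ terms arise.

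From these two facts, $M_{n+1}^{(k)}\le\ProjectiveContractionConstant M_n^{(k)}+C_k$, which gives uniform bounds. Next, the difference $\partial^\gamma\xi_{n+1}-\partial^\gamma\xi_n$ satisfies a similar affine recursion: its contraction factor is $\ProjectiveContractionConstant$, and its inhomogeneity tends to $0$ by the inductive hypothesis and uniform continuity of the derivatives of $\ProjectiveMap$ on compacta. Hence $\partial^\gamma\xi_n$ converges uniformly. Since uniform convergence of all derivatives implies that the limit $\ProjectiveCoordinate_*$ is $C^k$, and $k$ is arbitrary, $\ProjectiveCoordinate_*$ is $C^\infty$.

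I expect the main obstacle to be the bookkeeping that makes the Cauchy step rigorous. Concretely, one must show the inhomogeneity tends to zero; this uses the fact that the composed factors $\partial^\beta\partial_\xi^j\ProjectiveMap(\ZakPoint,\xi_n(\TranslateZak\ZakPoint))$ converge uniformly, which follows from Lipschitz bounds on $\ProjectiveMap$'s derivatives on $\R^2\times V_0$. The resulting estimate has the form $e_{n+1}\le\ProjectiveContractionConstant e_n+\epsilon_n$ with $\epsilon_n\to0$, and this forces $e_n\to0$. A secondary concern is ensuring that the iterates stay in the disc where \eqref{eq:vertical-derivative-bound} applies, which Step~1 of Proposition~\ref{prop:contraction} guarantees ($\|\xi_n\|_\infty<5/7$).
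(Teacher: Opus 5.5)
Your scheme matches the paper's: iterate from $\xi_0=0$, differentiate with Fa\`a di Bruno, use $|\partial_\xi\mathfrak F|\le\lambda_0<1$ on the top-order term, and close an affine recursion. But the step that actually closes the recursion is missing, and the justification you give for it is false for derivatives.

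The twisted periodicity \eqref{eq:xi-boundary} makes $|\xi|$ periodic, but not $|\partial^\gamma\xi|$. Iterating it gives $\xi(x+m,\omega)=(-1)^m e^{2\pi i m\omega}\xi(x,\omega)$, and hence
\[
\partial_\omega\xi(x+m,\omega)=(-1)^m e^{2\pi i m\omega}\bigl(\partial_\omega\xi(x,\omega)+2\pi i m\,\xi(x,\omega)\bigr),
\]
which is unbounded in $m$ unless $\xi\equiv0$. So $\sup_{\R^2}|\partial^\gamma\xi_n|=\infty$ whenever $\gamma$ involves $\partial_\omega$. Likewise, $\partial_z^\gamma\partial_\xi^j\mathfrak F$ is in general not bounded on $\R^2\times V_0'$: one has $\mathfrak F(z+e_1,\xi)=-e^{2\pi i\omega}\mathfrak F(z,-e^{-2\pi i(\omega-\beta)}\xi)$, and because the phases depend on $\omega$, only undifferentiated moduli are periodic.

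If you instead take sup norms over $[0,1]^2$, as you say, the top-order factor $(\partial^\gamma\xi_n)(T_\tau z)$ for $z\in[0,1]^2$ is evaluated on $T_\tau([0,1]^2)\not\subset[0,1]^2$. The fact that $T_\tau$ is a translation with unit Jacobian does not let you bound this by $\|\partial^\gamma\xi_n\|_{L^\infty([0,1]^2)}$. What you need is the estimate \eqref{eq:translated-square-derivative}--\eqref{eq:translated-square-estimate}. Differentiating $f(z)=(-1)^{\epsilon_1+\epsilon_2}e^{-2\pi i\epsilon_1\omega}f(z+\epsilon_1e_1+\epsilon_2e_2)$ shows that $\partial^\gamma f$ at the translated point equals a unimodular factor times $\partial^\gamma f$ at a point of the fundamental domain, plus lower-order derivatives with bounded coefficients. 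This gives
\[
\|\partial^\gamma f\|_{L^\infty(T_\tau\Omega)}\le\|\partial^\gamma f\|_{L^\infty(\Omega)}+C\sum_{|\kappa|<|\gamma|}\|\partial^\kappa f\|_{L^\infty(\Omega)}.
\]
The coefficient exactly $1$ on the top-order term is what preserves the factor $\lambda_0<1$ in $a_{n+1}\le\lambda_0a_n+C$. With this estimate, and with bounds on derivatives of $\mathfrak F$ taken only over the compact set $[0,1]^2\times V_0$, your induction goes through; without it, the recursion does not close.

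A second, smaller gap is in your convergence step. From $e_{n+1}\le\lambda_0e_n+\epsilon_n$ with $\epsilon_n\to0$ you only get $e_n\to0$. If $e_n$ measures consecutive differences $\partial^\gamma\xi_{n+1}-\partial^\gamma\xi_n$, that does not make $(\partial^\gamma\xi_n)_n$ uniformly Cauchy. You could fix this in either of two ways:
\begin{itemize}
\item show geometric decay of $\epsilon_n$, obtainable inductively from $\|\xi_{n+1}-\xi_n\|_\infty\le\lambda_0^n\|\xi_1\|_\infty$, so that the $e_n$ are summable;
\item use a two-index version $e_{n+1,m+1}\le\lambda_0e_{n,m}+\epsilon_{n,m}$ together with the uniform bounds, which gives $\limsup e_{n,m}=0$.
\end{itemize}
The paper avoids this altogether: once all derivatives are uniformly bounded on compacta, Arzel\`a--Ascoli plus the uniform convergence $\xi_n\to\xi_*$ already yields $\xi_*\in C^\infty$.
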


\begin{proof}
    Consider the iteration
    \begin{equation}
        \CorrectionField_0 := 0 \in \Ball,
        \quad \text{and} \quad
        \CorrectionField_{n+1} := \CorrectionMap \CorrectionField_n \in \Ball,
        \qquad n \in \N_0.
    \end{equation}
    We refer to Proposition~\ref{prop:contraction} for the fact that $\CorrectionField_n \in \Ball$.
    Using \eqref{eq:ProjectiveCoordinateDefinition}, we write $\CorrectionField_n(\ZakPoint) = \ProjectiveCoordinate_n(\ZakPoint)\NormalSection(\ZakPoint)$ and $\FixedCorrection(\ZakPoint) =\ProjectiveCoordinate_*(\ZakPoint)\NormalSection(\ZakPoint)$. Equation~\eqref{eq:CorrectionMapInNormalCorrdinate} of Lemma~\ref{lem:projective-map} shows that
    \begin{equation}\label{eq:scalar-iteration}
        \ProjectiveCoordinate_0(\ZakPoint) = 0,
        \qquad
        \ProjectiveCoordinate_{n+1}(\ZakPoint)
        =
        \ProjectiveMap\bigl(
        \ZakPoint,
        \ProjectiveCoordinate_n(\TranslateZak\ZakPoint)
        \bigr),
        \qquad \ZakPoint \in \R^2
        ,
    \end{equation}
    where $\ProjectiveMap$ was defined in \eqref{eq:local-projective-map}. By induction, the smoothness assertion in Lemma~\ref{lem:projective-map} shows that every $\ProjectiveCoordinate_n\in C^\infty(\R^2)$, and hence every $\CorrectionField_n$ is a $C^\infty$ vector Zak function on $\R^2$. Therefore, $\ProjectiveCoordinate_n$ satisfies \eqref{eq:xi-boundary}. Moreover, $\CorrectionField_n \to \FixedCorrection$ in $\CalX$ by Proposition~\ref{prop:contraction} and Banach's fixed point theorem, meaning that $\CorrectionField_n \to \FixedCorrection$ uniformly on $\R^2$. As $\ProjectiveCoordinate_n (\ZakPoint) = \NormalSection(\ZakPoint)^\ast \CorrectionField_n (\ZakPoint)$ and $\|\NormalSection(\ZakPoint)\| = 1$, that implies $\ProjectiveCoordinate_n \to \ProjectiveCoordinate_*$ uniformly on $\R^2$.
    
    The main idea of the proof is as follows. For  each $T \geq 1$ set $\Omega_T := [-T,T]^2$ and show for each multi-index $\gamma \in \N_0^2$ that
    \begin{equation}
      C_0 (\gamma, T)
      := \sup_{n \in \N_0}
           \| \partial^\gamma \ProjectiveCoordinate_n \|_{L^\infty (\Omega_T)}
      < \infty
      .
      \label{eq:LocallyUniformDerivativeBound}
    \end{equation}
    
    In Step~1 below, we will show this inductively for derivatives of order $|\gamma|$. In Step~2, we will then verify that this implies smoothness of $\ProjectiveCoordinate_*$, and hence of $\FixedCorrection$ and $\FixedInvariantVector$. To achieve this, we combine an Arzelà–Ascoli argument with the fact that $\CorrectionField_n \to \FixedCorrection$ uniformly on $\R^2$. Lastly, in Step~3, we will prove the claimed properties of $\InvariantMultiplier$.
    \\~\\
    \textbf{Step~1.} In this step, we prove that \eqref{eq:LocallyUniformDerivativeBound} holds for all multi-indices $\gamma \in \N_0^2$ with $|\gamma| \leq r$ by induction on $r \in \N_0$. For $r = 0$, we have $\gamma = 0$ and
    $
      \| \ProjectiveCoordinate_n \|_{L^\infty(\R^2)}
      \leq \frac{3}{4}
      ,
    $
    so that \eqref{eq:LocallyUniformDerivativeBound} holds.
    
    For the inductive step, let $r \in \N$ and suppose that \eqref{eq:LocallyUniformDerivativeBound} holds for all multi-indices $\gamma \in \N_0^2$ with $|\gamma| < r$. Let $\gamma = (a,b) \in \N_0^2$ with $|\gamma| = r$; we want to verify \eqref{eq:LocallyUniformDerivativeBound} for $\gamma$. To this end, we first note that
    \begin{equation}
      \TranslateZak (\Omega_T)
      = \Omega_T - \TorusShiftVector
      \subset [-(T+1), T]^2
      \subset \bigcup_{(\eps_1, \eps_2) \in \{0,1\}^2} \bigl( [-T, T]^2 - \eps_1 e_1 - \eps_2 e_2 \bigr)
      ,
      \label{eq:TranslationInclusion}
    \end{equation}
    where we recall that
    $\TorusShiftVector=(\TranslationAlpha,\TranslationBeta)\in(0,1)^2$.
    We now use this inclusion to derive an estimate controlling derivatives on
    $\TranslateZak(\Omega_T)$ by derivatives on $\Omega_T$. 
    Let $f\in C^r(\R^2)$ satisfy \eqref{eq:xi-boundary}, i.e.,
    \begin{equation}
     f(\ZakPosition+1,\ZakFrequency)
     =-\e^{2\pi\ii\ZakFrequency}f(\ZakPosition,\ZakFrequency),
     \qquad
     f(\ZakPosition,\ZakFrequency+1)
     =-f(\ZakPosition,\ZakFrequency).
     \label{eq:DerivativeProofQuasiPeriodicity}
    \end{equation}
    For each fixed pair $(\eps_1,\eps_2) \in \{0,1\}^2$,
    \eqref{eq:DerivativeProofQuasiPeriodicity} implies 
    \[
      f(\ZakPoint)
      =(-1)^{\eps_1+\eps_2}
       \e^{-2\pi\ii\eps_1\ZakFrequency}
      f(\ZakPoint+\eps_1e_1+\eps_2e_2),
      \qquad \ZakPoint=(\ZakPosition,\ZakFrequency)\in\R^2.
    \] 
    Keeping this pair fixed, differentiate the identity. For every $\ZakPoint\in\R^2$, this gives by the product rule for higher derivatives that
    \begin{equation}\label{eq:translated-square-derivative}
    \begin{split}
     \partial^\gamma f (\ZakPoint)
     &=\partial_{\ZakPosition}^a\partial_{\ZakFrequency}^b f(\ZakPoint)\\
     &=
     (-1)^{\eps_1+\eps_2}
     \e^{-2\pi\ii\eps_1\ZakFrequency}
     \sum_{j=0}^{b}\binom bj
     (-2\pi\ii\eps_1)^{b-j}
     \bigl(\partial_{\ZakPosition}^a
           \partial_{\ZakFrequency}^j f\bigr)
     (\ZakPoint+\eps_1e_1+\eps_2e_2) \\
     &= (-1)^{\eps_1+\eps_2}
       \e^{-2\pi\ii\eps_1\ZakFrequency}
       \cdot \bigl( \partial^\gamma f\bigr)
       (\ZakPoint+\eps_1e_1+\eps_2e_2) \\
       &\quad +
       (-1)^{\eps_1+\eps_2}
       \e^{-2\pi\ii\eps_1\ZakFrequency}
       \sum_{j=0}^{b-1}
       \binom{b}{j}
       (-2\pi\ii\eps_1)^{b-j}
       \bigl(\partial_{\ZakPosition}^a
             \partial_{\ZakFrequency}^j f\bigr)
       (\ZakPoint+\eps_1e_1+\eps_2e_2)
    \end{split}
    \end{equation}
    Fix $\ZakPoint'=(\ZakPosition',\ZakFrequency') \in \TranslateZak (\Omega_T)$ for the moment.
    By \eqref{eq:TranslationInclusion}, we can choose $(\eps_1, \eps_2) \in \{0,1\}^2$ with
    \[
      \ZakPoint'+\eps_1e_1+\eps_2e_2 \in \Omega_T.
    \]
    Applying \eqref{eq:translated-square-derivative} to $\ZakPoint'$,
    we thus obtain existence of a constant $C_1 (\gamma,T)>0$ such that
    \begin{equation}\label{eq:translated-square-estimate}
     \norm{\partial^\gamma f}_{L^\infty(\TranslateZak (\Omega_T))}
     \leq
     \norm{\partial^\gamma f}_{L^\infty(\Omega_T)}
     +  C_1 (\gamma,T) \sum_{|\kappa|<r} \norm{\partial^\kappa f}_{L^\infty(\Omega_T)}.
    \end{equation}
    Here, it is \emph{crucial} for what follows that the term $\norm{\partial^\gamma f}_{L^\infty(\Omega_T)}$ appears without a factor in front of it. We remark that $C_1(\gamma,T)$ can in fact be chosen independently of $T \geq 1$, but we will not need this.
    
    Next, we use \eqref{eq:scalar-iteration} and a Fa\'a di Bruno-type formula (cf.\ Lemma~\ref{lem:FaaDiBrunoBound}) to write
    \begin{equation}\label{eq:highest-derivative-recursion}
        \partial^\gamma\ProjectiveCoordinate_{n+1}(\ZakPoint)
        = \partial^\gamma \Bigl[ \ProjectiveMap\bigl( \ZakPoint, \ProjectiveCoordinate_n(\TranslateZak\ZakPoint) \bigr) \Bigr]
        = A_n(\ZakPoint)
        (\partial^\gamma\ProjectiveCoordinate_n) (\TranslateZak\ZakPoint)
        + H_{\gamma,n}(\ZakPoint),
    \end{equation}
    where
    \begin{equation}
        A_n(\ZakPoint)
        := (\partial_{\ProjectiveCoordinate}\ProjectiveMap)
        \bigl( \ZakPoint, \ProjectiveCoordinate_n(\TranslateZak\ZakPoint) \bigr),
        \qquad
        \ZakPoint \in \R^2
        \label{eq:IteratorDerivative}
    \end{equation}
    and where the term
    \[
      H_{\gamma,n}(\ZakPoint)
      := R_\gamma [\ProjectiveCoordinate_n] (\ZakPoint)
    \]
    contains all the partial derivatives of $\ProjectiveCoordinate_n$ obtained from successive applications of the chain rule that are of lower order than $|\gamma| = r$. The precise form of $R_\gamma [\ProjectiveCoordinate_n] (\ZakPoint)$ is not important for us; we only need the following estimate, for $n \in \N_0$:
    \begin{align*}
        \| H_{\gamma,n} \|_{L^\infty(\Omega_T)}
        &= \| R_\gamma [\ProjectiveCoordinate_n] \|_{L^\infty(\Omega_T)} \\
        &\leq
        C_2 (\gamma, T) \cdot
        (1 + \| \ProjectiveCoordinate_n \|_{C^{|\gamma| - 1}(\TranslateZak(\Omega_T))})^{|\gamma|} \\
        &\leq
        C_2 (\gamma, T) \cdot
        \Bigl(1+\max_{\ell\in\N_0^2,\,|\ell|\leq r-1}
        C_0(\ell,T+1)\Bigr)^{|\gamma|} \\
        &=: C_3 (\gamma,T) < \infty.
    \end{align*}
    Here, the second estimate follows from Lemma~\ref{lem:FaaDiBrunoBound}, which is applicable since $\Omega_T \subset \R^2$ and $V_0 \subset V$ are compact and $|\ProjectiveCoordinate_n(\ZakPoint)| = \| \CorrectionField_n(\ZakPoint) \|_\infty \leq 3/4$, as $\CorrectionField_n \in \Ball$. The penultimate step used that $\TranslateZak(\Omega_T) \subset \Omega_{T+1}$ and inductively applied \eqref{eq:LocallyUniformDerivativeBound} (for $T+1$ instead of $T$).
    
    Next, note that the quantity $A_n(\ZakPoint)$ from \eqref{eq:IteratorDerivative} satisfies the bound
    \[
      |A_n (\ZakPoint)|
      \leq \ProjectiveContractionConstant
      <    1
      , \qquad \ZakPoint \in \R^2
      ,
    \]
    due to \eqref{eq:vertical-derivative-bound}. Combining this estimate with \eqref{eq:highest-derivative-recursion} and \eqref{eq:translated-square-estimate}, we finally see for
    \[
      a_n
      :=a_{n} (\gamma, T)
      := \| \partial^\gamma \ProjectiveCoordinate_{n} \|_{L^\infty(\Omega_T)}
    \]
    that
    \begin{align*}
      a_{n+1} (\gamma, T)
      &= \| \partial^\gamma \ProjectiveCoordinate_{n+1} \|_{L^\infty(\Omega_T)} \\
      &\leq \|A_n\|_{L^\infty(\R^2)}
             \cdot \| \partial^\gamma \ProjectiveCoordinate_n \|_{L^\infty(\TranslateZak (\Omega_T))}
             + \| H_{\gamma,n} \|_{L^\infty(\Omega_T)} \\
      &\leq
        \ProjectiveContractionConstant
        \cdot  \Bigl(
                   \| \partial^\gamma \ProjectiveCoordinate_n \|_{L^\infty (\Omega_T)}
                   + C_1 (\gamma,T) \sum_{|\kappa| < r} \| \partial^\kappa \ProjectiveCoordinate_n \|_{L^\infty(\Omega_T)}
               \Bigr)
        + C_3 (\gamma, T) \\
      &\leq
        \ProjectiveContractionConstant
        \cdot \| \partial^\gamma \ProjectiveCoordinate_n \|_{L^\infty (\Omega_T)}
        + C_1 (\gamma,T) \sum_{|\kappa| < r} C_0 (\kappa, T)
        + C_3 (\gamma, T) \\
      &=: \ProjectiveContractionConstant\,a_n (\gamma, T)
          + C_4 (\gamma,T)
      ,
    \end{align*}
    where the penultimate step used that $\ProjectiveContractionConstant<1$ and the induction assumption.
    
    The above recursive estimate implies that the sequence $\bigl(a_n(\gamma,T) \bigr)_{n \in \N_0}$ is bounded;
    in fact,  Lemma~\ref{lem:RecursiveSequenceEstimate} shows that
    \[
      \| \partial^\gamma \ProjectiveCoordinate_{n} \|_{L^\infty(\Omega_T)}
      = a_n (\gamma,T)
      \leq a_0 (\gamma,T)
      + \frac{C_4 (\gamma,T)}{1 - \ProjectiveContractionConstant}
      \qquad \forall \, n \in \N
      .
    \]
    This completes the induction and thus Step~1 of the proof.
    \\~\\
    \textbf{Step 2.} We next prove the smoothness of $\ProjectiveCoordinate_*$, $\FixedCorrection$, and $\FixedInvariantVector$. Note that $\Omega_T = [-T,T]^2$ is convex. Hence, the estimate \eqref{eq:LocallyUniformDerivativeBound} (applied to $\gamma + e_i$ for $i \in \{1,2\}$ instead of $\gamma$) implies for each $\gamma \in \N_0^2$ and each $T \geq 1$ that the sequence $(\partial^\gamma \ProjectiveCoordinate_n)_{n \in \N_0}$ is uniformly Lipschitz, and thus equicontinuous, on $\Omega_T$. Moreover, \eqref{eq:LocallyUniformDerivativeBound} implies that the sequence is also uniformly bounded on this set.
    
    By an application of the Arzelà–Ascoli theorem, this implies for arbitrary $r \in \N$ and $T \geq 1$ that for a subsequence $(n_\ell)_\ell$ we have $\partial^\gamma \ProjectiveCoordinate_{n_\ell} \to f_\gamma$ uniformly on $\Omega_T$ as $\ell \to \infty$, for all $|\gamma| \leq r$. Since $\ProjectiveCoordinate_{n_\ell} \to \ProjectiveCoordinate_*$ uniformly on $\R^2$, this implies that $\ProjectiveCoordinate_*$ is $C^r$ on $\Omega_T$. Since $r \in \N$ and $T \geq 1$ were arbitrary, this implies that $\ProjectiveCoordinate_*$ is $C^\infty$ on $\R^2$.
    
    Since $\FixedCorrection=\ProjectiveCoordinate_*\NormalSection$ and $\NormalSection\in C^\infty(\R^2;\C^2)$, it follows that $\FixedCorrection\in C^\infty(\R^2;\C^2)$. Finally, \eqref{eq:vq} gives $\FixedInvariantVector=\ReferenceSection+\FixedCorrection$; hence $\FixedInvariantVector\in C^\infty(\R^2;\C^2)$ as well.
    \\~\\
    \textbf{Step 3.} Finally, we verify the claimed properties of $\InvariantMultiplier$.
    Applying \eqref{eq:qIsPeriodic} to $\NormalizationScalar{\FixedCorrection}$ shows directly that
    $\InvariantMultiplier$ is $\Z^2$-periodic. Moreover, since $\ReferenceSection$ is smooth by Lemma~\ref{lem:unit}, and since it follows directly from the definitions of  $\LatticeWeylMatrix_{m,n}$ (see \eqref{eq:Lmn-factorization}) and $\FixedMatrixField$ (see \eqref{eq:Astar}) that $\FixedMatrixField$ is smooth, while $\OffsetPhase$ is smooth by definition (see \eqref{eq:EtaDefinition}), we see that also the function $\FixedCocycle : \R^2 \to \C^{2 \times 2}$ defined in \eqref{eq:Bstar} is smooth.
    
    Hence, since we showed above that $\FixedCorrection$ and $\FixedInvariantVector$ are smooth, it follows from the definition of $\NormalizationScalar{\FixedCorrection}$ in \eqref{eq:vq} that $\InvariantMultiplier$ is smooth on $\R^2$.
\end{proof}

\section{Reduction of the scalar multiplier to a constant}
\label{sec:cohomology}

In this section, we will show that the scalar multiplier $\InvariantMultiplier$ can be reduced to a constant. 
This will be shown in Proposition~\ref{prop:cohomology}. For this, we will first prove an auxiliary result in Subsection \ref{sec:estimate}.

\subsection{Estimate for the translation vector} \label{sec:estimate}

For  $s \in \R$, we define its distance to the nearest integer by
\begin{equation}
 \norm{s}_{\R/\Z}=\inf_{\ell\in\Z}|s-\ell|.
\end{equation}

We recall the arithmetic data $
    \CubicIrrational := \sqrt[3]{2},$
    $\TranslationAlpha := \CubicIrrational-1$
    and $\TranslationBeta := \CubicIrrational^2-1$ from Equation \eqref{eq:arith-data}. 
    The following lemma is well-known. For example, it is a special case of \cite[Chapter~VI, Lemma~1A]{Schmidt1980Diophantine}. Nevertheless, we include its proof for the sake of completeness.

\begin{lemma} \label{lem:dio}
For every nonzero $(m,n)\in\Z^2$,
\begin{equation}\label{eq:dio}
 \norm{m\TranslationAlpha+n\TranslationBeta}_{\R/\Z}
 \ge \frac{1}{12}(1+|m|+|n|)^{-2}.
\end{equation}
\end{lemma}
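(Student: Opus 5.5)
We argue via the norm form of the cubic field $\Q(\CubicIrrational)$, where $\CubicIrrational=\sqrt[3]{2}$. Fix a nonzero $(m,n)\in\Z^2$ and let $\ell\in\Z$ be the nearest integer to $m\TranslationAlpha+n\TranslationBeta$. Set
\[
\xi := m\TranslationAlpha+n\TranslationBeta-\ell = n\CubicIrrational^2+m\CubicIrrational-(m+n+\ell).
\]
This is an element of $\Z[\CubicIrrational]$ with coordinates $(c,m,n)$, where $c:=-(m+n+\ell)$. Since $1,\CubicIrrational,\CubicIrrational^2$ are linearly independent over $\Q$ and $(m,n)\neq 0$, we have $\xi\neq0$.

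Let $\omega=\e^{2\pi\ii/3}$ and consider the conjugates
\[
\xi' = n\omega^2\CubicIrrational^2+m\omega\CubicIrrational+c,
\qquad
\xi''=\overline{\xi'}.
\]
The norm $N(\xi)=\xi\,\xi'\,\xi''=\xi|\xi'|^2$ is a nonzero rational integer. Explicitly,
\[
N(c+m\CubicIrrational+n\CubicIrrational^2)=c^3+2m^3+4n^3-6cmn,
\]
and in any case $|N(\xi)|\ge1$. Hence
\[
|\xi|\ge 1/|\xi'|^2 .
\]

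It remains to bound $|\xi'|$ from above linearly in $1+|m|+|n|$. Since $|\xi|\le 1/2$, we have
\[
|c|=|m\CubicIrrational+n\CubicIrrational^2-\xi|\le \CubicIrrational|m|+\CubicIrrational^2|n|+\tfrac12 .
\]
Therefore
\[
|\xi'|\le |c|+\CubicIrrational|m|+\CubicIrrational^2|n| \le 2\CubicIrrational|m|+2\CubicIrrational^2|n|+\tfrac12 .
\]
With $\CubicIrrational^2\approx1.587$ this gives $|\xi'|\le 3.18(|m|+|n|)+\tfrac12\le 3.2\,(1+|m|+|n|)$. Consequently
\[
\|m\TranslationAlpha+n\TranslationBeta\|_{\R/\Z}=|\xi|\ge \frac{1}{10.3\,(1+|m|+|n|)^2},
\]
which is stronger than the claimed constant $1/12$. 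The bound can be sharpened slightly by using $|\xi'|^2=\xi'\overline{\xi'}$, but this is not needed.

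The only step requiring care is confirming $N(\xi)\in\Z\setminus\{0\}$. There are two clean ways to do this. One is to note that the product of the three conjugates is a symmetric function of the roots of $x^3-2$ with integer coefficients in $(c,m,n)$. The other is to verify directly that it equals $\det(c I+mM+nM^2)$, where $M$ is the companion matrix of $x^3-2$, which has integer entries. Nonvanishing follows because each conjugate is nonzero, being the image of $\xi\neq0$ under a field embedding. Everything else is the elementary arithmetic above, keeping track of the constant so that it stays at most $12$.
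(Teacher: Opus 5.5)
Your proof is correct and is essentially the paper's argument: the paper factors the integer $k^3+2m^3+4n^3-6kmn$ via $a^3+b^3+c^3-3abc=(a+b+c)(a^2+b^2+c^2-ab-ac-bc)$, and its bracket $\bigl(k-\tfrac{m\vartheta+n\vartheta^2}{2}\bigr)^2+\tfrac34(m\vartheta-n\vartheta^2)^2$ is exactly your $|\xi'|^2$, written as squared real part plus squared imaginary part. The only difference is that you bound $|\xi'|$ by the triangle inequality instead of bounding the real and imaginary parts separately, which yields the slightly better constant $10.3$ in place of $12$.
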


\begin{proof}
Choose $\ell\in\Z$ nearest to $-m\TranslationAlpha-n\TranslationBeta$ and set
\begin{equation}
 d:=\ell+m\TranslationAlpha+n\TranslationBeta
 = k+m\CubicIrrational+n\CubicIrrational^2
 \qquad \text{for} \qquad k:=\ell-m-n.
\end{equation}
Then
\begin{equation}
 \norm{m\TranslationAlpha+n\TranslationBeta}_{\R/\Z} = |d| 
 \le\frac12.
\end{equation}
Since the polynomial $x^3-2$ has no rational root, it is
irreducible over $\Q$, which implies that $1,\CubicIrrational,\CubicIrrational^2$ are linearly
independent over $\Q$. Therefore, $d\ne0$.

Next, set
$
 q := k^3+2m^3+4n^3-6kmn\in\Z.
$
Applying the elementary identity 
\begin{equation}
 a^3+b^3+c^3-3abc
 =(a+b+c)(a^2+b^2+c^2-ab-ac-bc)
\end{equation}
for $a, b, c \in \R$,  with $a=k$, $b=m\CubicIrrational$, and
$c=n\CubicIrrational^2$, gives
\begin{equation}\label{eq:elementary-cubic-identity}
 q
 = d \left[
  \left(k-\frac{m\CubicIrrational+n\CubicIrrational^2}{2}\right)^2
  +\frac34\left(m\CubicIrrational-n\CubicIrrational^2\right)^2
 \right].
\end{equation}
The expression in brackets on the right-hand side above is positive. Indeed, if both squares were zero, then $m\CubicIrrational=n\CubicIrrational^2$, and hence the irrationality of $\CubicIrrational$ would imply that $m=n=0$, which would in turn imply that $k=0$. This is impossible because $(m,n) \in \Z^2$ is assumed to be nonzero.  Since, as argued above, also $d \ne0$, it follows from \eqref{eq:elementary-cubic-identity}  that the integer $q$ is nonzero.  Therefore $|q|\ge1$.

Let
$
 p:= 1+|m|+|n|.
$
Using $1<\CubicIrrational<3/2$, $\CubicIrrational^2<2$, and $|d|\le1/2$, we
obtain
\begin{equation}
 |k|
 \le|d|+\CubicIrrational|m|+\CubicIrrational^2|n|
 \le\frac12+\frac32|m|+2|n|
 \le2p.
\end{equation}
It follows that
\begin{equation}
 |m\CubicIrrational-n\CubicIrrational^2|
 \le2p \text{ and } \left|k-\frac{m\CubicIrrational+n\CubicIrrational^2}{2}\right|
 \le3p.
\end{equation}
Consequently, the bracket in
\eqref{eq:elementary-cubic-identity} can be estimated from above by
\begin{equation}
 (3p)^2+\frac34(2p)^2=12p^2.
\end{equation}
In combination, this shows that
\begin{equation}
 1\le |q|\le12 p^2|d|,
\end{equation}
and therefore
\begin{equation}
 \norm{m\TranslationAlpha+n\TranslationBeta}_{\R/\Z}
 =|d|
 \ge\frac1{12(1+|m|+|n|)^2},
\end{equation}
which settles the claim.
\end{proof}

As a consequence of Lemma \ref{lem:dio}, for nonzero $k = (m,n) \in \Z^2$,
\begin{equation}\label{eq:denominator-bound}
 \abs{1-\e^{-2\pi\ii k\cdot\TorusShiftVector}}
 =2\abs{\sin(\pi k\cdot\TorusShiftVector)}
 \ge\frac13(1+|m|+|n|)^{-2}.
\end{equation}
Indeed, if
$d := \norm{m\TranslationAlpha+n\TranslationBeta}_{\R/\Z} \le1/2$, then concavity of
$\sin$ on $[0,\pi/2]$ gives $\sin(\pi d)\ge2d$, and Lemma \ref{lem:dio} gives the stated bound. 

\subsection{Reduction of the scalar multiplier}
For a $\Z^2$-periodic, locally integrable function $f : \R^2 \to \C$, we use the Fourier convention
\begin{equation}
    \widehat f(k)
    :=\int_{[0,1]^2}f(\ZakPoint)\e^{-2\pi\ii k\cdot \ZakPoint}\,\dd \ZakPoint \in \C,
    \qquad k\in\Z^2.
\end{equation}

Let $\InvariantMultiplier : \R^2 \to \C$ be defined as in \eqref{eq:vq}, where we recall that this depends on the fixed admissible coefficient sequence $\aaa$. By \eqref{eq:q-disk} from Proposition \ref{prop:contraction}, the image $\InvariantMultiplier(\R^2)$ is a subset of the disk $D(1,5/12)$ of radius $\frac{5}{12}$ around $1$, where we observe that $D(1,5/12) \subset \C_- := \C \setminus (-\infty,0]$. Let $\Log : \C_- \to \C$ be the principal branch of the logarithm on $\C_-$ and define $\LogMultiplier : \R^2 \to \C$ by
\begin{equation}\label{eq:phi}
    \LogMultiplier(\ZakPoint)
    :=\Log \InvariantMultiplier(\ZakPoint),
    \qquad \ZakPoint \in \R^2
    .
\end{equation}
By Theorem~\ref{lem:regularity}, $\InvariantMultiplier$ is smooth on $\R^2$ and $\Z^2$-periodic. Hence, $\LogMultiplier \in C^\infty(\R^2)$ and it is $\Z^2$-periodic. Finally, we define a $\Z^2$-periodic function $u : \R^2 \to \C$ via its Fourier coefficients, given by
\begin{equation}\label{eq:u-hat}
    \widehat{\CohomologySolution}(0) := 0,
    \quad \text{and} \quad
    \widehat{\CohomologySolution}(k)
    :=
    \frac{
    \widehat{\LogMultiplier}(k)
    }{
    1-\e^{-2\pi\ii k\cdot\TorusShiftVector}
    }
    \quad \text{ for } k \in \Z^2 \setminus \{0\},
\end{equation}
where we recall that $\TorusShiftVector = (\TranslationAlpha,\TranslationBeta)$ was defined in \eqref{eq:arith-data}.

The following Proposition~\ref{prop:cohomology} summarizes the main consequences of this definition. In the literature, equations of the type \eqref{eq:additive-cohomology} below are commonly referred to as \emph{cohomological equations}. A standard approach is to solve the equation on the Fourier side, where one divides by factors of the form 
$
    1- \e^{-2\pi\ii k\cdot\TorusShiftVector}
$
and controls the size of these factors. Our argument below follows this strategy in the two-dimensional setting. An analogous argument on the circle can be found in, e.g., \cite[Proposition~2.9.5]{Katok_Hasselblatt_1995}.

\begin{proposition}\label{prop:cohomology}
    The Fourier series defined by the coefficients in \eqref{eq:u-hat} defines a $\Z^2$-periodic function $\CohomologySolution\in C^\infty(\R^2)$ satisfying
    \begin{equation}\label{eq:additive-cohomology}
        \CohomologySolution(\ZakPoint)
        -\CohomologySolution(\TranslateZak\ZakPoint)
        =
        \LogMultiplier(\ZakPoint)-\widehat{\LogMultiplier}(0),
        \qquad \ZakPoint \in \R^2.
    \end{equation}
    Consequently, with
    \begin{equation}\label{eq:hE}
        h :  \R^2 \to \C, \quad
        \ScalarCorrection(\ZakPoint)
        :=\e^{\CohomologySolution(\ZakPoint)},
        \quad \text{and} \quad
        \FixedEigenvalue
        := \e^{\widehat{\LogMultiplier}(0)} \in \C,
    \end{equation}
    one has $\FixedEigenvalue\ne0$,
    $\ScalarCorrection(\ZakPoint) \neq 0$ for all $\ZakPoint \in \R^2$, and
    \begin{equation}\label{eq:multiplicative-cohomology}
     \InvariantMultiplier(\ZakPoint)
     \ScalarCorrection(\TranslateZak\ZakPoint)
     =
     \FixedEigenvalue\ScalarCorrection(\ZakPoint),
     \quad \ZakPoint \in \R^2
     .
    \end{equation}
\end{proposition}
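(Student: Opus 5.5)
The plan is the standard Fourier-side solution of the cohomological equation, using the Diophantine bound \eqref{eq:denominator-bound} to control small divisors. First I will show that the coefficients \eqref{eq:u-hat} decay rapidly. By Theorem~\ref{lem:regularity}, $\LogMultiplier=\Log\InvariantMultiplier$ is smooth and $\Z^2$-periodic. Integration by parts therefore gives, for every $N\in\N$, a constant $K_N$ with $|\widehat{\LogMultiplier}(k)|\le K_N(1+|k_1|+|k_2|)^{-N}$; this is the same standard fact cited in the proof of Lemma~\ref{lem:fourier-density}. Combining this with \eqref{eq:denominator-bound} yields
\[
|\widehat{\CohomologySolution}(k)|\le 3K_{N}(1+|k_1|+|k_2|)^{2-N}.
\]
Since $N$ is arbitrary, $\widehat{\CohomologySolution}$ decays faster than any polynomial. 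Consequently the series $\sum_k \widehat{\CohomologySolution}(k)\e^{2\pi\ii k\cdot\ZakPoint}$, together with all of its termwise derivatives (each bringing a factor bounded by $(2\pi|k|)^{|\gamma|}$), converges absolutely and uniformly. This defines a $\Z^2$-periodic function $\CohomologySolution\in C^\infty(\R^2)$.

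Next I verify \eqref{eq:additive-cohomology} coefficientwise. Since $\TranslateZak\ZakPoint=\ZakPoint-\TorusShiftVector$, the function $\CohomologySolution\circ\TranslateZak$ has Fourier coefficients $\widehat{\CohomologySolution}(k)\e^{-2\pi\ii k\cdot\TorusShiftVector}$. Hence the $k$-th coefficient of $\CohomologySolution-\CohomologySolution\circ\TranslateZak$ is $(1-\e^{-2\pi\ii k\cdot\TorusShiftVector})\widehat{\CohomologySolution}(k)$. By \eqref{eq:u-hat} this equals $\widehat{\LogMultiplier}(k)$ for $k\ne0$, and it equals $0$ for $k=0$. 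These are exactly the coefficients of $\LogMultiplier-\widehat{\LogMultiplier}(0)$. Both sides of \eqref{eq:additive-cohomology} are continuous and periodic, and $\LogMultiplier$ is smooth, so its Fourier series converges absolutely to it. Uniqueness of Fourier coefficients therefore gives the identity pointwise.

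Finally, I exponentiate. Set $\ScalarCorrection=\e^{\CohomologySolution}$ and $\FixedEigenvalue=\e^{\widehat{\LogMultiplier}(0)}$. Both are nonvanishing because they are exponentials, and $\ScalarCorrection$ is smooth. Since $\e^{\LogMultiplier}=\InvariantMultiplier$ (principal logarithm on $\C_-$), exponentiating \eqref{eq:additive-cohomology} gives
\[
\ScalarCorrection(\ZakPoint)/\ScalarCorrection(\TranslateZak\ZakPoint)=\InvariantMultiplier(\ZakPoint)/\FixedEigenvalue,
\]
which rearranges to \eqref{eq:multiplicative-cohomology}.

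The only genuinely delicate point is the small-divisor control. It is already settled by Lemma~\ref{lem:dio}, which loses only a polynomial factor of degree $2$. That loss is harmless precisely because $\LogMultiplier$ is $C^\infty$, which is why the regularity result Theorem~\ref{lem:regularity} was needed.
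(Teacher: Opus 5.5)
Your proposal is correct and follows essentially the same route as the paper: rapid decay of $\widehat{\LogMultiplier}$ combined with the small-divisor bound \eqref{eq:denominator-bound} makes $\CohomologySolution$ smooth, \eqref{eq:additive-cohomology} is checked on the Fourier side, and exponentiating yields \eqref{eq:multiplicative-cohomology}. The only cosmetic difference is that you match Fourier coefficients and invoke uniqueness, whereas the paper sums the series directly; since the Fourier series of $\LogMultiplier$ converges absolutely, this amounts to the same thing.
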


\begin{proof} We split the proof into three steps.
\\~\\
\textbf{Step 1.} We start by proving $\CohomologySolution$ is well-defined, $\Z^2$-periodic, and smooth.
Because $\LogMultiplier$ is $\Z^2$-periodic and smooth,
it follows from standard results in Fourier analysis
(see e.g. \cite[Corollary~3.3.10]{GrafakosClassicalFourier})
that for
each $N \in \N$, there exists a constant $C_N > 0$ such that
\begin{equation}
 |\widehat{\LogMultiplier}(k)|
 \leq C_N \cdot (1+|k_1|+|k_2|)^{-N},
 \quad k = (k_1, k_2) \in \Z^2.
\end{equation}
Combining this with \eqref{eq:denominator-bound} yields
\begin{equation}\label{eq:arbitraryFastDecayOfFourierSeries}
 |\widehat{\CohomologySolution}(k)|
 \leq C_N \cdot (1+|k_1|+|k_2|)^{-N+2}
 \quad k = (k_1, k_2) \in \Z^2
 .
\end{equation} 
Now, for any prescribed derivative order $j \in \N_0$, choose $N > j+4$. Since we are working in dimension two, multiplying \eqref{eq:arbitraryFastDecayOfFourierSeries} by $|k|^j$ then gives an absolutely summable sequence. Thus, the Fourier series for $\CohomologySolution$ and all of its derivatives of any fixed order converge absolutely and uniformly, so that $\CohomologySolution$ is a well-defined, $\Z^2$-periodic function with  $\CohomologySolution\in C^\infty(\R^2)$,  see, e.g., \cite[Proposition~3.3.12]{GrafakosClassicalFourier} for more details on this argument.
\\~\\
\textbf{Step 2.} We next show \eqref{eq:additive-cohomology}.
For this, note that
$
 \e^{2\pi\ii k\cdot\TranslateZak\ZakPoint}
 =
 \e^{-2\pi\ii k\cdot\TorusShiftVector}
 \e^{2\pi\ii k\cdot\ZakPoint},
$
and therefore
\begin{align*}
 &\CohomologySolution(\ZakPoint)
 -\CohomologySolution(\TranslateZak\ZakPoint)\\
 &\quad=
 \sum_{k\in\Z^2}
 \bigl(1-\e^{-2\pi\ii k\cdot\TorusShiftVector}\bigr)
 \widehat{\CohomologySolution}(k)
 \e^{2\pi\ii k\cdot\ZakPoint}\\
 &\quad=
 \sum_{k\ne0}\widehat{\LogMultiplier}(k)
 \e^{2\pi\ii k\cdot\ZakPoint}
 =\LogMultiplier(\ZakPoint) - \widehat{\LogMultiplier} (0).
\end{align*}
This proves
\eqref{eq:additive-cohomology}.
\\~\\
\textbf{Step 3.} Lastly, we prove \eqref{eq:multiplicative-cohomology}. Rearranging \eqref{eq:additive-cohomology} gives
\begin{align}\label{eq:rearrangedIdentitywithPhiandu}
 \LogMultiplier(\ZakPoint)
 +\CohomologySolution(\TranslateZak\ZakPoint)
 =\widehat{\LogMultiplier} (0)
 +\CohomologySolution(\ZakPoint),
 \quad \ZakPoint \in \R^2
 .
\end{align}
Exponentiating \eqref{eq:rearrangedIdentitywithPhiandu} and using
$\e^{\LogMultiplier}=\InvariantMultiplier$ (see \eqref{eq:phi}),
together with the definitions in \eqref{eq:hE}, yields
\[
 \InvariantMultiplier(\ZakPoint)
 \ScalarCorrection(\TranslateZak\ZakPoint)
 =\FixedEigenvalue\ScalarCorrection(\ZakPoint),
 \quad \ZakPoint \in \R^2
 ,
\]
which is \eqref{eq:multiplicative-cohomology}.
\end{proof}

\section{Proof of Theorem~\ref{thm:main_Weyl}}
\label{sec:lifting}

We will prove Theorem~\ref{thm:main} and Theorem~\ref{thm:qualitative} using the following general theorem.

\begin{theorem}\label{prop:GeneralMainResult}
  Given a finitely supported, admissible sequence $\aaa \in \C^{\Z \times \Z}$,
  there exists a nonzero Schwartz function $\FinalWindow \in \mathcal{S} (\R)$
  and a nonzero $\FixedEigenvalue \in \C$ satisfying
  \begin{equation}
  \label{eq:GeneralMainResultEquation}
  \begin{aligned}
    \FixedEigenvalue \, \FinalWindow
    =  \sum_{(m,n)\in \Z^2}
          \FixedWeylCoefficient_{m,n} \,
          \e^{\frac{\pi\ii}{2}(n\TranslationAlpha-m\TranslationBeta)} 
          \WeylShift\!\left(m+\TranslationAlpha,\frac{n+\TranslationBeta}{2}\right) \FinalWindow
    .
  \end{aligned}
  \end{equation}
  In particular, the finite family
  \[
    \TimeFrequencyShift(0,0) \FinalWindow,
    \quad
    \TimeFrequencyShift \bigl(m + \TranslationAlpha, \tfrac{n + \TranslationBeta}{2} \bigr) \FinalWindow,
    \qquad
    (m,n) \in \supp \aaa
  \]
  of time-frequency shifts of $\FinalWindow$ is linearly dependent.
\end{theorem}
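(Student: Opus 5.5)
We assemble the pieces already established. With the fixed admissible sequence $\aaa$, Proposition~\ref{prop:contraction} and Theorem~\ref{lem:regularity} provide a smooth, nowhere-vanishing vector Zak function $\FixedInvariantVector$ and a smooth $\Z^2$-periodic $\InvariantMultiplier$ satisfying \eqref{eq:line-invariance}. Proposition~\ref{prop:cohomology} provides a smooth, nowhere-vanishing, $\Z^2$-periodic $\ScalarCorrection$ and $\FixedEigenvalue\neq0$ satisfying \eqref{eq:multiplicative-cohomology}. We set $\FinalZakEigenfunction:=\ScalarCorrection\cdot\FixedInvariantVector$. Since $\ScalarCorrection$ is $\Z^2$-periodic and scalar, $\FinalZakEigenfunction$ inherits the quasi-periodicity relations \eqref{eq:x-sewing}, \eqref{eq:omega-sewing}. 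It is smooth and nowhere zero, hence a nonzero element of $\mathscr H_\Zak$. The computation from the outline then gives $\FixedCocycle(\ZakPoint)\FinalZakEigenfunction(\TranslateZak\ZakPoint)=\FixedEigenvalue\FinalZakEigenfunction(\ZakPoint)$ for all $\ZakPoint$. We put $\FinalWindow:=\VectorZak^{-1}\FinalZakEigenfunction$, which is nonzero by Lemma~\ref{lem:vector_zak_unitary}. Then \eqref{eq:Bstar-Zak-Action} shows $\VectorZak\FixedWeylOperator\FinalWindow=\FixedEigenvalue\VectorZak\FinalWindow$ a.e., so $\FixedWeylOperator\FinalWindow=\FixedEigenvalue\FinalWindow$ by injectivity. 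By \eqref{eq:Tstar} and \eqref{eq:Tstar-preview}, $\FixedWeylOperator=\FixedLatticeOperator\WeylShift(\WeylOffset)$ is exactly the operator on the right-hand side of \eqref{eq:GeneralMainResultEquation}.

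For the linear dependence, we convert Weyl shifts to time-frequency shifts via \eqref{eq:Weyl_vs_TF-shifts}, which only changes coefficients by unimodular factors. The points $(m+\TranslationAlpha,(n+\TranslationBeta)/2)$ are pairwise distinct and differ from $0$ because $\TranslationAlpha\notin\Z$. The coefficient of $\TimeFrequencyShift(0,0)\FinalWindow$ is $-\FixedEigenvalue\neq0$, so we obtain a nontrivial vanishing linear combination.

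The main obstacle is showing that $\FinalWindow\in\mathcal S(\R)$. Our plan is to invert the Zak transform explicitly. Let $G:=\ScalarZak\FinalWindow$. By \eqref{eq:inverse-vector Zak-folding}, $G(\ZakPosition,\nu)=\sqrt2\,(\FinalZakEigenfunction)_1(\ZakPosition,2\nu)$ for $\nu\in[0,\tfrac12)$ and $\sqrt2\,(\FinalZakEigenfunction)_2(\ZakPosition,2\nu-1)$ for $\nu\in[\tfrac12,1)$. The relation \eqref{eq:omega-sewing} glues these two pieces smoothly, since $(\FinalZakEigenfunction)_1(\ZakPosition,\ZakFrequency+1)=(\FinalZakEigenfunction)_2(\ZakPosition,\ZakFrequency)$. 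Hence $G$ extends to a $C^\infty$ function on $\R^2$ satisfying \eqref{eq:Zak_quasiperiodic} and \eqref{eq:Zak_periodic}. We then recover the window through $\FinalWindow(\ZakPosition-k)=\int_0^1 G(\ZakPosition,\nu)\e^{-2\pi\ii k\nu}\,\dd\nu$ for $\ZakPosition\in[0,1]$ and $k\in\Z$. Integrating by parts in $\nu$ arbitrarily often, which is allowed by the $1$-periodicity and smoothness of $G$ in $\nu$, gives decay $O(|k|^{-N})$ uniformly in $\ZakPosition\in[0,1]$. The same argument applied to $\partial_\ZakPosition^jG$ controls all derivatives of $\FinalWindow$. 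Together these estimates show that $\FinalWindow\in\mathcal S(\R)$; equivalently, one can invoke the standard fact that $\ScalarZak$ maps $\mathcal S(\R)$ onto the smooth quasi-periodic functions.
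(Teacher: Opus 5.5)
Your proposal is correct and follows essentially the same route as the paper: you form $\FinalZakEigenfunction=\ScalarCorrection\FixedInvariantVector$, verify the twisted eigen-equation, pull back via $\VectorZak^{-1}$, and convert Weyl shifts to time-frequency shifts exactly as the paper does. The only difference is the Schwartz step. The paper reads off smoothness from $(\ScalarZak\FinalWindow)(\ZakPosition,\ZakFrequency)=\sqrt{2}\,[\FinalZakEigenfunction(\ZakPosition,2\ZakFrequency)]_1$ and cites \cite[Theorem~8.2.5]{Gro01}; this identity holds for all $\ZakFrequency$ directly from the definition of $\VectorZak$, so your gluing via $\SecondSewingMatrix$ is correct but unnecessary. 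You instead sketch the proof of that standard characterization by integrating by parts in the frequency variable.
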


\begin{proof}
The proof uses the notation and results from Sections \ref{sec:EigenvalueReformulation}--\ref{sec:cohomology}. In particular, we use the matrix-valued functions $\FixedMatrixField,\FixedCocycle$, the map $\CorrectionMap : \Ball \to \CalX$, and the operators $\FixedLatticeOperator,\FixedWeylOperator : L^2(\R) \to L^2(\R)$ that all depend on the sequence $\aaa$.
Using the Weyl product formula \eqref{eq:Weyl-shift_composition} and \eqref{eq:Astar}, \eqref{eq:Tstar} as well as \eqref{eq:Weyl_vs_TF-shifts}, \eqref{eq:arith-data}, we see that $\FixedWeylOperator = \FixedLatticeOperator\WeylShift(\WeylOffset)$ is explicitly given by
\begin{equation}
  \label{eq:WeylOperatorExplicit}
  \begin{aligned}
    \FixedWeylOperator
    &=  \sum_{(m,n)\in \Z^2}
          \FixedWeylCoefficient_{m,n} \,
          \e^{\frac{\pi\ii}{2}(n\TranslationAlpha-m\TranslationBeta)} 
          \WeylShift\!\left(m+\TranslationAlpha,\frac{n+\TranslationBeta}{2}\right).
  \end{aligned}
\end{equation}

For proving Equation~\eqref{eq:GeneralMainResultEquation}, let $\FixedCorrection \in \Ball \subset \CalX \subset C(\R^2; \C^2)$ be the fixed point constructed in Proposition~\ref{prop:contraction}, and let $\FixedInvariantVector$ and $\InvariantMultiplier$ be as in \eqref{eq:vq}. Equation~\eqref{eq:line-invariance} shows
\[
 \FixedCocycle(\ZakPoint)
 \FixedInvariantVector(\TranslateZak\ZakPoint)
 =\InvariantMultiplier(\ZakPoint)
  \FixedInvariantVector(\ZakPoint),
  \quad \ZakPoint \in \R^2,
\]
and Theorem~\ref{lem:regularity} shows that $\FixedInvariantVector : \R^2 \to \C^2$ and $\InvariantMultiplier : \R^2 \to \C$ are smooth. Moreover, Proposition~\ref{prop:cohomology} provides a $\Z^2$-periodic function $\ScalarCorrection \in C^{\infty} (\R^2; \C)$ satisfying $h(z) \neq 0$ for all $z \in \R^2$ and a nonzero constant $\FixedEigenvalue \in \C$ such that
\[
 \InvariantMultiplier(\ZakPoint)
 \ScalarCorrection(\TranslateZak\ZakPoint)
 =\FixedEigenvalue\ScalarCorrection(\ZakPoint)
 , \quad \ZakPoint \in \R^2
 .
\]
We combine these two constructions to define the function 
\begin{equation}\label{eq:Psi}
\FinalZakEigenfunction : \quad
\R^2 \to \C^2, \qquad
 \FinalZakEigenfunction(\ZakPoint)
 :=
\ScalarCorrection(\ZakPoint)\FixedInvariantVector(\ZakPoint).
\end{equation}
Note that $\FixedInvariantVector$ is a vector Zak function,
since $\FixedInvariantVector = \ReferenceSection + \FixedCorrection$,
where $\ReferenceSection$ satisfies 
\eqref{eq:x-sewing} and \eqref{eq:omega-sewing} by Lemma~\ref{lem:unit}
and $\FixedCorrection \in \Ball \subset \CalX$ satisfies \eqref{eq:x-sewing} and \eqref{eq:omega-sewing}
by definition of the space $\CalX$.
Moreover, $\FixedInvariantVector \in C^\infty (\R^2 ; \C^2)$
and $\ScalarCorrection \in C^{\infty} (\R^2)$
is $\Z^2$-periodic.
Together, this shows that $\FinalZakEigenfunction$ is a $C^\infty$ vector Zak function,
i.e., $\FinalZakEigenfunction$ is smooth and $\FinalZakEigenfunction \in \mathscr{H}_\Zak$.
Here, we use that the function $\FinalZakEigenfunction$ is trivially square-integrable
on a fundamental domain by continuity.

Next, note that $\ScalarCorrection$ never vanishes and
$\ReferenceSection(\ZakPoint)^*\FixedInvariantVector(\ZakPoint)=1$,
as follows from Lemma~\ref{lem:unit} and \eqref{eq:x-orthogonal}.
Thus, $\FixedInvariantVector \neq 0$ and by \eqref{eq:Psi} we conclude that $\FinalZakEigenfunction$
is everywhere nonzero.

Finally, using \eqref{eq:Bstar-Zak-Action}, \eqref{eq:line-invariance}, and \eqref{eq:multiplicative-cohomology},
we obtain that
\begin{align*}
  (\VectorZak\FixedWeylOperator\VectorZak^{-1} \FinalZakEigenfunction)(\ZakPoint)
    &=
    \FixedCocycle(\ZakPoint)
    \FinalZakEigenfunction(\TranslateZak\ZakPoint)
    =
    \FixedCocycle(\ZakPoint)
    \ScalarCorrection(\TranslateZak\ZakPoint)
    \FixedInvariantVector(\TranslateZak\ZakPoint)
    =
    \InvariantMultiplier(\ZakPoint)
    \ScalarCorrection(\TranslateZak\ZakPoint)
    \FixedInvariantVector(\ZakPoint)\\
    &=
    \FixedEigenvalue
    \ScalarCorrection(\ZakPoint)
    \FixedInvariantVector(\ZakPoint)
    =
    \FixedEigenvalue\FinalZakEigenfunction(\ZakPoint)
\end{align*}
for $\ZakPoint \in \R^2$.
Since $\VectorZak : L^2(\R) \to \mathscr{H}_\Zak$ is unitary by Lemma~\ref{lem:vector_zak_unitary}, the inverse image
\begin{equation}\label{eq:gstar}
    \FinalWindow := \VectorZak^{-1}\FinalZakEigenfunction
\end{equation}
is a nonzero element of $L^2(\R)$ and satisfies
\begin{equation}\label{eq:eigen}
\FixedWeylOperator\FinalWindow=\FixedEigenvalue\FinalWindow.
\end{equation}
In view of \eqref{eq:WeylOperatorExplicit}, this proves
\eqref{eq:GeneralMainResultEquation}.

For showing that $\FinalWindow \in \mathcal{S} (\R)$, note that by the definition of the vector Zak transform $\VectorZak$ in \eqref{eq:vector Zak}, we can recover the ordinary Zak transform of $\FinalWindow$ via
\begin{align}\label{eq:recoveredZakFunction}
  (\Zak \FinalWindow) (\ZakPosition, \ZakFrequency)
  = \sqrt{2} \cdot \bigl[ (\VectorZak \FinalWindow) (\ZakPosition, 2 \ZakFrequency) \bigr]_1
  = \sqrt{2} \cdot \bigl[ \FinalZakEigenfunction (\ZakPosition, 2 \ZakFrequency) \bigr]_1
  ,
  \quad (\ZakPosition, \ZakFrequency) \in \R^2
  ,
\end{align}
which shows that $\Zak \FinalWindow : \R^2 \to \C$ is smooth.
Hence, it follows from \cite[Theorem~8.2.5]{Gro01} that ${\FinalWindow \in \CalS(\R)}$.

Lastly, we observe that a direct calculation gives
\[
\FixedWeylOperator =   \sum_{(m,n)\in \Z^2}
          \FixedWeylCoefficient_{m,n} \,
          e^{- \frac{\pi \ii}{2} (m n + 2 m \TranslationBeta + \TranslationAlpha \TranslationBeta)}
          \TimeFrequencyShift\!\left(m+\TranslationAlpha,\frac{n+\TranslationBeta}{2}\right),
\]
yielding the final claim on time-frequency shifts.
\end{proof}

Using Theorem~\ref{prop:GeneralMainResult}, we can now prove Theorem~\ref{thm:main_Weyl} and, hence, the qualitative Theorem~\ref{thm:qualitative}.

\begin{proof}[Proof of Theorem~\ref{thm:main_Weyl}]
By Remark~\ref{rem:AdmissibleSequencesExist}, there exists a finitely supported,
admissible sequence $\aaa = (\aaa_{m,n})_{m,n \in \Z} \in \C^{\Z \times \Z}$.
Thus, Theorem~\ref{prop:GeneralMainResult} yields a nonzero Schwartz function
$\FinalWindow$ and a nonzero $\FixedEigenvalue \in \C \setminus \{ 0\}$
satisfying
\[
  \FixedEigenvalue \, \FinalWindow
   =  \sum_{(m,n)\in \Z^2}
        \FixedWeylCoefficient_{m,n} \,
        \e^{\frac{\pi\ii}{2}(n\TranslationAlpha-m\TranslationBeta)} 
        \WeylShift\!\left(m+\TranslationAlpha,\frac{n+\TranslationBeta}{2}\right) \FinalWindow
  .
\]
Note that since $\FixedEigenvalue \neq 0$ and $\FinalWindow \neq 0$, this implies
that the sequence $\aaa$ is not identically zero.
Altogether, this proves the claim of Theorem~\ref{thm:main_Weyl}.
\end{proof}

\section{\texorpdfstring{The certified twelve-point construction}{The certified twelve-point construction}}
\label{sec:ExplicitAdmissibleSequence}
\label{sec:certificate}

In this section, we prove the main Theorem~\ref{thm:main}. We do this by explicitly constructing an admissible coefficient sequence by
establishing a uniform numerical certificate for the
quantity in \eqref{eq:delta-equality}.
For this, we first reduce the
problem analytically to an estimate on a finite grid of points
and then perform a certified numerical
computation using the \texttt{python-flint} interface to FLINT/Arb;
see \cite{johansson2017arb}.
Before presenting the construction, let us recall the enclosure
principle underlying the certified computation.

\subsection{Background on certified numerics}

For $d \in \N$ and $D \subset \R^d$, let \(f\colon D\to\R\) be a function and let \(X\subset D\).  We write
\[
 f(X):=\{f(x):x\in X\}.
\]
An interval \(Z\) is called an \emph{enclosure} of \(f\) on \(X\) if
\(f(X)\subseteq Z\).
A standard form of the \emph{fundamental theorem of interval analysis} states
that an interval-valued map \(F\) provides such enclosures if it agrees
with \(f\) on point intervals (i.e., $F(\{x\}) = \{f(x)\}$)
and is monotone with respect to inclusion;
see \cite[Theorem~5.1]{moore2009}. 
The
same conclusion clearly holds under the weaker enclosure condition
\begin{align}\label{eq:enclosureConditions}
 [f(x),f(x)]\subseteq F([x,x])
 \quad\text{and}\quad
 X\subseteq Y \Longrightarrow F(X)\subseteq F(Y).
\end{align}

Floating-point interval arithmetic constructs such enclosures by
propagating intervals through the operations occurring in an expression.
For exact real intervals \(X = [\underline{X}, \overline{X}]\) and \(Y = [\underline{Y}, \overline{Y}]\),
and any arithmetic operation \(\circ\in\{+,-,\cdot,/\}\), one first defines
\[
 X\circ Y:=\{x\circ y:x\in X,\ y\in Y\},
 \qquad\text{where \(0\notin Y\) is required for division.}
\]
For example,
\[
 X+Y
 =
 [\underline X+\underline Y,\overline X+\overline Y],
\]
whereas the endpoints of \(XY\) are the minimum and maximum of the
four products of endpoints.  On a computer, the endpoints are
floating-point numbers.
The lower endpoint of each result is rounded down to the closest smaller floating-point number,
and the upper endpoint is rounded up to the next larger floating point number,
so that the computed interval contains the exact interval result; see
\cite[Sections~5.3--5.5]{rump2010}.
As a result, for basic arithmetic operations, interval arithmetic satisfies \eqref{eq:enclosureConditions}.

More generally, suppose that every arithmetic operation and every
elementary function appearing in a formula for \(f\) is replaced by a
floating-point-based interval routine that encloses its exact range. 
If \(\widetilde F(X)\) denotes the resulting floating-point interval
evaluation, then repeated application of the enclosure property gives
\[
 f(X)\subseteq\widetilde F(X).
\]
Consequently, a computed inclusion such as
\(\widetilde F(X)\subset(-\infty,c)\) is a rigorous certificate that
\(f(x)<c\) for every \(x\in X\).  Standard references for interval
arithmetic and validated numerics include
\cite{moore1966,moore2009,rump2010,tucker2011}.

For the computation below, we use the ball arithmetic implemented by
Arb.  Instead of storing lower and upper endpoints, Arb represents a
real quantity by a midpoint--radius ball
\[
 [m\mathbin{\pm}r]
 =
 \{x\in\R:|x-m|\le r\}.
\]
The midpoint is an arbitrary-precision binary floating-point number,
whereas the radius is stored at a fixed lower precision and rounded
upward.  For a basic operation at a user-specified target precision,
Arb computes the midpoint at that precision, bounds the errors propagated
from the input radii, and adds a rigorous bound for the rounding error of
the midpoint.  In our computational supplement, the target precision is
set explicitly to \(256\) bits through \texttt{ctx.prec}.  Individual Arb
routines may use additional internal precision or choose algorithmic
parameters depending on the input and the target precision, but the
returned ball remains an enclosure.

Arb also provides rigorous enclosures for elementary and special
functions. In particular, the exponential and
trigonometric functions used in our certificate are evaluated with
rigorous error bounds.  Complex quantities are represented by pairs of midpoint–radius balls for their real and imaginary parts, respectively.  For details on the ball
representation, precision management, error propagation, and function
evaluation, see \cite[Sections~2, 3, and~5]{johansson2017arb}.

Arb is integrated into FLINT, which also provides efficient exact
arithmetic for integers, rational numbers, and polynomials.  Its
functionality is available in Python through \texttt{python-flint}.
Thus, all balls produced by the executed computation are rigorous
enclosures of the corresponding exact quantities.

\subsection{An explicit admissible coefficient sequence} 
The fixed set of integer indices is 

\begin{equation}\label{eq:index-set}
    \begin{split}
     \CoefficientIndexSet := \{
        &(0,-4),(0,-2),(0,-1),(0,0),(0,1),\\
        &(-1,-3),(-1,-2),(-1,-1),(-1,0),(-1,1),(-1,2)
     \}.
    \end{split}
\end{equation}
Next, define 
\begin{equation}\label{eq:amn-dyadic}
  \FixedWeylCoefficient_{m,n}
  := \begin{cases}
       \displaystyle
       \frac{\RealCoefficientNumerator_{m,n}+\ii \, \ImagCoefficientNumerator_{m,n}}{2^{60}} \in \C\setminus\{0\},
       & \text{if } (m,n) \in \CoefficientIndexSet, \\[0.2cm]
       0 & \text{if } (m,n) \notin \CoefficientIndexSet,
     \end{cases}
\end{equation}
where the integers $\RealCoefficientNumerator_{m,n},\ImagCoefficientNumerator_{m,n}$
are listed in Table~\ref{tab:coefficients} and the approximate values are given in
Table~\ref{tab:approximate-coefficients}.
Every coefficient for $(m,n) \in \CoefficientIndexSet$ is nonzero,
meaning $\supp \FixedWeylCoefficient = \CoefficientIndexSet$.
The values in Table~\ref{tab:approximate-coefficients} are included only for orientation;
all arguments use the exact dyadic coefficients from Table~\ref{tab:coefficients}.

Our main goal in this section is to prove that the coefficient sequence $\FixedWeylCoefficient$
defined above is admissible (see Definition~\ref{def:AdmissibleSequence}).

\begin{table}[ht]
    \centering
    \caption{Exact dyadic coefficients in \eqref{eq:amn-dyadic}.}
    \label{tab:coefficients}
        \begin{tabular}{@{}rrrr@{}}
        \toprule
        $m$ & $n$ & $\RealCoefficientNumerator_{m,n}$ & $\ImagCoefficientNumerator_{m,n}$\\
        \midrule
        $0$ & $-4$ & $-70 \, 931 \, 713 \, 049 \, 615 \, 624$ & $15 \, 284 \, 944 \, 543 \, 743 \, 724$\\
        $0$ & $-2$ & $121 \, 726 \, 843 \, 886 \, 294 \, 432$ & $-202 \, 158 \, 869 \, 833 \, 490 \, 592$\\
        $0$ & $-1$ & $-60 \, 331 \, 328 \, 971 \, 522 \, 640$ & $448 \, 536 \, 698 \, 320 \, 036 \, 992$\\
        $0$ & $0$ & $-116 \, 016 \, 377 \, 251 \, 609 \, 520$ & $-411 \, 853 \, 055 \, 664 \, 041 \, 984$\\
        $0$ & $1$ & $119 \, 046 \, 687 \, 232 \, 322 \, 112$ & $146 \, 355 \, 954 \, 620 \, 108 \, 544$\\
        $-1$ & $-3$ & $-3 \, 398 \, 089 \, 318 \, 489 \, 594$ & $123 \, 163 \, 038 \, 349 \, 035 \, 856$\\
        $-1$ & $-2$ & $-63 \, 437 \, 999 \, 652 \, 999 \, 096$ & $-158 \, 506 \, 055 \, 955 \, 978 \, 304$\\
        $-1$ & $-1$ & $140 \, 771 \, 717 \, 126 \, 566 \, 400$ & $139 \, 766 \, 062 \, 381 \, 477 \, 056$\\
        $-1$ & $0$ & $-182 \, 082 \, 933 \, 907 \, 714 \, 272$ & $-71 \, 363 \, 812 \, 363 \, 555 \, 152$\\
        $-1$ & $1$ & $163 \, 324 \, 206 \, 988 \, 562 \, 784$ & $-5 \, 678 \, 230 \, 433 \, 708 \, 979$\\
        $-1$ & $2$ & $-103 \, 058 \, 338 \, 818 \, 812 \, 480$ & $48 \, 900 \, 594 \, 575 \, 167 \, 184$\\
        \bottomrule
    \end{tabular}
\end{table}

\begin{table}[ht]
    \centering
    \caption{Approximate values of the real and imaginary parts of the coefficients $\FixedWeylCoefficient_{m,n}$.}
    \label{tab:approximate-coefficients}
    \begin{tabular}{r r r r}
        \hline
        $m$ & $n$
            & $\operatorname{Re} \FixedWeylCoefficient_{m,n}$
            & $\operatorname{Im} \FixedWeylCoefficient_{m,n}$ \\
        \hline
            $0$ & $-4$ & $-0.0615$ & $\phantom{-}0.0133$ \\
            $0$ & $-2$ & $\phantom{-}0.1056$ & $-0.1753$ \\
            $0$ & $-1$ & $-0.0523$ & $\phantom{-}0.3890$ \\
            $0$ &  $0$ & $-0.1006$ & $-0.3572$ \\
            $0$ &  $1$ & $\phantom{-}0.1033$ & $\phantom{-}0.1269$ \\
            $-1$ & $-3$ & $-0.0029$ & $\phantom{-}0.1068$ \\
            $-1$ & $-2$ & $-0.0550$ & $-0.1375$ \\
            $-1$ & $-1$ & $\phantom{-}0.1221$ & $\phantom{-}0.1212$ \\
            $-1$ &  $0$ & $-0.1579$ & $-0.0619$ \\
            $-1$ &  $1$ & $\phantom{-}0.1417$ & $-0.0049$ \\
            $-1$ &  $2$ & $-0.0894$ & $\phantom{-}0.0424$ \\
        \hline
    \end{tabular}
\end{table}

For the coefficient sequence just defined, let $\FixedMatrixField,\FixedCocycle : \R^2 \to \C^{2 \times 2}$ be as defined in Section~\ref{sec:AdmissibleSequences}. Moreover, recall the definition of the matrix-valued functions $\IdealMatrixField,\IdealCocycle : \R^2 \to \C^{2 \times 2}$ introduced in \eqref{eq:A0Definition} and \eqref{eq:B0Definition}. As will be explained below in the proof, the special structure of these matrix fields allows us to eliminate the $\ZakFrequency$-dependence analytically. The remaining problem is to bound a univariate function in $\ZakPosition$. This is achieved by using analytic upper bounds for the derivative of this function, thereby reducing the problem to establishing estimates on a finite grid. This is then verified using certified numerics via the Arb library.

\begin{theorem} \label{thm:certificate}
Let 
\[
  \CertifiedError_0
  :=\sup_{\ZakPoint\in[0,1]^2}\norm{\FixedMatrixField(\ZakPoint)-\IdealMatrixField(\ZakPoint)}_{\op}.
\]
It holds that
\begin{equation}\label{eq:delta-third}
 \CertifiedError_0
 \leq \frac{41629}{125000}
 < \frac{1}{3}.
\end{equation}
\end{theorem}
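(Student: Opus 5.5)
The plan is to reduce the two-dimensional supremum defining $\CertifiedError_0$ to a supremum over a single variable $\ZakPosition\in[0,1]$. That univariate supremum is then controlled by a Lipschitz estimate plus a certified evaluation on a finite grid.

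\textbf{Step 1: eliminate $\ZakFrequency$.} Every index in $\CoefficientIndexSet$ has $m\in\{0,-1\}$. By \eqref{eq:Lmn-factorization}, $\FixedMatrixField(\ZakPosition,\ZakFrequency)$ therefore depends on $\ZakFrequency$ only through the factors $1$ and $\e^{\pi\ii\ZakFrequency}$. On the other side, \eqref{eq:chi} shows that for $\ZakPosition\in[0,1]$ the vector $\ReferenceSection(\ZakPosition,\ZakFrequency)$ equals
\[
H\begin{pmatrix}\ReferenceSine(\ZakPosition)\\ \ReferenceCosine(\ZakPosition)\e^{-\pi\ii\ZakFrequency}\end{pmatrix},
\qquad
H=2^{-1/2}\begin{pmatrix}1&1\\1&-1\end{pmatrix}.
\]
So after conjugating by $H$ and the diagonal unitary $D(\ZakFrequency)=\operatorname{diag}(1,\e^{-\pi\ii\ZakFrequency})$, the field $\IdealMatrixField$ becomes a matrix whose entries depend on $\ZakFrequency$ only through a common scalar phase. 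For $\TranslatedReferenceSection$ one must use \eqref{eq:x-sewing} whenever $\ZakPosition-\TranslationAlpha\notin[0,1]$, which splits $[0,1]$ into $[\TranslationAlpha,1]$ and $[0,\TranslationAlpha)$.

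Next I would check the lattice Weyl matrices in the same frame. Under $H$, the matrices $L^{(0)}_{m,n}$ turn into $\operatorname{diag}(1,-1)$ or off-diagonal matrices as appropriate, and $D(\ZakFrequency)$ absorbs the $\e^{-\pi\ii m\ZakFrequency}$ phases. The goal is an identity
\[
\FixedMatrixField-\IdealMatrixField
= \e^{\ii\phi(\ZakPoint)}\,H D(\ZakFrequency)\, M(\ZakPosition)\, D(\ZakFrequency')^{*} H
\]
with $M(\ZakPosition)$ an explicit $2\times2$ matrix function of $\ZakPosition$ alone. Since $H$ and $D$ are unitary, this gives $\norm{\FixedMatrixField-\IdealMatrixField}_{\op}=\norm{M(\ZakPosition)}_{\op}=:g(\ZakPosition)$. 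If the phases do not match exactly, the fallback is to expand the difference as a trigonometric polynomial in $\ZakFrequency$ of degree at most one, and bound the operator norm by the sum of the norms of its $\ZakFrequency$-Fourier coefficients; these coefficients again depend only on $\ZakPosition$.

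\textbf{Step 2: Lipschitz bound.} Next I bound $\sup_{\ZakPosition}\norm{M'(\ZakPosition)}_{\op}$ analytically, using the following ingredients:
\begin{itemize}
\item $\norm{\partial_\ZakPosition\LatticeWeylMatrix_{m,n}}_{\op}=\pi|n|$, which gives the term $\pi\sum_{(m,n)}|\FixedWeylCoefficient_{m,n}|\,|n|$;
\item Lemma~\ref{lem:ChiDerivativeEstimate} for $\ReferenceSection$ and its shift, with unit norms, which gives a term of at most $2\pi$;
\item $|\OffsetPhase'|=\pi\TranslationBeta$, together with Lemma~\ref{lem:s0-derivative-estimate} for $\ReferenceSine$ and $\ReferenceCosine$.
\end{itemize}
Since $\norm{\cdot}_{\op}$ is $1$-Lipschitz, these combine into an explicit constant $\DerivativeBound$ with $|g(\ZakPosition)-g(\ZakPosition')|\le \DerivativeBound|\ZakPosition-\ZakPosition'|$; I expect it to be of order $10$ to $20$.

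\textbf{Step 3: certified grid check.} Choose $G$ grid points $\ZakPosition_j=(j+\frac12)/G$. Using Arb at 256 bits, evaluate the exact dyadic coefficients, $\CubicIrrational=\sqrt[3]2$, $\SmoothStep$, and the entries of $M(\ZakPosition_j)$. Compute $g(\ZakPosition_j)$ through the closed form for the $2\times2$ spectral norm,
\[
\norm{M}_{\op}^2=\tfrac12\Bigl(\norm{M}_F^2+\sqrt{\norm{M}_F^4-4|\det M|^2}\Bigr),
\]
and certify that $g(\ZakPosition_j)+\DerivativeBound/(2G)\le 41629/125000$ for every $j$. By the enclosure principle \eqref{eq:enclosureConditions}, this proves \eqref{eq:delta-third}.

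The main obstacle is Step 1: getting the exact algebraic cancellation of $\ZakFrequency$ right across the two $\ZakPosition$-ranges. A secondary issue is the size of the margin. The numerical supremum is presumably just below $1/3$, leaving a gap of only about $10^{-3}$. This forces $G$ to be of order $10^4$, which is cheap, but it also requires $\DerivativeBound$ not to be wastefully large; if necessary I would sharpen $\DerivativeBound$ subinterval by subinterval.
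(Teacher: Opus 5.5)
Your primary Step~1 cannot succeed. Any identity $A_*-A_0=e^{\mathrm{i}\phi}\,H D(\omega)M(x)D(\omega')^*H$ with unitary factors would force the singular values of $E(x,\omega):=A_*(x,\omega)-A_0(x,\omega)$ to be independent of $\omega$, and they are not. To see this, take $U=e^{-\pi\mathrm{i}\omega}$ and $x\ge\alpha$. Your own frame then gives
\[
H E(x,\omega) H=\begin{pmatrix} p(x)-\overline{\eta(x)}\,a(x)a(x-\alpha) & \overline{U}\,\bigl(r(x)-\overline{\sigma}\,\overline{\eta(x)}\,a(x)b(x-\alpha)\bigr)\\ \overline{U}\,t(x)-U\,s(x) & q(x)-\overline{\sigma}\,\overline{\eta(x)}\,b(x)b(x-\alpha)\end{pmatrix},\qquad s(x)=\overline{\eta(x)}\,b(x)\,a(x-\alpha).
\]
Here $p,q,r,t$ are trigonometric polynomials in $x$ built from the $a_{m,n}$; for example, $t(x)=\sum_n a_{-1,n}e^{\pi\mathrm{i}n(x-1/2)}$. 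Every index in $\mathcal I$ has $m\in\{0,-1\}$, so $A_*$ only carries the modes $U^0$ and $U^{-1}$. By contrast, $A_0$ carries the mode $U^{1}$ (for $x<\alpha$, the mode $U^{-2}$), and $A_*$ can never cancel it. Consequently $\|E\|_{\mathrm F}^2$ contains the term $-2\Re\bigl(\overline{U}^{\,2}t(x)\overline{s(x)}\bigr)$. This term oscillates in $\omega$ wherever $t(x)s(x)\neq0$, which holds on $(\alpha,1)$ outside the finitely many zeros of $t$. So $\|E(x,\omega)\|_{\mathrm{op}}$ is not a function of $x$ alone, and the closed-form evaluation of $\|M(x_j)\|_{\mathrm{op}}$ in your Step~3 has nothing to apply to.

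Your fallback is the right idea, and it is exactly what the paper does. The paper expands each entry of $A_*$ and $A_0$ as a Laurent polynomial in $U(\omega)$ with $x$-dependent coefficients $e_{r,s,\ell}(x)$; the support is $\{-1,0,1\}$ for $x\ge\alpha$ and $\{-2,-1,0\}$ for $x<\alpha$. It then bounds $\|E\|_{\mathrm{op}}\le\|E\|_{\mathrm F}\le\mathfrak B(x):=\bigl(\sum_{r,s}(\sum_\ell|e_{r,s,\ell}(x)|)^2\bigr)^{1/2}$. Your Step~2 ingredients give its $D_*<17.280031$, and Arb certifies $\mathfrak B(x_j)<0.328813$ at $2048$ midpoints, with $D_*/4096<0.004219$.

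Two things in your fallback need fixing.

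\emph{The Lipschitz step.} $D_*$ bounds $\|\partial_x E(x,\omega)\|_{\mathrm{op}}$, so it makes $x\mapsto\|E(x,\omega)\|_{\mathrm{op}}$ Lipschitz for each fixed $\omega$. It does not make the majorant $g$ Lipschitz. You must apply the mean value theorem at fixed $\omega$ first and majorize only at the grid point: $\|E(x,\omega)\|_{\mathrm{op}}\le\|E(x_j,\omega)\|_{\mathrm{op}}+D_*/(2G)\le g(x_j)+D_*/(2G)$. A Lipschitz bound for $g$ itself would cost up to a factor equal to the number of modes.

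\emph{The choice of majorant.} Your majorant $\sum_\ell\|C_\ell(x)\|$ is valid but is not $\mathfrak B$. With Frobenius norms it is never smaller than $\mathfrak B(x)$, by Minkowski's inequality. With operator norms the two are incomparable in general. The certified margin below $1/3$ is only about $4.5\cdot10^{-3}$, so the paper's numbers do not certify the constant $41629/125000$ for your majorant; that would need its own computation.
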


\begin{proof}
The proof is based on the following steps:

\begin{enumerate}
    \item writing $\FixedMatrixField(\ZakPosition,\ZakFrequency)$
          and $\IdealMatrixField(\ZakPosition,\ZakFrequency)$ for each fixed $\ZakPosition \in [0,1]$
          as Laurent series (with finitely many summands) in terms of $\LaurentVariable(\ZakFrequency) := \e^{-\pi\ii\ZakFrequency}$;

    \item using this Laurent series to derive a  bound 
          of the form
          $\| \FixedMatrixField(\ZakPosition,\ZakFrequency) - \IdealMatrixField(\ZakPosition,\ZakFrequency) \|_{\mathrm{op}}
          \leq \LaurentMajorant(\ZakPosition)$,
          where $\LaurentMajorant(\ZakPosition)$ is an explicitly given quantity only
          depending on $\ZakPosition$, independent of $\ZakFrequency$;

    \item using bounds on the derivatives $\| \partial_{\ZakPosition} \FixedMatrixField(\ZakPosition,\ZakFrequency) \|_{\mathrm{op}}$
          and $\| \partial_{\ZakPosition} \IdealMatrixField(\ZakPosition,\ZakFrequency) \|_{\mathrm{op}}$
          to show that it suffices to bound
          $\| \FixedMatrixField(\ZakPosition,\ZakFrequency) - \IdealMatrixField(\ZakPosition,\ZakFrequency) \|_{\mathrm{op}}$
          sufficiently well on finitely many grid points $\ZakPosition_j$;

    \item verifying the required bound for the grid points using certified numerics.
\end{enumerate}
The remainder of the proof provides the details on these steps.
\\~\\
\textbf{Step 1.} In this step, we write $\FixedMatrixField - \IdealMatrixField$ as a Laurent sum in terms of $\LaurentVariable(\ZakFrequency) := \e^{-\pi\ii\ZakFrequency}$. This step consists of three substeps.
\\~\\
\emph{Step 1a.} We first write  $\FixedMatrixField$ as a Laurent sum. For $r,s\in\{0,1\}$, denote $\epsilon_r := (-1)^r$, set $\sigma := \e^{\pi\ii\TranslationBeta}$, and define the coefficients of $\FixedMatrixField$ by
\begin{equation}\label{eq:fixed-laurent-coefficients}
 \FixedLaurentCoefficient_{r,s,\LaurentExponent}(\ZakPosition)
 :=
 \sum_{\substack{(m,n)\in\CoefficientIndexSet\\
                  m=\LaurentExponent\\
                  s\equiv r-n\;(\mathrm{mod}\,2)}}
 \FixedWeylCoefficient_{m,n}
 \exp\!\left[
  \pi\ii\left(nx-mr+\frac{mn}{2}\right)
 \right],
 \quad r,s \in \{0,1\}, \LaurentExponent \in \Z, \ZakPosition \in \R,
\end{equation}
noting that $\FixedLaurentCoefficient_{r,s,\LaurentExponent}(\ZakPosition) = 0$ for $\LaurentExponent \notin \{-1,0\}$.
Equations~\eqref{eq:Lmn-factorization}, \eqref{eq:Lmn-constant}, and \eqref{eq:Astar} then show
\begin{equation}
 (\FixedMatrixField(\ZakPosition,\ZakFrequency))_{r+1,s+1}
 =
 \sum_{\LaurentExponent=-1}^{0}
 \FixedLaurentCoefficient_{r,s,\LaurentExponent}(\ZakPosition)
 \LaurentVariable^\LaurentExponent(\ZakFrequency)
 , \qquad r,s \in \{0,1\}, \,\, (\ZakPosition,\ZakFrequency) \in \R^2
 ,
 \label{eq:Aast-laurent-series}
\end{equation}
which is the desired Laurent sum expression of $\FixedMatrixField$.

~\\~\\
\emph{Step 1b.}  We next write $\IdealMatrixField$ as a Laurent sum. Explicitly, our goal in this step is to show that
\begin{equation}
  \label{eq:IdealMatrixFieldLaurentSum}
  (\IdealMatrixField (\ZakPosition, \ZakFrequency))_{r+1,s+1}
  = \sum_{\LaurentExponent \in \LaurentSupport_{\ZakPosition}}
      \IdealLaurentCoefficient_{r,s,\LaurentExponent}(\ZakPosition)
      \LaurentVariable^\LaurentExponent (\ZakFrequency)
  \qquad \text{for } \ZakPosition \in [0,1], \ZakFrequency \in \R
  ,
\end{equation}
where we define
\begin{equation}
\label{eq:LaurentSupport}
\LaurentSupport_{\ZakPosition}
:= \begin{cases}
     \{-1,0,1\},& \text{if }\ZakPosition\geq\TranslationAlpha,\\
     \{-2,-1,0\},& \text{if } \ZakPosition<\TranslationAlpha,
   \end{cases}
\end{equation}
and moreover
\begin{equation}
 A_-
 :=
 \begin{cases}
     \ReferenceSine(\ZakPosition-\TranslationAlpha), & \text{if } \ZakPosition \geq \TranslationAlpha, \\
     \ReferenceSine(\ZakPosition-\TranslationAlpha+1), & \text{if  } \ZakPosition < \alpha,
 \end{cases}
 \qquad
 B_-
 :=
 \begin{cases}
     \ReferenceCosine(\ZakPosition-\TranslationAlpha), & \text{if } \ZakPosition \geq \TranslationAlpha , \\
     \ReferenceCosine(\ZakPosition-\TranslationAlpha+1) , & \text{if } \ZakPosition < \TranslationAlpha
 \end{cases}
 \label{eq:A-B-definition}
\end{equation}
as well as
\begin{equation}
 \label{eq:laurent-coefficient-d0-definition}
 \IdealLaurentCoefficient_{r,s,0}(\ZakPosition)
 := \begin{cases}
        \frac{\overline{\OffsetPhase(\ZakPosition)}}{2}
        \left(
             \ReferenceSine(\ZakPosition) A_- + \epsilon_r \epsilon_s  \ReferenceCosine(\ZakPosition) B_- \, \overline{\sigma}
        \right)
       , & \text{if } \ZakPosition \geq \TranslationAlpha, \\[0.2cm]
       \frac{\overline{\OffsetPhase(\ZakPosition)}}{2}\,
       \epsilon_r \epsilon_s \ReferenceCosine(\ZakPosition) A_- \, \overline{\sigma}
       , & \text{if } \ZakPosition < \TranslationAlpha ,
 \end{cases}
\end{equation}
and finally
\begin{equation}
 \label{eq:laurent-coefficient-d-1-definition}
 \IdealLaurentCoefficient_{r,s,-1}(\ZakPosition)
 := \begin{cases}
      \frac{\overline{\OffsetPhase(\ZakPosition)}}{2}\,
      \epsilon_s \ReferenceSine(\ZakPosition) B_- \, \overline{\sigma}
       , & \text{if } \ZakPosition \geq \TranslationAlpha, \\[0.2cm]
      \frac{\overline{\OffsetPhase(\ZakPosition)}}{2}
      \left(
       \epsilon_s \ReferenceSine(\ZakPosition) A_- \, \overline{\sigma}
       + \epsilon_r \ReferenceCosine(\ZakPosition) B_- \, \overline{\sigma}^{\,2}
      \right)
       , & \text{if } \ZakPosition < \TranslationAlpha ,
    \end{cases}
\end{equation}
and
\begin{equation}
 \label{eq:laurent-coefficient-d1-and-d-2-definition}
 \IdealLaurentCoefficient_{r,s,-2}(\ZakPosition)
 :=
 \frac{\overline{\OffsetPhase(\ZakPosition)}}{2}\,
 \ReferenceSine(\ZakPosition) B_- \, \overline{\sigma}^{\,2},
 \qquad
 \IdealLaurentCoefficient_{r,s,1}(\ZakPosition)
 := \frac{\overline{\OffsetPhase(\ZakPosition)}}{2}\,
    \epsilon_r \ReferenceCosine(\ZakPosition) \, A_-
    .
\end{equation}
Here, $\epsilon_r = (-1)^r$ and $\sigma = \e^{\pi\ii\TranslationBeta}$ are as in Step~1a.

\medskip{}

To prove \eqref{eq:IdealMatrixFieldLaurentSum}, we distinguish the two cases $\ZakPosition \geq \TranslationAlpha$
and $\ZakPosition < \TranslationAlpha$.
For both cases, we will use that \eqref{eq:chi} shows for $r\in\{0,1\}$  that the component of
$\ReferenceSection(\ZakPosition,\ZakFrequency)$ with index $r+1$ is
\begin{equation}\label{eq:chi-component-laurent}
 (\ReferenceSection(\ZakPosition,\ZakFrequency))_{r+1}
 =\frac{1}{\sqrt2}\Bigl(\ReferenceSine(\ZakPosition) + \epsilon_r \ReferenceCosine(\ZakPosition)\LaurentVariable (\ZakFrequency) \Bigr),
\end{equation}
for $r\in\{0,1\}, \,\, \ZakPosition \in [0,1]$.
\\~\\
\emph{Case 1 ($\ZakPosition \geq \TranslationAlpha$):}
In this case, $\ZakPosition - \TranslationAlpha \in [0,1]$.
Hence, \eqref{eq:chi-component-laurent} shows that
the component of $\ReferenceSection(\TranslateZak \ZakPoint)$ with index $s+1$ is
\begin{equation}\label{eq:translated-chi-component-positive}
\begin{aligned}
 (\ReferenceSection(\TranslateZak\ZakPoint) )_{s+1}
 &= (\ReferenceSection(\ZakPosition-\TranslationAlpha,\ZakFrequency-\TranslationBeta) )_{s+1} \\
 &= \frac{1}{\sqrt{2}}
    \bigl(
      \ReferenceSine(\ZakPosition - \TranslationAlpha)
      + \epsilon_s \ReferenceCosine(\ZakPosition - \TranslationAlpha)
        \LaurentVariable(\ZakFrequency - \TranslationBeta)
    \bigr) \\
 &= \frac{1}{\sqrt2}
    \bigl(A_- + \epsilon_s B_ - \sigma \LaurentVariable (\ZakFrequency) \bigr)
   .
\end{aligned}
\end{equation}
Expanding the outer product
$\IdealMatrixField =\ReferenceSection\TranslatedReferenceSection^*$
entry by entry gives, by \eqref{eq:chi-component-laurent} and \eqref{eq:def_w},
\begin{equation}
\begin{aligned}
 (\IdealMatrixField(\ZakPosition,\ZakFrequency))_{r+1,s+1}
 &= (\ReferenceSection(\ZakPosition,\ZakFrequency) )_{r+1}
 \,\overline{
   (\TranslatedReferenceSection(\ZakPosition,\ZakFrequency) )_{s+1}
 }\\
 &=\frac{\overline{\OffsetPhase(\ZakPosition)}}2
   \Bigl(\ReferenceSine(\ZakPosition) + \epsilon_r \ReferenceCosine(\ZakPosition) \LaurentVariable(\ZakFrequency)\Bigr)
   \Bigl(A_- + \epsilon_s B_-\,\overline{\sigma}\,\LaurentVariable^{-1}(\ZakFrequency)\Bigr).
\end{aligned}
\end{equation}
Expanding this and recalling the definitions of the coefficients
$\IdealLaurentCoefficient_{r,s,\LaurentExponent}$ thus proves
\eqref{eq:IdealMatrixFieldLaurentSum} for the case $\ZakPosition \geq \TranslationAlpha$.
\\~\\
\emph{Case 2 ($\ZakPosition < \TranslationAlpha$):}
In this case, $\ZakPosition - \TranslationAlpha + 1 \in [0,1]$ since $\TranslationAlpha < 1$.
Now, \eqref{eq:x-sewing} shows
\begin{align*}
  \ReferenceSection (\TranslateZak \ZakPoint)
  &= \ReferenceSection( \ZakPosition - \TranslationAlpha, \ZakFrequency - \TranslationBeta) \\
  &= \FirstSewingMatrix(\ZakFrequency - \TranslationBeta)^\ast
     \ReferenceSection( \ZakPosition - \TranslationAlpha + 1, \ZakFrequency - \TranslationBeta) \\
  &= e^{- \pi i (\ZakFrequency - \TranslationBeta)}
     \begin{pmatrix}
         1 & 0 \\ 0 & -1
     \end{pmatrix}
     \ReferenceSection( \ZakPosition - \TranslationAlpha + 1, \ZakFrequency - \TranslationBeta)
\end{align*}
and hence \eqref{eq:chi-component-laurent} shows
\begin{equation}
  \label{eq:TranslatedReferenceSectionLaurentSum}
  \begin{aligned}
    (\ReferenceSection (\TranslateZak \ZakPoint))_{s+1}
    &= e^{\pi i \TranslationBeta} \,
       \LaurentVariable(\ZakFrequency) \,
       \epsilon_s \,
       \bigl(\ReferenceSection( \ZakPosition - \TranslationAlpha + 1, \ZakFrequency - \TranslationBeta)\bigr)_{s+1} \\
    &= \sigma
       \LaurentVariable(\ZakFrequency)
       \frac{1}{\sqrt{2}}
       \Bigl(
         \ReferenceSine (\ZakPosition - \TranslationAlpha + 1)
         + \epsilon_s \,  \ReferenceCosine (\ZakPosition - \TranslationAlpha + 1)
           \LaurentVariable(\ZakFrequency - \TranslationBeta)
       \Bigr) \\
    &= \frac{1}{\sqrt{2}}
       \Bigl(
         \epsilon_s \, A_- \, \sigma \, \LaurentVariable(\ZakFrequency)
         + \sigma^2 \ B_- \, \LaurentVariable^2 (\ZakFrequency)
       \Bigr)
    .
  \end{aligned}
\end{equation}
Consequently, recalling the definitions of the coefficients
$\IdealLaurentCoefficient_{r,s,\LaurentExponent}$, we see using \eqref{eq:A0Definition} that
\begin{equation}
\begin{aligned}
 (\IdealMatrixField(\ZakPosition,\ZakFrequency) )_{r+1,s+1}
 &=
   (\ReferenceSection(\ZakPosition,\ZakFrequency))_{r+1}
   \overline{(\TranslatedReferenceSection (\ZakPosition,\ZakFrequency))_{s+1}} \\
 &=
   \frac{\overline{\OffsetPhase(\ZakPosition)}}{2}
   \Bigl(
     \ReferenceSine(\ZakPosition) + \epsilon_r \, \ReferenceCosine(\ZakPosition) \LaurentVariable(\ZakFrequency)
   \Bigr)
   \overline{
       \Bigl(
         \epsilon_s \, A_- \, \sigma \, \LaurentVariable (\ZakFrequency)
         + B_- \, \sigma^2 \, \LaurentVariable^2 (\ZakFrequency)
       \Bigr)
   } \\
 &= \sum_{\LaurentExponent=-2}^{0}
      \IdealLaurentCoefficient_{r,s,\LaurentExponent}(\ZakPosition)\LaurentVariable^\LaurentExponent (\ZakFrequency),
\end{aligned}
\end{equation}
for $\ZakPosition \in [0,1]$ and $\ZakFrequency \in \R$,
which proves \eqref{eq:IdealMatrixFieldLaurentSum} for the case $\ZakPosition < \TranslationAlpha$. Here, the second step used \eqref{eq:def_w}, \eqref{eq:TranslatedReferenceSectionLaurentSum} and \eqref{eq:chi-component-laurent}.
\\~\\
\emph{Step 1c.} Lastly, we write $\FixedMatrixField - \IdealMatrixField$ as a Laurent sum.
Overall, setting
\begin{equation}
 \MatrixErrorField(\ZakPosition,\ZakFrequency)
 :=\FixedMatrixField(\ZakPosition,\ZakFrequency)-\IdealMatrixField(\ZakPosition,\ZakFrequency)
\end{equation}
and defining
\begin{equation}\label{eq:error-laurent-coefficients}
 \ErrorLaurentCoefficient_{r,s,\LaurentExponent}(\ZakPosition)
 :=
 \FixedLaurentCoefficient_{r,s,\LaurentExponent}(\ZakPosition)
 -\IdealLaurentCoefficient_{r,s,\LaurentExponent}(\ZakPosition),
\end{equation}
for $\ZakPosition \in [0,1]$,
it follows from \eqref{eq:Aast-laurent-series} and \eqref{eq:IdealMatrixFieldLaurentSum} that
\begin{equation}\label{eq:laurent-error}
 (\MatrixErrorField(\ZakPosition,\ZakFrequency) )_{r+1,s+1}
 = \sum_{\LaurentExponent\in\LaurentSupport_{\ZakPosition}}
     \ErrorLaurentCoefficient_{r,s,\LaurentExponent}(\ZakPosition)
     \LaurentVariable^\LaurentExponent (\ZakFrequency)
\end{equation}
for all $r,s \in \{0,1\}$, all $\ZakPosition \in [0,1]$, and all $\ZakFrequency \in \R$.
\\~\\
\textbf{Step 2.} In this step,  we prove the bound 
\(
  \|\FixedMatrixField (\ZakPosition, \ZakFrequency) - \IdealMatrixField(\ZakPosition, \ZakFrequency) \|_{\op}
  \leq \LaurentMajorant(\ZakPosition)
\).
Bounding the operator norm by the Frobenius norm,
using \eqref{eq:laurent-error}, and applying the triangle inequality,
we obtain, for every $\ZakPosition \in [0,1]$ and $\ZakFrequency \in \R$, that
\begin{equation}\label{eq:omega-triangle}
 \norm{\MatrixErrorField(\ZakPosition,\ZakFrequency)}_{\op}
 \leq \norm{\MatrixErrorField(\ZakPosition,\ZakFrequency)}_{\mathrm F}
 \leq \LaurentMajorant(\ZakPosition):=
 \left(
  \sum_{r,s=0}^1
  \left(
    \sum_{\LaurentExponent\in\LaurentSupport_{\ZakPosition}}
    |\ErrorLaurentCoefficient_{r,s,\LaurentExponent}(\ZakPosition)|
  \right)^2
 \right)^{1/2}.
\end{equation}
Note that \eqref{eq:omega-triangle} is independent of
$\ZakFrequency$, and the proof is complete once the one-variable
function $\LaurentMajorant$ is bounded by the fraction in
\eqref{eq:delta-third}.
\\~\\
\textbf{Step 3.} We next obtain a bound for $\|\partial_\ZakPosition (\FixedMatrixField - \IdealMatrixField)(\ZakPosition, \ZakFrequency) \|_{\op}$.
We derive this bound by considering $\FixedMatrixField$ and $\IdealMatrixField$ individually
and then applying the triangle inequality. We split this step into three substeps.
\\~\\
\emph{Step 3a.} We start by bounding $\|\partial_\ZakPosition \FixedMatrixField (\ZakPosition, \ZakFrequency) \|_{\op}$.
From \eqref{eq:Astar} and \eqref{eq:L-derivatives} and the elementary estimate
$|\FixedWeylCoefficient_{m,n}| \leq |\Re \FixedWeylCoefficient_{m,n}|+|\Im \FixedWeylCoefficient_{m,n}|$, we get
\begin{equation}\label{eq:cand-dx}
\begin{aligned}
 \sup_{\ZakPoint}
 \norm{
   \partial_{\ZakPosition}\FixedMatrixField(\ZakPoint)
 }_{\op}
 &\leq \pi
       \sum_{(m,n)\in\CoefficientIndexSet}
         |n|
         (|\Re \FixedWeylCoefficient_{m,n}|+|\Im \FixedWeylCoefficient_{m,n}|) \\
 &=   \frac{\pi}{2^{60}}
      \sum_{(m,n)\in\CoefficientIndexSet}
        |n|
        (|\RealCoefficientNumerator_{m,n}|+|\ImagCoefficientNumerator_{m,n}|)
        ,
\end{aligned}
\end{equation}
which is the required bound.
\\~\\
\emph{Step 3b.} We next estimate $\|\partial_\ZakPosition \IdealMatrixField (\ZakPosition, \ZakFrequency) \|_{\op}$.
First, recall from \eqref{eq:def_w} that
$\TranslatedReferenceSection(\ZakPoint) = \OffsetPhase(\ZakPosition) \ReferenceSection(\TranslateZak\ZakPoint) \in \C^2$
and from \eqref{eq:EtaDefinition} that $\OffsetPhase(\ZakPosition) = \e^{\pi\ii\TranslationBeta (\ZakPosition-\TranslationAlpha/2)}$.
Using the product rule, we thus get
\begin{equation}
\begin{aligned}
  \| \partial_\ZakPosition \TranslatedReferenceSection (\ZakPosition, \ZakFrequency) \|
  &= \|
       \partial_\ZakPosition
       (\OffsetPhase(\ZakPosition) \ReferenceSection (\ZakPosition - \TranslationAlpha, \ZakFrequency - \TranslationBeta))
     \| \\
  &= \|
       \OffsetPhase'(\ZakPosition) \ReferenceSection(\TranslateZak \ZakPoint)
       + \OffsetPhase(\ZakPosition) (\partial_\ZakPosition \ReferenceSection) (\ZakPosition - \TranslationAlpha, \ZakFrequency - \TranslationBeta)
     \| \\
  &\leq |\OffsetPhase'(\ZakPosition)|
        \cdot \| \ReferenceSection (\TranslateZak \ZakPoint) \|
        + |\OffsetPhase(\ZakPosition)|
          \cdot \| (\partial_\ZakPosition \ReferenceSection) (\ZakPosition - \TranslationAlpha, \ZakFrequency - \TranslationBeta) \| \\
  &\leq \pi \TranslationBeta + \pi
   =    \pi \cdot (1 + \TranslationBeta)
   ,
\end{aligned}
\label{eq:w-derivative-estimate}
\end{equation}
where the last step used that $|\OffsetPhase(\ZakPosition)| = 1$,
$|\OffsetPhase'(\ZakPosition)| = \pi \TranslationBeta$
and moreover that $\|\ReferenceSection (\TranslateZak \ZakPoint)\| = 1$ by Lemma~\ref{lem:unit}
and 
\(
  \|
    (\partial_\ZakPosition \ReferenceSection) (\ZakPosition - \TranslationAlpha, \ZakFrequency - \TranslationBeta)
  \|
  \leq \pi
\)
by Lemma~\ref{lem:ChiDerivativeEstimate}.

Next, recall from \eqref{eq:A0Definition} that
$\IdealMatrixField(\ZakPoint) = \ReferenceSection(\ZakPoint) \TranslatedReferenceSection(\ZakPoint)^* \in \C^{2 \times 2}$.
Thus, using the product rule and the elementary estimate
$\| u v^\ast \|_{\op} \leq \| u \| \cdot \| v \|$ for $u,v \in \C^2$, we see
\begin{equation}
  \label{eq:A0-derivative-estimate}
  \begin{aligned}
      \| \partial_\ZakPosition \IdealMatrixField (\ZakPosition, \ZakFrequency) \|_{\op}
      &= \|
           \partial_\ZakPosition (\ReferenceSection(\ZakPosition, \ZakFrequency) \TranslatedReferenceSection(\ZakPosition, \ZakFrequency)^\ast)
         \|_{\op} \\
      &= \|
           (\partial_\ZakPosition \ReferenceSection)(\ZakPosition, \ZakFrequency)
           \cdot \TranslatedReferenceSection(\ZakPosition, \ZakFrequency)^\ast
           + \ReferenceSection(\ZakPosition, \ZakFrequency)
             \cdot ((\partial_\ZakPosition \TranslatedReferenceSection)(\ZakPosition, \ZakFrequency))^\ast
         \|_{\op} \\
      &\leq \|
              (\partial_\ZakPosition \ReferenceSection)(\ZakPosition, \ZakFrequency)
              \cdot \TranslatedReferenceSection(\ZakPosition, \ZakFrequency)^\ast
            \|_{\op}
            + \| \ReferenceSection(\ZakPosition, \ZakFrequency)
                  \cdot ((\partial_\ZakPosition \TranslatedReferenceSection)(\ZakPosition, \ZakFrequency))^\ast
              \|_{\op} \\
      &\leq \| (\partial_\ZakPosition \ReferenceSection)(\ZakPosition, \ZakFrequency) \|
              \cdot \| \TranslatedReferenceSection(\ZakPosition, \ZakFrequency) \|
            + \| \ReferenceSection(\ZakPosition, \ZakFrequency) \|
                  \cdot \| (\partial_\ZakPosition \TranslatedReferenceSection)(\ZakPosition, \ZakFrequency) \| \\
      &\leq \pi \cdot 1 + 1 \cdot \pi \cdot (1 + \TranslationBeta) \\
      &=    \pi \cdot (2 + \TranslationBeta)
      ,
  \end{aligned}
\end{equation}
where the penultimate step combines \eqref{eq:w-derivative-estimate} and the observation $\| \TranslatedReferenceSection(\ZakPoint) \| = \| \ReferenceSection (\TranslateZak \ZakPoint) \|$, together with Lemmas \ref{lem:unit} and \ref{lem:ChiDerivativeEstimate}.
\\~\\
\emph{Step 3c.} Lastly, we estimate $\|\partial_\ZakPosition (\FixedMatrixField - \IdealMatrixField)(\ZakPosition, \ZakFrequency) \|_{\op}$.
By combining the bounds from the previous two sub-steps using the triangle inequality
and using the  rational inequalities $\pi<355/113$ and $\TranslationBeta<3/5$,
we obtain
\begin{align*}
  \|\partial_\ZakPosition (\FixedMatrixField - \IdealMatrixField)(\ZakPosition, \ZakFrequency) \|_{\op}
  &\leq  \| \partial_\ZakPosition \FixedMatrixField (\ZakPosition, \ZakFrequency) \|_{\op}
        + \| \partial_\ZakPosition \IdealMatrixField(\ZakPosition, \ZakFrequency) \|_{\op} \\
  &\leq  \frac{\pi}{2^{60}}
         \sum_{(m,n)\in\CoefficientIndexSet}
           |n|
           (|\RealCoefficientNumerator_{m,n}|+|\ImagCoefficientNumerator_{m,n}|)
        + \pi \cdot (2 + \beta) \\
  &\leq \frac{355}{113}
        \cdot
        \Bigl(
          2
          + \frac{3}{5}
         + \frac{1}{2^{60}} \sum_{(m,n)\in\CoefficientIndexSet}
             |n|
             (|\RealCoefficientNumerator_{m,n}|+|\ImagCoefficientNumerator_{m,n}|)
        \Bigr)
   =: \circledast
   ,
\end{align*}
for all $\ZakPosition \in [0,1]$ and $\ZakFrequency \in \R$.
The right-hand side of this inequality only involves rational numbers.

The supplementary code
(specifically the function \lstinline[language=Python]|exact_x_derivative_bound()|)
uses exact rational number arithmetic to compute
\begin{equation}
\label{eq:derivative-numbers}
\begin{aligned}
  \DerivativeBound
  &:= \sup_{\ZakPosition \in [0,1], \ZakFrequency \in \R}
       \|\partial_\ZakPosition \MatrixErrorField(\ZakPosition, \ZakFrequency) \|_{\op} \\
  & = \sup_{\ZakPosition \in [0,1], \ZakFrequency \in \R}
       \|\partial_\ZakPosition (\FixedMatrixField - \IdealMatrixField)(\ZakPosition, \ZakFrequency) \|_{\op} \\
  &\leq \circledast
   =  \frac{2 \, 251 \, 244 \, 646 \, 990 \, 129 \, 974 \, 723}{130 \, 280 \, 130 \, 020 \, 573 \, 708 \, 288}
   <  17.280031.
\end{aligned}
\end{equation}
\\~\\
\textbf{Step 4.} We complete the proof with a certified numerical computation. 
For this, let $N := 2048$ and set
\begin{equation}
 \ZakPosition_j := \frac{2j+1}{2N},
 \qquad 0 \leq j<N.
 \label{eq:GridPoints}
\end{equation}
At these $2048$ centers, the supplementary Python script evaluates all
coefficients $\ErrorLaurentCoefficient_{r,s,\LaurentExponent}(\ZakPosition_j)$
in \eqref{eq:laurent-error} using $256$-bit Arb real and
complex balls and uses that to produce a certified bound for
$\LaurentMajorant(\ZakPosition_j)$.

Specifically, the supplementary script certifies that
\begin{equation}\label{eq:center-bound}
  \LaurentMajorant(\ZakPosition_j)
  <     0.328813
  \quad \text{for all } 0 \leq j < N
  .
\end{equation}

Now, given an arbitrary $(\ZakPosition,\ZakFrequency) \in [0,1] \times \R$,
we can choose $0 \leq j < N$ with $|\ZakPosition-\ZakPosition_j|\leq \frac{1}{2N}$.
Equations \eqref{eq:omega-triangle} and
\eqref{eq:derivative-numbers} yield with the mean value theorem that
\begin{equation}
 \norm{\MatrixErrorField(\ZakPosition,\ZakFrequency)}_{\op}
 \leq  \norm{\MatrixErrorField(\ZakPosition_j,\ZakFrequency)}_{\op}
       + \frac{D_\ast}{2N}
 \leq
 \LaurentMajorant(\ZakPosition_j)
 +\frac{\DerivativeBound}{2N}.
\end{equation}
Note that the mean-value theorem is applicable, since $\FixedMatrixField, \IdealMatrixField : \R^2 \to \C^{2 \times 2}$
are smooth.

The supplementary Python script also certifies that
\begin{equation}\label{eq:interpolation}
 \frac{\DerivativeBound}{2N}
 <    0.004219.
\end{equation}
Adding the estimates \eqref{eq:center-bound} and \eqref{eq:interpolation} gives
\begin{equation}
\label{eq:finalNumericalBound}
 \CertifiedError_0
 < 0.328813 + 0.004219
 = 0.333032
 = \frac{41629}{125000}
 < \frac{1}{3},
\end{equation}
which proves \eqref{eq:delta-third}.
\end{proof}

\begin{remark}[Reproducibility]
The computational supplement consists of the single standalone script
\nolinkurl{hrt_fixed_12_point_arb_1d_audit.py}.  It was run with
python-flint $0.8.0$, FLINT $3.3.1$, and $256$-bit Arb precision.
The script is deliberately repetitive: it writes the eleven dyadic
coefficients directly in its source, implements
$\FixedLaurentCoefficient_{r,s,\LaurentExponent}$ and the two
piecewise formulas for
$\IdealLaurentCoefficient_{r,s,\LaurentExponent}$
as separate literal functions, then forms
$\ErrorLaurentCoefficient_{r,s,\LaurentExponent}
 =\FixedLaurentCoefficient_{r,s,\LaurentExponent}
  -\IdealLaurentCoefficient_{r,s,\LaurentExponent}$
before applying
\eqref{eq:omega-triangle}. The script verifies 
\eqref{eq:center-bound}, \eqref{eq:derivative-numbers}, and \eqref{eq:interpolation} and the final bound \eqref{eq:finalNumericalBound}.
The certificate relies on the documented rigorous
ball-arithmetic guarantees of FLINT/Arb.
\end{remark}

Lastly, we prove the following corollary.

\begin{corollary}\label{cor:GlobalErrorBound}
The coefficient sequence $\aaa \in \C^{\Z \times \Z}$ defined in \eqref{eq:amn-dyadic}
is admissible in the sense of Definition~\ref{def:AdmissibleSequence}.
\end{corollary}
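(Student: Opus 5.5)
To prove Corollary~\ref{cor:GlobalErrorBound}, I will show that the supremum of $\|\FixedCocycle - \IdealCocycle\|_{\op}$ over all of $\R^2$ equals the quantity $\CertifiedError_0$ from Theorem~\ref{thm:certificate}. That theorem only takes the supremum over $[0,1]^2$.

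First, by \eqref{eq:delta-equality} in Remark~\ref{rem:AdmissibleSequencesExist}, we have $\CertifiedError(\aaa) = \sup_{\ZakPoint\in\R^2}\|A[\aaa](\ZakPoint)-\IdealMatrixField(\ZakPoint)\|_{\op}$, where $A[\aaa] = \FixedMatrixField$. So it suffices to bound $\|\MatrixErrorField(\ZakPoint)\|_{\op}$ for $\MatrixErrorField := \FixedMatrixField - \IdealMatrixField$ at every $\ZakPoint\in\R^2$.

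Next, $\MatrixErrorField$ satisfies the matrix quasi-periodicity relations \eqref{eq:sewing}. For $\FixedMatrixField$ this holds because it is a finite linear combination of lattice Weyl matrices, each of which satisfies \eqref{eq:sewing} by Lemma~\ref{lem:sewing_Weyl_matrix}. For $\IdealMatrixField = \ReferenceSection\TranslatedReferenceSection^*$ it holds because both $\ReferenceSection$ and $\TranslatedReferenceSection$ are vector Zak functions (Lemmas~\ref{lem:unit} and~\ref{lem:w_vectorZak}). Explicitly,
\[
\ReferenceSection(\ZakPoint+e_j)\TranslatedReferenceSection(\ZakPoint+e_j)^* = \SewingMatrix_j(\ZakPoint)\,\ReferenceSection(\ZakPoint)\TranslatedReferenceSection(\ZakPoint)^*\,\SewingMatrix_j(\ZakPoint)^*,
\]
and $\SewingMatrix_j(\ZakPoint)^* = \SewingMatrix_j(\ZakPoint)^{-1}$. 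Since $\FirstSewingMatrix(\ZakFrequency)$ and $\SecondSewingMatrix$ are unitary for real $\ZakFrequency$, conjugation by them preserves the operator norm. Hence $\ZakPoint\mapsto\|\MatrixErrorField(\ZakPoint)\|_{\op}$ is $\Z^2$-periodic, and its supremum over $\R^2$ equals its supremum over $[0,1]^2$, which is $\CertifiedError_0$.

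Finally, Theorem~\ref{thm:certificate} gives $\CertifiedError(\aaa) = \CertifiedError_0 \le 41629/125000 < 1/3$. Since $\aaa$ is finitely supported, with support $\CoefficientIndexSet$, it is admissible. The only step that needs any care is the quasi-periodicity of $\IdealMatrixField$, and even that follows directly from the sewing relations of the two vector Zak functions. In fact, Theorem~\ref{thm:certificate} already bounds $\MatrixErrorField$ on $[0,1]\times\R$, so only periodicity in $\ZakPosition$ is actually needed.
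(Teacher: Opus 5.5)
Your proof is correct and follows the same route as the paper: the pointwise identity \eqref{eq:delta-equality-pointwise} reduces everything to $\FixedMatrixField-\IdealMatrixField$. That field satisfies the matrix quasi-periodicity relations with unitary sewing matrices, using Lemma~\ref{lem:sewing_Weyl_matrix} for $\FixedMatrixField$ and the vector Zak property of $\ReferenceSection$ and $\TranslatedReferenceSection$ for $\IdealMatrixField$, so its operator norm is $\Z^2$-periodic and the bound $\CertifiedError_0<1/3$ from Theorem~\ref{thm:certificate} carries over from $[0,1]^2$ to all of $\R^2$.

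Your closing remark that only periodicity in $\ZakPosition$ is needed is accurate for what the proof of Theorem~\ref{thm:certificate} actually establishes, namely a bound on $[0,1]\times\R$. Its statement, however, only asserts the bound on $[0,1]^2$.
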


\begin{proof}
Recall from Definition~\ref{def:AdmissibleSequence} that 
\(
  \CertifiedError(\aaa)
  = \sup_{\ZakPoint \in \R^2}
      \norm{\FixedCocycle(\ZakPoint)-\IdealCocycle(\ZakPoint)}_{\op}
\)
and that admissibility of $\aaa$ means that $\CertifiedError(\aaa) < \frac{1}{3}$.
We will show that
\begin{equation}
  \CertifiedError (\aaa) \leq \CertifiedError_0
  ,
  \label{eq:ExplicitSequenceDeltaGlobalization}
\end{equation}
with $\CertifiedError_0$ as in Theorem~\ref{thm:certificate}.
The admissibility of $\aaa$ then follows from that theorem,
since it shows $\CertifiedError_0 < \frac{1}{3}$.

As the first step for proving \eqref{eq:ExplicitSequenceDeltaGlobalization},
note that the pointwise identity \eqref{eq:delta-equality-pointwise} shows that
$\CertifiedError(\aaa) = \sup_{\ZakPoint \in \R^2} \norm{\FixedMatrixField(\ZakPoint)-\IdealMatrixField(\ZakPoint)}_{\op}$.

For brevity, set
\(
 \MatrixErrorField(\ZakPoint)
 :=
 \FixedMatrixField(\ZakPoint)-\IdealMatrixField(\ZakPoint) \in \C^{2 \times 2}
\)
for $\ZakPoint \in \R^2$.
Both $\FixedMatrixField$ and $\IdealMatrixField$
satisfy the quasi-periodicity relations \eqref{eq:sewing}.  
For $\IdealMatrixField$, this follows from its definition \eqref{eq:A0Definition}
and the vector Zak sewing formulas \eqref{eq:x-sewing} and \eqref{eq:omega-sewing}
for $\ReferenceSection$ and $\TranslatedReferenceSection$. 
Here, $\ReferenceSection$ satisfies the quasi-periodicity relations since
$\ReferenceSection = \VectorZak \ReferenceWindow$ (see \eqref{eq:explicit_vector_zak}),
and $\TranslatedReferenceSection$ satisfies the quasi-periodicity relations by Lemma~\ref{lem:w_vectorZak}.
For $\FixedMatrixField$,
first recall from Lemma~\ref{lem:sewing_Weyl_matrix}
that each matrix function $\LatticeWeylMatrix_{m,n}$ satisfies \eqref{eq:sewing}.
In view of \eqref{eq:Astar}, this shows that
$\FixedMatrixField$ and hence $\MatrixErrorField$ also satisfy \eqref{eq:sewing}.

Now let $\ZakPoint\in\R^2$. 
Choose $\ZakPoint_0\in[0,1)^2$ and
$k\in\Z^2$ such that $\ZakPoint=\ZakPoint_0+k$. 
Repeated application
of the two identities in \eqref{eq:sewing} yields a unitary matrix
$U_{\ZakPoint}$ such that
\[
 \MatrixErrorField(\ZakPoint)
 =
 U_{\ZakPoint}\MatrixErrorField(\ZakPoint_0)U_{\ZakPoint}^{-1}.
\]
The operator norm is invariant under unitary conjugation.
Hence, we see with $\CertifiedError_0$ as in Theorem~\ref{thm:certificate} that
\[
 \norm{\MatrixErrorField(\ZakPoint)}_{\op}
 =
 \norm{\MatrixErrorField(\ZakPoint_0)}_{\op}
 \leq \CertifiedError_0
 .
\] 
Taking the supremum over $\ZakPoint\in\R^2$ gives
\[
  \CertifiedError (\aaa)
  = \sup_{\ZakPoint\in\R^2}
      \norm{\FixedMatrixField(\ZakPoint)-\IdealMatrixField(\ZakPoint)}_{\op}
  \leq \sup_{\ZakPoint\in[0,1]^2}
  \norm{\FixedMatrixField(\ZakPoint)-\IdealMatrixField(\ZakPoint)}_{\op}
  \leq  \CertifiedError_0.
\]
As noted at the beginning of the proof, this proves admissibility of $\aaa$.
\end{proof}

We can now finally prove the main Theorem~\ref{thm:main}.

\begin{proof}[Proof of Theorem~\ref{thm:main}]
Consider the sequence $\aaa$ defined in \eqref{eq:amn-dyadic},
with the set $\CoefficientIndexSet = \supp \aaa$ defined in \eqref{eq:index-set}.
By Corollary~\ref{cor:GlobalErrorBound}, the sequence $\aaa$ is admissible.
Thus, Theorem~\ref{prop:GeneralMainResult} yields a nonzero Schwartz function
$\FinalWindow$ and a nonzero $\FixedEigenvalue \in \C \setminus \{ 0\}$
satisfying
\begin{align*}
  \FixedEigenvalue \, \TimeFrequencyShift(0,0) \FinalWindow
  = \FixedEigenvalue \, \FinalWindow
  &=  \sum_{(m,n)\in \Z^2}
        \aaa_{m,n} \,
        e^{- \frac{\pi \ii}{2} (m n + 2 m \TranslationBeta + \TranslationAlpha \TranslationBeta)}
        \TimeFrequencyShift \bigl( m + \TranslationAlpha, \tfrac{n + \TranslationBeta}{2} \bigr) \FinalWindow \\
  &=  \sum_{(m,n)\in \CoefficientIndexSet}
        \aaa_{m,n} \,
        e^{- \frac{\pi \ii}{2} (m n + 2 m \TranslationBeta + \TranslationAlpha \TranslationBeta)}
        \TimeFrequencyShift \bigl( m + \TranslationAlpha, \tfrac{n + \TranslationBeta}{2} \bigr) \FinalWindow
  .
\end{align*}
The set $\CoefficientIndexSet$ has exactly $11$ elements and the points $( m + \TranslationAlpha, (n + \TranslationBeta)/2 )$, $(m,n) \in \CoefficientIndexSet$ are pairwise distinct and none equals $(0,0)$; moreover, $\aaa_{m,n} \neq 0$ for every $(m,n) \in \CoefficientIndexSet$. We have therefore identified a linear dependence among $12$ time-frequency shifts $\FinalWindow$, in which all coefficients are nonzero. This easily implies the claim of Theorem~\ref{thm:main}.
\end{proof}

\section*{Acknowledgements}
For M.F. and J.v.V., this research was funded in whole or in part by the Austrian Science Fund (FWF): [10.55776/PAT5102224] and [10.55776/PAT2545623].
F.V. acknowledges support by the German Science Foundation (DFG) in the context of the Emmy Noether junior research group VO 2594/1-1.
F.V. acknowledges support by the Hightech Agenda Bavaria. 

\appendix

\section{A Fa\`a di Bruno-type formula}
\label{sec:FaaDiBrunoByHand}

In this appendix, we provide a proof for the formula \eqref{eq:highest-derivative-recursion} in the proof of Theorem~\ref{lem:regularity}. Essentially, this formula is a (less precise) special case of the \emph{multivariate Faà di Bruno formula}; see e.g.\  \cite{FaaDiBrunoMultivariate}. However, since we consider a mix of partial derivatives in the sense of real variables and of partial derivatives in the sense of holomorphic functions (see below), we could not locate a version of \emph{Faà di Bruno's formula} that strictly applies in our setting. For this reason, and to make the paper more self-contained, we provide a proof.

\subsection{Assumptions and notation}
\label{sub:FaaDiBrunoNotation}

Let $V \subset \C$ be open.
Let $\ProjectiveMap \in C^\infty(\R^2 \times V;\C)$,
and suppose that 
$\ProjectiveCoordinate\mapsto \ProjectiveMap(\ZakPoint,\ProjectiveCoordinate)$
is holomorphic on $V$ for every $\ZakPoint \in \R^2$.
For $\alpha\in\N_0^2$ and $k\in\N_0$, write
\[
 \ProjectiveMap_{\alpha,k}
 :=
 \partial_{\ZakPoint}^{\alpha}
 \partial_{\ProjectiveCoordinate}^{k} \ProjectiveMap
 ,
\]
where the partial derivatives with respect to $\ProjectiveCoordinate \in \C$ are taken
in the sense of complex variables and holomorphic functions,
while the partial derivatives with respect to $\z \in \R^2$ are
taken in the sense of real variables.

For $\alpha = (\alpha_1, \alpha_2) , \gamma = (\gamma_1, \gamma_2) \in \N_0^2$,
we use the following well-known multi-index notation:
\[
  |\alpha| := \alpha_1 + \alpha_2
  \quad \text{and} \quad
  \alpha \leq \gamma \,\, :\Leftrightarrow  \,\, \alpha_i \leq \gamma_i \text{ for } i \in \{1,2\}
  .
\]
Moreover, we write $\alpha < \gamma$ if and only if $\alpha \leq \gamma$ and $\alpha \neq \gamma$.

For a tuple $\BoldEta = (\eta^1,\dots,\eta^k)$ with $\eta^1,\dots,\eta^k \in \N_0^2$, define
\[
  \overline{\BoldEta} := \eta^1 + \cdots + \eta^k \in \N_0^2
  \quad \text{and} \quad
  k(\BoldEta) := k \in \N
\]
  In what follows, the empty tuple is denoted by $()$.
If $\BoldEta = ()$, we set $\overline{\BoldEta} := 0 \in \N_0^2$
and $k(\BoldEta) := 0$.

Using this notation, for $\gamma\in\N_0^2 \setminus \{ 0 \}$, define
\[
  \FrakI_\gamma
  := \{ () \}
     \cup \Bigl\{
            \BoldEta = (\eta^1,\dots,\eta^k)
            \,\,:\,\,
            k(\BoldEta) \in \{1, \dots, |\gamma|\} ,
            \eta^i \in \N_0^2 \setminus \{0\},
            \text{ and } \overline{\BoldEta} \leq \gamma
          \Bigr\}
  .
\]

Now, given $g \in C^\infty (\R^2 ; V)$, define
\[
  \widetilde{g} : \quad
  \R^2 \to V, \quad
  \widetilde {g} (\z) := g(\TranslateZak \z)
\]
and
\[
  G : \quad
  \R^2 \to \C, \quad
  G(\z) := \ProjectiveMap (\z , \widetilde{g} (\z))
  .
\]
Additionally, given $\BoldEta \in \FrakI_\gamma$, define
\[
  G_{\gamma, \BoldEta} [g] : \quad
  \R^2 \to \C, \quad
  G_{\gamma, \BoldEta} [g] (\z)
  := \ProjectiveMap_{\gamma - \overline{\BoldEta}, k(\BoldEta)} (\z, \widetilde{g}(\z))
     \cdot \prod_{j=1}^{k(\BoldEta)} (\partial^{\eta^j} g) (\TranslateZak \z)
  ,
\]
where the empty product $\prod_{j=1}^0 a_j$ is interpreted as $1$.

Finally, for $\Omega \subset \R^2$, define
\[
  \Omega_{\TorusShiftVector}
  := \Omega - \TorusShiftVector
  = \{ \z - \TorusShiftVector \,\,:\,\, \z \in \Omega \}
  \subset \R^2
  .
\]

\subsection{A formula for higher derivatives of \texorpdfstring{$G(\z)$}{G(z)}}
\label{sub:FaaDiBrunoResult}

\begin{lemma}\label{lem:FaaDiBrunoResult}
    With notation and assumptions as in Section~\ref{sub:FaaDiBrunoNotation}, for every
    $\gamma \in \N_0^2 \setminus \{ 0 \}$,  there exist coefficients
    $c_{\gamma, \BoldEta} \in \N_0$, $\BoldEta \in \FrakI_\gamma$, independent of $g$, such that
    \begin{equation}
      \partial^\gamma G (\z)
      = \sum_{\BoldEta \in \FrakI_\gamma} c_{\gamma, \BoldEta} G_{\gamma,\BoldEta}[g] (\z)
      , \qquad  \z \in \R^2
      .
      \label{eq:FaaDiBruno}
    \end{equation}
    Moreover,
    \begin{enumerate}[label=(\roman*)]
        \item $c_{\gamma, (\gamma)} = 1$, and
        \item for $\BoldEta \in \FrakI_\gamma \setminus \{ (\gamma) \}$, each partial derivative
              $(\partial^{\eta^j} g) (\TranslateZak \z)$ occurring in the definition of
              $G_{\gamma, \BoldEta}$ has order $|\eta^j| \leq |\gamma| - 1$.
    \end{enumerate}
\end{lemma}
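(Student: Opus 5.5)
The plan is to argue by induction on $|\gamma|$, building up $\partial^\gamma$ one coordinate derivative at a time. The basic tool is a chain rule for $\partial_{\z_i}$ applied to expressions of the form $\ProjectiveMap_{\alpha,k}(\z,\widetilde g(\z))$. Because $\ProjectiveMap_{\alpha,k}$ is holomorphic in $\ProjectiveCoordinate$ and $\widetilde g$ is a smooth $\C$-valued function of real variables, the real chain rule (applied through the Cauchy--Riemann equations) gives
\[
  \partial_{\z_i}\bigl[\ProjectiveMap_{\alpha,k}(\z,\widetilde g(\z))\bigr]
  = \ProjectiveMap_{\alpha+e_i,k}(\z,\widetilde g(\z))
    + \ProjectiveMap_{\alpha,k+1}(\z,\widetilde g(\z))\,(\partial_{\z_i} g)(\TranslateZak\z).
\]
Here no $\partial_{\overline{\ProjectiveCoordinate}}$ term appears. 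We also use $\partial_{\z_i}[(\partial^{\eta}g)(\TranslateZak\z)] = (\partial^{\eta+e_i}g)(\TranslateZak\z)$, which holds because $\TranslateZak$ is a translation. For the base case $|\gamma|=1$, say $\gamma=e_i$, this formula gives $\partial^{e_i}G = G_{e_i,()}[g] + G_{e_i,(e_i)}[g]$, with both coefficients equal to $1$. It is convenient to allow $\gamma=0$ formally with $\FrakI_0=\{()\}$, so that $G=G_{0,()}[g]$ and the induction can also start from $\gamma=0$.

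For the inductive step, write $\gamma' = \gamma+e_i$ and differentiate each summand $G_{\gamma,\BoldEta}[g]$ by the product rule. This produces three kinds of terms. First, differentiating the $\z$-slot of $\ProjectiveMap$ gives $G_{\gamma',\BoldEta}$; indeed $\gamma'-\overline{\BoldEta} = (\gamma-\overline{\BoldEta})+e_i$. Second, differentiating the $\ProjectiveCoordinate$-slot gives $G_{\gamma',(\BoldEta,e_i)}$, with the tuple extended by the entry $e_i$. Third, differentiating the $j$-th factor gives $G_{\gamma',\BoldEta^{(j)}}$, where $\BoldEta^{(j)}$ replaces $\eta^j$ by $\eta^j+e_i$. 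In every case the new tuple has nonzero entries, its sum satisfies $\overline{\BoldEta}\le\gamma'$, and its length is at most $|\gamma'|$. So the tuple lies in $\FrakI_{\gamma'}$, and all coefficients remain nonnegative integers independent of $g$. Collecting terms and summing over $\BoldEta$ yields \eqref{eq:FaaDiBruno}. One side point to check is that tuples are ordered, so several tuples differing by permutation may each appear; this is harmless because $\FrakI_\gamma$ contains all orderings.

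For claims (i) and (ii), we track which terms can produce the single-entry tuple $(\gamma')$. A tuple $\BoldEta$ has an entry of order $|\gamma'|$ only if $k(\BoldEta)=1$ and $\eta^1=\gamma'$, because all entries are nonzero and their sum is at most $\gamma'$. Such a tuple arises in only two ways. It can come from $(\gamma)$ via the third kind of operation, which contributes coefficient $c_{\gamma,(\gamma)}=1$. It can also come from $()$ via the second kind, but only when $\gamma=0$; for $\gamma\ne0$ the tuple $((),e_i)$ equals $(e_i)\ne(\gamma')$. The first kind of operation leaves the tuple unchanged and so cannot create the entry $\gamma'$ from $(\gamma)$. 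Hence $c_{\gamma',(\gamma')}=1$. Claim (ii) then follows, since any other tuple has either $k\ge2$ or its single entry strictly smaller than $\gamma'$, and in both cases every entry has order at most $|\gamma'|-1$.

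The main obstacle is the bookkeeping in this induction, in particular confirming that $c_{\gamma',(\gamma')}$ is exactly $1$ and not larger, together with a careful justification of the holomorphic chain rule. For the chain rule we would write $\ProjectiveCoordinate = u+\ii v$, use $\partial_u\ProjectiveMap = \partial_{\ProjectiveCoordinate}\ProjectiveMap$ and $\partial_v\ProjectiveMap = \ii\,\partial_{\ProjectiveCoordinate}\ProjectiveMap$, and also use that $\partial_{\ProjectiveCoordinate}$ commutes with $\partial_{\z}$ by smoothness.
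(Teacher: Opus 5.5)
Your proposal is correct and follows essentially the same route as the paper: the Cauchy--Riemann chain rule, induction on $|\gamma|$ in which the product rule produces the three kinds of tuples $\BoldEta$, $(\BoldEta,e_i)$ and $\BoldEta^{*}(i,\ell)$, and the observation that $(\gamma+e_i)$ can arise only from $(\gamma)$ via the third operation, which gives $c_{\gamma+e_i,(\gamma+e_i)}=1$. Two of your additions are minor refinements of details the paper leaves implicit: the formal start at $\gamma=0$ with $\FrakI_0=\{()\}$, and the explicit remark that $\partial_{\ProjectiveCoordinate}$ commutes with $\partial_{\z}$, which is needed to identify $\partial_{\ProjectiveCoordinate}\ProjectiveMap_{\alpha,k}$ with $\ProjectiveMap_{\alpha,k+1}$.
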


\begin{proof}
We split the proofs into three steps.
\\~\\
\textbf{Step 1.}
Let
\[
  h = h(\z, \vv) \in C^\infty(\R^2 \times V;\C),
\]
and assume that $V \ni \vv \mapsto h(\z, \vv)$ is holomorphic for each $\z \in \R^2$.
Let us identify $\C \cong \R^2$.
Then, it is well-known (see e.g.\ \cite[Pages 11--12]{SteinComplexAnalysis})
that as a consequence of $h(\z, \cdot)$ being holomorphic,
the partial derivatives $\partial_{\Re \vv} h, \partial_{\Im \vv} h$ in the sense of real variables
and the partial derivative $\partial_{\vv} h$ in the sense of complex variables are related via
\[
  (\partial_\vv h) (\z, \vv)
  = (\partial_{\Re \vv} h) (\z, \vv)
  = -\ii \cdot (\partial_{\Im \vv} h)(\z, \vv)
  .
\]
Thus, writing $g = g_1 + \ii \, g_2$ with real-valued functions $g_1, g_2 \in C^\infty (\R^2)$,
we can apply the usual (real-variable) multivariate chain rule to get
\begin{equation}
\begin{aligned}
  &\partial_{\z_i} [h(\z, \widetilde{g}(\z))] \\
  &= (\partial_{\z_i} h) (\z, \widetilde{g}(\z))
    + (\partial_{\Re \vv} h) (\z, \widetilde{g}(\z)) \cdot (\partial_{\z_i} g_1)(\TranslateZak \z)
    + (\partial_{\Im \vv} h) (\z, \widetilde{g}(\z)) \cdot (\partial_{\z_i} g_2)(\TranslateZak \z) \\
  &= (\partial_{\z_i} h) (\z, \widetilde{g}(\z))
    + (\partial_{\vv} h) (\z, \widetilde{g}(\z)) \cdot (\partial_{\z_i} g_1)(\TranslateZak \z)
    + \ii \cdot (\partial_{\vv} h) (\z, \widetilde{g}(\z)) \cdot (\partial_{\z_i} g_2)(\TranslateZak \z) \\
  &= (\partial_{\z_i} h) (\z, \widetilde{g}(\z))
    + (\partial_{\vv} h) (\z, \widetilde{g}(\z)) \cdot (\partial_{\z_i} g)(\TranslateZak \z)
    .
\end{aligned}
\label{eq:EasyChainRule}
\end{equation}
This shows that the expected formula obtained from the chain rule also holds in this setting
of mixed ``real variables'' and ``complex variables'' derivatives.
Above, we also used that the simple translation
$\TranslateZak \z = \z - \TorusShiftVector$
has constant partial derivatives $1$.
\\~\\
\textbf{Step 2.} 
Let $\BoldEta \in \FrakI_\gamma \setminus \{ (\gamma) \}$.
In case of $\BoldEta = ()$, the claim is vacuously true;
thus, let $\BoldEta = (\eta^1,\dots,\eta^k)$.
In case of $k = 1$, we then have $\eta^1 = \overline{\BoldEta} \leq \gamma$.
Since furthermore $\BoldEta \neq (\gamma)$, this implies $\eta^1 < \gamma$
and hence $|\eta^1| \leq |\gamma| - 1$.
In case of $k \geq 2$, note that $\eta^i \in \N_0^2 \setminus \{ 0 \}$ for every
$i$, by definition of $\FrakI_\gamma$.
Hence, we get for every $1 \leq j \leq k$ that
\[
  \eta^j
  < \eta^1 + \cdots + \eta^k
  = \overline{\BoldEta}
  \leq \gamma
  .
\]
This implies as before that  $|\eta^j| \leq |\gamma| - 1$.
\\~\\
\textbf{Step 3.}
We prove \eqref{eq:FaaDiBruno} by induction on $|\gamma| \in \N$.
\\~\\
\emph{Base case:}
In this case, $\gamma = e_i$ with $i \in \{1, 2\}$.
We then have $\FrakI_\gamma = \{()\} \cup \{(e_i)\}$.
Moreover, by Step~1 (applied to $h = \ProjectiveMap$), we see
\[
  \begin{aligned}
    \partial^\gamma G(\z)
    &= \partial_{\z_i} [\ProjectiveMap(\z, g(\TranslateZak \z))] \\
    &= (\partial_{\z_i} \ProjectiveMap)(\z, g(\TranslateZak \z)) + (\partial_\vv \ProjectiveMap)(\z, g(\TranslateZak \z)) \cdot (\partial^{e_i} g)(\TranslateZak \z) \\
    &= \underbrace{\ProjectiveMap_{e_i, 0}(\z, g(\TranslateZak \z))}_{= G_{\gamma, ()}[g](\z)}
       + \underbrace{\ProjectiveMap_{0, 1}(\z, g(\TranslateZak \z)) \cdot (\partial^{e_i} g)(\TranslateZak \z)}_{= G_{\gamma, (e_i)}[g](\z)}
    ,
  \end{aligned}
\]
which proves the claim with
$
  c_{\gamma, ()} = c_{\gamma, (\gamma)} = 1
  .
$
\\~\\
\emph{Induction step:}
For the induction step, assume the claim is proven for $\gamma \in \N_0^2 \setminus \{0\}$, and let $i \in \{1, 2\}$.
By the product rule and by Step~1 of the proof (applied to $h = \ProjectiveMap_{\gamma-\overline{\BoldEta},k(\BoldEta)}$)),
we see for $\bm{\eta} \in \FrakI_\gamma$
\[
  \begin{aligned}
    \partial_i G_{\gamma, \bm{\eta}}(\z)
    &= \partial_i
       \left[
          \ProjectiveMap_{\gamma - \overline{\BoldEta}, k(\BoldEta)}(\z, g(\TranslateZak \z))
          \cdot \prod_{j=1}^{k(\bm{\eta})} (\partial^{\eta^j} g)(\TranslateZak \z)
       \right] \\
    &= \partial_i \left[ \ProjectiveMap_{\gamma - \overline{\BoldEta}, k(\BoldEta)}(\z, g(\TranslateZak \z)) \right]
       \cdot \prod_{j=1}^{k(\bm{\eta})} (\partial^{\eta^j} g)(\TranslateZak \z) \\
    &\quad +  \ProjectiveMap_{\gamma - \overline{\BoldEta}, k(\BoldEta)}(\z, g(\TranslateZak \z))
             \sum_{\ell=1}^{k(\BoldEta)}
             \left[
              (\partial^{\eta^\ell + e_i} g) (\TranslateZak \z)
              \prod_{\substack{1 \leq j \leq k(\BoldEta) \\ j \neq \ell}}
                (\partial^{\eta^j} g)(\TranslateZak \z) 
             \right]
    \\
    &= (\partial_i \ProjectiveMap_{\gamma - \overline{\BoldEta}, k(\BoldEta)})(\z, g(\TranslateZak \z))
       \cdot \prod_{j=1}^{k(\bm{\eta})} (\partial^{\eta^j} g)(\TranslateZak \z) \\
    &\quad + \ProjectiveMap_{\gamma - \overline{\BoldEta}, k(\BoldEta)+1}(\z, g(\TranslateZak \z))
             \cdot (\partial^{e_i} g)(\TranslateZak \z)
             \cdot \prod_{j=1}^{k(\bm{\eta})} (\partial^{\eta^j} g)(\TranslateZak \z) \\
    &\quad + \ProjectiveMap_{\gamma - \overline{\BoldEta}, k(\BoldEta)}(\z, g(\TranslateZak \z))
              \sum_{\ell=1}^{k(\bm{\eta})}
              \left[
                   (\partial^{\eta^\ell + e_i} g)(\TranslateZak \z)
                   \prod_{\substack{1 \leq j \leq k(\BoldEta) \\ j \neq \ell}}
                     (\partial^{\eta^j} g)(\TranslateZak \z)
              \right] \\
    &= G_{\gamma + e_i, \BoldEta}[g](\z)
       + G_{\gamma + e_i, (\BoldEta, e_i)} [g](\z)
       + \sum_{\ell=1}^{k(\BoldEta)}
           G_{\gamma + e_i, \BoldEta^\ast (i,\ell)} [g] (\z)
    ,
  \end{aligned}
\]
where $(\BoldEta, e_i) = (\eta^1,\dots,\eta^k,e_i)$
denotes the concatenation of $\BoldEta$ with $e_i$ and where we defined
\[
  \BoldEta^{*} (i, \ell)
  := (\eta^1, \dots, \eta^{\ell-1}, \eta^\ell + e_i, \eta^{\ell+1}, \dots, \eta^k)
  \in \FrakI_{\gamma + e_i}
  .
\]
Since $\BoldEta, (\BoldEta, e_i), \BoldEta^{\ast}(i,\ell) \in \FrakI_{\gamma + e_i}$,
this proves \eqref{eq:FaaDiBruno} by induction.
It only remains to verify that the resulting coefficient $c_{\gamma+e_i, (\gamma+e_i)}$
satisfies $c_{\gamma+e_i, (\gamma+e_i)} = 1$.
To this end, it is enough to show that the condition
\begin{align}
  (\gamma + e_i)
  \in \bigl\{ 
        \BoldEta, \,\,
        (\BoldEta, e_i), \,\,
        \BoldEta^{\ast}(i,1)
        ,\dots,
        \BoldEta^{\ast}(i,k(\BoldEta))
      \bigr\}
  \label{eq:FaaDiBrunoLeadingCoefficientCondition}
\end{align}
holds for precisely one $\BoldEta \in \FrakI_\gamma$,
and then precisely one of the elements of the set listed above
coincides with $(\gamma + e_i)$.

First, note that $\BoldEta = (\gamma) \in \FrakI_\gamma$ and for this choice of $\BoldEta$,
it is easy to see
\[
  \BoldEta^\ast (i, 1)
  = (\gamma + e_i)
  .
\]
We thus need only show that there is no other possibility for generating $(\gamma + e_i)$.
To do so, let $\BoldEta \in \FrakI_\gamma$, and note the following:
\begin{itemize}
    \item We have $\overline{\BoldEta} \leq \gamma < \gamma + e_i$,
          and thus $\BoldEta \neq (\gamma + e_i)$.
    \item If $(\BoldEta, e_i) = (\gamma + e_i)$, then this would imply $\BoldEta = ()$
          simply by comparing the length of the tuples.
          Hence, $(\gamma + e_i) = (\BoldEta, e_i) = (e_i)$
          and hence $\gamma = 0$, in contradiction to $\gamma \in \N_0^2 \setminus \{ 0\}$.
    \item Finally, if $\BoldEta^\ast (i,\ell) = (\gamma + e_i)$ for some $1 \leq \ell \leq k(\BoldEta)$,
          then this implies
          \[
            1 = k( (\gamma + e_i) )
              = k(\BoldEta^\ast (i, \ell))
              = k(\BoldEta)
            ,
          \]
          so that $\BoldEta = (\eta^1)$ and $\ell = 1$, which finally implies
          $(\gamma + e_i) = \BoldEta^\ast (i,1) = (\eta^1 + e_i)$
          and hence $\BoldEta = (\eta^1) = (\gamma)$.
\end{itemize}
This case distinction shows that indeed $\BoldEta = (\gamma)$ is the only element of $\FrakI_\gamma$
satisfying the condition \eqref{eq:FaaDiBrunoLeadingCoefficientCondition}
and that in this case, exactly one of the elements in the set appearing in that condition
coincides with $(\gamma + e_i)$, even when considering possibly repeated elements.
As noted above, this completes the proof of the lemma.
\end{proof}

\subsection{A bound for derivatives of \texorpdfstring{$G(\z)$}{G(z)}}
\label{sub:FaaDiBrunoDerivativeBound}

In the following, for $k \in \N_0$ and $\Omega \subset \R^2$, we write
\[
  \| g \|_{C^k (\Omega)}
  := \max_{\alpha \in \N_0^2, |\alpha| \leq k} \,\,
       \sup_{\z \in \Omega} \,\,
         |\partial^\alpha g (\z)|
  .
\]
Using this notation, we have the following result.

\begin{lemma}\label{lem:FaaDiBrunoBound}
With notation and assumptions as in Section~\ref{sub:FaaDiBrunoNotation},
for each $\gamma \in \N_0^2 \setminus \{ 0 \}$, there exists a
 map
\[
  R_\gamma : \quad C^\infty(\R^2, V) \to C^\infty(\R^2, \C)
\]
satisfying
\begin{equation}
  \partial^\gamma [\ProjectiveMap(\z, g(\TranslateZak \z))]
  = \ProjectiveMap_{0,1}(\z, g(\TranslateZak \z)) \cdot (\partial^\gamma g)(\TranslateZak \z)
    + R_\gamma [g](\z)
  \label{eq:FaaDiBrunoRemainder}
\end{equation}
and such that for arbitrary
compact sets $\Omega \!\subset\! \R^2$ and $V_0 \!\subset\! V$,
there exists a constant ${C_{\gamma} (\Omega,V_0) \!>\! 0}$ such that
\[
  \| R_\gamma [g] \|_{L^\infty(\Omega)}
  \leq C_{\gamma}(\Omega, V_0) \cdot
  (1 + \| g \|_{C^{|\gamma| - 1}(\Omega_{\TorusShiftVector})})^{|\gamma|}
\]
for all $g \in C^\infty(\R^2; V)$ with $g(\R^2) \subset V_0$.
\end{lemma}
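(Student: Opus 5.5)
The plan is to read off both parts of the statement from the Faà di Bruno-type expansion in Lemma~\ref{lem:FaaDiBrunoResult}. We define the remainder so that \eqref{eq:FaaDiBrunoRemainder} holds by construction:
\[
  R_\gamma[g](\z) := \sum_{\BoldEta \in \FrakI_\gamma \setminus \{(\gamma)\}} c_{\gamma,\BoldEta}\, G_{\gamma,\BoldEta}[g](\z).
\]
By part (i) of that lemma, $c_{\gamma,(\gamma)} = 1$. Moreover, $G_{\gamma,(\gamma)}[g](\z) = \ProjectiveMap_{0,1}(\z, g(\TranslateZak\z))\,(\partial^\gamma g)(\TranslateZak\z)$, since $\gamma - \overline{(\gamma)} = 0$ and $k((\gamma)) = 1$. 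Hence \eqref{eq:FaaDiBruno} becomes exactly \eqref{eq:FaaDiBrunoRemainder}.

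To see that $R_\gamma[g] \in C^\infty(\R^2,\C)$, we note that each $G_{\gamma,\BoldEta}[g]$ is a product of smooth functions. The factor $\ProjectiveMap_{\alpha,k}(\z, g(\TranslateZak\z))$ is smooth because $\ProjectiveMap$ is smooth jointly in $(\z,\Re\vv,\Im\vv)$, and its holomorphic $\vv$-derivatives coincide with real partial derivatives, as observed in Step~1 of the proof of Lemma~\ref{lem:FaaDiBrunoResult}.

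For the bound, we fix compact sets $\Omega$ and $V_0$, and take $g$ with $g(\R^2) \subset V_0$. The index set $\FrakI_\gamma$ is finite: tuples have length at most $|\gamma|$, and their entries $\eta^j$ satisfy $\eta^j \le \gamma$. Let $M_\gamma := \max\{\sup_{\Omega\times V_0}|\ProjectiveMap_{\alpha,k}| : \alpha \le \gamma,\ k \le |\gamma|\}$. This is finite because the mixed derivatives are continuous on the compact set $\Omega \times V_0 \subset \R^2 \times V$. For $\z \in \Omega$, the point $(\z, g(\TranslateZak\z))$ lies in $\Omega \times V_0$. The points $\TranslateZak\z$ lie in $\Omega_{\TorusShiftVector}$.

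By part (ii) of Lemma~\ref{lem:FaaDiBrunoResult}, every factor $(\partial^{\eta^j}g)(\TranslateZak\z)$ with $\BoldEta \ne (\gamma)$ has order at most $|\gamma|-1$. So each such factor is bounded by $\|g\|_{C^{|\gamma|-1}(\Omega_{\TorusShiftVector})}$. The number of factors is $k(\BoldEta) \le |\gamma|$, so the product is at most $(1 + \|g\|_{C^{|\gamma|-1}(\Omega_{\TorusShiftVector})})^{|\gamma|}$. This also covers the empty product, which equals $1$. It follows that
\[
  \|R_\gamma[g]\|_{L^\infty(\Omega)} \le M_\gamma \Bigl(\sum_{\BoldEta} c_{\gamma,\BoldEta}\Bigr)\bigl(1 + \|g\|_{C^{|\gamma|-1}(\Omega_{\TorusShiftVector})}\bigr)^{|\gamma|},
\]
and we set $C_\gamma(\Omega,V_0)$ to be this prefactor, adding $1$ if needed to make it positive. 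The constant is independent of $g$, because the coefficients $c_{\gamma,\BoldEta}$ are.

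The only delicate point is the finiteness and continuity claim behind $M_\gamma$. It requires reading $\ProjectiveMap_{\alpha,k}$ as a continuous function on $\R^2 \times V$, and this is justified by smoothness together with the identification of complex derivatives with real ones. Everything else is bookkeeping.
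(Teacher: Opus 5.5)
Your proposal is correct and follows essentially the same route as the paper: define $R_\gamma[g]$ as the sum of the terms $c_{\gamma,\BoldEta}\,G_{\gamma,\BoldEta}[g]$ over $\BoldEta \neq (\gamma)$. Then use $c_{\gamma,(\gamma)}=1$ together with part (ii) of Lemma~\ref{lem:FaaDiBrunoResult} and a uniform bound on the derivatives of $\ProjectiveMap$ over the compact set $\Omega \times V_0$. Your remarks on the smoothness of $R_\gamma[g]$ and on forcing the constant to be strictly positive are small additions the paper leaves implicit.
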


\begin{proof}
Based on Lemma~\ref{lem:FaaDiBrunoResult}, set
\[
  R_\gamma [g] (\z)
  := \sum_{\BoldEta \in \FrakI_{\gamma} \setminus \{ (\gamma) \}}
       c_{\gamma, \BoldEta} \,\, G_{\gamma, \BoldEta}[g](\z).
\]
Given the formula for $G_{\gamma, \BoldEta}$, we see that $R_\gamma [g]$ only depends on
$g, \gamma$ (and $\ProjectiveMap$).
Moreover, setting
\[
  C_\gamma^{(0)} (\Omega, V_0)
  := \max_{\alpha \in \N_0^2, k \in \N_0, |\alpha| + k \leq |\gamma|} \,\,
       \max_{\z \in \Omega, \vv \in V_0} \,\,
         \left[
           | \partial_{\z}^\alpha \partial_{\vv}^k \ProjectiveMap (\z, \vv) |
         \right]
\]
and
\[
  C_\gamma (\Omega, V_0)
  := C_{\gamma}^{(0)}(\Omega, V_0) \cdot \sum_{\BoldEta \in \FrakI_{\gamma}} | c_{\gamma, \BoldEta} |
  ,
\]
we see for $\z \in \Omega$ because of $\widetilde{g} (\z) = g(\TranslateZak \z) \in V_0$ that
\begin{align*}
  |R_\gamma [g] (\z)|
  &\leq \sum_{\BoldEta \in \FrakI_{\gamma} \setminus \{(\gamma)\}}
        \left[
         | c_{\gamma, \BoldEta} |
         \cdot | \ProjectiveMap_{\gamma - \overline{\BoldEta}, k(\BoldEta)} (\z, \widetilde{g}(\z))|
         \cdot \prod_{j=1}^{k(\BoldEta)}
                 |(\partial^{\eta^j} g) (\TranslateZak \z)|
        \right]
        \\
  &\leq \sum_{\BoldEta \in \FrakI_{\gamma} \setminus \{(\gamma)\}}
         | c_{\gamma, \BoldEta} |
         \cdot C_\gamma^{(0)} (\Omega, V_0)
         \cdot \prod_{j=1}^{k(\BoldEta)}
                 \| g \|_{C^{|\gamma| - 1} (\Omega_{\TorusShiftVector})} \\
  &\leq \sum_{\BoldEta \in \FrakI_{\gamma} \setminus \{(\gamma)\}}
         | c_{\gamma, \BoldEta} |
         \cdot C_\gamma^{(0)} (\Omega, V_0)
         \cdot (1 + \| g \|_{C^{|\gamma| - 1} (\Omega_{\TorusShiftVector})})^{k(\BoldEta)} \\
  &\overset{(\ast)}{\leq} C_\gamma (\Omega, V_0)
        \cdot (1 + \| g \|_{C^{|\gamma| - 1} (\Omega_{\TorusShiftVector})})^{|\gamma|}
        .
\end{align*}
Here, we used that if $\BoldEta \in \FrakI_\gamma \setminus \{ (\gamma) \}$, then
$|\eta^j| \leq |\gamma| - 1$ (see Lemma~\ref{lem:FaaDiBrunoResult})
and that $|\eta^j| \geq 1$ for each $j$ (by definition of $\FrakI_\gamma$),
so that
\[
  k(\BoldEta) + |\gamma - \overline{\BoldEta}|
  = k(\BoldEta) + |\gamma| - \sum_{j=1}^{k(\BoldEta)} |\eta^j|
  \leq k(\BoldEta) + |\gamma| - \sum_{j=1}^{k(\BoldEta)} 1
  = |\gamma|
  .
\]
Moreover, the step marked with $(\ast)$ used that $k(\BoldEta) \leq |\gamma|$ for $\BoldEta \in \FrakI_\gamma$.
\end{proof}

\section{Additional calculations}
This appendix presents several calculations omitted from the main text to improve the article's overall readability.

\subsection{Proof of Equation \texorpdfstring{~\eqref{eq:s0-derivative-sech-formula}}{(3.13)}}
\label{sub:smooth-step-derivative}

We provide the details of the computation used in the derivative estimate for $\SmoothStep$ in the proof of Lemma~\ref{lem:s0-derivative-estimate}, Equation~\eqref{eq:s0-derivative-sech-formula}. Let $\ZakPosition\in(0,1)$ and set, as in \eqref{eq:s0-derivative-auxiliary-quantities},
\[
    t := 2\ZakPosition-1,
    \qquad
    r(t) := \frac{2t}{1-t^2}.
\]
Then
\[
    \ZakPosition=\frac{1+t}{2},
    \qquad
    1-\ZakPosition=\frac{1-t}{2},
\]
and hence
\begin{align*}
    \frac{1}{\ZakPosition}-\frac{1}{1-\ZakPosition}
    &=\frac{2}{1+t}-\frac{2}{1-t} 
    =-\frac{4t}{1-t^2}
    =-2r(t).
\end{align*}
Dividing the numerator and denominator in \eqref{eq:smooth-step} of $\SmoothStep$ by $\exp(-1/\ZakPosition)$ therefore yields
\begin{align*}
    \SmoothStep(\ZakPosition)
    &=\frac{1}{
    1+\exp\left(
    \frac{1}{\ZakPosition}-\frac{1}{1-\ZakPosition}
    \right)}
    =\frac{1}{1+\exp(-2r(t))}
    = \frac{1+\tanh(r(t))}{2},
\end{align*}
where we recall the definition $\tanh (r) = \frac{e^r - e^{-r}}{e^r + e^{-r}}$, which implies the last identity. Indeed,
\[
  \frac{1 + \tanh(r)}{2}
  = \frac{1}{2} \cdot \frac{e^r + e^{-r} + (e^r - e^{-r})}{e^r + e^{-r}}
  = \frac{e^r}{e^r + e^{-r}}
  = \frac{1}{1 + e^{-2r}}
  .
\]

To complete the proof, note that
\[
    \frac{\dd t}{\dd\ZakPosition}=2
    \qquad\text{and}\qquad
    r'(t)
    =\frac{2(1-t^2)+4t^2}{(1-t^2)^2}
    =2\frac{1+t^2}{(1-t^2)^2}.
\]
The chain rule and the well-known identity $\frac{\dd}{\dd y}\tanh(y)=\operatorname{sech}^2(y)$ now give
\begin{align*}
    \SmoothStep'(\ZakPosition)
    =\frac12\operatorname{sech}^2(r(t))
    r'(t)\frac{\dd t}{\dd\ZakPosition} 
    =2\frac{1+t^2}{(1-t^2)^2}
    \operatorname{sech}^2(r(t)),
\end{align*}
which proves \eqref{eq:s0-derivative-sech-formula}.
\hfill$\square$

\subsection{Proof of Equation\texorpdfstring{~\eqref{eq:sewingB_star}}{(6.6)}}
\label{sub:fixed-cocycle-sewing}

We verify the quasi-periodicity relations used in the proof of Lemma~\ref{lem:correction_zak}.
Let $\ZakPoint=(\ZakPosition,\ZakFrequency)\in\R^2$,
let $e_1=(1,0)$ and $e_2=(0,1)$, and let
$\SewingMatrix_j(\ZakPoint)$ denote the matrix in the corresponding
quasi-periodicity relations \eqref{eq:x-sewing} or \eqref{eq:omega-sewing}.
Thus
\begin{equation} \SewingMatrix_1(\ZakPoint)=\FirstSewingMatrix(\ZakFrequency),
 \qquad
 \SewingMatrix_2(\ZakPoint)=\SecondSewingMatrix.
\end{equation}

Recall from Lemma~\ref{lem:sewing_Weyl_matrix}
that each matrix function $\LatticeWeylMatrix_{m,n}$ satisfies \eqref{eq:sewing}.
In view of \eqref{eq:Astar}, this shows that
also $\FixedMatrixField$ satisfies the quasi-periodicity relations \eqref{eq:sewing}.
Hence, we have
\begin{equation}\label{eq:Astar-sewing-appendix}
 \FixedMatrixField(\ZakPoint+e_j)
 =
 \SewingMatrix_j(\ZakPoint)
 \FixedMatrixField(\ZakPoint)
 \SewingMatrix_j(\ZakPoint)^{-1},
 \qquad j=1,2.
\end{equation}

For $j=1$, the definitions of $\FirstSewingMatrix$ and
$\TranslateZak$ give
\begin{equation}
 \FirstSewingMatrix(\TranslateZak\ZakPoint)
 =\FirstSewingMatrix(\ZakFrequency-\TranslationBeta)
 =\e^{-\pi\ii\TranslationBeta}
  \FirstSewingMatrix(\ZakFrequency),
\end{equation}
whereas \eqref{eq:EtaDefinition} gives
\begin{equation}
 \OffsetPhase(\ZakPosition+1)
 =\e^{\pi\ii\TranslationBeta}\OffsetPhase(\ZakPosition).
\end{equation}
Using \eqref{eq:Bstar} and \eqref{eq:Astar-sewing-appendix}, we
therefore obtain
\begin{align*}
 \FirstSewingMatrix(\ZakPoint)
  \FixedCocycle(\ZakPoint)
  \FirstSewingMatrix(\TranslateZak\ZakPoint)^{-1}
 &=
 \FirstSewingMatrix(\ZakFrequency)
 \FixedMatrixField(\ZakPoint)\OffsetPhase(\ZakPosition)
 \bigl(
   \e^{-\pi\ii\TranslationBeta}
   \FirstSewingMatrix(\ZakFrequency)
 \bigr)^{-1}\\
 &=
 \e^{\pi\ii\TranslationBeta}
 \FirstSewingMatrix(\ZakFrequency)
 \FixedMatrixField(\ZakPoint)
 \FirstSewingMatrix(\ZakFrequency)^{-1}
 \OffsetPhase(\ZakPosition)\\
 &=
 \FixedMatrixField(\ZakPoint+e_1)
 \OffsetPhase(\ZakPosition+1) \\
 &=\FixedCocycle(\ZakPoint+e_1).
\end{align*}
For $j=2$, the matrix $\SecondSewingMatrix$ is constant and
$\OffsetPhase$ is independent of $\ZakFrequency$.  Hence,
\begin{align*}
 \SecondSewingMatrix
  \FixedCocycle(\ZakPoint)
  \SecondSewingMatrix^{-1}
 =
 \SecondSewingMatrix
 \FixedMatrixField(\ZakPoint)
 \SecondSewingMatrix^{-1}
 \OffsetPhase(\ZakPosition)
 =
 \FixedMatrixField(\ZakPoint+e_2)
 \OffsetPhase(\ZakPosition)
 =\FixedCocycle(\ZakPoint+e_2).
\end{align*}
Combining the two cases proves
$
 \FixedCocycle(\ZakPoint+e_j)
 =
 \SewingMatrix_j(\ZakPoint)
 \FixedCocycle(\ZakPoint)
 \SewingMatrix_j(\TranslateZak\ZakPoint)^{-1},
 $ for $j=1,2.
$

\subsection{Proof of Equation\texorpdfstring{~\eqref{eq:matrixB0}}{(6.21)}}
\label{sub:ideal-local-matrix}

Recall from \eqref{eq:B0Definition} that
\begin{equation}
 \IdealCocycle(\ZakPoint)
 =\ReferenceSection(\ZakPoint)
  \ReferenceSection(\TranslateZak\ZakPoint)^*.
\end{equation}
Since the columns of $\UnitaryFrame(\ZakPoint)$ are
$\ReferenceSection(\ZakPoint)$ and $\NormalSection(\ZakPoint)$, which
form an orthonormal basis,
\begin{equation}
 \UnitaryFrame(\ZakPoint)^*\ReferenceSection(\ZakPoint)
 =\begin{pmatrix}1\\0\end{pmatrix},
 \qquad
 \ReferenceSection(\TranslateZak\ZakPoint)^*
 \UnitaryFrame(\TranslateZak\ZakPoint)
 =\begin{pmatrix}1&0\end{pmatrix}.
\end{equation}
Consequently,
\begin{align*}
 \UnitaryFrame(\ZakPoint)^*
 \IdealCocycle(\ZakPoint)
 \UnitaryFrame(\TranslateZak\ZakPoint)
 &=
 \bigl(
  \UnitaryFrame(\ZakPoint)^*\ReferenceSection(\ZakPoint)
 \bigr)
 \bigl(
  \ReferenceSection(\TranslateZak\ZakPoint)^*
  \UnitaryFrame(\TranslateZak\ZakPoint)
 \bigr)\\
 &=
 \begin{pmatrix}1\\0\end{pmatrix}
 \begin{pmatrix}1&0\end{pmatrix}
 =\begin{pmatrix}1&0\\0&0\end{pmatrix},
\end{align*}
which proves Equation~\eqref{eq:matrixB0}.

\subsection{Proof of Equation~\texorpdfstring{\eqref{eq:estimates_differences}}{(6.28)}}
\label{sub:local-projective-estimates}

We provide the details for the estimates used in the proof of
Lemma~\ref{lem:projective-map}. 
Set
\begin{equation}
 P_0:=\begin{pmatrix}1&0\\0&0\end{pmatrix},
 \qquad
 E(\ZakPoint):=\LocalMatrix(\ZakPoint)-P_0.
\end{equation}
Put also
$
 r_0:=1-\frac54\CertifiedError.
$
By \eqref{eq:local-matrix-error},
\begin{equation}\label{eq:E-local-bound-appendix}
 \norm{E(\ZakPoint)}_{\op}\leq\CertifiedError
 \qquad\text{for all }\ZakPoint\in\R^2.
\end{equation}
Writing $[v]_j$ for the $j$-th component of a vector
$v\in\C^2$, the numerator and denominator from
\eqref{eq:local-projective-map} can be written as
\begin{equation}
 N_{\ZakPoint}(\ProjectiveCoordinate)
 =\left[
   \LocalMatrix(\ZakPoint)
   \begin{pmatrix}1\\\ProjectiveCoordinate\end{pmatrix}
  \right]_2,
 \qquad
 D_{\ZakPoint}(\ProjectiveCoordinate)
 =\left[
   \LocalMatrix(\ZakPoint)
   \begin{pmatrix}1\\\ProjectiveCoordinate\end{pmatrix}
  \right]_1.
\end{equation}
Since the second row of $P_0$ vanishes, while
$P_0(0,\ProjectiveCoordinate-\widetilde{\ProjectiveCoordinate})^T=0$,
we have
\begin{align*}
 N_{\ZakPoint}(\ProjectiveCoordinate)
 -N_{\ZakPoint}(\widetilde{\ProjectiveCoordinate})
 &=\left[
 E(\ZakPoint)
 \begin{pmatrix}
  0\\
  \ProjectiveCoordinate-\widetilde{\ProjectiveCoordinate}
 \end{pmatrix}
 \right]_2,\\
 D_{\ZakPoint}(\ProjectiveCoordinate)
 -D_{\ZakPoint}(\widetilde{\ProjectiveCoordinate})
 &=\left[
 E(\ZakPoint)
 \begin{pmatrix}
  0\\
  \ProjectiveCoordinate-\widetilde{\ProjectiveCoordinate}
 \end{pmatrix}
 \right]_1.
\end{align*}
The modulus of a component is bounded by the Euclidean norm of
the vector.  Hence \eqref{eq:E-local-bound-appendix} yields
\begin{align*}
 \left|
 N_{\ZakPoint}(\ProjectiveCoordinate)
 -N_{\ZakPoint}(\widetilde{\ProjectiveCoordinate})
 \right|
 &\leq\CertifiedError
 |\ProjectiveCoordinate-\widetilde{\ProjectiveCoordinate}|,\\
 \left|
 D_{\ZakPoint}(\ProjectiveCoordinate)
 -D_{\ZakPoint}(\widetilde{\ProjectiveCoordinate})
 \right|
 &\leq\CertifiedError
 |\ProjectiveCoordinate-\widetilde{\ProjectiveCoordinate}|.
\end{align*}
Furthermore, since the second row of $P_0$ vanishes,
\begin{align*}
 |N_{\ZakPoint}(\widetilde{\ProjectiveCoordinate})|
 &=\left|
  \left[
   E(\ZakPoint)
   \begin{pmatrix}1\\\widetilde{\ProjectiveCoordinate}\end{pmatrix}
  \right]_2
 \right|
 \leq\CertifiedError
 \sqrt{1+|\widetilde{\ProjectiveCoordinate}|^2}
 \leq\frac54\CertifiedError
\end{align*}
whenever
$|\widetilde{\ProjectiveCoordinate}|\leq3/4$.  Finally,
\begin{align*}
 |D_{\ZakPoint}(\ProjectiveCoordinate)|
 &=\left|
  1+\left[
   E(\ZakPoint)
   \begin{pmatrix}1\\\ProjectiveCoordinate\end{pmatrix}
  \right]_1
 \right|
 \geq
 1-\CertifiedError\sqrt{1+|\ProjectiveCoordinate|^2}
 \geq1-\frac54\CertifiedError
 =r_0.
\end{align*}
The same estimate holds with
$\widetilde{\ProjectiveCoordinate}$ in place of
$\ProjectiveCoordinate$.  These are precisely the four estimates used
before the application of \eqref{eq:quotient-difference}.

\subsection{Lemma~\ref{lem:OrthogonalVectorTransformation}}

\begin{lemma}\label{lem:OrthogonalVectorTransformation}
    For $v \in \C^2$ define
    \[
    v^\perp := \begin{pmatrix} -\overline{v_2} \\ \overline{v_1} \end{pmatrix} \in \C^2.
    \]
    Then, given a unitary matrix $U \in \C^{2 \times 2}$, it holds
    $
    (Uv)^\perp = \overline{\det U} \cdot U(v^\perp).
    $
\end{lemma}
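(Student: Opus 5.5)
The identity $(Uv)^\perp = \overline{\det U}\,U(v^\perp)$ is linear algebra in $\C^2$. I plan to prove it by writing $v^\perp = J\overline{v}$, where
\[
  J := \begin{pmatrix} 0 & -1 \\ 1 & 0 \end{pmatrix}
\]
and $\overline{v}$ denotes entrywise conjugation. This is immediate from the definition: $J\overline{v} = (-\overline{v_2}, \overline{v_1})^{\mathsf T}$.

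With this notation, $(Uv)^\perp = J\,\overline{U}\,\overline{v}$. The claim is therefore equivalent to the matrix identity
\[
  J\,\overline{U} = \overline{\det U}\; U J,
\]
which must hold for every unitary $U \in \C^{2\times 2}$. Once it is established, applying both sides to $\overline{v}$ gives the lemma.

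To prove the matrix identity, I would use the general fact that $J A^{\mathsf T} J^{-1} = (\det A)\, A^{-1}$ for any invertible $2\times 2$ matrix $A$. In terms of the adjugate this reads $J A^{\mathsf T} J^{-1} = \operatorname{adj}(A)$. It can be checked entrywise: for $A = \begin{pmatrix} a & b\\ c & d\end{pmatrix}$, one has $J A^{\mathsf T} J^{-1} = \begin{pmatrix} d & -b \\ -c & a\end{pmatrix}$, which is exactly $\operatorname{adj}(A)$.

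Now take $A = \overline{U}$, so that $A^{\mathsf T} = U^*$. Then
\[
  J U^* J^{-1} = \det(\overline{U})\,\overline{U}^{-1} = \overline{\det U}\;\overline{U}^{-1}.
\]
Since $U$ is unitary, $U^* = U^{-1}$. Also $|\det U| = 1$, so $\overline{\det U}^{-1} = \det U$. Inverting the previous identity therefore gives
\[
  J U J^{-1} = \det U\;\overline{U},
\]
that is, $\overline{U} = \overline{\det U}\, J U J^{-1}$.

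Multiplying on the right by $J$ and then on the left by $J$ yields
\[
  J\,\overline{U}\,J = \overline{\det U}\; J^2 U.
\]
On the other side, $J\overline{U} = \overline{\det U}\,(JUJ^{-1})$. I need to reconcile the two expressions carefully, using $J^{-1} = -J$ and $J^2 = -I$. An alternative is to bypass the conjugation manipulation and verify $J\overline{U} = \overline{\det U}\,UJ$ directly. For this I would parametrize a unitary matrix as
\[
  U = \begin{pmatrix} a & b \\ -e^{i\varphi}\overline{b} & e^{i\varphi}\overline{a}\end{pmatrix}, \qquad |a|^2 + |b|^2 = 1, \qquad \det U = e^{i\varphi},
\]
and compare the two sides entrywise.

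The only real obstacle is sign bookkeeping with $J^{-1} = -J$, and the explicit parametrization avoids it entirely. Both sides come out equal to
\[
  \begin{pmatrix} e^{-i\varphi} b & -a \\ \overline{a} & \overline{b}\end{pmatrix}.
\]
This establishes the matrix identity and hence the lemma.
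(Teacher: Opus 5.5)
Your reduction is right: with $J=\begin{pmatrix}0&-1\\ 1&0\end{pmatrix}$ one has $v^\perp=J\overline{v}$, so the lemma is equivalent to $J\overline{U}=\overline{\det U}\,UJ$. Your adjugate route is essentially the paper's proof written in matrix form. The paper computes $(Uv)^\perp=Mv^\perp$ with $M=\begin{pmatrix}\overline{d}&-\overline{c}\\ -\overline{b}&\overline{a}\end{pmatrix}$, which is exactly your $J\overline{U}J^{-1}=\operatorname{adj}(U^*)$. It then reads off $M=\overline{\det U}\,U$ by comparing entries in $U^*=U^{-1}=\overline{\det U}\operatorname{adj}(U)$.

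Two displayed lines in your proposal are wrong as written.

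\emph{First slip.} The aside ``$J\overline{U}=\overline{\det U}\,(JUJ^{-1})$'' is false. Since $JUJ^{-1}=\det U\,\overline{U}$ and $|\det U|=1$, its right-hand side equals $\overline{U}$, so it would assert $J\overline{U}=\overline{U}$. What $\overline{U}=\overline{\det U}\,JUJ^{-1}$ actually gives is $J\overline{U}=\overline{\det U}\,J^2UJ^{-1}=\overline{\det U}\,(-U)(-J)=\overline{\det U}\,UJ$. So this route already closes in one line, and there is no sign difficulty forcing you to switch.

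\emph{Second slip.} In the parametrized check, both sides equal $\begin{pmatrix}e^{-i\varphi}b&-e^{-i\varphi}a\\ \overline{a}&\overline{b}\end{pmatrix}$. The $(1,2)$ entry is $-e^{-i\varphi}a$, not $-a$. The identity you want still holds, but the matrix you display matches neither side when $\varphi\notin 2\pi\mathbb{Z}$ and $a\neq 0$.

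The route you finally settle on, the explicit parametrization, is a genuinely different and more computational argument. It relies on the normal form of $2\times 2$ unitaries, which you should justify. The second row is a unit vector orthogonal to $(a,b)$, hence a unimodular multiple $e^{i\varphi}(-\overline{b},\overline{a})$, which also gives $\det U=e^{i\varphi}$. After that it is a mechanical entrywise check. The paper's argument (equivalently, your adjugate route done correctly) needs no normal form, only $U^{-1}=\operatorname{adj}(U)/\det U$ and $|\det U|=1$. It also shows directly why the factor $\overline{\det U}$ appears: it is exactly the scalar converting $\operatorname{adj}(U^*)$ into $U$.
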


\begin{proof}
Write $U = \left(\begin{smallmatrix} a & b \\ c & d \end{smallmatrix}\right)$.
By definition of $(Uv)^\perp$, we have
\[
  \begin{aligned}
    (Uv)^\perp
    &= \begin{pmatrix} -\overline{(Uv)_2} \\ \overline{(Uv)_1} \end{pmatrix} 
    = \begin{pmatrix} -\overline{cv_1 + dv_2} \\ \overline{av_1 + bv_2} \end{pmatrix} 
    = \begin{pmatrix} \overline{d} & -\overline{c} \\ -\overline{b} & \overline{a} \end{pmatrix}
       \begin{pmatrix} -\overline{v_2} \\ \overline{v_1} \end{pmatrix}
     = \begin{pmatrix} \overline{d} & -\overline{c} \\ -\overline{b} & \overline{a} \end{pmatrix} v^\perp.
  \end{aligned}
\]
Thus, it remains to show that
\(\left(\begin{smallmatrix} \overline{d} & -\overline{c} \\ -\overline{b} & \overline{a} \end{smallmatrix}\right) = \overline{\det U} \cdot U\).
To see that this holds, first note that
\[
  1
  = \det(I_2)
  = \det(UU^*)
  = \det U \cdot \det(U^*)
  = \det(U) \cdot \overline{\det(U)}.
\]
Moreover, by the well-known formula for the inverse of a $2 \times 2$ matrix, we have
\[
  \begin{aligned}
    \begin{pmatrix} \overline{a} & \overline{c} \\ \overline{b} & \overline{d} \end{pmatrix}
    = U^*
     = U^{-1} = \frac{1}{\det(U)} \begin{pmatrix} d & -b \\ -c & a \end{pmatrix} 
    = \overline{\det(U)} \begin{pmatrix} d & -b \\ -c & a \end{pmatrix}.
  \end{aligned}
\]
Comparing the components of this identity implies that
\[
  \overline{\det(U)} \cdot U
  = \overline{\det(U)} \begin{pmatrix} a & b \\ c & d \end{pmatrix}
  = \begin{pmatrix} \overline{d} & -\overline{c} \\ -\overline{b} & \overline{a} \end{pmatrix}.
\]
This completes the proof.
\end{proof}

\subsection{Lemma~\ref{lem:RecursiveSequenceEstimate}}

\begin{lemma}\label{lem:RecursiveSequenceEstimate}
    Let $\lambda_0 \in [0,1)$ and $C > 0$. If  $(a_n)_{n \in \N_0}$
    is a nonnegative real-valued sequence satisfying
    \[
      a_{n+1}
      \leq \lambda_0 \, a_n + C, \qquad
      n \in \N_0
      ,
    \]
    then $(a_n)_{n \in \N_0}$ is a bounded sequence. Moreover, 
    \[
      a_n \leq a_0 + \frac{C}{1 - \lambda_0}, 
      \qquad  n \in \N_0
      .
    \]
\end{lemma}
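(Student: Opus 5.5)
The plan is to prove the bound by induction on $n$ and then drop the geometric factor. Unrolling the recursion gives, for every $n \in \N_0$,
\[
  a_n \leq \lambda_0^n a_0 + C \sum_{j=0}^{n-1} \lambda_0^j .
\]
This holds trivially for $n=0$, where the sum is empty. For the inductive step, assume it holds for $n$. The hypothesis $a_{n+1} \leq \lambda_0 a_n + C$, together with $\lambda_0 \geq 0$ (so multiplying the inductive inequality by $\lambda_0$ preserves it), gives
\[
  a_{n+1} \leq \lambda_0^{n+1} a_0 + C \sum_{j=1}^{n} \lambda_0^j + C = \lambda_0^{n+1} a_0 + C\sum_{j=0}^{n}\lambda_0^j .
\]

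Since $0 \leq \lambda_0 < 1$, we have $\lambda_0^n \leq 1$. Because $a_0 \geq 0$, this yields $\lambda_0^n a_0 \leq a_0$. The partial geometric sum is bounded by the full series, $\sum_{j=0}^{n-1}\lambda_0^j \leq \sum_{j=0}^\infty \lambda_0^j = \frac{1}{1-\lambda_0}$, where we use $C>0$ to multiply through. Combining the two estimates gives $a_n \leq a_0 + \frac{C}{1-\lambda_0}$ for all $n$.

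Boundedness of $(a_n)_{n \in \N_0}$ follows immediately, since the sequence is nonnegative and bounded above by a constant independent of $n$. There is no real obstacle here. The only point needing care is that the multiplication step uses $\lambda_0 \geq 0$, and the step $\lambda_0^n a_0 \leq a_0$ uses nonnegativity of $a_0$.
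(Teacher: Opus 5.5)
Your proof is correct and matches the paper's argument essentially verbatim: the same inductive bound $a_n \leq \lambda_0^n a_0 + C\sum_{j=0}^{n-1}\lambda_0^j$, followed by bounding $\lambda_0^n a_0 \leq a_0$ and the partial geometric sum by $\frac{1}{1-\lambda_0}$. Your explicit remarks on where $\lambda_0 \geq 0$ and $a_0 \geq 0$ are used are a small addition that the paper leaves implicit.
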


\begin{proof}
  We inductively show that, for $n \in \N_0$,
  \begin{equation}
    a_n \leq \lambda_0^n \, a_0 + C \sum_{j=0}^{n-1} \lambda_0^j
    \label{eq:RecursiveSequenceEstimateInduction}
    . 
  \end{equation}
  For $n = 0$, this is trivial.
  For the induction step, note that
  \begin{align*}
      a_{n+1}
      &\leq \lambda_0 \, a_n + C 
      \leq \lambda_0 \, (\lambda_0^n \, a_0 + C \sum_{j=0}^{n-1} \lambda_0^j) + C \\
      &=    \lambda_0^{n+1} a_0 + C \sum_{j=1}^{n} \lambda_0^j + C 
      =    \lambda_0^{n+1} a_0 + C \sum_{j=0}^{(n+1) - 1} \lambda_0^j
      ,
  \end{align*}
  proving \eqref{eq:RecursiveSequenceEstimateInduction}.

  Finally, note that \eqref{eq:RecursiveSequenceEstimateInduction} implies
  \[
    a_n \leq a_0 + C \sum_{j=0}^\infty \lambda_0^j
        =    a_0 + \frac{C}{1 - \lambda_0}
    .
    \qedhere
  \]
\end{proof}

\section{Computational supplement}\label{sec:computational-supplement}

The computational supplement contains only \nolinkurl{hrt_fixed_12_point_arb_1d_audit.py}. The eleven exact dyadic coefficients from Table \ref{tab:coefficients} are duplicated directly in that script, so no external data file is needed. With Python~3 and python-flint $0.8.0$ installed, run
\begin{center}
    \ttfamily python hrt\_fixed\_12\_point\_arb\_1d\_audit.py
\end{center}
from the directory containing the script.

\section{Numerical illustration of linear dependence}
\label{sec:numerical-illustrations}

We show a floating-point reconstruction of the final
Schwartz window in Figure \ref{fig:gstar-approximation} and a direct numerical evaluation of \eqref{eq:fixed-dependence} in Figure \ref{fig:gstar-cancellation}. 
The purpose of this appendix is only to visualize the function produced by the abstract construction and
to act as a sanity check that cancellation in physical space happens.
We stress that none of the computations in this appendix are used in the proof above.

\subsection{Reconstruction of the window}

For $\GridSize=256$, we used a uniform
$\GridSize\times\GridSize$ grid on $[0,1]^2$, with
\begin{equation}
 \ZakPoint_{r,s}
 :=
 \left(\frac r{\GridSize},\frac s{\GridSize}\right),
 \qquad
 0\le r,s<\GridSize.
\end{equation} 
We then compute an approximation $\GridInvariantVector$ of
$\FixedInvariantVector$ on the grid using $150$ iterations.
Starting at a distant backward translate with the function $\ReferenceSection$, the
iteration repeatedly applies $\FixedCocycle$ and normalizes the result by its
$\ReferenceSection$-component: 
\begin{equation}
 v_{\mathrm{out}}(\ZakPoint)
 =\FixedCocycle(\ZakPoint)v(\TranslateZak\ZakPoint),
 \qquad
 v_{\mathrm{new}}(\ZakPoint)
 =
 \frac{
  v_{\mathrm{out}}(\ZakPoint)
 }{
  \ReferenceSection(\ZakPoint)^*v_{\mathrm{out}}(\ZakPoint)
 }.
\end{equation}

The resulting grid function $\GridInvariantVector$ approximates
$\FixedInvariantVector$. Its scalar
multiplier is then evaluated according to the formula
\begin{equation}
 \GridMultiplier(\ZakPoint)
 =
 \ReferenceSection(\ZakPoint)^*
 \FixedCocycle(\ZakPoint)
 \GridInvariantVector(\TranslateZak\ZakPoint),
\end{equation}
which results from its definition in \eqref{eq:vq} and \eqref{eq:denom}.

The correction of Proposition \ref{prop:cohomology} is
computed as follows: Put
$\GridLogMultiplier=\Log\GridMultiplier$, take its two-dimensional
discrete Fourier
transform, and set
\begin{equation}\label{eq:appendix-discrete-u}
 \widehat{\GridCohomologySolution}(k)=
 \begin{cases}
 \displaystyle
 \frac{\widehat{\GridLogMultiplier}(k)}
 {1-\e^{-2\pi\ii k\cdot\TorusShiftVector}},
   &k\ne0,\\[3mm]
 0,&k=0.
 \end{cases}
\end{equation}
The inverse discrete Fourier transform yields
$\GridCohomologySolution$, and we form
\begin{equation}
 \GridScalarCorrection=\e^{\GridCohomologySolution},
 \qquad
 \GridZakEigenfunction
 =\GridScalarCorrection\GridInvariantVector,
 \qquad
 \GridEigenvalue
 =\exp\left(
  \frac1{\GridSize^2}
  \sum_{r,s=0}^{\GridSize-1}
  \GridLogMultiplier(\ZakPoint_{r,s})
 \right).
\end{equation}

The computation yields,
\begin{equation}
 \GridEigenvalue
 =0.9773322606287828
   +6.8\mathbin{\cdot}10^{-18}\ii.
\end{equation}

The scalar Zak samples are recovered from the two components of
$\GridZakEigenfunction$ by following
\eqref{eq:recoveredZakFunction}. Thus, for each of the
$\GridSize$ sampled $\ZakPosition$ values, we recover
$\GridScalarZak(\ZakPosition,\ZakFrequency)$ on the one-dimensional
$2\GridSize$-point frequency grid
$$
\ZakFrequency_{\ell}
=\frac{\ell}{2\GridSize},
\qquad
0\le\ell<2\GridSize.
$$
Concretely,
\begin{equation}
 \GridScalarZak(\ZakPosition,\ZakFrequency)=\sqrt2
 \begin{cases}
(\GridZakEigenfunction)_1
    (\ZakPosition,2\ZakFrequency),
    &0\le\ZakFrequency<\frac12,\\
  (\GridZakEigenfunction)_2
    (\ZakPosition,2\ZakFrequency-1),
    &\frac12\le\ZakFrequency<1.
 \end{cases}
\end{equation}

For $\ZakPosition=r/\GridSize$ and $j\in\Z$, the inverse Zak
integral is approximated
by the inverse discrete Fourier transform
\begin{equation}\label{eq:appendix-inverse-zak}
 \GridWindow(\ZakPosition+j)
 =
 \frac1{2\GridSize}
 \sum_{\ell=0}^{2\GridSize-1}
 \GridScalarZak\left(
  \ZakPosition,\frac{\ell}{2\GridSize}
 \right)
 \e^{2\pi\ii j\ell/(2\GridSize)}.
\end{equation}
Finally, $\GridWindow$ is normalized to have discrete $L^2$-norm
one, and
one irrelevant global phase is chosen so that its largest displayed
sample is positive real.
The maximum residual of the discretized invariant-line equation
\begin{equation}R_{\mathrm{line},G}(\ZakPoint_{r,s})
 =
 \FixedCocycle(\ZakPoint_{r,s})
 \GridInvariantVector(\TranslateZak\ZakPoint_{r,s})
 -
 \GridMultiplier(\ZakPoint_{r,s})
 \GridInvariantVector(\ZakPoint_{r,s}),
\end{equation}
where
\begin{equation}
 \GridMultiplier(\ZakPoint_{r,s})
 =
 \ReferenceSection(\ZakPoint_{r,s})^*
 \FixedCocycle(\ZakPoint_{r,s})
 \GridInvariantVector(\TranslateZak\ZakPoint_{r,s})
\end{equation}
was $5.66\mathbin{\cdot}10^{-16}$.
The residuals of the normalization
$\ReferenceSection^*\GridInvariantVector=1$ and of
\eqref{eq:additive-cohomology} were, respectively,
\begin{equation}
 4.56\mathbin{\cdot}10^{-16},
 \qquad
 9.02\mathbin{\cdot}10^{-17}.
\end{equation}
We stress again that these values measure the consistency of the floating-point grid
calculation and are not bounds for the exact mathematical objects of the main argument of this manuscript.

\begin{figure}[htb]
\centering
\includegraphics[width=\textwidth]{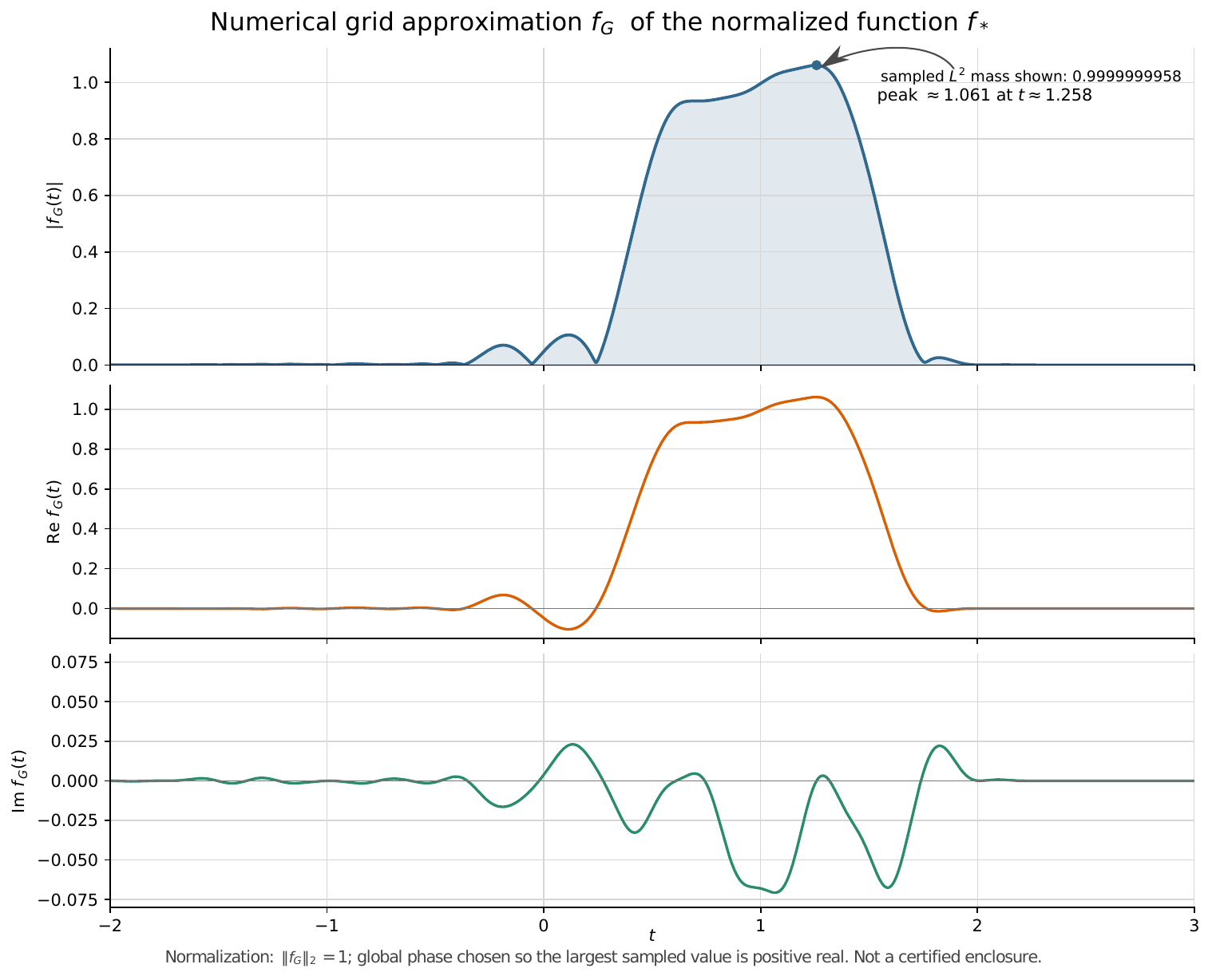}
\caption{Floating-point approximation $\GridWindow$ of the final window.
The three panels show its modulus, real part, and imaginary part.
With the stated normalization, the largest displayed sample has
$\abs{\GridWindow(\PhysicalVariable)}\approx1.061$ at
$\PhysicalVariable\approx1.258$. The sampled
$L^2$-mass in $[-2,3)$ is $0.9999999958$.}
\label{fig:gstar-approximation}
\end{figure}

\subsection{Direct evaluation of the twelve-term relation}
We next evaluate \eqref{eq:fixed-dependence} numerically with
$\FinalWindow$ replaced by $\GridWindow$:
\begin{align}
 \GridResidual(\PhysicalVariable)
 &=
 -\GridEigenvalue\GridWindow(\PhysicalVariable)\notag\\
 &\quad+
 \sum_{(m,n)\in\CoefficientIndexSet}
 \FixedWeylCoefficient_{m,n}\e^{\frac{\pi\ii}{2}(n\TranslationAlpha-m\TranslationBeta)}
 \e^{\pi\ii(n+\TranslationBeta)
       \left(
        \PhysicalVariable-\frac{m+\TranslationAlpha}{2}
       \right)}
 \GridWindow(\PhysicalVariable-m-\TranslationAlpha).
\label{eq:appendix-physical-residual}
\end{align}

The second exponential in
\eqref{eq:appendix-physical-residual} is precisely the symmetric Weyl
phase from \eqref{eq:Weyl}, with translation parameter
$m+\TranslationAlpha$ and modulation parameter
$(n+\TranslationBeta)/2$.
The residual was evaluated at the $\GridSize$ equally spaced points,
including the left endpoint, in every
interval $[j, j+1)$ for $j = -2,\dots, 3$.

The shifted values
$\GridWindow(\PhysicalVariable-m-\TranslationAlpha)$ were not
obtained by linearly
interpolating the plotted curve.  Instead, the approximate section
$\GridZakEigenfunction$ was evaluated at the shifted fractional
coordinate $\ZakPosition-\TranslationAlpha\pmod1$, after which the
same inverse Zak transform
\eqref{eq:appendix-inverse-zak} was applied.  This spectral evaluation
avoids introducing a separate low-order interpolation error.

Write
$\GridSummand_{0,\GridSize}
 =-\GridEigenvalue\GridWindow$, and let
$\GridSummand_{1,\GridSize},\ldots,
  \GridSummand_{11,\GridSize}$
denote the eleven summands in
\eqref{eq:appendix-physical-residual}.  On the displayed interval, the
largest individual term and the largest sum of term magnitudes were
\begin{equation}
 \max_{j,\PhysicalVariable}
 \abs{\GridSummand_{j,\GridSize}(\PhysicalVariable)}
 =1.037,
 \qquad
 \max_{\PhysicalVariable}
 \sum_{j=0}^{11}
 \abs{\GridSummand_{j,\GridSize}(\PhysicalVariable)}
 =2.368.
\end{equation}
In contrast, the directly computed residual satisfies
\begin{equation}
 \max_{\PhysicalVariable}
 \abs{\GridResidual(\PhysicalVariable)}
 =7.96\mathbin{\cdot}10^{-16},
 \qquad
 \frac{\norm{\GridResidual}_{L^2([-2,4))}}
 {\sum_{j=0}^{11}
  \norm{\GridSummand_{j,\GridSize}}_{L^2([-2,4))}}
 =1.21\mathbin{\cdot}10^{-16}.
\end{equation}

\begin{figure}[htb]
\centering
\includegraphics[width=\textwidth]{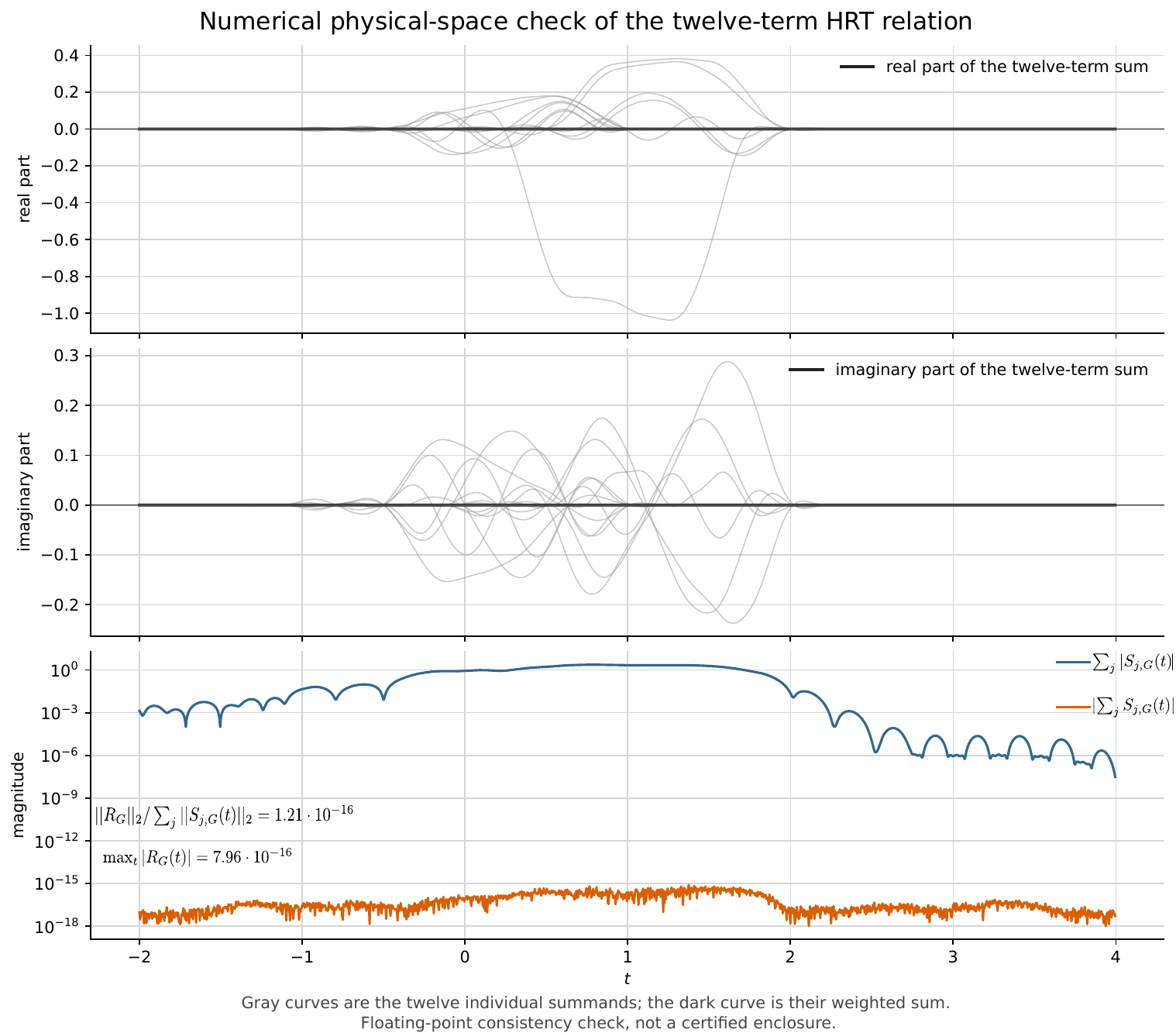}
\caption{Direct floating-point evaluation of the twelve summands in \eqref{eq:appendix-physical-residual}. In the first two panels, the gray curves are the real and imaginary parts of the individual summands, while the dark curve is their sum.  In the bottom panel, the blue curve is the sum of the magnitudes of the individual terms
$\sum_{j=0}^{11}
    \abs{\GridSummand_{j,\GridSize}(\PhysicalVariable)}
$, and the orange curve is the residual $\abs{\GridResidual(\PhysicalVariable)}$. The latter remains at approximately double-precision rounding level.}
\label{fig:gstar-cancellation}
\end{figure}

\newpage

\bibliographystyle{abbrv}
\bibliography{bib}

\end{document}